\documentclass[reqno]{amsart}

\usepackage{mathrsfs}
\usepackage[dvips]{graphicx}
\usepackage{amsmath,amsthm, amscd}
\usepackage[psamsfonts]{amssymb}
\usepackage[all]{xy}
\usepackage{wrapfig}

\newtheorem{Thm}{Theorem}[section]
\newtheorem{Prop}[Thm]{Proposition}
\newtheorem{Lem}[Thm]{Lemma}
\newtheorem{Cor}[Thm]{Corollary}

\newtheorem{LemD}[Thm]{Lemma--Definition}

\theoremstyle{remark}
\newtheorem{Rem}[Thm]{Remark}

\theoremstyle{definition}
\newtheorem{Def}[Thm]{Definition}
\newtheorem{Assum}[Thm]{Assumption}

\newcommand{\mysection}[2]{%
\vspace{2mm}\section{\bf #1}\label{#2}
}

\newcommand{\fig}[1]
        {\raisebox{-0.5\height}
                 {\includegraphics{#1}}
        }

\setlength{\headheight}{30pt}
\setlength{\topmargin}{0cm} 
\setlength{\oddsidemargin}{1.9cm} 
\setlength{\evensidemargin}{1.9cm} 

\def\Z{{\mathbb Z}}
\def\R{{\mathbb R}}
\def\Q{{\mathbb Q}}

\def\calA{\mathscr{A}}

\def\calC{\mathscr{C}}
\def\calD{\mathscr{D}}
\def\calE{\mathscr{E}}

\def\calG{\mathscr{G}}
\def\calH{\mathscr{H}}

\def\calM{\mathscr{M}}

\def\calU{\mathscr{U}}

\def\calX{\mathscr{X}}

\def\deg{\mathrm{deg}}

\def\tcoprod{\textstyle\coprod}

\def\Conf{C}
\def\bConf{\overline{C}}
\def\EC{E\bConf}
\def\Tr{\mathrm{Tr}}

\def\ve{\varepsilon}
\def\bvec#1{\mbox{\boldmath{$#1$}}}
\def\tbvec#1{\mbox{\scriptsize\boldmath{$#1$}}}

\def\Diff{\mathrm{Diff}}
\def\Emb{\mathrm{Emb}}
\def\fEmb{\Emb^{\mathrm{f}}}
\def\bcalA{\overline{\calA}}
\def\bcalD{\overline{\calD}}
\def\bcalM{\overline{\calM}}

\newcommand{\acalM}{\calM^{\mathrm{Z}}}
\newcommand{\bacalM}{\bcalM\,\kern-.5mm^{\mathrm{Z}}}
\def\ev{\mathrm{ev}}

\def\bD{\mathbb{D}}
\def\bA{\mathbb{A}}

\def\fib{\mathrm{fib}}

\def\adm{\mathrm{adm}}
\def\Morse{\mathrm{Morse}}
\def\t#1{#1^T}

\def\tcoprod{\textstyle\coprod}
\def\ddx#1{\frac{\partial}{\partial #1}}
\def\bbcalM#1#2#3{{\calM'_{#2#3}(#1)}}
\def\winfty{\ell_\infty}
\def\pbcalM{\bcalM\kern-.5mm\,'}
\def\ibcalD{\bcalD\kern-.5mm\,^\infty}
\def\ibcalA{\bcalA\kern-.5mm\,^\infty}
\def\ibcalM{\bcalM\kern-.5mm\,^\infty}

\def\wDelta{\widehat{\Delta}}

\title[Exotic elements of the rational homotopy groups of $\Diff(S^4)$]{Some exotic nontrivial elements of the rational homotopy groups of $\Diff(S^4)$}
\author{Tadayuki Watanabe}
\address{Department of Mathematics, Shimane University,
1060 Nishikawatsu-cho, Matsue-shi, Shimane 690-8504, Japan}
\email{tadayuki@riko.shimane-u.ac.jp}
\date{\today}
\subjclass[2000]{57M27, 57R57, 58D29, 58E05}

\begin{document}

{\noindent\footnotesize {\rm Preprint (2018)}}\par\vspace{15mm}
\maketitle
\vspace{-6mm}
\setcounter{tocdepth}{2}
\numberwithin{equation}{section}
\renewcommand{\thefootnote}{\fnsymbol{footnote}}

\begin{abstract}
This paper studies the rational homotopy groups of the group $\Diff(S^4)$ of self-diffeomorphisms of $S^4$ with the $C^\infty$-topology. We present a method to prove that there are many `exotic' non-trivial elements in $\pi_*\Diff(S^4)\otimes \Q$ parametrized by trivalent graphs. As a corollary of the main result, the 4-dimensional Smale conjecture is disproved. The proof utilizes Kontsevich's characteristic classes for smooth disk bundles and a version of clasper surgery for families. In fact, these are analogues of Chern--Simons perturbation theory in 3-dimension and clasper theory due to Goussarov and Habiro. 
\end{abstract}
\par\vspace{3mm}

\def\baselinestretch{1.06}\small\normalsize

\mysection{Introduction}{s:intro}

The homotopy type of $\Diff(S^4)$ is an important object in topology, whereas almost nothing was known about its homotopy groups except that they include those coming from the orthogonal group $O_5$ (e.g., recent surveys in \cite{Hat2, Kup}). Let $\Diff(D^d,\partial)$ denote the group of self-diffeomorphisms of $D^d$ which fix a neighborhood of $\partial D^d$ pointwise. This is the `non-linear' part of $\Diff(S^d)$ in the sense of the well-known splitting $\Diff(S^d)\simeq O_{d+1}\times \Diff(D^d,\partial)$ (e.g., \cite{ABK}). For $d=1,2,3$, it is known that $\Diff(D^d,\partial)$ is contractible. Proof for $d=1$ is easy. The case $d=2$ is due to Smale (\cite{Sm}), and a proof for the case $d=3$ (the Smale conjecture) has been given by Hatcher (\cite{Hat}). On the other hand, for $d\geq 5$, it is known that $\Diff(D^d,\partial)$ is not contractible (e.g., \cite{Hat2}). For $d=4$, there is a conjecture which claims that $\Diff(D^4,\partial)$ is contractible, or equivalently, $\Diff(S^4)\simeq O_5$ (the 4-dimensional Smale conjecture \cite[{Problem~4.34, 4.126}]{Kir}, \cite{RS}). The following theorem, which is the main result of this paper, gives a negative answer to this conjecture.

\begin{Thm}[Theorem~\ref{thm:commute}]\label{thm:main}
For each $k\geq 1$, evaluation of Kontsevich's characteristic classes on $D^4$-bundles over $S^k$ gives an epimorphism
from $\pi_k B\Diff(D^4,\partial)\otimes\Q$ to the space $\calA_k$ of trivalent diagrams (definition in \S\ref{ss:graphs}).
\end{Thm}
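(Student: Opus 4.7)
The plan is to construct, for each trivalent graph $\Gamma$ of degree $k$, an explicit $D^4$-bundle $\pi_\Gamma$ over $S^k$ by a family version of clasper surgery, and to show that Kontsevich's characteristic class $Z_k$ applied to $\pi_\Gamma$ equals the class $[\Gamma]$ in $\calA_k$. Since the classes $[\Gamma]$ of trivalent diagrams span $\calA_k$ by definition, this immediately yields surjectivity of the evaluation map.

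First I would review the definition of $Z_k$. Given a framed smooth $D^4$-bundle $\pi\colon E\to B$, trivialized in a collar of the fiberwise $\partial D^4$, one forms the fiberwise Fulton--MacPherson compactified configuration space $\EC_n(\pi)\to B$, chooses a ``propagator'' $3$-form $\omega$ on the fiberwise double representing the Poincar\'e dual of the fiberwise diagonal, and for each trivalent graph $\Gamma$ of degree $k$ wedges together the pullbacks of $\omega$ along the edge projections and integrates along the fiber. Summed over $\Gamma$ with appropriate combinatorial weights and coupled with an anomaly-correction term at infinity, this produces a closed degree-$k$ form on $B$ valued in $\calA_k$; the AS and IHX relations imposed in $\calA_k$ are exactly what is needed for closedness, since they match the cancellations of Stokes' contributions from the hidden boundary strata of $\EC_n(\pi)$.

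Next I would construct $\pi_\Gamma$ by a parametrized version of Goussarov--Habiro clasper surgery adapted to dimension $4$. For each trivalent vertex of $\Gamma$ embed three framed $2$-spheres in $D^4$ in a Borromean configuration, arranged to vary in a $1$-parameter family of isotopies; for each edge identify two such leaves of adjacent $Y$-claspers. The resulting family of self-diffeomorphisms of $D^4$ is parametrized by a product of circles, one per edge of $\Gamma$, and restricting to an appropriate $k$-sphere slice yields the class $[\pi_\Gamma]\in \pi_k B\Diff(D^4,\partial)$. The main calculation is that $Z_k([\pi_\Gamma])=[\Gamma]$: by design, the fiberwise linking of the clasper leaves is concentrated along the edges of $\Gamma$, so the only configurations contributing to leading order place one point near each vertex and pair the propagators across $\Gamma$'s edges, producing $[\Gamma]$ with coefficient one; integrals labeled by other trivalent graphs $\Gamma'$ vanish after dimension counting, followed by an IHX reduction to identify repeated contributions.

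The main obstacle will be controlling the boundary-face contributions to these configuration integrals. Unlike in Kontsevich's original $3$-dimensional Chern--Simons setting, generic $2$-discs in $D^4$ do not link---the required linking is an essentially parametrized phenomenon over $S^k$, emerging only after integration along the base. Consequently, the hidden strata of $\EC_n(\pi_\Gamma)$ arising from collisions of configuration points on the clasper leaves can produce nonzero anomaly integrals that do not obviously vanish; verifying that these either cancel in pairs via AS/IHX symmetry or are absorbed into the anomaly correction term, and that no spurious off-diagonal graph contribution survives so that the leading calculation $Z_k([\pi_\Gamma])=[\Gamma]$ is exact, is the technical heart of the argument.
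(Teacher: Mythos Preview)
Your overall strategy---construct bundles by parametrized clasper surgery along a Y-link modeled on $\Gamma$, then compute the characteristic class to be $[\Gamma]$---is exactly the paper's approach. But several concrete details in your sketch are wrong and would not survive a careful write-up.

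First, three $2$-spheres in $D^4$ cannot form a Borromean configuration: the Borromean rings $B(p,q,r)_d$ require $p+q+r=2d-3$, so in $D^4$ one must use $B(2,2,1)_4$, i.e.\ two $2$-spheres and one $1$-sphere. This asymmetry is not incidental. In the paper's construction, each edge of $\Gamma$ is oriented (an ``arrow graph''), and each trivalent vertex is of type I (two incoming, one outgoing) or type II (one incoming, two outgoing); the three leaves at a vertex are $S^2,S^2,S^1$ linked as $B(2,2,1)_4$, matched to the incidence pattern.

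Second, your parameter space is wrong. You say the family is parametrized by ``a product of circles, one per edge'', which would be $3k$-dimensional; earlier you suggest a $1$-parameter family per vertex, giving $2k$ dimensions. In the paper the base is $B_\Gamma=\prod_{i=1}^{2k}K_i$ with $K_i=S^0$ for type I vertices and $K_i=S^1$ for type II vertices. A counting argument on heads and tails of the arrow graph forces exactly $k$ vertices of each type, so $\dim B_\Gamma=k$. Getting the correct parameter dimension is what makes the leading-order configuration count land in degree $k$.

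Third, the passage from the product base $B_\Gamma$ to $S^k$ is not a ``restriction to a slice'' but a bundle bordism argument (Proposition~\ref{prop:primitive}): one shows that the restriction of the classifying map to each $B_\Gamma[i]$ (one factor collapsed) is nullhomotopic, with the nullhomotopies coherently compatible, so the class descends to $B_\Gamma/B_\Gamma^*\simeq S^k$. This uses that a clasper with an unlinked leaf is trivial (Lemma~\ref{lem:V[l]-trivial}).

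Finally, the paper does not compute via differential forms but via a Morse-theoretic graph-counting formula (\S\ref{s:GCF}): one builds explicit admissible propagators from Z-paths of fiberwise Morse gradients, reduces to counting Z-graphs, and then chooses \emph{coherent} $v$-gradients adapted to the surgery so that only graphs ``occupying'' all $2k$ handlebodies survive (Key Lemma~\ref{lem:occupied}). The surviving count is then identified with a product of triple intersections $\langle\alpha_1,\alpha_2,\alpha_3\rangle_{S_i}=\pm 1$ and linking numbers along edges, yielding $[\Gamma]$. Your forms-based sketch could in principle be made to work (this is closer to the approach in \cite{Wa2} for odd fiber dimension), but the vanishing of off-diagonal and anomaly contributions that you flag as ``the technical heart'' is handled in the paper by these Morse-theoretic localization arguments, not by direct analysis of hidden-face integrals.
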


\begin{Rem}
Theorem~\ref{thm:main} gives no information about $\pi_1B\Diff(D^4,\partial)\cong \pi_0\Diff(D^4,\partial)$ because $\calA_1=0$. The first nontrivial element is detected in $\calA_2\cong \Q$ (Proposition~\ref{prop:A_2}). The topological version of the 4-dimensional Smale conjecture: $\mathrm{TOP}(S^4)\simeq O_5$ has been disproved by Randall and Schweitzer in \cite{RS}. 
\end{Rem}

Kontsevich's characteristic classes, defined in \cite{Kon}, are invariants for fiber bundles with fiber a punctured homology sphere. They were defined, as an analogy to Chern--Simons perturbation theory in 3-dimension, by utilizing a graph complex and configuration space integrals, both developed by Kontsevich in \cite{Kon}. The method of this paper is essentially the same as \cite{Wa2}, where we studied the rational homotopy groups of $\Diff(D^{4k-1},\partial)$. Namely, we construct some explicit fiber bundles from trivalent graphs, by using a higher-dimensional analogue of graph-clasper surgery, developed by Goussarov and Habiro for knots and 3-manifolds (\cite{Gou, Hab}). Then we compute the values of the characteristic numbers for the bundles. 

In fact, the restriction to 4-dimensional fiber in this paper would not be essential and the results could be given for arbitrary fiber dimensions $\geq 4$. This is similar to the fact that the cocycles of $\Emb(S^1,\R^d)$ given by configuration space integrals are nontrivial and $d=4$ is not exceptional there (\cite{Kon, CCL}). This paper has some ad hoc arguments that are special in 4-dimension, in Lemmas~\ref{lem:T-bundle-II}, \ref{lem:morse-complex-invariant}, and \ref{lem:Y-restrict}.

\if0
By a theorem of Hubbuck (\cite{Hu}), which states that every connected, homotopy-commutative $H$-space of finite type has the homotopy type of a point or a torus $S^1\times\cdots\times S^1$, in particular, $\pi_i=0$, $i\geq 2$, the following holds.
\begin{Cor}
$\Diff(D^4,\partial)$ does not have the homotopy type of a finite CW complex.
\end{Cor}
This is an analogue of Antonelli--Burghelea--Kahn's theorem (\cite{ABK}), which states that $\Diff(S^d)$ does not have the homotopy type of a finite CW complex if $d\geq 7$.
\fi

\subsection{Some consequences of Theorem~\ref{thm:main}}

Theorem~\ref{thm:main} answers to some problems in Kirby's problem list \cite{Kir}. 

\begin{enumerate}
\item $\Diff(S^4)\not\simeq O_5$. (cf. \cite[{Problem~4.34, 4.126 (D.~Randall)}]{Kir})
\item There is a bundle over $S^2$, with a 4-manifold as fiber, which is topologically trivial but not smoothly trivial. (cf. \cite[Problem~4.122 (K.~Fukaya)]{Kir}) 
\item The space $\mathit{Sympl}$ of all standard-at-infinity symplectic structures on $\R^4$ is not contractible. (cf. \cite[Problem~4.141 (Eliashberg)]{Kir}, \cite[7.3]{El})
\end{enumerate}
Here, (2) follows from the contractibility of $\mathrm{TOP}(D^4,\partial)$, which can be shown by the Alexander trick, and (3) follows from Theorem~\ref{thm:main} and the remark given in \cite[Problem~4.141]{Kir}, which says that the evaluation map $\Diff(D^4,\partial)\to \mathit{Sympl}$ is a fibration whose fiber is the group of self-symplectomorphisms of $(D^4,\omega_0)$ fixed at the boundary, where $\omega_0$ is the standard symplectic form. This group is contractible by a deep result of Gromov based on his theory of pseudo-holomorphic curves.

As well as \cite[Appendix]{Hat}, the 4-dimensional Smale conjecture has several equivalent statements. By Morlet's equivalence $\Diff(D^d,\partial)\simeq \Omega^{d+1}\mathrm{PL}_d/O_d$ (\cite{BL}), we have the following.
\begin{enumerate}
\setcounter{enumi}{3}
\item $\mathrm{PL}_4\not\simeq O_4$.
\end{enumerate}

Let $\Emb(S^3,\R^4)_0$ denote the component of $\Emb(S^3,\R^4)$ of the standard inclusion. By the fibration sequence $\Diff(D^4,\partial)\to \Emb(D^4,\R^4)\to \Emb(S^3,\R^4)$, a generalized version of the 4-dimensional Schoenflies conjecture fails:
\begin{enumerate}
\setcounter{enumi}{4}
\item $\Emb(S^3,\R^4)_0\not\simeq \Emb(D^4,\R^4)\,\,(\simeq O_4)$\footnote{The 4-dimensional Schoenflies conjecture claims that $\pi_0\Emb(S^3,\R^4)\cong\pi_0\Emb(D^4,\R^4)\,(=\pi_0O_4)$.}.
\end{enumerate}

An element of the group $\calC(M)=\Diff(M\times I, \partial M\times I\cup M\times\{0\})$ of relative diffeomorphisms is called a {\it pseudo-isotopy}. By the fibration sequence $\Diff(D^{d+1},\partial)\to \calC(D^d) \to \Diff(D^d,\partial)$, Hatcher's theorem $\Diff(D^3,\partial)\simeq *$, and Theorem~\ref{thm:main}, we have the following.
\begin{enumerate}
\setcounter{enumi}{5}
\item $\calC(D^3)\not\simeq *$.
\end{enumerate}
This implies that pseudo-isotopy and isotopy of $D^3$ are essentially different. 

By $\pi_0\Diff(D^5,\partial)\approx \Theta_6=0$ (\cite{Ce}, \cite{KM}), $\pi_1\Diff(D^4,\partial)\otimes\Q\neq 0$, and the homotopy sequence for the fibration $\calC(D^4)\to \Diff(D^4,\partial)$, we have the following.
\begin{enumerate}
\setcounter{enumi}{6}
\item $\pi_1\calC(D^4)\otimes\Q\neq 0$.
\end{enumerate}

By considering the fibrations
$\Diff(S^3\times D^1,\partial)\to \Diff(D^4,\partial) \to \Emb(D^4,\mathrm{Int}\,D^4)$, $\Diff(S^3\times D^1,\partial)\times \Diff(S^3\times D^1,\partial)\to \Diff(S^3\times D^1,\partial)\to \Emb(S^3,S^3\times D^1)$,
we obtain the following.
\begin{enumerate}
\setcounter{enumi}{7}
\item $\Diff(S^3\times D^1,\partial)\not\simeq \Omega O(4)$.
\item $\Emb(S^3,S^3\times D^1)_0\not\simeq SO(4)$, where $\Emb(S^3,S^3\times D^1)_0$ is the component of the standard inclusion $S^3\to S^3\times \{0\}\subset S^3\times D^1$.
\end{enumerate}

In (1), (3), (4), (5), (6), (8), (9), the deficiency of being a homotopy equivalence can be measured by $\Diff(D^4,\partial)$. Other interesting statements that are equivalent to the 4-dimensional Smale conjecture are described in \cite{RS}.

\subsection{Content of the paper}

The rest of this paper consists of four parts.

\begin{enumerate}
\item[\bf \S\ref{s:kon}.] {\bf Kontsevich's characteristic classes} \hfill {\bf p.\pageref{s:kon}}\\
We review the definition of the invariant $\hat{Z}_k^\adm$ given by Kontsevich's characteristic classes for $D^4$-bundles. 
The main result is restated in terms of $\hat{Z}_k^\adm$.
\item[\bf \S\ref{s:GCF}.] {\bf Graph counting formula}  \hfill {\bf p.\pageref{s:GCF}}\\
We give a formula for $\hat{Z}_k^\adm$ counting flow-graphs of gradients of Morse functions. This is an analogue of the relation between Kontsevich's configuration space integral invariant of rational homology 3-spheres in \cite{Kon} and Fukaya's Morse homotopy invariant in \cite{Fu, Wa3}, proved by Shimizu in \cite{Sh} using Lescop's description \cite{Les1} of configuration space integral invariant. The formula allows us to compute the invariant by geometric arguments and hopefully makes the problem simple. 
\item[\bf \S\ref{s:cycles}.] {\bf Cycles in $B\Diff(D^4,\partial)$ associated to graphs}  \hfill {\bf p.\pageref{s:cycles}}\\
We shall construct a $D^4$-bundle $\pi^\Gamma:E^\Gamma\to B_\Gamma$ concretely from a trivalent graph $\Gamma$. This is a higher-dimensional analogue of graph-clasper surgery of \cite{Gou,Hab,GGP} and is similar to what we have given for $(4k-1)$-dimensional disk bundles in \cite{Wa2}. 
\item[\bf \S\ref{s:computation}.] {\bf Computation of the invariant}  \hfill {\bf p.\pageref{s:computation}}\\
We shall compute the value of the invariant $\hat{Z}_k^\adm$ for the $D^4$-bundles constructed in \S\ref{s:cycles}. This part is the core of the proof and is philosophically based on the computation of Kuperberg and D.~Thurston \cite{KT}. Thanks to the graph counting formula, it is enough to count only the flow-graphs that are caught in some restricted places where the gradient on fibers varies drastically in a parameter, which are highly restricted. 
\end{enumerate}

Most of \S\ref{s:kon}--\S\ref{s:cycles} consists of formal arguments such as definitions and confirmations associated with them, and there aren't major differences from known results there, though there are some new techniques to simplify arguments. In \S\ref{s:computation}, we choose Morse functions that is adapted to the surgery and consider ``coherent $v$-gradients'', and see that they make the computation into a homological one. 

The reasons that the proof is mainly given by parametrized Morse theory, unlike that of \cite{Wa2} of differential forms, are as follows.
\begin{enumerate}
\item We consider that the proof of the present paper is geometric and concrete, although the proof is a bit lengthy mainly due to some arguments of general position, compactness and orientation in finite dimensional manifolds, which are routine. 
\item We consider that giving different proofs would make the nontriviality result of Kontsevich's characteristic classes more solid. 
\item It would be interesting to compare the results of this paper with known results about stable pseudo-isotopy, some of which utilize Cerf theory (e.g., \cite{Ce, HW, Ig}). 
\item Our Z-paths, Z-graphs and geometric iterated integrals in \S\ref{s:GCF} may be of independent interest.
\end{enumerate}

\subsection{Notations and conventions}

For a sequence of submanifolds $A_1,A_2,\ldots,A_r\subset W$ of a smooth Riemannian manifold $W$, we say that the intersection $A_1\cap A_2\cap \cdots\cap A_r$ is {\it transversal} if for each point $x$ in the intersection, the subspace $N_xA_1+N_xA_2+\cdots+N_xA_r\subset T_xW$ spans the direct sum $N_xA_1\oplus N_xA_2\oplus\cdots\oplus N_xA_r$, where $N_xA_i$ is the orthogonal complement of $T_xA_i$ in $T_xW$ with respect to the Riemannian metric. 

For manifolds with corners and their (strata) transversality, we follow \cite[Appendix]{BT} (see also \cite[Appendix~A]{Wa3}).

As chains in a manifold $X$, we consider $\Q$-linear combinations of finitely many smooth maps from compact oriented manifolds with corners to $X$. We say that two chains $\sum n_i\sigma_i$ and $\sum m_j\tau_j$ ($n_i,m_j\in\Q$, $\sigma_i,\tau_j$: smooth maps from compact manifolds with corners) are strata transversal if for every pair $i,j$, the terms $\sigma_i$ and $\tau_j$ are strata transversal. Strata transversality among two or more chains can be defined similarly. The intersection number $\langle \sigma,\tau\rangle_X$ of strata transversal two chains $\sigma=\sum n_i\sigma_i$ and $\tau=\sum m_j\tau_j$ with $\dim{\sigma_i}+\dim{\tau_j}=\dim{X}$ is defined by $\sum_{i,j}n_im_j(\sigma_i\cdot \tau_j)$. We also consider intersection $\langle \sigma_1,\ldots,\sigma_n\rangle_X$ of strata transversal chains $\sigma_1,\ldots,\sigma_n$ for $n\geq 2$, which is defined similarly.

We denote by $|x|$ the degree of an element $x$ of a graded module. 

The diagonal $\{(x,x)\in X\times X\mid x\in X\}$ is denoted by $\Delta_X$. 

For a fiber bundle $\pi:E\to B$, we denote by $T^vE$ the (vertical) tangent bundle along the fiber $\mathrm{Ker}\,d\pi\subset TE$. Let $ST^vE$ denote the subbundle of $T^vE$ of unit spheres. Let $\partial^\fib E$ denote the fiberwise boundaries: $\bigcup_{b\in B}\partial(\pi^{-1}\{b\})$. We denote by $\pi_*$ the pushforward or integral along the fiber for $\pi$. We will use the well-known identity: $\pi_*(\pi^*\alpha\wedge \beta)=\alpha\wedge \pi_*\beta$. 

In Appendix, we describe convention for orientation (\S\ref{s:ori}).

\mysection{Kontsevich's characteristic class}{s:kon}

Kontsevich's characteristic classes are invariants for fiber bundles with fiber a punctured {\it framed} homology $d$-sphere. Here, we shall see that an invariant for {\it unframed} $(D^d,\partial)$-bundles can be obtained by adding to Kontsevich's characteristic class a correction term, in a similar way as \cite{KT, Les1}.

\subsection{Graphs}\label{ss:graphs}

In this paper, we consider connected trivalent graphs. In general, trivalent graph has even number of vertices, and if it is $2k$, then the number of edges is $3k$. Let $V(\Gamma)$ and $E(\Gamma)$ denote the sets of vertices and edges of a trivalent graph $\Gamma$, respectively. Labellings of a trivalent graph $\Gamma$ are given by bijections $V(\Gamma)\to \{1,2,\ldots,2k\}$, $E(\Gamma)\to \{1,2,\ldots,3k\}$. Let $\calG_k$ be the vector space over $\Q$ spanned by the set $\calG_k^0$ of all labelled connected trivalent graphs with $2k$ vertices. We define 
\[ \calA_k=\calG_k/{\mbox{IHX relation, label change relation}}, \]
where the IHX relation is given in Figure~\ref{fig:IHX} and the label change relation is generated by the following relations:
\[ \Gamma'\sim -\Gamma,\qquad \Gamma''\sim \Gamma. \]
Here, $\Gamma'$ is the graph obtained from $\Gamma$ by exchanging labels of two edges, $\Gamma''$ is the graph obtained from $\Gamma$ by exchanging labels of two vertices. This is the trivalent part of the cohomology of Kontsevich's graph complex which works for even dimensional manifolds \cite{Kon}.

\begin{figure}
\begin{center}
\includegraphics[height=15mm]{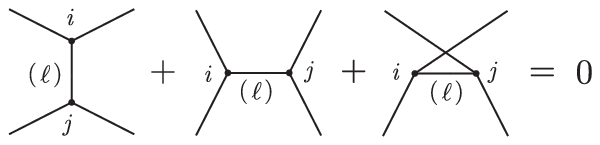}
\end{center}
\caption{IHX relation}\label{fig:IHX}
\end{figure}

\begin{Prop}\label{prop:A_2}
$\calA_1=0$, and $\calA_2$ is 1-dimensional and generated by the class of the complete graph $K_4$ on four vertices with some labels. 
\end{Prop}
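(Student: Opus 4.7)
The plan is to enumerate the finitely many isomorphism classes of connected trivalent graphs on $2k$ vertices for $k=1,2$ and apply the label-change and IHX relations to reduce the quotient to its essential generators. The key combinatorial observation is: the label-change relation forces $\Gamma=0$ in $\calA_k$ whenever $\Gamma$ admits a combinatorial automorphism inducing an odd permutation of its $3k$ edges, because such an automorphism identifies $\Gamma$ with a labelling that differs by an edge-relabelling of sign $-1$ and a vertex-relabelling of sign $+1$, giving $\Gamma=-\Gamma$.

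For $k=1$ the only connected trivalent graphs on two vertices are the theta graph $\theta$ (three parallel edges) and, if self-loops are permitted, the dumbbell (two self-loops joined by a bridge). The theta admits the automorphism transposing any two of its parallel edges, and the dumbbell admits the vertex-swap that exchanges the two self-loops; in each case the induced edge-permutation is a single transposition, hence odd. So $\theta$ and the dumbbell are both $0$, and $\calA_1=0$.

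For $k=2$, I would enumerate the connected trivalent graphs on four vertices by edge-multiplicities $(m_{ij})$. The only simple graph (no self-loops, no parallel edges) is $K_4$. Graphs with a pair of parallel edges between two vertices vanish immediately, because the edge-transposition exchanging the two parallel edges (fixing all vertices) is an odd edge-permutation; for instance, this kills the ``H-graph'' with $m_{12}=m_{34}=2$, $m_{13}=m_{24}=1$. For the remaining graphs containing self-loops but no parallel edges (such as the ``three-loops-plus-star'' graph with loops at vertices $1,2,3$ and bridges to $4$), label-change alone does not suffice, so one applies an IHX move $I = H - X$ at a suitable edge; the two resulting graphs both turn out to contain a pair of parallel edges and hence vanish by the previous argument. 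On the other hand $\mathrm{Aut}(K_4) = S_4$, acting on the six edges (pairs in $\{1,2,3,4\}$), is directly checked to act entirely through even permutations on edges for every conjugacy class (the transposition $(1\,2)$ induces two commuting edge-transpositions, the $3$-cycle $(1\,2\,3)$ induces two commuting $3$-cycles, the $4$-cycle $(1\,2\,3\,4)$ induces a $4$-cycle together with an edge-transposition, etc.), so $K_4$ survives label-change. Hence $\calA_2$ is spanned by $K_4$.

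The main obstacle is to verify that $K_4 \neq 0$ in $\calA_2$. The plan is to exhibit a linear functional $W : \calG_2 \to \Q$ that vanishes on both the label-change and IHX relations and satisfies $W(K_4) \neq 0$. Such a weight system is standardly built for the even graph complex by placing at each trivalent vertex a fixed invariant structure tensor of a finite-dimensional algebraic gadget (for example, the structure constants of a cyclic Lie coalgebra with invariant inner product, as in \cite{Kon}) and pairing legs along each edge; the IHX relation corresponds to the (suitably graded) Jacobi identity, and a short direct computation produces a nonzero rational value on $K_4$. A self-contained alternative is a finite linear-algebra computation: since the label-change quotient of $\calG_2$ is finite-dimensional and only finitely many IHX relations are applicable at $k=2$, one can verify $\dim \calA_2 = 1$ directly from the finite presentation.
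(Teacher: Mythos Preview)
Your reduction of $\calA_1$ and of $\calA_2$ to the span of $K_4$ matches the paper's argument almost exactly; if anything, your remark that self-loop graphs may require an IHX move (not just label-change) before they visibly vanish is a refinement the paper glosses over.

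The substantive difference is in the non-triviality of $K_4$. You already verified that every element of $\mathrm{Aut}(K_4)\cong\mathfrak{S}_4$ acts by an \emph{even} permutation on the six edges, so label-change alone cannot kill $K_4$. At that point the paper does not reach for a weight system; it simply observes that the IHX relation in $\calG_2$ imposes no relation beyond the vanishing of graphs with a self-loop or a multiple edge. Concretely, applying IHX at any edge of a $4$-vertex trivalent graph produces a triple $I,H,X$ in which at most two of the three terms can be (a labelling of) $K_4$, the remaining term(s) having a multiple edge; the resulting identity, read modulo the already-killed graphs, is tautological in the $1$-dimensional span of $K_4$. Combined with your $\mathrm{Aut}(K_4)$ check, this finishes the proof with no external input.

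Your proposed route via a weight system (structure constants of a metric Lie object, or a direct finite linear-algebra check) would certainly certify $K_4\neq 0$, but as written it is only a plan: you neither specify the algebraic gadget nor carry out the computation on $K_4$, so the argument is incomplete at its most delicate point. The paper's approach buys you a self-contained, two-line finish; the weight-system approach would buy you a method that scales to larger $k$, at the cost of actually building it.
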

\begin{proof}
By the label change relation and the IHX relation, one may see that a graph with a self-loop or multiple edges vanishes in $\calA_k$. It follows that $\calA_1=0$ and $\calA_2$ is spanned by the class of $K_4$. Since the IHX relation for $\calA_2$ imposes no restriction other than the vanishings of the graphs with a self-loop or multiple edges, we need only to check that the label change relation does not make $\calA_2$ trivial. The automorphism group of $K_4$ is isomorphic to the symmetric group $\mathfrak{S}_4$ and each automorphism changes the labels of edges by an even permutation, namely, an automorphism of $K_4$ never produces $-K_4$, which implies that $K_4\neq 0$ in $\calA_2$.
\end{proof}
\begin{Rem}
That $K_4$ represents a nontrivial class in $\calA_2$ is a special case of \cite[Example~2.5]{CGP}. 
One may easily check that $\calA_3=0$. The dimensions of $\calA_k$ for $4\leq k\leq 9$ are computed in \cite{BNM} as in the following table.
\par\medskip
\begin{center}
\begin{tabular}{c|ccccccccc}\hline
$k$ & 1 & 2 & 3 & 4 & 5 & 6 & 7 & 8 & 9\\\hline
$\dim\,\calA_k$ & 0 & 1 & 0 & 0 & 1 & 0 & 0 & 0 & 1\\\hline
\end{tabular}
\end{center}
\end{Rem}

\subsection{$(D^d,\partial)$-bundles and associated $\bConf_n(S^d,\infty)$-bundles}

A {\it $(D^d,\partial)$-bundle} is a smooth $D^d$-bundle $\pi:E\to B$ equipped with a trivialization on a neighborhood of $\partial^\fib E$. We say that a $(D^d,\partial)$-bundle is {\it pointed} if it is given a diffeomorphism between the fiber $F_0$ over the base point $b_0\in B$ and the standard unit disk $D^d$, which is compatible with the trivialization on a neighborhood of $\partial^\fib E$. A pointed $(D^d,\partial)$-bundle corresponds to a classifying map $(B,b_0)\to (B\Diff(D^d,\partial),*)$. We take a Riemannian metric on the total space $E$ of a $(D^d,\partial)$-bundle that agrees with the standard one on $D^d$ near $\partial^\fib E$ under the trivialization. 

Put $\infty=(0,\ldots,0,1)\in S^d$ and let $U_\infty\subset S^d$ denote the hemisphere $S^d_+=\{(x_1,\ldots,x_{d+1})\in S^d\mid x_{d+1}\geq 0\}$, and put $U_\infty'=U_\infty-\{\infty\}$.
The group $\Diff(D^d,\partial)$ acts on $S^d$ through the action on $S^d_-=\{(x_1,\ldots,x_{d+1})\in S^d\mid x_{d+1}\leq 0\}$. 

Let $C_n(S^d,\infty)$ denote the configuration space of ordered points in $S^d-\{\infty\}$:
\[ C_n(S^d,\infty)=\{(x_1,\ldots,x_n)\in (S^d-\{\infty\})^n\mid x_i\neq x_j \mbox{ for }i\neq j\} \]
and let $\bConf_n(S^d,\infty)$ denote its Fulton--MacPherson compactification (\cite{AS, Kon}, see also \cite[2.3]{Wa2}). Roughly, $\bConf_n(S^d,\infty)$ is a compact manifold with corners, which is obtained from $\Conf_n(S^d,\infty)$ by attaching boundary strata so that $\bConf_n(S^d,\infty)\simeq \Conf_n(S^d,\infty)$. The codimension 1 (boundary) stratum of $\bConf_n(S^d,\infty)$ consists of faces obtained by blowing-up the diagonal in $(S^d-\{\infty\})^n$ that corresponds to coincidence of points with labels in a subset $A\subset\{1,\ldots,n,\infty\}$.

These spaces admit natural diagonal $\Diff(D^d,\partial)$-actions $g\cdot(x_1,\ldots,x_n)=(g\cdot x_1,\ldots,g\cdot x_n)$. We call the $S^d$-bundle associated to a $(D^d,\partial)$-bundle $\pi:E\to B$ an {\it $(S^d,U_\infty)$-bundle} and call its associated $\R^d=S^d-\{\infty\}$-bundle an {\it $(\R^d,U_\infty')$-bundle}. Also, we consider the $\bConf_n(S^d,\infty)$-bundle
\[ \bConf_n(\pi):\EC_n(\pi)\to B \]
associated to $\pi$. 

We equip $S^d$ with the orientation induced from the unit disk in $\R^{d+1}$ with the standard orientation $dx_1\wedge\cdots\wedge dx_{d+1}$. We orient $C_n(S^d,\infty)$ by $o(C_n(S^d,\infty))_{(x_1,\ldots,x_n)}=o(S^d)_{x_1}\wedge\cdots\wedge o(S^d)_{x_n}$, where $o(X)$ denotes the orientation of an oriented manifold $X$, and the orientation of $\bConf_n(S^d,\infty)$ is defined similarly. We orient $\EC_n(\pi)=\bigcup_t \bConf_n(S^d,\infty)_t$ at $(t,y)$ by $o(B)_t\wedge o(\bConf_n(S^d,\infty)_t)_y$.

\subsection{Admissible propagator}\label{ss:adm-propagator}

The invariant will be defined via the intersection among some fundamental chains in $\EC_n(\pi)$ called admissible propagators. Admissible propagator was first considered in \cite{Les1, Les2} for rational homology 3-sphere. Let us review the definition of the Fulton--MacPherson compactification $\bConf_2(S^d,\infty)$ for two points. Let the subspaces $\Sigma_0\subset\Sigma_1$ of $S^d\times S^d$ be given as follows.
\[ \Sigma_0=\{(\infty,\infty)\},\quad \Sigma_1=\Delta_{S^d}\cup (S^d\times\{\infty\})\cup (\{\infty\}\times S^d). \]
We consider the real blow-up $B\ell(S^d\times S^d,\Sigma_0)$ of $S^d\times S^d$ along $\Sigma_0$, which is defined by replacing $\Sigma_0$ with its normal sphere $S^{2d-1}$. Then $\partial B\ell(S^d\times S^d,\Sigma_0)=S^{2d-1}$ and $\mathrm{Int}\,B\ell(S^d\times S^d,\Sigma_0)$ is naturally identified with $S^d\times S^d-\Sigma_0$. The closure $\overline{\Sigma_1-\Sigma_0}$ of $\Sigma_1-\Sigma_0$ in $B\ell(S^d\times S^d,\Sigma_0)$ is a disjoint union of three $d$-submanifolds, whose boundaries are transversal to $\partial B\ell(S^d\times S^d,\Sigma_0)$. Now $\bConf_2(S^d,\infty)$ is obtained by the real blowing-up along $\overline{\Sigma_1-\Sigma_0}$: 
\[ \bConf_2(S^d,\infty)=B\ell(B\ell(S^d\times S^d,\Sigma_0),\overline{\Sigma_1-\Sigma_0}) \]
Roughly, the real blow-up along $\overline{\Sigma_1-\Sigma_0}$ is obtained by replacing the $d$-submanifolds with their normal $S^{d-1}$-bundles. A piecewise smooth map (Gauss map)
\[ \phi_0:\partial\bConf_2(S^d,\infty)\to S^{d-1} \]
is defined as follows.
\begin{enumerate}
\item The face obtained by blowing-up along $\Sigma_0$ is canonically identified with $S^{2d-1}=\{(y_1,y_2)\in (\R^d)^2\mid \|y_1\|^2+\|y_2\|^2=1\}$, which is viewed as the `limit' of $\{(y_1,y_2)\in(\R^d)^2\mid \|y_1\|^2+\|y_2\|^2=R^2\}$ for $R\to \infty$. A map $S^{2d-1}-\overline{\Delta}_{S^d}\to S^{d-1}$ is defined by $\phi_0(y_1,y_2)=\frac{y_2-y_1}{\|y_2-y_1\|}$. 
\item The face obtained by blowing-up along $S^d\times\{\infty\}$ is identified with $B\ell(S^d,\infty)\times \partial B\ell(S^d,\infty)\cong B\ell(S^d,\infty)\times S^{d-1}$. So the projection on the second factor gives a map to $S^{d-1}$, where we consider $S^{d-1}=\{y_2\in \R^d\mid \|y_2\|^2=1\}$. The face obtained by blowing-up along $\{\infty\}\times S^d$ is similar.
\item The face obtained by blowing-up $\Delta_{S^d}$ is identified with $\{(-y_2,y_2)\in(\R^d)^2\mid 2\|y_2\|^2=1\}\times B\ell(\Delta_{S^d},(\infty,\infty))\cong S^{d-1}\times B\ell(\Delta_{S^d},(\infty,\infty))$. So the projection on the first factor gives a map to $S^{d-1}$. 
\end{enumerate}
These maps are glued together along the boundaries and define a piecewise smooth map $\phi_0$, which may be smoothly extended to a neighborhood of the boundary. 

Let $B$ be a compact oriented manifold. For an $(S^d,U_\infty)$-bundle $\pi:E\to B$, we consider the associated $S^d\times S^d$-bundle $E\times_B E\to B$. By fiberwise blowing-up $E\times_B E$ as in (3) above, we obtain $\EC_2(\pi)$. Namely, if $E=\bigcup_{t\in B}(S^d)$, then $\EC_2(\pi)$ is $\bigcup_{t\in B}\bConf_2((S^d)_t,\infty)$. We denote by $S_{\Delta_E}$ the unit $S^{d-1}$-bundle $ST^v\Delta_E$ of $T^v\Delta_E$, where $\Delta_E\to B$ is the subbundle of the associated $B\ell(S^d\times S^d,\Sigma_0)$-bundle corresponding to $\overline{\Delta_{S^d}-\Sigma_0}$. Then $S_{\Delta_E}$ can be identified with the face of $\EC_2(\pi)$ obtained by fiberwise blowing-up $\Delta_E$. The map $\phi_0:\partial\bConf_2(S^d,\infty)\to S^{d-1}$ extends naturally to $\partial^\fib\EC_2(\pi)-\mathrm{Int}\,S_{\Delta_E}$.

\begin{Def}[Admissible propagator]
We say that a piecewise smooth singular chain $\theta$ of $\EC_2(\pi)$ over $\Q$ of codimension $d-1$ is an {\it admissible propagator} if it satisfies the following.
\begin{enumerate}
\item $\partial \theta$ is a chain of $\partial \EC_2(\pi)$ and $\theta$ is strata transversal to $\bConf_2(\pi)^{-1}(\partial B)$. 
\item $\theta$ is standard on $\partial^\fib\EC_2(\pi)-\mathrm{Int}\,S_{\Delta_E}$, namely, $\partial\theta$ is the sum of three chains: a chain $\phi_0^{-1}(\{a,-a\})$ (for some $a\in S^{d-1}$) of $\partial^\fib\EC_2(\pi)-\mathrm{Int}\,S_{\Delta_E}$, a $\Z_2$-invariant chain of $S_{\Delta_E}$, and a chain of $\bConf_2(\pi)^{-1}(\partial B)$. We call each term in the sum a {\it restriction} of $\theta$ on the corresponding part. Let $\theta_{\partial B}$ denote the chain of $\bConf_2(\pi)^{-1}(\partial B)$ given by the restriction of $\partial\theta$. 
\item The chain $\partial\theta - \theta_{\partial B}$ of $\partial^\fib \EC_2(\pi)$ is invariant under the $\Z_2$-action given by swapping of two points. We call such a chain of $\partial \EC_2^\fib(\pi)$ an {\it admissible section}.
\end{enumerate}
When a chain of $ST^vE$ or of a general $S^{d-1}$-bundle is invariant under the $\Z_2$-action of the involution of the fiber $S^{d-1}$, we also call such a chain an admissible section. (Note that an admissible section is not a genuine section of $S^{d-1}$-bundle.)
\end{Def}
There exists an admissible propagator. Indeed, we will give later an explicit admissible propagator by using Morse theory.

\begin{Lem}\label{lem:propagator-unique}
\begin{enumerate}
\item There exists an admissible propagator. When $B$ is a closed manifold, the relative homology class of admissible propagator in\\ $H_{\dim{B}+d+1}(\EC_2(\pi),\partial\EC_2(\pi))$ is unique. 
\item When $B$ is a closed manifold, the homology class of admissible section of $\partial \EC_2(\pi)$ in $H_{\dim{B}+d}(\partial\EC_2(\pi))$ is unique.
\end{enumerate}
\end{Lem}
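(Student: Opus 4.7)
\textbf{Proof plan for Lemma~\ref{lem:propagator-unique}.} Both parts reduce to obstruction-theoretic computations based on the homotopy equivalence $\bConf_2(S^d,\infty)\simeq S^{d-1}$ realized by the Gauss map $\phi_0$.

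For existence in (1), I first construct a model boundary cycle $s$ on $\partial\EC_2(\pi)$. Pick a regular value $a\in S^{d-1}$ and let $s_\infty=\phi_0^{-1}(\{a,-a\})$ on $\partial^\fib\EC_2(\pi)\setminus\mathrm{Int}\,S_{\Delta_E}$. Using the explicit description of the codimension-two corner stratum $F_\infty\cap S_{\Delta_E}$ given by the iterated blow-up (piece (3) in the definition of $\phi_0$ in \S\ref{ss:adm-propagator}), extend $s_\infty$ across this corner to a $\Z_2$-invariant chain $s_\Delta$ on $S_{\Delta_E}$ matching the corner boundary, and add any strata transversal chain $s_{\partial B}$ over $\partial B$ so that $s:=s_\infty+s_\Delta+s_{\partial B}$ is a cycle in $\partial\EC_2(\pi)$. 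Producing an admissible propagator $\theta$ with $\partial\theta=s$ is now equivalent to showing $[s]=0$ in $H_{\dim B+d}(\EC_2(\pi))$. In the Leray--Serre spectral sequence of $\bConf_2(\pi):\EC_2(\pi)\to B$ the fiber has $H_*\cong H_*(S^{d-1};\Q)$, so the only nontrivial rows are $q=0,d-1$; the fiberwise restriction of $s$ is a $d$-dimensional cycle which automatically vanishes in $H_d(\bConf_2(S^d,\infty))=0$. Higher differentials and secondary obstructions can be absorbed by replacing $s_\Delta$ and $s_{\partial B}$ with appropriate adjustments, yielding the desired $\theta$.

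For uniqueness in (1) and all of (2), assume $B$ is closed. In (2), two admissible sections $s,s'$ differ on $F_\infty$ by $\phi_0^{-1}(\{a,-a\}-\{a',-a'\})$, which bounds $\phi_0^{-1}(\gamma)$ for any $1$-chain $\gamma\subset S^{d-1}$ with the right boundary (using $H_0(S^{d-1};\Q)=\Q$), and on $S_{\Delta_E}$ by a $\Z_2$-invariant cycle; by the Leray--Serre spectral sequence of the $S^{d-1}$-bundle $S_{\Delta_E}\to B$ restricted to $\Z_2$-invariants, the latter is null-homologous once the corner matching is imposed, giving $[s]=[s']$ in $H_{\dim B+d}(\partial\EC_2(\pi))$. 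For uniqueness of the propagator class, two admissible propagators $\theta,\theta'$ yield a relative cycle $\theta-\theta'$; by part (2) the class of $\partial\theta-\partial\theta'$ in $H_{\dim B+d}(\partial\EC_2(\pi))$ vanishes, so after adding a chain in $\partial\EC_2(\pi)$ we may view $\theta-\theta'$ as an absolute cycle in $\EC_2(\pi)$. Its fiberwise class lies in $H_{d+1}(\bConf_2(S^d,\infty))=H_{d+1}(S^{d-1})=0$, and the spectral-sequence argument again propagates the vanishing to the total space, giving $[\theta]=[\theta']$ in $H_{\dim B+d+1}(\EC_2(\pi),\partial\EC_2(\pi))$.

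The main obstacle is the careful matching of boundary chains across the codimension-two corner $F_\infty\cap S_{\Delta_E}$, and arranging $\Z_2$-invariance simultaneously with the strata transversality requirements (including against $\bConf_2(\pi)^{-1}(\partial B)$). One must also verify that the Leray--Serre spectral sequence argument goes through in families; this relies on the structure group of $\EC_2(\pi)\to B$ acting compatibly with the identification $\bConf_2(S^d,\infty)\simeq S^{d-1}$, which holds because elements of $\Diff(D^d,\partial)$ are trivial near $\infty\in S^d$ and so preserve the homotopy class of $\phi_0$.
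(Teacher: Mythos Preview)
The paper does not give a self-contained proof here---it simply cites \cite[Lemma~3]{Wa1}. Your approach via the homotopy equivalence $\bConf_2(S^d,\infty)\simeq S^{d-1}$ and the Leray--Serre spectral sequence of $\bConf_2(\pi)$ is the standard one and is almost certainly what \cite{Wa1} does as well. For uniqueness in (1), the key computation is clean: in the spectral sequence with $E_2^{p,q}=H_p(B;H_q(S^{d-1}))$, total degree $\dim B+d+1$ forces $p>\dim B$ on both rows $q=0,d-1$, so $H_{\dim B+d+1}(\EC_2(\pi))=0$ outright; hence after invoking (2) and the long exact sequence of the pair, the difference of two propagators dies.

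Two minor corrections. First, $S_{\Delta_E}$ is an $S^{d-1}$-bundle over $\Delta_E\cong B\ell(S^d,\infty)\times_{\Diff}B$, not directly over $B$; your spectral-sequence argument for (2) should be run over this base (dimension $\dim B+d$), though the conclusion is unaffected. Second, the paper also remarks just before the lemma that existence will be established later by an explicit Morse-theoretic construction (Theorem~\ref{thm:P-propagator}), so your existence argument---while correct---is redundant with the paper's logical flow; only the uniqueness statements are used in the invariance proof (Theorem~\ref{thm:Z-adm-inv}) and in the construction of adapted pairs.
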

\begin{proof}
The lemma follows immediately from the proof of \cite[Lemma~3]{Wa1}.
\end{proof}

\subsection{Homotopy invariant $\hat{Z}_k^\adm$ via admissible propagator}

In the following, we assume $d=4$, $k\geq 1$. Let $B$ be a $k$-dimensional compact oriented manifold possibly with corners and let $\pi:E\to B$ be a $(D^4,\partial)$-bundle. Let $\theta^{(1)},\theta^{(2)},\ldots,\theta^{(3k)}$ be admissible propagators of $\EC_2(\pi)$. For $\Gamma\in\calG_k^0$, we define $I^\adm(\Gamma)\in\Q$ by the following.
\begin{equation}\label{eq:I^adm}
 I^\adm(\Gamma)=\langle\phi_1^{-1}\theta^{(1)},\phi_2^{-1}\theta^{(2)},\cdots, \phi_{3k}^{-1}\theta^{(3k)}\rangle_{\EC_{2k}(\pi)}
\end{equation}
Here $\phi_j:\EC_{2k}(\pi)\to \EC_2(\pi)$ is the projection which gives the endpoints of the $j$-th edge and $\phi_j^{-1}\theta^{(j)}$ denotes the chain obtained by pulling back simplices in $\EC_2(\pi)$ by $\phi_j$. We choose the order of the two points arbitrarily.
If $\vec{\theta}=(\theta^{(1)},\theta^{(2)},\ldots,\theta^{(3k)})$ is generic, the intersection of (\ref{eq:I^adm}) is strata transversal. Choosing such $\vec{\theta}$, we define
\[ Z_k^\adm(\vec{\theta})=\frac{1}{2^{3k}(2k)!(3k)!}\sum_{\Gamma\in\calG_k^0} I^\adm(\Gamma)[\Gamma]\in \calA_k.  \]

Now we let $B=I^k$, $I=[0,1]$, and suppose that $\pi$ is standard on $\partial I^k$, namely, it corresponds to a relative classifying map
$\psi_\pi:(I^k,\partial I^k) \to (B\Diff(D^4,\partial),*)$.
For this, we shall define a correction term which turns $Z_k^\adm$ into an invariant under relative homotopy of the classifying map. We consider the iteration of $\psi_\pi$:
\[ \begin{split}
  \psi_{N_k}&=\underbrace{\psi_\pi \natural \cdots \natural \psi_\pi}_{N_k}:(I^k,\partial I^k) \to (B\Diff(D^4,\partial),*),\\
  \end{split} \]
where $N_k=m(\Theta^{k+4})\,m(\pi_{k+4}SO_4)\,m(\pi_{k+3}SO_4)$, $\Theta^n$ is the group of $h$-cobordism classes of homotopy $n$-spheres (or diffeomorphism classes for $n\geq 5$ by Smale's $h$-cobordism theorem \cite{Sm2}), $m(G):=\min\{d\in\Z_{>0}\mid dx=0\mbox{ for all }x\in G\}$ for a finite abelian group $G$. Since $\pi_jSO_4$ is finite for $j\geq 4$ and so is $\Theta^n$ for $n\geq 5$, $N_k$ is finite for $k\geq 1$. Let $N_k\pi:N_kE\to I^k$ be the $(D^4,\partial)$-bundle corresponding to $\psi_{N_k}$ obtained by iteration. Since $N_k\pi$ is standard on $\partial I^k$, $\partial N_kE$ is identified with $\partial (D^4\times I^k)$ in a strata preserving way.

\begin{Lem}\label{lem:NE-trivial}
\begin{enumerate}
\item $N_kE$ is relatively diffeomorphic to $D^4\times I^k$ as manifolds with corners, in the sense of \cite[Appendix]{BT}.
\item The tangent bundle $T^v(N_kE)\to N_kE$ along the fiber is trivial and there is a canonical trivialization of $T^v(N_kE)$ up to homotopy.
\end{enumerate}
\end{Lem}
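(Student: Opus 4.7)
The plan is to reduce both statements to obstruction-theoretic computations, exploiting the fact that under the iteration $\natural$ of classifying maps the relevant invariants live in abelian groups ($\Theta^{k+4}$, $\pi_{k+3}SO_4$, $\pi_{k+4}SO_4$) and behave additively, so that multiplication by $N_k$ kills them.

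For (1), I would first note that since $N_k\pi$ is a $(D^4,\partial)$-bundle over the contractible base $I^k$ trivialized on both $\partial I^k$ and on a neighborhood of $\partial^\fib N_kE$, the total space $N_kE$ is a smooth compact $(k+4)$-manifold with corners whose boundary is canonically identified with $\partial(D^4\times I^k)$. Cap off $N_kE$ by gluing a standard $D^{k+4}$ along this boundary (after smoothing corners) to obtain a closed smooth $(k+4)$-manifold $M_{N_k\pi}$. Since $N_kE$ is contractible (it fibers over the contractible base with contractible fibers), a routine Mayer--Vietoris and van Kampen calculation shows that $M_{N_k\pi}$ is a homotopy $(k+4)$-sphere, and hence represents a class in $\Theta^{k+4}$. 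The key observation is that the assignment $\psi\mapsto [M_\psi]$ is additive under $\natural$: gluing the total spaces of two bundles along the trivialized slab $D^4\times\{1/2\}\times I^{k-1}$ corresponds, after capping, to connect sum of the associated closed manifolds, so $[M_{N_k\pi}]=N_k\,[M_\pi]\in\Theta^{k+4}$. Since $m(\Theta^{k+4})$ divides $N_k$, this class vanishes, so by Smale's $h$-cobordism theorem (applicable since $k+4\ge 5$) $M_{N_k\pi}$ is diffeomorphic to the standard $S^{k+4}$. Viewing $N_kE$ as the complement of the standard capping disk then produces a diffeomorphism $N_kE\cong D^{k+4}\cong D^4\times I^k$ compatible with the corner identification on the boundary.

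For (2), once (1) is established, $T^v(N_kE)$ is a rank 4 oriented vector bundle over the contractible manifold $N_kE\cong D^{k+4}$, and its restriction to $\partial N_kE=\partial(D^4\times I^k)$ carries a canonical trivialization inherited from the standard framing of $TD^4$. I need to extend this boundary framing to all of $N_kE$, canonically up to homotopy. The primary obstruction to such an extension lies in $\pi_{k+3}(SO_4)$; the same additivity argument as above (the iteration $\natural$ concatenates the framings on disjoint subcubes, so the obstruction is additive) shows that the $N_k$-fold iterate multiplies this class by $N_k$, which is divisible by $m(\pi_{k+3}SO_4)$, hence the obstruction vanishes. Any two such extensions differ by a map $D^{k+4}\to SO_4$ equal to the identity on $\partial D^{k+4}$, i.e., an element of $\pi_{k+4}(SO_4)$; the same additivity argument combined with divisibility of $N_k$ by $m(\pi_{k+4}SO_4)$ produces uniqueness up to homotopy, giving the canonical trivialization.

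The main technical obstacle is to verify the additivity statements under $\natural$, namely that the closed manifold $M_{\psi\natural\psi'}$ really is diffeomorphic to $M_\psi\#M_{\psi'}$ and that the corresponding framing obstructions concatenate additively in $\pi_{k+3}SO_4$ and $\pi_{k+4}SO_4$. This requires compatibility of the trivializations of the various bundles at the shared boundary slab $\{1/2\}\times I^{k-1}$, which is possible precisely because of the relative trivializations on $\partial I^k$ and on $\partial^\fib E$ built into the definition of a $(D^4,\partial)$-bundle; checking this carefully is the main piece of bookkeeping in the argument.
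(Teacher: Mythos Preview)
Your proposal is correct and follows essentially the same approach as the paper: both reduce (1) to the vanishing of the associated homotopy $(k+4)$-sphere in $\Theta^{k+4}$ under $N_k$-fold iteration, and both reduce (2) to the standard obstruction-theoretic computation with primary obstruction in $H^{k+4}(E,\partial E;\pi_{k+3}SO_4)\cong\pi_{k+3}SO_4$ and secondary indeterminacy in $\pi_{k+4}SO_4$, killed by the corresponding factors of $N_k$. The paper's proof is much terser (one sentence per part), and your version fills in the details the paper leaves implicit; the one place you could be slightly more precise is in the uniqueness step of (2), where the canonical trivialization on $N_kE$ is obtained by \emph{iterating} an arbitrary choice on a smaller multiple (so that the ambiguity is multiplied by $m(\pi_{k+4}SO_4)$ and hence vanishes), rather than all extensions on $N_kE$ being homotopic.
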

\begin{proof}
(1) holds by the multiplication by $m(\Theta^{k+4})$. For (2), the obstruction to extending the trivialization of $T^vE|_{\partial E}$ to $T^vE$ of the original bundle $\pi:E\to I^k$ belongs to $H^{k+4}(E,\partial E;\pi_{k+3}SO_4)\cong \pi_{k+3}SO_4$, and if there is an extension, then the possibility for different choices is given by
$H^{k+4}(E,\partial E;\pi_{k+4}SO_4)\cong \pi_{k+4}SO_4$.
By multiplying $m(\pi_{k+4}SO_4)\,m(\pi_{k+3}SO_4)$, the obstructions vanish.
\end{proof}

Now we shall define a correction term which turns $Z_k^\adm(\vec{\theta})$ into an invariant of $(D^4,\partial)$-bundles, in a similar manner as \cite{Wa3, Sh}. For this we shall take a cobordism $W$ between an iteration of $E$ and the trivial $D^4$-bundle over $I^k$ with a trivial rank 4 vector bundle $T^vW$ on it. Moreover, we shall take a sequence $\vec{\eta}$ of admissible sections of $ST^vW$ with a prescribed behavior near the ends.

Put $N_k'=m(\Theta^{k+5})N_k$. Let $N_k'\pi:N_k'E\to I^k$ be the iteration defined similarly to $N_k\pi$. 
We denote by $W$ the cobordism between $N_k'E$ and $D^4\times I^k$ that is obtained by identifying $D^4\times I^k\times \{1\}$ in $D^4\times I^k\times I$ with $N_k'E$ by the iteration $m(\Theta^{k+5})\iota$ of the diffeomorphism $\iota:D^4\times I^k\times \{1\}\to N_kE$ of Lemma~\ref{lem:NE-trivial} (1) formed along the $I^k$ direction. Since $\pi_0\Diff(D^n,\partial)\cong \Theta^{n+1}$ for $n\geq 5$ (\cite{Ce}) and $N_k'$ has the factor $m(\Theta^{k+5})$, the diffeomorphism $m(\Theta^{k+5})\iota$ is unique up to isotopy. Moreover, by identifying $TD^4\times I^k\times\{1\}$ in the trivial $\R^4$-bundle $TD^4\times I^k\times I$ with $T^v(N_k'E)$ by using $m(\Theta^{k+5})\iota$ and the $m(\Theta^{k+5})$ times iteration of the trivialization of Lemma~\ref{lem:NE-trivial} (2), we obtain a trivial $\R^4$-bundle $T^vW\to W$ that extends $T^v(N_k'E)$ and $TD^4\times I^k$ on the endpoints. We take a metric on $T^vW$ extending the given ones on the boundary. 

An iteration of $\vec{\theta}$ restricts to a tuple of admissible sections on a subset of $ST^vW$ and we consider its extension on $ST^vW$, as follows. The face of $\partial^\fib\EC_2(N_k'\pi)$ obtained by blowing-up along $\Delta_{N_k'E}$ is identified with $ST^v(N_k'E)$. Also, the restriction of the unit sphere bundle $ST^vW$ on $D^4\times I^k\times \{1\}$ is $ST^v(N_k'E)$:
\[ \partial^\fib\EC_2(N_k'E)\quad\supset\quad ST^v(N_k'E)\quad\subset\quad ST^vW \]
The tuple $N_k'\vec{\theta}=(N_k'\theta^{(1)},\ldots,N_k'\theta^{(3k)})$ of iterations of admissible propagators of $\EC_2(\pi)$ restricts to a tuple of admissible sections of $ST^v(N_k'E)$. The restriction of $ST^vW$ on $D^4\times I^k\times \{0\}$ is a trivial $S^3$-bundle and it admits a tuple $\vec{v}=(v^{(1)},\ldots,v^{(3k)})$ of admissible sections that are given by opposite pairs of constant sections. By Lemma~\ref{lem:propagator-unique} (2), there exists an admissible section $\eta^{(i)}$ on $ST^vW$ that extends both $N_k'\theta^{(i)}|_{ST^v(N_k'E)}$ and $v^{(i)}$ on the endpoints, where the $\Z_2$-symmetry may be assumed by taking $c\mapsto \frac{c+\tau\cdot c}{2}$ for nontrivial $\tau\in\Z_2$, for each $i$. We obtain $\vec{\eta}=(\eta^{(1)},\ldots,\eta^{(3k)})$. 

\begin{Def}\label{def:adapted}
We call the pair $(\vec{\theta},\vec{\eta})$ that can be obtained from a tuple $\vec{\theta}$ of admissible propagators and fixed $\vec{v}$ as above an {\it adapted} pair. In that case, we also say that $\vec{\eta}$ is adapted to $\vec{\theta}$.
\end{Def}

We consider the normalized configuration space
\begin{equation}\label{eq:S_n}
 S_n(\R^4)=\{(y_1,\ldots,y_n)\in (\R^4)^n\mid y_1=0,\sum_{\ell=2}^n\|y_\ell\|^2=1,y_i\neq y_j\mbox{ if }i\neq j\}. 
\end{equation}
Intuitively, this can be seen as the configuration space in the ``microscopic world'' at the limit of collapsing of $n$ points into one point. Let $\overline{S}_n(\R^4)$ be the closure of $S_n(\R^4)$ in $\bConf_n(S^4,\infty)$. In $\partial\bConf_n(S^4,\infty)$, the face obtained by blowing up along the diagonal $\{(y_1,\ldots,y_n)\in (S^4-\{\infty\})^n\mid y_1=\cdots=y_n\}$ ({\it anomalous face}), where all the $n$ points collapse into a point of $S^4-\{\infty\}$, is naturally diffeomorphic to $\overline{S}_n(\R^4)\times B\ell(S^4,\infty)$. There is an action of $SO_4$ on $\overline{S}_n(\R^4)$ by extending the diagonal action $g\cdot (y_1,\ldots,y_n)=(g\cdot y_1,\ldots,g\cdot y_n)$ on $S_n(\R^4)$. Let $\overline{S}_n(T^vW)$ denote the $\overline{S}_n(\R^4)$-bundle associated to the rank 4 vector bundle $T^vW$. The face of $\EC_n(\pi)$ obtained by blowing up along $\{\infty\}^j\times (S^4)^{n-j}$ in fiber ({\it infinite face}) is naturally diffeomorphic to $\overline{S}_{j+1}(\R^4)\times \EC_{n-j}(\pi)$. 

Note that $\eta^{(i)}$ is an admissible section of $\overline{S}_2(T^vW)=ST^vW$. By replacing $\theta^{(i)}$ with $\eta^{(i)}$ and $\EC_n(\pi)$ with $\overline{S}_n(T^vW)$ in the definition of $I^\adm(\Gamma)$ in (\ref{eq:I^adm}), we define an analogue of $I^\adm(\Gamma)$ in the microscopic configuration space. Namely, given a trivalent graph $\Gamma\in\calG_k^0$, let $\varphi_i:\overline{S}_{2k}(T^vW)\to \overline{S}_2(T^vW)=ST^vW$ be the map which gives the endpoints of the $j$-th edge. Then $\varphi_i^{-1}\eta^{(i)}$ gives a codimension 3 chain of $\overline{S}_{2k}(T^vW)$. Put
\begin{equation}\label{eq:J^adm}
 J^\adm(\Gamma)=\langle\varphi_1^{-1}\eta^{(1)},\varphi_2^{-1}\eta^{(2)},\cdots, \varphi_{3k}^{-1}\eta^{(3k)}\rangle_{\overline{S}_{2k}(T^vW)}.
\end{equation}
The intersection of the right hand side is generically of codimension $9k$, which agrees with the dimension of $\overline{S}_{2k}(T^vW)$. If the extension $\vec{\eta}=(\eta^{(1)},\eta^{(2)},\ldots,\eta^{(3k)})$ is chosen generically, the intersection in (\ref{eq:J^adm}) is strata transversal. Choosing such an $\vec{\eta}$, we define
\[ \begin{split}
  \hat{Z}_k^\adm(\vec{\theta},\vec{\eta})&=Z_k^\adm(\vec{\theta})-\alpha_k^\adm(\vec{\eta})\in\calA_k,\\
  \alpha_k^\adm(\vec{\eta})&=\frac{1}{N_k'2^{3k}(2k)!(3k)!}\sum_{\Gamma\in\calG_k^0} J^\adm(\Gamma)[\Gamma].
\end{split} \]
The following theorem is a modification of the fact that Kontsevich's characteristic classes are invariants for framed bundles (\cite{Kon}), and the proof is essentially the same. 
\begin{Thm}\label{thm:Z-adm-inv}
$\hat{Z}_k^\adm$ is independent of the choice of the adapted pair $(\vec{\theta},\vec{\eta})$ and defines a homomorphism
$\hat{Z}_k^\adm:\pi_{k}B\Diff(D^4,\partial)\to \calA_k$.
\end{Thm}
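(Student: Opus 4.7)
The plan is to prove invariance by a standard Stokes/cobordism argument in the style of Kontsevich's original work. Given two adapted pairs $(\vec\theta_0,\vec\eta_0)$ and $(\vec\theta_1,\vec\eta_1)$, I would form the constant family $\pi\times\mathrm{id}_I:E\times I\to B\times I$ and choose a generic 1-parameter family $(\vec\theta_t,\vec\eta_t)$ of admissible propagators/sections interpolating between the two ends. The differences $Z_k^\adm(\vec\theta_1)-Z_k^\adm(\vec\theta_0)$ and $\alpha_k^\adm(\vec\eta_1)-\alpha_k^\adm(\vec\eta_0)$ are then expressed as sums of boundary intersections in $\EC_{2k}(\pi\times\mathrm{id}_I)$ and $\overline{S}_{2k}(T^vW\times I)$ respectively, and I would analyze the strata of these boundaries one by one.

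The main step is the analysis of boundary strata of $\EC_{2k}(\pi)$ in the associated 1-parameter chain. They fall into three groups. On the principal faces where exactly two configuration points collide, the standardness condition on an admissible propagator (its restriction to $\partial^\fib\EC_2(\pi)-\mathrm{Int}\,S_{\Delta_E}$ is a $\Z_2$-invariant lift of $\phi_0^{-1}\{a,-a\}$) combined with the $\mathfrak{S}_{2k}\times\mathfrak{S}_{3k}$ averaging built into the coefficient $\tfrac{1}{(2k)!(3k)!}$ makes these contributions pair off and cancel. On the hidden faces where three or more points of a sub-configuration collapse at an interior point of a fiber, I would group the contributions by the subgraph $\Gamma'$ spanned on the colliding vertices; summing over the three ways of contracting the edge inside $\Gamma'$ yields an IHX combination, which is zero in $\calA_k$. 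Edge/vertex swaps produced by the $\mathfrak{S}$-averaging become the label change relations. Infinite faces are harmless because the propagators are standard on $U_\infty'$. The sole remaining stratum is the anomalous face, where all $2k$ points collapse to a single point of the fiber; this face is fibered over $E$ with fiber the microscopic configuration space $\overline S_{2k}(T^vE_x)$, and its contribution is exactly what $\alpha_k^\adm$ is designed to cancel. Using that $N_k'$ was chosen so that $T^v(N_k'E)$ is trivialized and that $\vec{\eta}$ is adapted, the anomalous contribution from the $N_k'$-fold iterate equals $N_k'\cdot \alpha_k^\adm(\vec\eta)$, and after dividing by $N_k'$ the difference $\hat Z_k^\adm(\vec\theta_1,\vec\eta_1)-\hat Z_k^\adm(\vec\theta_0,\vec\eta_0)$ vanishes in $\calA_k$.

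With invariance of $\hat Z_k^\adm$ in hand, homotopy invariance is the same argument with $E\times I$ replaced by the mapping cylinder of a homotopy of classifying maps $(I^k,\partial I^k)\times I\to (B\Diff(D^4,\partial),*)$, the constancy on $\partial I^k$ ensuring that no new boundary terms appear. The homomorphism property follows by realizing the addition in $\pi_k B\Diff(D^4,\partial)$ as concatenation of cubes $I^k=I^{k-1}\times I$ and choosing admissible data that agree on the common face $I^{k-1}\times\{1/2\}$: the intersection numbers split as a sum over the two halves by additivity of chains. The hard part will be the anomalous face: one must verify with care that $SO_4$-equivariance of the microscopic picture, combined with the canonical trivialization of $T^v(N_k'E)$ from Lemma~\ref{lem:NE-trivial}(2) and the prescribed behavior of $\vec\eta$ near both ends of $W$, produces exactly the factor $N_k'$ so that the cancellation is clean. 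A secondary subtlety is the strata-transversality and orientation bookkeeping on corners, which I would handle by the conventions of \cite[Appendix]{BT} referenced in \S\ref{s:intro}.
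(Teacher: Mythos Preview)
Your overall architecture---Stokes on the cylinder $\EC_{2k}(\pi)\times I$ with an interpolating family $\vec\omega$---is the same as the paper's. But your face-by-face analysis has the cancellation mechanisms swapped, and your anomaly step is missing a second cobordism argument.

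\textbf{Face analysis.} The principal faces $|A|=2$ (two adjacent vertices collapsing along an edge) are \emph{not} killed by the standardness condition you cite; that condition concerns the restriction of $\theta$ to $\partial^\fib\EC_2(\pi)-\mathrm{Int}\,S_{\Delta_E}$, i.e.\ the \emph{infinite} strata, whereas the collision face is exactly $S_{\Delta_E}$. The $|A|=2$ contributions cancel by the IHX relation in $\calA_k$: three different trivalent $\Gamma$ share the same contracted face (a graph with one $4$-valent vertex), and their weighted sum is an IHX combination. Conversely, the hidden faces $3\le|A|<2k$ are \emph{not} handled by IHX. There the full subgraph $\Gamma_A$ necessarily has a vertex of valence $\le 2$ (since $\Gamma$ is connected trivalent and $\Gamma_A$ is proper); a univalent vertex in $\Gamma_A$ forces vanishing for dimensional reasons, and a bivalent vertex is killed by Kontsevich's involution argument, which is precisely where the $\Z_2$-invariance of the admissible section on $S_{\Delta_E}$ is used. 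Your ``three ways of contracting the edge inside $\Gamma'$'' is the IHX mechanism and only makes sense for a single edge contraction, i.e.\ $|A|=2$.

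\textbf{Anomaly.} The anomalous face of the $Z$-variation contributes a term $\alpha_k^\adm(\vec\eta_{\widetilde E})$ computed on $ST^v(N_k'E\times I)$ from the restriction of $N_k'\vec\omega$. This is \emph{not} equal to $N_k'\cdot\alpha_k^\adm(\vec\eta)$ nor directly to $\alpha_k^\adm(\vec\eta_1)-\alpha_k^\adm(\vec\eta_0)$, because $\vec\eta_j$ lives on $ST^vW$, a different space. The paper closes this gap with a second identity: one extends $\vec\eta_0,\vec\eta_1,\vec\eta_{\widetilde E}$ to admissible sections $\vec\eta_{W\times I}$ on $ST^v(W\times I)$ (constant $\vec v$ on the trivial end), runs the same boundary analysis in $\overline S_{2k}(T^v(W\times I))$, and obtains $\alpha_k^\adm(\vec\eta_0)-\alpha_k^\adm(\vec\eta_1)=\alpha_k^\adm(\vec\eta_{\widetilde E})$. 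Only after this do the two variations match and $\hat Z_k^\adm$ is seen to be independent of $(\vec\theta,\vec\eta)$. Your sketch conflates these two steps; without the argument on $W\times I$ the cancellation is not justified.
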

We shall prove Theorem~\ref{thm:Z-adm-inv} just below. The main Theorem~\ref{thm:main} can be restated in terms of $\hat{Z}_k^\adm$ as follows. 

\begin{Thm}\label{thm:commute}
Let $\calG_k'$ be the subspace of $\calG_k$ spanned by graphs without self-loops or multiple edges. There exists a linear map $\Psi_k:\calG_k'\to \pi_kB\Diff(D^4,\partial)\otimes\Q$ such that the composition $\hat{Z}_k^\adm\circ \Psi_k:\calG_k'\to \calA_k$ 
agrees with the projection.
\end{Thm}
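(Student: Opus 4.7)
The plan is to produce $\Psi_k$ by a higher-dimensional analogue of Goussarov--Habiro graph-clasper surgery and then evaluate $\hat{Z}_k^\adm$ on its image by parametrized Morse theory, following the Kuperberg--D.~Thurston \cite{KT} strategy alluded to in Section~\ref{s:computation}.

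First, for a labelled trivalent graph $\Gamma\in\calG_k'$ (with $2k$ vertices, $3k$ edges, and no self-loops or multiple edges), I would construct a $(D^4,\partial)$-bundle $\pi^\Gamma:E^\Gamma\to B_\Gamma$ over a closed oriented $k$-manifold $B_\Gamma$. Viewing $\Gamma$ as an abstract graph embedded in $\mathrm{Int}\,D^4$, one replaces each vertex by a $4$-dimensional analogue of a Goussarov--Habiro $Y$-clasper, and each edge by a pair of parallel strands equipped with a $1$-parameter family of rotations in $SO_4$; combining the parameter spheres across the $3k$ edges yields a bundle with $\dim B_\Gamma=k$. Defining $\Psi_k([\Gamma])$ to be the class of the corresponding classifying map $(B_\Gamma,\ast)\to(B\Diff(D^4,\partial),\ast)$ and extending $\Q$-linearly gives the required $\Psi_k$, which is well-defined because the hypothesis on $\Gamma$ ensures the claspers are disjoint.

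Second, to compute $\hat{Z}_k^\adm(\Psi_k(\Gamma))$, I would choose Morse functions on the fibers which agree with a fixed standard radial model outside the clasper region and vary only within tubular neighborhoods of the claspers, producing a \emph{coherent $v$-gradient} in the sense announced for \S\ref{s:computation}. By the graph counting formula of Section~\ref{s:GCF}, the intersection number $I^\adm(\Gamma')$ of (\ref{eq:I^adm}) becomes an algebraic count of parametrized flow-graphs of combinatorial type $\Gamma'\in\calG_k^0$, whose edges are gradient trajectories in fibers. Since the gradient is standard outside the clasper region, every contributing flow-graph must place all $3k$ of its edges inside that region, forcing a combinatorial matching of $\Gamma'$ with $\Gamma$. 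The ``diagonal'' matchings with $\Gamma'\cong\Gamma$ produce exactly the normalization constant $2^{3k}(2k)!(3k)!$ times $[\Gamma]$, while the correction term $\alpha_k^\adm(\vec{\eta})$ is arranged to vanish by choosing adapted sections $\vec{\eta}$ that extend trivially over the cobordism $W$ when the ambient gradients are standard at infinity.

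The remaining task, and the main obstacle, is to show that the ``off-diagonal'' contributions ($\Gamma'$ not combinatorially equivalent to $\Gamma$) cancel in $\calA_k$. Such flow-graphs either are trivalent graphs containing a self-loop or multiple edges---hence zero in $\calA_k$ by the vanishings used in Proposition~\ref{prop:A_2}---or assemble, via the three ad hoc $4$-dimensional lemmas announced in the introduction (Lemmas~\ref{lem:T-bundle-II}, \ref{lem:morse-complex-invariant}, \ref{lem:Y-restrict}), into IHX-type sums that vanish modulo the IHX and label-change relations. Carrying this out rigorously demands careful bookkeeping of orientations, strata transversality and $\Z_2$-symmetries on compactified moduli spaces of gradient trajectories with corners, and this is precisely where the analysis becomes delicate and genuinely four-dimensional; once it is in hand, combining with the diagonal count yields $\hat{Z}_k^\adm\circ\Psi_k=\mathrm{pr}:\calG_k'\to\calA_k$ as asserted.
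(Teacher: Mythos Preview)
Your overall strategy---construct bundles from $\Gamma$ by clasper-type surgery and evaluate $\hat{Z}_k^\adm$ via the graph counting formula---matches the paper's, but several of the concrete steps are misdescribed or missing in ways that matter.

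First, the construction of $\pi^\Gamma$ is not as you describe. The parameter space $B_\Gamma$ is a product $K_1\times\cdots\times K_{2k}$ indexed by the $2k$ \emph{vertices}, not the $3k$ edges, and the surgery at each vertex is a (parametrized) Borromean twist $\alpha_\mathrm{I}:S^0\to\Diff(\partial V_i)$ or $\alpha_\mathrm{II}:S^1\to\Diff(\partial V_i)$, not a family of $SO_4$-rotations along edges. An arrow orientation on $\Gamma$ forces exactly $k$ vertices of each type, which is what yields $\dim B_\Gamma=k$. Your ``$3k$ edge-parameter-spheres giving dimension $k$'' is internally inconsistent.

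Second, you skip the primitiveness step. Since $B_\Gamma$ is a product of $S^0$'s and $S^1$'s, a classifying map $B_\Gamma\to B\Diff(D^4,\partial)$ does not directly define an element of $\pi_k$. The paper proves (Proposition~\ref{prop:primitive}, via Lemmas~\ref{lem:V[l]-trivial}--\ref{lem:null-leaf}) that $\pi^\Gamma$ is bundle-bordant to a bundle over $S^k$; without this, $\Psi_k$ is not defined as a map into $\pi_kB\Diff(D^4,\partial)\otimes\Q$.

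Third, and most significantly, your mechanism for killing the ``off-diagonal'' contributions is wrong. The paper does \emph{not} use IHX to cancel Z-graphs of type $\Gamma'\not\cong\Gamma$. After the Key Lemma~\ref{lem:occupied} forces every contributing Z-graph to be a disjoint union of $Y$-pieces occupying the handlebodies, the count factors (Lemma~\ref{lem:homological-formula}) as a product of linking numbers $\prod_e\sum_{a,b}\ell k(\beta_a^{(j)},\beta_b^{(\ell)})$ times a product of triple intersections $\langle\alpha_1^{(i)},\alpha_2^{(i)},\alpha_3^{(i)}\rangle_{S_i}=\pm1$. The linking-number product is nonzero \emph{if and only if} $H\cong\pm\Gamma$ as vertex-labelled graphs (Lemma~\ref{lem:lk-nontrivial}); all other $H$ vanish individually, with no IHX needed. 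The lemmas you cite (\ref{lem:T-bundle-II}, \ref{lem:morse-complex-invariant}, \ref{lem:Y-restrict}) serve different purposes: realizing $\widetilde{V}$ as a family of mapping cylinders, verifying Assumptions~\ref{hyp:eta} and~\ref{hyp:cv-const} for the surgered $v$-gradient, and restricting the Morse indices at white vertices so that Lemma~\ref{lem:rel_cycle} applies.
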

Note that the projection $\calG_k'\to \calA_k$ is surjective. Theorem~\ref{thm:commute} will be proved in \S\ref{s:cycles} and \S\ref{s:computation}. More precise statement is given in Theorem~\ref{thm:Z(G)}.

\begin{Cor} 
$\dim \pi_kB\Diff(D^4,\partial)\otimes\Q \geq \dim \calA_k$.
\end{Cor}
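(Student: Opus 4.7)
The plan is to deduce the corollary directly from Theorem~\ref{thm:commute} together with the remark that the projection $\calG_k'\to \calA_k$ is surjective. The only content is to verify that a surjection of the stated form exists out of $\pi_k B\Diff(D^4,\partial)\otimes\Q$ with target $\calA_k$, after which the dimension inequality is a standard fact about linear maps between $\Q$-vector spaces. There is no significant obstacle here; essentially all the substantive work lies in the preceding theorem.

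Concretely, I would argue as follows. By Theorem~\ref{thm:Z-adm-inv}, the homotopy invariant $\hat{Z}_k^\adm$ extends $\Q$-linearly to a homomorphism
\[
\hat{Z}_k^\adm:\pi_k B\Diff(D^4,\partial)\otimes\Q\longrightarrow \calA_k.
\]
By Theorem~\ref{thm:commute} there is a linear map $\Psi_k:\calG_k'\to \pi_k B\Diff(D^4,\partial)\otimes \Q$ whose composition with $\hat{Z}_k^\adm$ agrees with the natural projection $p:\calG_k'\to\calA_k$. As noted immediately after Theorem~\ref{thm:commute}, every class in $\calA_k$ is represented by a trivalent graph without self-loops or multiple edges, so $p$ is surjective. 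Therefore $\hat{Z}_k^\adm$ itself is surjective, since its image contains the image of $\hat{Z}_k^\adm\circ\Psi_k = p$.

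Finally, because $\calA_k$ is a finite-dimensional $\Q$-vector space, the existence of a surjective $\Q$-linear map from $\pi_k B\Diff(D^4,\partial)\otimes\Q$ onto $\calA_k$ immediately yields
\[
\dim_\Q \pi_k B\Diff(D^4,\partial)\otimes\Q \;\geq\; \dim_\Q \calA_k,
\]
as required. The hard part is not here but entirely packaged into Theorem~\ref{thm:commute}, whose proof occupies \S\ref{s:cycles}--\S\ref{s:computation}; all this corollary does is extract a dimension count from the surjectivity that theorem provides.
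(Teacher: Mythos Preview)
Your argument is correct and is exactly the intended deduction: the paper does not spell out a proof of this corollary because it is immediate from Theorem~\ref{thm:commute} (and the surjectivity of $\calG_k'\to\calA_k$), precisely as you explain.
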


\subsection{Proof of Theorem~\ref{thm:Z-adm-inv}}
We take two adapted pairs $(\vec{\theta}_0,\vec{\eta}_0)$, $(\vec{\theta}_1,\vec{\eta}_1)$. Since $\theta_0^{(i)}$ and $\theta_1^{(i)}$ are relatively homologous by Lemma~\ref{lem:propagator-unique} (1), one can find a tuple $\vec{\omega}=(\omega^{(1)},\ldots,\omega^{(3k)})$ of admissible propagators of the $\bConf_2(S^4,\infty)$-bundle $\EC_{2}(\pi)\times I$ over $B\times I$ such that $\omega^{(i)}$ extends given ones $\theta_0^{(i)},\theta_1^{(i)}$ on $\EC_2(\pi)\times \{0\}, \EC_2(\pi)\times \{1\}$ respectively. If we let $\vec{\eta}_{\widetilde{E}}$ be the tuple of restriction of the iteration $N_k'\vec{\omega}$ on $ST^v(N_k'E\times I)\subset \partial^\fib\EC_2(N_k'\pi)\times I$, then this is a symmetric chain that extends $N_k'\vec{\theta}_0|_{ST^v(N_k'E)\times\{0\}}$ and $N_k'\vec{\theta}_1|_{ST^v(N_k'E)\times\{1\}}$, and agrees with the restrictions of $\vec{\eta}_0$ and $\vec{\eta}_1$ on the endpoints. Then the following identities hold.
\begin{eqnarray}
 & Z_k^\adm(\vec{\theta}_0)-Z_k^\adm(\vec{\theta}_1)=\alpha_k^\adm(\vec{\eta}_{\widetilde{E}}) \label{eq:Z-Z}\\
 & \alpha_k^\adm(\vec{\eta}_0)-\alpha_k^\adm(\vec{\eta}_1)=\alpha_k^\adm(\vec{\eta}_{\widetilde{E}}) \label{eq:a-a}
\end{eqnarray}
The invariance of $\hat{Z}_k^\adm$ follows immediately from these identities.
\par\medskip

\noindent{\it Proof of (\ref{eq:Z-Z}):}  Similarly to the definition of (\ref{eq:I^adm}), let $\phi_i:\EC_{2k}(\pi)\times I\to \EC_2(\pi)\times I$ be the map which gives the endpoints of the $j$-th edge. Choose $\vec{\omega}$ so that the piecewise intersection $I^\adm(\Gamma;\vec{\omega})=\langle \phi_1^{-1}\omega^{(1)},\ldots,\phi_{3k}^{-1}\omega^{(3k)}\rangle_{\EC_{2k}(\pi)\times I}$ is strata transversal, and put 
\[ Z_k^\adm(\vec{\omega})=\frac{1}{2^{3k}(2k)!(3k)!}\sum_{\Gamma\in\calG_k^0}I^\adm(\Gamma;\vec{\omega})[\Gamma]. \]
Since $\partial I^\adm(\Gamma;\vec{\omega})$ is a 0-boundary, we have $\# \partial I^\adm(\Gamma;\vec{\omega})=0$ and hence $\#\partial Z_k^\adm(\vec{\omega})=0$. The boundary of $I^\adm(\Gamma;\vec{\omega})$ comes from the strata obtained by replacing at least one $\omega^{(i)}$ with $\partial \omega^{(i)}$. The boundary of $\omega^{(i)}$ can be described as follows.
\[ \partial \omega^{(i)}=\theta_1^{(i)}-\theta_0^{(i)}+\partial^\fib\omega^{(i)} \]
Here, we consider $\theta_0^{(i)}$ as a chain of $\EC_2(\pi)\times \{0\}$, $\theta_1^{(i)}$ as a chain of $\EC_2(\pi)\times\{1\}$, and $\partial^\fib\omega^{(i)}$ is a chain of $\partial^\fib\EC_2(\pi)\times I$.

We shall prove $\#\partial Z_k^\adm(\vec{\omega})=Z_k^\adm(\vec{\theta}_1)-Z_k^\adm(\vec{\theta}_0)+\alpha_k^\adm(\vec{\eta}_{\widetilde{E}})$. In $\#\partial Z_k^\adm(\vec{\omega})$, the sum of terms of boundary points of $\partial I^\adm(\Gamma;\vec{\omega})$ from $\theta_1^{(i)}-\theta_0^{(i)}$ is $Z_k^\adm(\vec{\theta}_1)-Z_k^\adm(\vec{\theta}_0)$. The boundary points of $\partial I^\adm(\Gamma;\vec{\omega})$ from $\partial^\fib\omega^{(i)}$ is on a stratum $S_A$ of $\partial^\fib\EC_{2k}(\pi)\times I$ that corresponds to the collapse of a full subgraph $\Gamma_A$ of $\Gamma$ on a vertex set $A\subset \{1,2,\ldots,2k\}$, $|A|\geq 2$, into a point on a fiber. The sum of terms of boundary points on $S_A$ with $|A|=2$ is proved to be cancelled each other by the IHX relation, as in \cite{KT, Les1}. The sum of terms of boundary points on $S_A$ with $3\leq |A|<2k$ is proved to vanish: When $\Gamma_A$ has a univalent vertex, the intersection must be empty by a dimensional reason (\cite[Lemma~A.8]{CCL}). When $\Gamma_A$ has a bivalent vertex, the sum of the terms of the boundary points vanishes by a symmetry of $S_A$ (\cite[Lemma~2.1]{Kon}, \cite[Lemma~A.9]{CCL}), which is allowed by the symmetry condition for an admissible section. The sum of terms of boundary points on $S_A$ with $|A|=2k$ (anomalous face) is $\alpha_k^\adm(\vec{\eta}_{\widetilde{E}})$. Further, we need to check that the boundary points on an infinite face $S_A$ for $|A|=j$, where $\infty\in A\subset \{1,2,\ldots,2k,\infty\}$, does not contribute to $\#\partial Z_k^\adm(\vec{\omega})$. By the direct product structure $\overline{S}_{j+1}(\R^4)\times \EC_{2k-j}(\pi)$ of $S_A$, it follows that the graphs for $S_A$ can be counted as the product of the numbers of graphs in the two factors. However, the count of graphs in $\overline{S}_{j+1}(\R^4)$ is zero since $\dim\overline{S}_{j+1}(\R^4)=4j-1$ is less than the codimension of the space of graphs in $\overline{S}_{j+1}(\R^4)$ that is $3\cdot\frac{3j+m}{2}=\frac{9j+3m}{2}$, where $m$ is the number of edges in $\Gamma$ that connect vertices in $A$ and those outside $A$.

From $\#\partial Z_k^\adm(\vec{\omega})=0$, the identity (\ref{eq:Z-Z}) follows.
\par\medskip

\noindent{\it Proof of (\ref{eq:a-a}):} Choose a tuple $\vec{\eta}_{W\times I}=(\eta_{W\times I}^{(1)},\ldots,\eta_{W\times I}^{(3k)})$ of admissible sections of $ST^v(W\times I)$ which extends $\vec{\eta}_0,\vec{\eta}_1,\vec{\eta}_{\widetilde{E}}$ and the tuple $\vec{v}$ of constant sections on $(D^4\times I^k\times \{0\})\times I\subset W\times I$. We choose $\vec{\eta}_{W\times I}$ generically so that the intersection $J^\adm(\Gamma;\vec{\eta}_{W\times I})=\langle \varphi_1^{-1}\eta_{W\times I}^{(1)},\ldots, \varphi_{3k}^{-1}\eta_{W\times I}^{(3k)}\rangle$ in the associated $\overline{S}_{2k}(\R^4)$-bundle $\overline{S}_{2k}(W\times I)$ to $ST^v(W\times I)$ is strata transversal. Then $J^\adm(\Gamma;\vec{\eta}_{W\times I})$ is a 1-chain and we put
\[  \alpha_k^\adm(\vec{\eta}_{W\times I})=\frac{1}{N_k' 2^{3k}(2k)!(3k)!}\sum_{\Gamma\in\calG_k^0}J^\adm(\Gamma;\vec{\eta}_{W\times I})[\Gamma].
\]
We have $\#\partial\alpha_k^\adm(\vec{\eta}_{W\times I})=0$. 

In $\#\partial\alpha_k^\adm(\vec{\eta}_{W\times I})$, the sum of terms of boundary points from the configuration space $\overline{S}_{2k}(\R^4)$ is proved to vanish by the IHX relation, a dimensional reason, and symmetry, as above. Note that there is no face in $\overline{S}_{2k}(\R^4)$ corresponding to the bifurcation of collapse of all the $2k$ points into a point, by the norm square sum condition (\ref{eq:S_n}). What remains is the contribution of boundary points on $\partial (W\times I)$, which gives $\alpha_k^\adm(\vec{\eta}_1)-\alpha_k^\adm(\vec{\eta}_0)$ on 
$W\times \{0,1\}$ and $\alpha_k^\adm(\vec{\eta}_{\widetilde{E}})$ on $N_k'E\times I$. On $(D^4\times I^k\times \{0\})\times I$, the restriction of $\vec{\eta}_{W\times I}$ is a pair of constant sections, the value of $\alpha_k^\adm$ is zero by a dimensional reason. Thus we have
\[ \#\partial\alpha_k^\adm(\vec{\eta}_{W\times I})=\alpha_k^\adm(\vec{\eta}_1)-\alpha_k^\adm(\vec{\eta}_0)+\alpha_k^\adm(\vec{\eta}_{\widetilde{E}}). \]
This together with $\#\partial\alpha_k^\adm(\vec{\eta}_{W\times I})=0$ implies (\ref{eq:a-a}). 

Proof of the homotopy invariance is the same as above. 
\qed

\subsection{Bundle bordism invariance of $\hat{Z}_k^\adm$ on the image from $\pi_k$}\label{ss:bordism-invariance}

We consider a bordism invariant defined on the image $\mathrm{Im}\,H$ of the natural homomorphism $H:\pi_{k}B\Diff(D^4,\partial)\to \Omega_{k}(B\Diff(D^4,\partial))$. There is a natural isomorphism 
\[ \Omega_k(B\Diff(D^4,\partial))\otimes \Q\cong \bigoplus_{p+q=k} H_p(B\Diff(D^4,\partial);\Q)\otimes\Omega_q \]
(see \cite{CF}). By the Milnor--Moore theorem (\cite[Appendix]{MM}) on path-connected homotopy associative $H$-spaces, $H_*(B\Diff(D^4,\partial);\Q)$ is the commutative polynomial algebra with primitive part $\pi_*B\Diff(D^4,\partial)\otimes\Q$. Thus $H$ is injective over $\Q$, and the homomorphism
\[ \hat{Z}_k^\adm\circ H^{-1}:\mathrm{Im}\,H\otimes \Q\to \calA_k \]
is defined. We shall give below a direct definition of this homomorphism from a $(D^4,\partial)$-bundle whose base is not necessarily a sphere. Such a definition will be needed because the $(D^4,\partial)$-bundles constructed later by surgery are not over $S^k$ (Proposition~\ref{prop:primitive}). 

Let $\pi:E\to B$ be a $(D^4,\partial)$-bundle over a closed oriented $k$-manifold $B$ that is bundle bordant to a pointed $(D^4,\partial)$-bundle $\pi_0:E_0\to S^{k}$.

\subsubsection{\bf Invariant for $\pi_0:E_0\to S^k$:}
By closing $I^k$ suitably, we may take the $N_k'$-fold disjoint iteration $N_k'\pi_0:N_k'E_0\to S^{k}$ and an adapted pair $(\vec{\theta}_0,\vec{\eta}_0)$. Then $\hat{Z}_k^\adm(\vec{\theta}_0,\vec{\eta}_0)$ is defined as follows. By Lemma~\ref{lem:NE-trivial}, $N_k'E_0$ is relatively diffeomorphic to $D^4\times S^{k}$, and one can find a compact manifold $W_0$ with corners having $N_k'E_0 \cup (S^3\times S^{k}\times I)\cup (-D^4\times S^{k})$ as the boundary, which satisfies the following.
\begin{itemize}
\item $W_0$ is relatively diffeomorphic to $D^4\times S^{k}\times I$.
\item There is a trivial rank 4 vector bundle $T^vW_0\to W_0$ that extends the trivial rank 4 vector bundles $T^v(N_k'E_0)$ and $TD^4\times S^{k}$ on the endpoints of $I$.
\end{itemize}
Choosing an adapted pair $(\vec{\theta}_0,\vec{\eta}_0)$ for $W_0$, which is defined similarly as Definition~\ref{def:adapted}, $\hat{Z}_k^\adm(\vec{\theta}_0,\vec{\eta}_0)$ is defined.

\subsubsection{\bf Invariant for $\pi:E\to B$:}
We consider the $N_k'$-fold iteration of $\pi$:
\[ N_k'\pi:N_k'E\to N_k'B=B\tcoprod\cdots\tcoprod B\quad\mbox{($N_k'$ times)}.\]
Let $\widetilde{\pi}:\widetilde{E}\to \widetilde{B}$ be a $(D^4,\partial)$-bundle bordism between $N_k'\pi$ and $N_k'\pi_0$, which exists by the choice of $\pi$.

We take a tuple $\vec{\theta}=(\theta^{(1)},\theta^{(2)},\ldots,\theta^{(3k)})$ of admissible propagators of $\EC_2(\pi)$ and a tuple $\vec{\eta}_{\widetilde{E}}$ of generic admissible sections of $T^v\widetilde{E}$ that agrees with $\vec{\eta}_0$ on $ST^v(N_k'E_0)$ and with the restriction of $N_k'\vec{\theta}$ on $ST^v(N_k'E)$. By using this, we define
\[ \hat{Z}_k^\adm(\vec{\theta},\vec{\eta}_{\widetilde{E}}\cup \vec{\eta}_0)=Z_k^\adm(\vec{\theta})-\alpha_k^\adm(\vec{\eta}_{\widetilde{E}})-\alpha_k^\adm(\vec{\eta}_0). \]

\begin{Prop}\label{prop:bordism-inv}
For generic $\vec{\theta},\vec{\eta}_{\widetilde{E}}$, the identity
\begin{equation}\label{eq:bordism-inv}
 \hat{Z}_k^\adm(\vec{\theta},\vec{\eta}_{\widetilde{E}}\cup \vec{\eta}_0)
=\hat{Z}_k^\adm(\vec{\theta}_0,\vec{\eta}_0) 
\end{equation}
holds. Hence the left hand side does not depend on the choices of $\widetilde{\pi},\vec{\theta},\vec{\eta}_{\widetilde{E}}$ and defines a homomorphism $\mathrm{Im}\,H\to \calA_k$.
\end{Prop}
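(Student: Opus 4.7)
The plan is to closely mirror the proof of Theorem~\ref{thm:Z-adm-inv}, with the bundle bordism $\widetilde{\pi}\colon\widetilde{E}\to\widetilde{B}$ taking the role of the cylinder $\EC_2(\pi)\times I$. Using Lemma~\ref{lem:propagator-unique}(1) applied to $\widetilde{\pi}$, together with the fact that the space of admissible propagators with prescribed boundary behavior is affine, I would first extend the iterated tuples $N_k'\vec{\theta}$ on $\EC_2(N_k'\pi)$ and $N_k'\vec{\theta}_0$ on $\EC_2(N_k'\pi_0)$ to a generic tuple $\vec{\omega}=(\omega^{(1)},\ldots,\omega^{(3k)})$ of admissible propagators of $\EC_2(\widetilde{\pi})$. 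Next, let $\vec{\eta}'_{\widetilde{E}}$ denote the restriction of $\vec{\omega}$ to the anomalous face $ST^v\widetilde{E}\subset\partial^\fib\EC_2(\widetilde{\pi})$; by construction this is a tuple of admissible sections on $ST^v\widetilde{E}$ sharing its boundary restriction with the given $\vec{\eta}_{\widetilde{E}}$, namely $\vec{\eta}_0|_{ST^v(N_k'E_0)}$ and $N_k'\vec{\theta}|_{ST^v(N_k'E)}$.

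The analogue of the identity (\ref{eq:Z-Z}) is then obtained by analyzing $\#\partial Z_k^\adm(\vec{\omega})$, where $Z_k^\adm(\vec{\omega})$ weights the $1$-chain intersection $\phi_1^{-1}\omega^{(1)}\cap\cdots\cap\phi_{3k}^{-1}\omega^{(3k)}$ in $\EC_{2k}(\widetilde{\pi})$. Since the signed boundary count of a compact $1$-chain vanishes, I would enumerate the pieces of $\partial$: (a) on $\EC_{2k}(\widetilde{\pi})|_{\partial\widetilde{B}}$ one obtains $\pm N_k'\bigl(Z_k^\adm(\vec{\theta})-Z_k^\adm(\vec{\theta}_0)\bigr)$ by additivity of $Z_k^\adm$ under disjoint union of the base; (b) principal faces ($|A|=2$) cancel pairwise via the IHX relation, as in \cite{KT,Les1}; (c) hidden faces with $3\le|A|<2k$ vanish by a dimension count when the collapsed subgraph has a univalent vertex, and by the $\Z_2$-symmetry of admissible sections when it has a bivalent one (cf.\ \cite[Lemmas~A.8, A.9]{CCL}); (d) infinite faces vanish by the same dimension count as at the end of the proof of Theorem~\ref{thm:Z-adm-inv}; and (e) the anomalous face ($|A|=2k$), fibered over $\widetilde{E}$ by $\overline{S}_{2k}(T^v\widetilde{E})$, contributes $\pm N_k'\alpha_k^\adm(\vec{\eta}'_{\widetilde{E}})$ after absorbing the factor $\tfrac{1}{N_k'}$ built into $\alpha_k^\adm$. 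Setting the sum to zero yields $Z_k^\adm(\vec{\theta})-Z_k^\adm(\vec{\theta}_0)=\alpha_k^\adm(\vec{\eta}'_{\widetilde{E}})$.

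To conclude, I would compare $\vec{\eta}_{\widetilde{E}}$ with $\vec{\eta}'_{\widetilde{E}}$: they coincide on $\partial ST^v\widetilde{E}$, so by a fiberwise analogue of Lemma~\ref{lem:propagator-unique}(2) there is a homology $\vec{\eta}_{\widetilde{E}\times I}$ on $ST^v(\widetilde{E}\times I)$ interpolating them and kept constant along the $\partial\widetilde{E}$-direction. Mimicking the proof of (\ref{eq:a-a}) on $\overline{S}_{2k}(T^v(\widetilde{E}\times I))$---the interior $\overline{S}_{2k}(\R^4)$ boundary contributions vanish by IHX, dimension, and symmetry, and the $\partial\widetilde{E}\times I$ contributions cancel because both sections agree there---gives $\alpha_k^\adm(\vec{\eta}_{\widetilde{E}})=\alpha_k^\adm(\vec{\eta}'_{\widetilde{E}})$. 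Combining the two identities yields
\[
 \hat{Z}_k^\adm(\vec{\theta},\vec{\eta}_{\widetilde{E}}\cup\vec{\eta}_0)
 = Z_k^\adm(\vec{\theta}_0)+\alpha_k^\adm(\vec{\eta}'_{\widetilde{E}})-\alpha_k^\adm(\vec{\eta}'_{\widetilde{E}})-\alpha_k^\adm(\vec{\eta}_0)
 = \hat{Z}_k^\adm(\vec{\theta}_0,\vec{\eta}_0),
\]
which is (\ref{eq:bordism-inv}). Independence from the choice of bordism representative $\pi_0$ follows by applying (\ref{eq:bordism-inv}) between two such representatives, and additivity under disjoint union promotes the value to a homomorphism $\mathrm{Im}\,H\to\calA_k$. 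The main obstacle I anticipate is the careful bookkeeping around the anomalous face: one must verify that the un-iterated propagator $\vec{\omega}$ on $\EC_2(\widetilde{\pi})$ produces precisely the normalization $N_k'\,\alpha_k^\adm(\vec{\eta}'_{\widetilde{E}})$, and that the signs coming from the orientation of $\partial\widetilde{B}$ line up with the cylindrical conventions of Theorem~\ref{thm:Z-adm-inv}.
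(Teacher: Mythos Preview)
Your approach is correct and is essentially the same as the paper's. The paper's proof is a one-liner: it asserts that the analogue of (\ref{eq:Z-Z}), namely $Z_k^\adm(\vec{\theta})-Z_k^\adm(\vec{\theta}_0)-\alpha_k^\adm(\vec{\eta}_{\widetilde{E}})=0$, holds by the same boundary analysis over the bordism $\widetilde{\pi}$, and then substitutes into the two defining formulas for $\hat{Z}_k^\adm$. Your extra step---distinguishing the given $\vec{\eta}_{\widetilde{E}}$ from the restriction $\vec{\eta}'_{\widetilde{E}}$ of the extended $\vec{\omega}$ and then showing $\alpha_k^\adm(\vec{\eta}_{\widetilde{E}})=\alpha_k^\adm(\vec{\eta}'_{\widetilde{E}})$---is exactly what the paper absorbs into that phrase (and is the content of Lemma~\ref{lem:a-inv-eta}, stated immediately afterward); the paper is implicitly either choosing $\vec{\eta}_{\widetilde{E}}$ to be $\vec{\eta}'_{\widetilde{E}}$ or invoking that independence. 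Your bookkeeping of the $N_k'$ factors is correct: the bordism $\widetilde{E}$ already lives between the iterated bundles, so the boundary contributions are $N_k'Z_k^\adm(\vec{\theta})$ and $N_k'Z_k^\adm(\vec{\theta}_0)$, and the anomalous-face count equals $N_k'\alpha_k^\adm(\vec{\eta}'_{\widetilde{E}})$ because of the $1/N_k'$ built into $\alpha_k^\adm$.
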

\begin{proof}
Similarly to (\ref{eq:Z-Z}) in the proof of Theorem~\ref{thm:Z-adm-inv}, the identity $Z_k^\adm(\vec{\theta})-Z_k^\adm(\vec{\theta}_0)-\alpha_k^\adm(\vec{\eta}_{\widetilde{E}})=0$ holds. This together with the two definitions of $\hat{Z}_k^\adm$ on both sides proves the identity (\ref{eq:bordism-inv}).
\end{proof}

\subsection{Change of cobordism}\label{ss:ch-cobordism}

In the definition of the homomorphism $\hat{Z}_k^\adm:\mathrm{Im}\,H\to \calA_k$, we took a $(k+5)$-cobordism $W$ with corners of special type. Here, we shall see that the same homomorphism can be defined by replacing the cobordism with more general one. This fact will be used later in computing the value of the invariant where we change $W$ into one that is adapted to surgery (Lemma~\ref{lem:occupied}). 

\begin{Assum}\label{hyp:W}
We assume that a $(D^4,\partial)$-bundle $\pi:E\to B$ over a closed oriented $k$-manifold $B$ satisfies the following.
\begin{enumerate}
\item The isomorphism class of $\pi$ corresponds to an element of $\mathrm{Im}\,H$. In particular, $B$ is oriented cobordant to $S^k$. Let $\widetilde{B}$ be an oriented cobordism between them.
\item There is a compact oriented manifold $W$ with corners bounded by $O_E=E\cup (\partial D^4\times \widetilde{B})\cup -(D^4\times S^k)$. Here, $W$ need not be the total space of a $D^4$-bundle over $\widetilde{B}$. Let $\widetilde{\pi}:O_E\to \widetilde{B}$ be the projection that is the gluing of the natural ones $\partial D^4\times \widetilde{B}\to \widetilde{B}$, $\pi:E\to B$, and $D^4\times S^k\to S^k$.
\item $T^vE$ has a vertical framing (trivialization) $\tau_E:T^vE\to \R^4\times E$, that is standard near $\partial^\fib E$.
\end{enumerate}
\end{Assum}
Under this assumption, we may construct a trivial bundle $\ve^4(W)\to W$, by identifying the sides (over $\partial \widetilde{B}=B\tcoprod (-S^k)$) of the trivial bundle $\R^4\times W\to W$ with $T^vE$ and $T^v(D^4\times S^k)=TD^4\times S^k$ by using $\tau_E$ and the standard framing respectively. Let $\vec{\theta}$ be a tuple of admissible propagators of $\EC_2(\pi)$ and we take a tuple $\vec{\eta}_W$ of generic admissible sections of $\ve^4(W)$ that is adapted to $\vec{\theta}$ without iteration with respect to $\vec{v}$ fixed above. With this data, $Z_k^\adm(\vec{\theta})$ and $\alpha_k^\adm(\vec{\eta}_W)$ are defined. Here, $\vec{\eta}_W$ depends on the choice of $\ve^4(W)$, which depends on the choice of $\tau_E$.

\begin{Lem}\label{lem:a-inv-eta}
For fixed $W,\tau_E$, the value of $\alpha_k^\adm(\vec{\eta}_W)$ does not depend on the choice of $\vec{\eta}_W$ that is adapted to $\vec{\theta}$.
\end{Lem}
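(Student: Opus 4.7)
The plan is to adapt the Stokes-type argument used for identity~(\ref{eq:a-a}) in the proof of Theorem~\ref{thm:Z-adm-inv} to the present setting, where the cobordism $W$ need not be a $D^4$-bundle but comes equipped with the trivial rank-$4$ bundle $\ve^4(W)$.

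Given two tuples $\vec{\eta}_{W,0}$ and $\vec{\eta}_{W,1}$ of admissible sections of $\ve^4(W)$ both adapted to $\vec{\theta}$, the first step is to construct, by applying Lemma~\ref{lem:propagator-unique}(2) fiberwise and then $\Z_2$-symmetrizing, a generic tuple $\vec{\eta}_{W\times I}=(\eta_{W\times I}^{(1)},\ldots,\eta_{W\times I}^{(3k)})$ of admissible sections of the trivial bundle $\ve^4(W)\times I\to W\times I$ that restricts to $\vec{\eta}_{W,0}$ on $W\times\{0\}$ and to $\vec{\eta}_{W,1}$ on $W\times\{1\}$, and that on each of the three components $E\times I$, $(\partial D^4\times\widetilde{B})\times I$, and $(D^4\times S^k)\times I$ of $\partial W\times I$ is independent of the $I$-coordinate, equal there to $\vec{\theta}|_{ST^vE}$, to the standard extension, and to the constant tuple $\vec{v}$, respectively.

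Next I would consider, for each $\Gamma\in\calG_k^0$, the analogue $J^\adm(\Gamma;\vec{\eta}_{W\times I})$ defined by intersecting the pulled-back chains $\varphi_i^{-1}\eta_{W\times I}^{(i)}$ inside $\overline{S}_{2k}(\ve^4(W)\times I)$. A dimension count shows that this is a generically compact $1$-chain, so $\#\partial J^\adm(\Gamma;\vec{\eta}_{W\times I})=0$. The boundary decomposes into contributions from $\partial(W\times I)$ and from the boundary strata of the fiber $\overline{S}_{2k}(\R^4)$. On $W\times\{0,1\}$ the first contribution yields precisely $J^\adm(\Gamma;\vec{\eta}_{W,1})-J^\adm(\Gamma;\vec{\eta}_{W,0})$. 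On each of the three $(k+4)$-dimensional pieces of $\partial W$ crossed with $I$ the tuple is $I$-independent, so the relevant intersection is the product with $I$ of its intersection on the slice, where the ambient $\overline{S}_{2k}$-bundle has total dimension $(k+4)+(8k-5)=9k-1$ and the codimension-$9k$ intersection is therefore empty by general position. The collapse-face contributions from $\overline{S}_{2k}(\R^4)$ are treated verbatim as in the proof of~(\ref{eq:a-a}): IHX cancels the codimension-one two-point collapse faces, the dimension argument of \cite[Lemma~A.8]{CCL} kills faces where a univalent vertex appears in the collapsing subgraph, the symmetry of \cite[Lemma~A.9]{CCL} kills those with a bivalent vertex, and the normalization condition~(\ref{eq:S_n}) excludes the anomalous total-collapse face.

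Combining everything and summing over $\Gamma$ with the usual weight $\frac{1}{2^{3k}(2k)!(3k)!}$ gives $\alpha_k^\adm(\vec{\eta}_{W,1})=\alpha_k^\adm(\vec{\eta}_{W,0})$, as required. The main obstacle I expect is constructing the interpolation $\vec{\eta}_{W\times I}$ so that the prescribed boundary behaviors on all three pieces of $\partial W\times I$ are simultaneously realized while preserving both the $\Z_2$-symmetry and genericity; once this is in hand, the rest is a faithful transcription of the Stokes argument already used in identity~(\ref{eq:a-a}).
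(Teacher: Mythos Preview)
Your proposal is correct and follows essentially the same approach as the paper, which simply states that the proof is the same as that of Theorem~\ref{thm:Z-adm-inv}; you have faithfully transcribed the Stokes argument for identity~(\ref{eq:a-a}) to the present setting, correctly noting that since both $\vec{\eta}_{W,0}$ and $\vec{\eta}_{W,1}$ are adapted to the \emph{same} $\vec{\theta}$, the interpolation may be taken $I$-independent on all of $\partial W\times I$, whence the lateral boundary contributions vanish by the dimensional count you give.
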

Proof of Lemma~\ref{lem:a-inv-eta} is the same as the proof of Theorem~\ref{thm:Z-adm-inv}.

\begin{Lem}\label{lem:a-inv-cob}
For a fixed $\tau_E$, the value of $\alpha_k^\adm(\vec{\eta}_W)$ is invariant under a relative cobordism of $W$. Namely, if we replace $W$ with another compact oriented manifold $W'$ that is related to $W$ by a $(k+6)$-cobordism $V$ with corners such that $\partial V=W\cup (\partial W\times I)\cup (-W')$, then we have $\alpha_k^\adm(\vec{\eta}_{W'})=\alpha_k^\adm(\vec{\eta}_W)$ for any choice of $\vec{\eta}_{W'}$.
\end{Lem}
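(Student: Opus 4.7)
The plan is to adapt the proof of identity~(\ref{eq:a-a}) in Theorem~\ref{thm:Z-adm-inv}, replacing the product cobordism $W\times I$ used there by the given $(k+6)$-cobordism $V$.

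First I would extend $\ve^4(W)$ and $\ve^4(W')$ to a trivial rank $4$ bundle $\ve^4(V)\to V$ whose trivialization restricts to the given ones on the parts of $\partial V$ equal to $W$ and $W'$. Such an extension exists because $\ve^4(W)|_{\partial W}$ and $\ve^4(W')|_{\partial W}$ carry the same framing, induced from $\tau_E$ and from the standard framings on $D^4\times S^k$ and $\partial D^4\times\widetilde{B}$, so taking the pullback of this common trivialization along $\partial W\times I\to\partial W$ glues the two sides. I would then form the associated $\overline{S}_{2k}(\R^4)$-bundle $\overline{S}_{2k}(V)$, of dimension $(k+6)+(8k-5)=9k+1$.

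Next I would choose a tuple $\vec{\eta}_V=(\eta_V^{(1)},\ldots,\eta_V^{(3k)})$ of admissible sections of $ST\ve^4(V)$ extending $\vec{\eta}_W$ on $W$ and $\vec{\eta}_{W'}$ on $W'$, and taking on $\partial W\times I$ the pullback $p^*(\vec{\eta}|_{\partial W})$ of the common boundary value along $p\colon\partial W\times I\to \partial W$. Compatibility at the corners $\partial W\times\{0,1\}$ is automatic because $\vec{\eta}_W$ and $\vec{\eta}_{W'}$, being adapted to the same $\vec{\theta}$ (Definition~\ref{def:adapted}), agree on $\partial W$. After making $\vec{\eta}_V$ generic in the interior of $V$, the intersection
\[
 J^\adm(\Gamma;\vec{\eta}_V)=\langle \varphi_1^{-1}\eta_V^{(1)},\ldots,\varphi_{3k}^{-1}\eta_V^{(3k)}\rangle_{\overline{S}_{2k}(V)}
\]
is a strata transversal $1$-chain for each $\Gamma\in\calG_k^0$, and I form the graph-weighted sum $\alpha_k^\adm(\vec{\eta}_V)$ just as in the definition of $\alpha_k^\adm$.

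The identity $\#\partial\alpha_k^\adm(\vec{\eta}_V)=0$ holds tautologically because $\alpha_k^\adm(\vec{\eta}_V)$ is the signed boundary count of a compact $1$-chain, and I would decompose this boundary along $\partial\overline{S}_{2k}(V)$. The fiberwise-stratum contributions vanish by exactly the mechanisms already used in the proof of~(\ref{eq:a-a}): two-point collapses cancel by the IHX relation; collapses of three or more but fewer than all vertices vanish by dimension when the collapsing subgraph has a univalent vertex, and by the $\Z$-symmetry of admissible sections when it has a bivalent vertex; infinite faces vanish by the dimensional count at the end of the proof of~(\ref{eq:Z-Z}); and no total-collapse anomalous face arises, since the norm-square condition in~(\ref{eq:S_n}) excludes it from $\overline{S}_{2k}(\R^4)$. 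The $\partial V$-contribution equals $\alpha_k^\adm(\vec{\eta}_W)-\alpha_k^\adm(\vec{\eta}_{W'})+\alpha_k^\adm(\vec{\eta}_V|_{\partial W\times I})$, and the last summand vanishes by dimension: the pullback structure identifies the intersection chain over $\partial W\times I$ with the product of the intersection chain over $\partial W$ and $I$, and $\overline{S}_{2k}(V)|_{\partial W}$ has dimension $(k+4)+(8k-5)=9k-1$, one less than the total codimension $9k$ of the $3k$ cycles intersected, so the generic intersection over $\partial W$ is empty. This yields $\alpha_k^\adm(\vec{\eta}_W)=\alpha_k^\adm(\vec{\eta}_{W'})$. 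The main work is the careful reprise of the fiber-stratum analysis from~(\ref{eq:a-a}); the genuinely new input, the dimensional vanishing on $\partial W\times I$, is free from the pullback choice of $\vec{\eta}_V$ there.
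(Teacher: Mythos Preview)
Your proof is correct and follows essentially the same approach as the paper's: construct a trivial rank~4 bundle $\ve^4(V)$ over the cobordism $V$, choose admissible sections $\vec{\eta}_V$ extending the given boundary data (with a trivial pullback family on $\partial W\times I$), and apply the boundary-stratum analysis from the proof of Theorem~\ref{thm:Z-adm-inv} to the identity $\#\partial\alpha_k^\adm(\vec{\eta}_V)=0$. The only organizational difference is that the paper defines $\vec{\eta}_{W'}$ as the restriction $\vec{\eta}_V|_{W'}$ and then invokes Lemma~\ref{lem:a-inv-eta} afterward to cover ``any choice of $\vec{\eta}_{W'}$'', whereas you start with an arbitrary $\vec{\eta}_{W'}$ and extend both sides simultaneously; your route folds that final step into the cobordism argument itself.
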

\begin{proof}
 By gluing $\R^4\times V$ to $T^v(E\times I)=T^v(\partial W\times I)$ by using the trivialization $\tau_E\times \mathrm{id}_I$, we obtain a trivial $\R^4$-bundle $\ve^4(V)\to V$. The trivial 1-parameter family of the restriction of $\vec{\theta}$ on $ST^vE$ gives a tuple of admissible sections of the unit $S^3$-bundle $ST^v(E\times I)$. By extending this and $\vec{\eta}_W$, we take a tuple $\vec{\eta}_V$ of generic admissible sections of the unit sphere bundle $S\ve^4(V)$. Let $\vec{\eta}_{W'}$ be the restriction of $\vec{\eta}_V$ on $W'$. As in the proof of Theorem~\ref{thm:Z-adm-inv}, the 1-chain $\alpha_k^\adm(\vec{\eta}_V)$ is defined and it follows from the identity $\#\partial\alpha_k^\adm(\vec{\eta}_V)=0$ that $\alpha_k^\adm(\vec{\eta}_W)-\alpha_k^\adm(\vec{\eta}_{W'})=0$. This proves that $\alpha_k^\adm$ is invariant under a relative cobordism of $W$. Moreover, by Lemma~\ref{lem:a-inv-eta}, $\vec{\eta}_{W'}$ can be altered without changing the value of $\alpha_k^\adm$.
\end{proof}
In order to make $\alpha_k^\adm$ invariant under changes of $\tau_E$ and $W$ (to one not necessarily relative cobordant), we add more terms. $\alpha_k^\adm$ can be defined for a general oriented closed $(k+5)$-manifold $X$ and a trivial rank 4 vector bundle $\ve^4(X)$ over it by using admissible sections. By the same argument as the proof of Lemma~\ref{lem:a-inv-cob}, it induces a homomorphism $\alpha_k^\adm:\Omega_{k+5}\to \calA_k$. If $k+5\not\equiv 0$ (mod 4), then $\Omega_{k+5}\otimes \Q=0$ by Thom's results on the cobordism groups (\cite{T}) and hence $\alpha_k^\adm=0$ on $\Omega_{k+5}$. 

From now on, we assume $k+5\equiv 0$ (mod 4)\footnote{For only a disproof of the 4-dimensional Smale conjecture, this case is not necessary.}. In this case, the rational cobordism invariants are given by homogeneous polynomials of degree $k+5$ in the Pontrjagin classes, by Thom's result again. Namely, there exists a universal homogeneous polynomial $P_k(p_1,\ldots,p_{\frac{k+5}{4}})$ of degree $k+5$ in $H^*(BSO_{k+5};\Q)\otimes\calA_k=\Q[p_1,\ldots,p_{\frac{k+3}{2}},e]\otimes\calA_k$, $|p_j|=4j$, $|e|=\frac{k+5}{2}$, such that 
\begin{equation}\label{eq:a-P}
 \alpha_k^\adm(X)=\langle P_k(p_1(TX),\ldots,p_{\frac{k+5}{4}}(TX)),[X]\rangle.
\end{equation}

We shall extend the definition of the right hand side of (\ref{eq:a-P}) for the manifold with corners $W$ by an integral of a pullback of a characteristic form on the classifying space. Since $TW|_{\partial W}=T^vO_E$ is the Whitney sum of vertical and horizontal subbundles, we may assume that the classifying map for $TW|_{\partial W}$ factors through the classifying space $BSO_4\times BSO_{k+1}$ for Whitney sums. We shall deform the characteristic form on $W$ according to this boundary condition. We consider the natural map $\sigma: BSO_4\times BSO_{k+1}\to BSO_{k+5}$. The cohomology ring of $BSO_4\times BSO_{k+1}$ is $H^*(BSO_4\times BSO_{k+1};\Q)= \Q[p_1',e']\otimes \Q[p_1'',\ldots,p_{\frac{k-1}{2}}'',e'']$,
with $|p_1'|=4$, $|e'|=4$, $|p_j''|=4j$, $|e''|=\frac{k+1}{2}$. Then by the Whitney sum formula for the Pontrjagin class, we have $\sigma^* p_n = \sum_{i+j=n} p_i'p_j''=p_n'' + p_1'p_{n-1}''$. 
Let $P_k'(p_1',p_1'',p_2'',\ldots, p_m'')$, $m=\frac{k+5}{4}$, be the polynomial in $H^*(BSO_4\times BSO_{k+1};\Q)\otimes\calA_k$ given by
\begin{equation}\label{eq:P'}
 \begin{split}
  P_k'(p_1',p_1'',p_2'',\ldots,p_m'')&=P_k(\sigma^* p_1,\sigma^* p_2,\ldots, \sigma^* p_m)\\
  &=P_k(p_1''+p_1', p_2''+p_1'p_1'',\ldots,p_m''+p_1'p_{m-1}''),
\end{split}
\end{equation}
where $P_k$ is the polynomial defined in (\ref{eq:a-P}).

In the following we regard $BSO_n$ as the oriented Grassmannian $\widetilde{G}_n(\R^\infty)$. Let $x_1,x_2,\ldots\in\Omega^*(BSO_{k+5})$ be closed forms on $BSO_{k+5}$ such that $[x_j]=p_j$ for each $j$. Let $u_1\in\Omega^4(BSO_4)$,  $v_1,v_2,\ldots\in\Omega^*(BSO_{k+1})$ be closed forms such that $[u_1]=p_1'$, $[v_j]=p_j''$ for each $j$. By (\ref{eq:P'}), there is a form $\eta\in\Omega^*(BSO_4\times BSO_{k+1})\otimes \calA_k$ such that the form level identity $P_k'(u_1,v_1,v_2,\ldots,v_m) = \sigma^* P_k(x_1,x_2,\ldots,x_m) + d\eta$ holds. Thus $P_k'(u_1,v_1,v_2,\ldots,v_m)$ and $\sigma^* P_k(x_1,x_2,\ldots,x_m)$ can be smoothly connected by an exact form. Namely, let $\rho:BSO_4\times BSO_{k+1}\times [0,\ve)\to BSO_4\times BSO_{k+1}$ be the projection. Let $\psi: [0,\ve)\to [0,1]$ be a smooth function that is 0 near $0\in [0,\ve)$ and 1 near $\ve$. Then the form
\[ P_k''=\rho^*\sigma^* P_k(x_1,\ldots,x_m) + d(\psi \rho^*\eta) \]
on $BSO_4\times BSO_{k+1}\times [0,\ve)$ agrees with $\rho^*P_k'(u_1,v_1,\ldots,v_m)$ near 0 and with $\rho^*\sigma^*P_k(x_1,\ldots,x_m)$ near $\ve$. 

In order to get a characteristic form on $W$, we shall take a classifying map $W\to BSO_{k+5}$ for $TW$ that factors through $BSO_4\times BSO_{k+1}$ on $\partial W=O_E$. By the decomposition $TW|_{E}=TE\oplus \gamma^1=T^vE\oplus \pi^* TB\oplus\gamma^1$, where $\gamma^1$ denotes a rank 1 trivial real line bundle, and $TW|_{\partial D^4\times\widetilde{B}}=TD^4|_{\partial D^4}\oplus \widetilde{\pi}^*\widetilde{B}$, 
there are classifying maps $\varphi_\partial:\partial W\to BSO_4\times BSO_{k+1}$ and $\varphi:W\to BSO_{k+5}$ for $TW|_{\partial}$ and $TW$ respectively such that $\varphi|_{\partial W}=\sigma\circ \varphi_\partial$. Further, we assume that the restriction of $\varphi$ on a collar neighborhood $\partial W\times [0,\ve)$ factors through $BSO_4\times BSO_{k+1}$. More explicitly, let $\widetilde\varphi_\partial:\partial W\times [0,\ve)\to BSO_4\times BSO_{k+1}\times [0,\ve)$ be the map defined by $\widetilde{\varphi}_\partial(x,t)=(\varphi(x,t),t)$ and we assume $\varphi=\sigma\circ \rho\circ\widetilde{\varphi}_\partial$ on $\partial W\times [0,\ve)$. 
Since $T^v\partial W$ has a trivialization that extends to that of $T^v\partial W\times [0,\ve)$, we assume further that $\widetilde{\varphi}_\partial$ is a map to $\{*\}\times BSO_{k+1}\times [0,\ve)$, $\varphi_\partial$ is a map to $\{*\}\times BSO_{k+1}$, and that $\varphi|_{-(D^4\times S^k)}$ is a map to $\{*\}\subset BSO_{k+5}$. 

We consider the form 
\[ \widetilde{\varphi}_\partial^* P_k'' = \varphi^* P_k(x_1,\ldots,x_m) + d(\widetilde{\varphi}_\partial^*\psi \rho^*\eta) \]
on $E\times [0,\ve)$. Since this agrees with $\widetilde{\varphi}_\partial^* \rho^* P_k'(u_1,v_1,\ldots,v_m)$ near 0 and with $\varphi^* P_k(x_1,\ldots,x_m)$ near $\ve$, $\widetilde{\varphi}_\partial^* P_k''$ can be extended by $\varphi^* P_k(x_1,\ldots,x_m)$ to the whole of $W$. We denote the resulting closed form on $W$ by $P_k(TW)$ and define
\[ P_k(TW;\tau_E)=\int_W P_k(TW). \]
\begin{Lem}\label{lem:a-inv-tau-W}
Suppose $k+5\equiv 0$ (mod 4).
\begin{enumerate}
\item $P_k(W;\tau_E)$ does not depend on the choices of $\varphi$, $\varphi_\partial$, $x_1,\ldots,x_m,u_1,v_1,\ldots,v_m$.
\item $P_k(W;\tau_E)$ does not depend on $\tau_E$. Namely, if $\tau_E'$ is another choice, then $P_k(W;\tau_E)=P_k(W;\tau_E')$.
\item $\alpha_k^\adm(\vec{\eta}_W)-P_k(W;\tau_E)$ does not depend on $W$ (and on $\tau_E$ and $\vec{\eta}_W$ that is adapted to $\vec{\theta}$).
\end{enumerate}
\end{Lem}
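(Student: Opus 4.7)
The plan is to reduce all three parts to relative-de-Rham and cobordism arguments, with (3) being the main step via a gluing construction.

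For (1), any change of the closed-form representatives $x_j,u_1,v_j$ within their cohomology classes alters $P_k(x_1,\ldots,x_m)$ and $P_k'(u_1,v_1,\ldots,v_m)$ by exact forms via standard Chern--Weil transgression, and the auxiliary form $\eta$ can be readjusted so that the relation $P_k'=\sigma^*P_k+d\eta$ continues to hold. Combined with the cutoff $\psi$, the induced change in $P_k(TW)$ is then exact on all of $W$ with a primitive of the expected boundary type, and $\int_W P_k(TW)$ is unchanged by Stokes' theorem. Changes of $\varphi,\varphi_\partial$ within their homotopy classes (the classes being pinned down by the topological data $TW$ and the boundary trivializations) give the same conclusion by pulling back along a homotopy and applying Stokes on $W\times[0,1]$.

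For (2), two framings $\tau_E,\tau_E'$ differ by a gauge transformation $E\to SO_4$, yielding different trivializations of $T^v\partial W|_E$ and hence different $\widetilde\varphi_\partial$. The underlying bundle is trivial of rank $4$, so all such $\widetilde\varphi_\partial$ land in the homotopy class classifying the trivial bundle, and the $u_1$-contribution to $P_k'$ vanishes on any representative. Thus the change in $P_k(TW)$ is exact with trivial boundary contribution, and (1) concludes.

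For the key $W$-independence in (3), take two bounding manifolds $W,W'$ for $O_E$ with the same $\tau_E$, and glue them along $O_E$ with opposite orientations to obtain the closed oriented $(k+5)$-manifold $X=W\cup_{O_E}(-W')$. Since $\vec{\eta}_W$ and $\vec{\eta}_{W'}$ are both adapted to $\vec{\theta}$, their restrictions on $O_E$ coincide, so they glue to a tuple $\vec{\eta}_X$ of admissible sections of the trivial rank-$4$ bundle $\ve^4(X)=\ve^4(W)\cup\ve^4(W')$. Additivity of intersection numbers yields
\[ \alpha_k^\adm(\vec{\eta}_W)-\alpha_k^\adm(\vec{\eta}_{W'})=\alpha_k^\adm(\vec{\eta}_X)=\alpha_k^\adm(X), \]
where the last equality is (\ref{eq:a-P}) applied to the closed manifold $X$ with its trivial rank-$4$ bundle. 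Similarly, $P_k(TW)$ and $P_k(TW')$ have matching boundary data on $O_E$ by construction (given by $\varphi_\partial^*P_k'(u_1,v_1,\ldots)$ on $E$ and the prescribed trivial data on the other components), so they glue to a closed form on $X$ representing $P_k(p_1(TX),\ldots)$, giving
\[ P_k(W;\tau_E)-P_k(W';\tau_E)=\int_X P_k(TX)=\alpha_k^\adm(X) \]
by (\ref{eq:a-P}) once more. Subtracting the two identities gives the $W$-independence; combined with (2) and Lemma~\ref{lem:a-inv-eta}, this yields full independence in $W,\tau_E,\vec{\eta}_W$. The main technical subtlety to verify is the seamless gluing of characteristic forms across the collar of $O_E$, which relies on the deliberate choice $\varphi=\sigma\circ\rho\circ\widetilde{\varphi}_\partial$ on the collar and the cutoff $\psi$ ensuring $P_k(TW)=\varphi^*P_k(x_1,\ldots,x_m)$ away from $\partial W$.
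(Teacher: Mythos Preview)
Your argument for (3) is correct and is essentially what the paper does, spelled out: glue $W$ and $-W'$ along $O_E$ to a closed $(k+5)$-manifold $X$ and apply (\ref{eq:a-P}) to both $\alpha_k^\adm$ and the Pontrjagin integral. Your (1) points in the right direction but is vague about why the Stokes boundary term on $\partial W\times I$ vanishes; the paper uses that, with $\tau_E$ fixed, the homotopy of $\varphi_\partial$ factors through the $(k+1)$-manifold $B\times I$ (via the framing), so the integral of a $(k+5)$-form there is zero by dimension.

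The real gap is in (2). You claim that since both framings trivialize the same bundle, the associated $\widetilde\varphi_\partial$ are homotopic, the $u_1$-pullback vanishes, and (1) finishes. But (1) only handles changes of $\varphi$ within a \emph{relative} homotopy class fixing the collar, and that relative class genuinely depends on $\tau_E$. Concretely: to arrange $\varphi_\partial$ to land in $\{*\}\times BSO_{k+1}$ one uses the nullhomotopy of the $BSO_4$-component supplied by $\tau_E$; a different framing $\tau_E'$ gives a different nullhomotopy, and the two differ by the gauge map $g:(E,\partial E)\to(SO_4,\mathbf 1)$, which need not be nullhomotopic. Thus $\varphi$ and $\varphi'$ agree on the collar but are in general not homotopic rel collar, and $P_k(W;\tau_E')-P_k(W;\tau_E)$ equals a nontrivial integral over the inserted cylinder $E\times I$ carrying a nonconstant map $\varphi_1:(E\times I,E\times\{0,1\})\to(BSO_4,*)$.

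The paper's proof of (2) is a genuine computation of this cylinder term. Writing it as a sum of $\int_{E\times I}\varphi_1^*\alpha\wedge r^*\pi^*\varphi_2^*\beta$ with $\alpha$ a monomial in $u_1$ and $\beta$ in the $v_j$, one has $[r_*\varphi_1^*\alpha]=g^*T[\alpha]$ for the Chern--Simons transgression $T:H^*(BSO_4)\to H^{*-1}(SO_4)$. Since $T$ annihilates decomposable monomials, only $Tp_1'$ of degree $3$ survives; then integrating along the $4$-dimensional fiber $D^4$ of $\pi$ kills it. This step is specific to the rank and the structure of $H^*(SO_4;\Q)$, and your argument bypasses it entirely.
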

\begin{proof}
(1) A change of $\varphi$ by a relative homotopy that fixes a collar neighborhood $\partial W\times [0,\ve)$ does not affect the integral $P_k(W;\tau_E)$ since the difference is given by an integral of a compact support exact form, which vanishes. By the definition of $P_k(TW)$, a change of $x_1,\ldots,x_m$ may change $P_k(TW)$ by a compact support exact form and does not affect the result, either. Next, we consider a change of $\varphi_\partial$ (and thus of its extension $\varphi$) by a homotopy in $BSO_4\times BSO_{k+1}$, which may change $P_k(W;\tau_E)$ by an integral over $\partial W\times I$. The homotopy gives a classifying map $\partial W\times I\to BSO_4\times BSO_{k+1}$ which can be factored as $\partial W\times I\to B\times I\to \{*\}\times BSO_{k+1}$ thanks to the presence of the framing. Hence the integral can be written as that of a $(k+5)$-form over the $(k+1)$-manifold $B\times I$, which vanishes. The effect of changing $u_1,v_1,\ldots,v_m$ is similar to this, where the homotopy is replaced by an exact form on $B\times I$. 

    (2) A change of $\tau_E$ corresponds to gluing a map $E\times I\to BSO_4\times\{*\}\to BSO_{k+5}$ which maps $E\times\{0,1\}$ to the base point, to $\varphi:W\to BSO_{k+5}$ along the face $E=E\times\{0\}$. Hence $P_k(W;\tau_E')-P_k(W;\tau_E)$ can be written as an integral over $E\times I$. Namely, a change of framing corresponds to a classifying map $\varphi_E:E\times I\to BSO_4\times BSO_{k+1}$ given by $\varphi_E=(\varphi_1, \varphi_2 \pi r)$, where $r:E\times I\to E$ is the projection, $\varphi_1:(E\times I,E\times\{0,1\})\to (BSO_4,*)$, and $\varphi_2:B\to BSO_{k+1}$. Hence $P_k(W;\tau_E')-P_k(W;\tau_E)$ is a linear combination of integrals of the following form:
\[ \int_{E\times I} \varphi_1^*\alpha\wedge r^*\pi^*\varphi_2^*\beta=\int_E r_*\varphi_1^*\alpha\wedge \pi^*\varphi_2^*\beta, \]
where $\alpha\in\Q[u_1]\subset H^*(BSO_4;\Q)$ and $\beta\in\Q[v_1,\ldots,v_m]\subset H^*(BSO_{k+1};\Q)$ are monomials.  
The form $r_*\varphi_1^*\alpha$ is closed since by the Stokes formula for pushforward (e.g., \cite[VII. Problem~4]{GHV}), one has $dr_*\varphi_1^*\alpha=r_*d\varphi_1^*\alpha\pm r_*^{\partial}\varphi_1^*\alpha=0$, and it has compact support along the fiber. Let $T[\alpha]\in H^*(SO_4;\Q)\otimes\calA_k$ be the image of the Chern--Simons transgression $H^*(BSO_4,*;\Q)\to H^*(ESO_4,SO_4;\Q)\leftarrow H^{*-1}(SO_4;\Q)$ for the Pontrjagin class $[\alpha]$. 
Then there exists a continuous map $g:(E,\partial E)\to (SO_4,\mathbf{1})$ such that $[r_*\varphi_1^*\alpha]=g^*T[\alpha]$ in $H^*_c(E;\Q)$. Since $T$ annihilates decomposable monomials (\cite[Proposition~16.19]{Sw}), the integral above vanishes unless $T[\alpha]$ is a multiple of $Tp_1'=T[u_1]$ and thus is of degree 3. We consider the integral along the fiber of $\pi$ first and we have
\[ \int_E r_*\varphi_1^*\alpha\wedge \pi^*\varphi_2^*\beta=\int_B \pi_*r_*\varphi_1^*\alpha\wedge \varphi_2^*\beta=0 \]
by a dimensional reason.

(3) That $\alpha_k^\adm(\vec{\eta}_W)-P_k(W;\tau_E)$ does not depend on $W$ follows from (\ref{eq:a-P}). 
\end{proof}

\begin{Prop} Let $(\vec{\theta}_0,\vec{\eta}_0)$ be the adapted pair for $N_k'\pi_0:N_k'E_0\to S^k$ and $W_0$, that was taken in \S\ref{ss:bordism-invariance}. 
For a $(D^4,\partial)$-bundle $\pi:E\to B$ satisfying Assumption~\ref{hyp:W}, we define
\[ \hat{Z}_k^\adm(\vec{\theta},\vec{\eta}_W)=\left\{\begin{array}{ll}
Z_k^\adm(\vec{\theta})-\alpha_k^\adm(\vec{\eta}_W)+P_k(W;\tau_E) & \mbox{if $k+5\equiv 0$ (mod 4)}\\
Z_k^\adm(\vec{\theta})-\alpha_k^\adm(\vec{\eta}_W) & \mbox{otherwise}
\end{array}\right. \]
Then the following hold.
\begin{enumerate}
\item $\hat{Z}_k^\adm(\vec{\theta},\vec{\eta}_W)$ does not depend on the choices of $\tau_E, W, \vec{\eta}_W$.
\item The identity $\hat{Z}_k^\adm(\vec{\theta},\vec{\eta}_W)=\hat{Z}_k^\adm(\vec{\theta}_0,\vec{\eta}_0)$ holds.
\end{enumerate}
\end{Prop}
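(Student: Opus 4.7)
The plan is to prove (1) and (2) in turn. For (1) I would separate the three independences (from $\vec{\eta}_W$, $W$, and $\tau_E$) and reduce to the already-established Lemmas~\ref{lem:a-inv-eta}--\ref{lem:a-inv-tau-W} whenever possible; the remaining case $k+5\not\equiv 0\pmod 4$ is handled by the usual bordism argument together with Thom's computation of $\Omega_*\otimes\Q$. For (2) I would use the bundle bordism available from $\pi\in\mathrm{Im}\,H$ to compare the new definition with the old via Proposition~\ref{prop:bordism-inv}, after verifying that the Pontrjagin correction vanishes on the trivial piece $W_0$.

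For part (1), independence from $\vec{\eta}_W$ (with $W,\tau_E$ fixed) is immediate from Lemma~\ref{lem:a-inv-eta}, since neither $Z_k^\adm(\vec{\theta})$ nor $P_k(W;\tau_E)$ depend on $\vec{\eta}_W$. When $k+5\equiv 0\pmod 4$, independence from $W$ and $\tau_E$ is exactly Lemma~\ref{lem:a-inv-tau-W}(3). When $k+5\not\equiv 0\pmod 4$, the Pontrjagin term is absent, and I would show that $\alpha_k^\adm(\vec{\eta}_W)$ itself is independent of $(W,\tau_E)$: given two choices $W,W'$ with common boundary $O_E$ and the same framing $\tau_E$, glue them to form the closed oriented $(k+5)$-manifold $X=W\cup_\partial(-W')$ together with the trivial rank $4$ bundle obtained by gluing $\ve^4(W)$ and $\ve^4(W')$. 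The cobordism argument of Lemma~\ref{lem:a-inv-cob} identifies $\alpha_k^\adm(\vec{\eta}_W)-\alpha_k^\adm(\vec{\eta}_{W'})$ with $\alpha_k^\adm$ on the closed pair $(X,\ve^4(X))$, which factors through $\Omega_{k+5}$; this vanishes rationally since $\Omega_{k+5}\otimes\Q=0$ by Thom's theorem. A change of framing $\tau_E\rightsquigarrow\tau_E'$ is treated similarly, gluing two copies of $W$ along $E\times I$ with the framing interpolated along the $I$-factor.

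For part (2), the central observation is that for $W_0$ of \S\ref{ss:bordism-invariance} one has $P_k(W_0;\tau_{E_0})=0$. Indeed, by Lemma~\ref{lem:NE-trivial}(2) the vertical trivialization on $N_k'E_0$ extends through $W_0\cong D^4\times S^k\times I$; combined with a trivialization of $T(S^k\times I)$ it trivializes the entire tangent bundle $TW_0$, so the classifying map $\varphi:W_0\to BSO_{k+5}$ may be chosen to factor through a point. Then $\varphi^*P_k(x_1,\dots,x_m)$ and the auxiliary exact correction vanish identically, giving $P_k(W_0;\tau_{E_0})=0$. In particular, for the special choice $(\pi,W,\tau_E)=(\pi_0,W_0,\tau_{E_0})$ the new formula reduces literally to $Z_k^\adm(\vec{\theta}_0)-\alpha_k^\adm(\vec{\eta}_0)=\hat{Z}_k^\adm(\vec{\theta}_0,\vec{\eta}_0)$.

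For general $\pi$ satisfying Assumption~\ref{hyp:W}, I would exploit part (1) to choose $W$ adapted to a bundle bordism $\widetilde{\pi}:\widetilde{E}\to\widetilde{B}$ between $N_k'\pi$ and $N_k'\pi_0$: namely set $W:=\widetilde{E}\cup_{N_k'E_0}N_k'W_0$ and build $\tau_E,\vec{\eta}_W$ by gluing from $\vec{\eta}_{\widetilde{E}}$ and $\vec{\eta}_0$. On the $N_k'W_0$-piece the Pontrjagin integrand is zero by the previous paragraph, so $P_k(W;\tau_E)$ reduces to a contribution supported on $\widetilde{E}$; this contribution is precisely what makes $\alpha_k^\adm(\vec{\eta}_W)-P_k(W;\tau_E)$ equal to $\alpha_k^\adm(\vec{\eta}_{\widetilde{E}})+\alpha_k^\adm(\vec{\eta}_0)$. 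Combined with Proposition~\ref{prop:bordism-inv}, this will yield $\hat{Z}_k^\adm(\vec{\theta},\vec{\eta}_W)=\hat{Z}_k^\adm(\vec{\theta}_0,\vec{\eta}_0)$. The main obstacle is the verification of this last identity, which requires careful bookkeeping of normalizations (in particular the $1/N_k'$ appearing in $\alpha_k^\adm$ on the iterated bundle), of corners along $N_k'E_0$ where $\widetilde{E}$ meets $N_k'W_0$, and of the additivity of $P_k$ under the decomposition; once these are under control the argument reduces to an application of Lemma~\ref{lem:a-inv-tau-W} on a closed auxiliary $(k+5)$-manifold with its rank $4$ bundle.
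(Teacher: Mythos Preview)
Your argument for (1) is correct and matches the paper's: Lemmas~\ref{lem:a-inv-eta}, \ref{lem:a-inv-cob}, \ref{lem:a-inv-tau-W} handle the case $k+5\equiv 0\pmod 4$, and the closed-manifold/Thom argument handles the other case exactly as in the discussion preceding Lemma~\ref{lem:a-inv-tau-W}.

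For (2) your route works in principle but is more laborious than necessary, and the ``main obstacle'' you flag is avoidable. The paper's proof is one line: by (1) you are free to take $W$ to be the total space of a bundle bordism as in \S\ref{ss:bordism-invariance}, and for such a $W$ one has $P_k(W;\tau_E)=0$ by \emph{the same argument as Lemma~\ref{lem:a-inv-tau-W}(2)}. The point you are missing is that the vanishing of $P_k$ is not special to $W_0$: whenever $W$ is the total space of a $(D^4,\partial)$-bundle with vertical framing, the classifying map $\varphi$ factors through $\{*\}\times BSO_{k+1}$ via the $(k+1)$-dimensional base $\widetilde{B}$, so the closed $(k+5)$-form $P_k(TW)$ is the pullback of a top-degree form on a $(k+1)$-manifold and hence is identically zero. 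Your parallelizability argument for $W_0$ is a special case of this (and incidentally relies on $TS^k\oplus\R$ being trivial, which is fine but unnecessary).

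Once $P_k$ is known to vanish on the bundle bordism, the new formula collapses to $Z_k^\adm(\vec{\theta})-\alpha_k^\adm(\vec{\eta}_W)$ with $W$ the bundle bordism, which is exactly the quantity appearing in Proposition~\ref{prop:bordism-inv}; no separate bookkeeping of the $\widetilde{E}$-contribution to $P_k$, the $1/N_k'$ normalization, or corner-matching is required. Your proposed decomposition $W=\widetilde{E}\cup N_k'W_0$ and the attendant additivity checks can be dropped.
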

\begin{proof}
(1) follows from Lemmas~\ref{lem:a-inv-eta}, \ref{lem:a-inv-cob}, \ref{lem:a-inv-tau-W}. 
(2) holds since $W$ may be the total space of a bundle bordism as in \S\ref{ss:bordism-invariance}. If $k+5\equiv 0$ (mod 4), then we need to check that $P_k(W;\tau_E)=0$ for the bundle bordism $W$. This can be proved by the same manner as Lemma~\ref{lem:a-inv-tau-W} (2). 
\end{proof}

\mysection{Graph counting formula}{s:GCF}

We shall see that under some assumptions $\hat{Z}_k^\adm$ can be obtained by counting some graphs in a bundle whose edges are trajectories of Morse gradients. This makes it possible to compute the exact value of $\hat{Z}_k^\adm$ in some cases directly and geometrically by counting graphs. The main idea of the proof of the counting formula is to construct an explicit admissible propagator by using Morse trajectory spaces, as in \cite{Sh}.

\subsection{Fiberwise Morse functions}\label{ss:FMF}

Let $\pi:E\to B$ be a $(\R^4,U_\infty')$-bundle over an oriented closed Riemannian $k$-manifold $B$, and write $F_b=\pi^{-1}(\{b\})$. We also equip $E$ with a Riemannian metric. A $C^\infty$ map $f:E\to \R$ is said to be a {\it fiberwise Morse function} if its restriction $f_b=f|_{F_b}:F_b\to \R$ ($b\in B$) to each fiber is Morse. The union of critical points of $f_b$ over $b\in B$ forms a $k$-submanifold of $E$. We call its connected component a {\it critical locus}. 

Let $\xi$ be a fiberwise gradient-like vector field for $f$ along the fiber, namely, the vector field on $E$ whose restriction $\xi_b=\xi|_{F_b}$ is gradient-like for $f_b$. We call such a vector field a {\it $v$-gradient}. For a critical locus $p$ of a fiberwise Morse function $f:E\to \R$, its {\it descending manifold} and {\it ascending manifold} are defined by
\[ \begin{split}
	\calD_p(\xi)&=\{x\in E\mid \lim_{t\to -\infty}\Phi_{-\xi}^t(x)\in p\},\quad \calA_p(\xi)=\{x\in E\mid \lim_{t\to \infty}\Phi_{-\xi}^t(x)\in p\},
\end{split} \] 
where $\Phi_{-\xi}^t:E\to E$, $t\in\R$, is the flow of $-\xi$. For a pair $p,q$ of critical loci with $|p|=i$, $|q|=i+\ell$, we may assume that $\calD_p(\xi)$ and $\calA_q(\xi)$ intersect transversally in $E$ by choosing the $v$-gradient $\xi$ generically within the space of $v$-gradients. In such a case, the intersection consists of integral curves of $\xi$ between $p$ and $q$. There is a free $\R$-action on $\calD_p(\xi)\cap\calA_q(\xi)$ defined by $x\mapsto \Phi_{-\xi}^T(x)$ ($T\in \R$). We put
\[ \calM_{pq}'=\calM_{pq}'(\xi)=(\calD_p(\xi)\cap\calA_q(\xi))/\R. \]
This space is locally parametrized as the intersection of $\calD_p(\xi)\cap\calA_q(\xi)$ with a level surface of $f$, as in \S\ref{ss:morse-complex}. The dimension of the manifold $\calM_{pq}'$ is $|p|-|q|-1+\dim{B}$. We call the intersection $\calD_p(\xi)\cap\calA_q(\xi)$ an {\it $i/i+\ell$-intersection} (\cite{HW})\footnote{In \cite{HW}, an $i/j$-intersection was considered modulo change of vertical parameter on a trajectory.}. 

We take a Morse function $h:B\to \R$ on the base space $B$ such that the numbers of critical points of index $k$ and 0 are both one on each path-component of $B$. Let $\eta$ be its gradient-like vector field. We call such an $\eta$ a {\it $h$-gradient}. We take a base point $b_0\in B$ of a path-component of $B$ to be the maximal point of $h$ and put $F_0=F_{b_0}$, $f_0=f_{b_0}$. In the following, we assume the following for $f,\xi$.
\begin{Assum}\label{hyp:eta} 
\begin{enumerate}
\item The $v$-gradient $\xi$ satisfies the parametrized Morse--Smale condition. Namely, all the descending and ascending manifolds are mutually transversal in $E$.
\item The descending manifolds $\calD_p(\xi)$ for every critical loci $p$ of $f$ are fiberwise orientable, i.e., the vertical vector bundle $T^v\calD_p(\xi)$ restricted on the locus $p$ is orientable.
\item For $\ell\geq 1$, there are no $i/i+\ell$-intersections.
\item $f$ is standard outside the unit disk $D^4$, namely, it agrees with a fixed linear function outside $D^4$ in each fiber, and $\xi$ is constant there.
\end{enumerate}
\end{Assum}

\begin{Lem}\label{lem:i/i-str}
Let $\xi$ be a generic $v$-gradient satisfying Assumption~\ref{hyp:eta}. The parameters in $B$ of $i/i$-intersections equip $B$ with a conic stratification (in the sense of \emph{\cite[I.1.3]{Ce}}). Let $B^{(\ell)}$ ($\ell=0,1,2,\ldots$) denote the codimension $\ell$ stratum in the conic stratification of $B$. 
\end{Lem}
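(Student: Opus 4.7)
The plan is to build the stratification from the projections to $B$ of the $i/i$-trajectory spaces $\calM'_{pq}$, and then verify the conic structure by a general-position argument in Cerf's sense. First, I would observe that for any ordered pair $(p,q)$ of critical loci with $|p|=|q|$, Assumption~\ref{hyp:eta}(1) and (3) imply that $\calD_p(\xi)\cap\calA_q(\xi)$ is a transversal intersection in $E$ of dimension $k+|p|-|q|=k$, so $\calM'_{pq}=(\calD_p(\xi)\cap\calA_q(\xi))/\R$ is a smooth $(k-1)$-manifold. Composing the inclusion with $\pi:E\to B$ and using $\R$-invariance of the flow gives a natural smooth map $\pi_{pq}:\calM'_{pq}\to B$.

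Second, I would argue by genericity that among $v$-gradients satisfying Assumption~\ref{hyp:eta}, one may additionally arrange that (i) each $\pi_{pq}$ is an immersion with only transverse self-intersections, (ii) images of distinct $\pi_{pq}$'s meet one another transversely in $B$, and (iii) all multi-jet intersections are in general position. These are a routine application of parametric transversality (Sard--Smale) to the evaluation maps on the multijet bundles of the family $\{\pi_{pq}\}$, with perturbations of $\xi$ supported in small neighborhoods of trajectories and disjoint from the critical loci. Only finitely many pairs $(p,q)$ occur, so all perturbations can be made simultaneously without destroying the earlier parts of Assumption~\ref{hyp:eta}.

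Third, I would take $B^{(0)}$ to be the open dense complement $B-\bigcup_{|p|=|q|}\pi_{pq}(\calM'_{pq})$ and, for $\ell\geq 1$, define $B^{(\ell)}$ as the locus of $b\in B$ at which exactly $\ell$ local sheets (counted with multiplicity coming from self-intersections) of the various $\pi_{pq}$'s meet transversely. By the transversality arranged above, each $B^{(\ell)}$ is a smooth submanifold of codimension $\ell$, and in local coordinates near $b\in B^{(\ell)}$ the union of sheets is diffeomorphic to $\ell$ coordinate hyperplanes in $\R^k$; this is the cone over the corresponding hyperplane configuration on $S^{k-1}$, which is precisely the local cone model of \cite[I.1.3]{Ce}.

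The main obstacle is the genericity step: keeping Assumption~\ref{hyp:eta}(1)--(4) intact while simultaneously imposing conditions (i)--(iii) on all of the $\pi_{pq}$. I expect this to be handled by choosing perturbations of $\xi$ supported in compact neighborhoods of trajectories disjoint from the critical loci and from the standard region at infinity, so that only the interiors of the $i/i$-trajectory moduli are affected. Once this is in place, the stratification described in the third step is conic by construction, and local finiteness follows from the finiteness of the collection of pairs $(p,q)$.
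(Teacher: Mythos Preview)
Your argument has a genuine gap in step (ii): the images of the $\pi_{pq}$ for different pairs $(p,q)$ \emph{cannot} in general be made mutually transverse by perturbing $\xi$, and the local model is \emph{not} a union of $\ell$ coordinate hyperplanes. The $i/i$-intersection loci for distinct pairs are not independent; they are constrained by cascades of handle-slides, and these constraints are rigid.

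Here is the simplest instance, which the paper's own proof works out explicitly. Take three critical loci $p,q_0,q_1$ of the same index with $f(p)>f(q_0)>f(q_1)$ and $\dim B=2$. In a suitable local model with parameters $(s,t_1)$, the loci are
\[
S_{q_0q_1}=\{t_1=0\},\qquad S_{pq_0}=\{s=0\},\qquad S_{pq_1}=\{s=t_1,\ t_1\ge 0\}.
\]
The third stratum is only a \emph{half}-line: once $\calA_{q_1}$ has slid under $\calA_{q_0}$ (i.e.\ for $t_1<0$), the piece of $\calA_{q_1}$ visible in the level surface just below $p$ disappears, so there is no $p$-to-$q_1$ trajectory there. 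Thus three codimension~1 strata meet at the origin in $\R^2$. This configuration is stable under perturbations of $\xi$: the boundary of $S_{pq_1}$ is forced to lie on $S_{q_0q_1}$ because the closure of $\calM'_{pq_1}$ contains broken trajectories $p\to q_0\to q_1$. No small perturbation separates these strata into two transverse lines, and your ``$\ell$ coordinate hyperplanes'' picture would require exactly that.

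More generally, the compactified moduli spaces $\pbcalM_{pq}$ have corners made of products of lower $\calM'_{p_ip_j}$'s (this is Proposition~\ref{prop:bMpq} in the paper), so $\pi_{pq}(\calM'_{pq})$ has boundary sitting on intersections of other $i/i$-loci. The correct proof proceeds by \emph{induction on the number of critical loci involved}: assume the strata coming from $q_0,\ldots,q_{r-1}$ already form a conic stratification of the parameter space, then add the strata $S_{pq_j}$ and check that each new piece is a cone whose boundary lies on the existing lower-codimension strata. The paper carries this out via an explicit normal-form model in a level surface just below $p$. Your transversality step (ii) should be replaced by this inductive incidence analysis.
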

\begin{proof}[Proof (sketch)] An $i/i$-intersection can be locally described as the transversal intersection between families of submanifolds and it turns out that a single $i/i$-intersection is a codimension 1 bifurcation. Below we shall describe a concrete shape of the stratification near a codimension $r$ stratum through a basic example. 

Let $f_b:F_b\to \R$ ($b\in \R^r$) be a family of Morse functions and let $p,q_0,q_1,\ldots,q_{r-1}$ be critical points of $f_0:F_0\to \R$ of index $i$ such that $f_0(p)>f_0(q_0)>f_0(q_1)>\cdots>f_0(q_{r-1})$. We also denote by $p,q_0,q_1,\ldots,q_{r-1}$ their loci for the family $f_b$. Now we assume that $i/i$-intersections between $p$ and $q_0$, $q_0$ and $q_1$, $q_1$ and $q_2$, $\ldots$, $q_{r-2}$ and $q_{r-1}$ occur simultaneously at $0\in\R^r$. A standard model for this bifurcation can be described as follows. Take $\ve>0$ and a neighborhood $U$ of $0\in\R^r$ both small and put $L=\bigcup_{b\in U} f_b^{-1}(f(p)-\ve)$. $L$ is locally a $\R^3$-bundle $L'\to U$. $\calD_p(\xi)$ intersects each fiber $L_b'$ of $L'$ in a $(i-1)$-disk, and $\calA_{q_j}(\xi)$ intersects each fiber $L_b'$ in a $(3-i)$-disk. In the case $i=2$, we describe the model $K_p(s), M_{q_0}(0),M_{q_1}(t_1),\ldots,M_{q_{r-1}}(t_{r-1})\subset\R^3$ ($s\in\R, 0\leq t_1\leq\cdots\leq t_{r-1}$) for the disks in $L_b'$ by a local coordinate (Figure~\ref{fig:conic-stratification} (1)). 
\[ \begin{split}
  K_p(s) &= \{(s,x_2,0)\mid x_2\in\R\},\qquad M_{q_0}(0)=\{(0,0,x_3)\mid x_3\in\R\}\\
  M_{q_j}(t_j)&=\{(t_j,0,x_3)\mid x_3\in\R\}\quad (1\leq j\leq r-1).\\
\end{split}\]
For simplicity, we assume the following (Figure~\ref{fig:conic-stratification} (2)).
\begin{itemize}
\item Let $t_0=0$. Suppose that at $t_j=t_{j+1}$, the disks $M_{q_j}(t_j)$ and $M_{q_{j+1}}(t_{j+1})$ coincide, and at the same time the $i/i$-intersection between $q_j$ and $q_{j+1}$ occurs. On $t_{j+1}<t_j$, $\calA_{q_{j+1}}(\xi)$ slides under $\calA_{q_j}(\xi)$, and $M_{q_{j+1}}(t_{j+1})$ disappears from $L'$. 
\item At the moment the family of $K_p(s)$ intersects $M_{q_j}(t_j)$, the $i/i$-intersection between $p$ and $q_j$ occurs, and $\calD_p(\xi)$ slides over $\calD_{q_j}(\xi)$.
\end{itemize}
\begin{figure}
\[ \includegraphics[height=40mm]{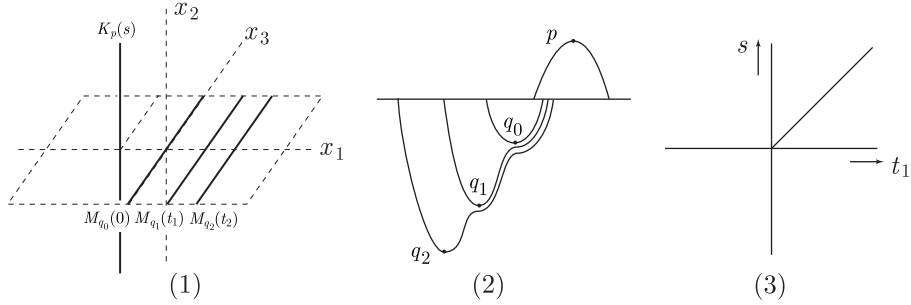} \]
\caption{Describing handle-slides by intersections in a level surface.}\label{fig:conic-stratification}
\end{figure}

We shall prove that the $i/i$-intersections in the standard model give a conic stratification of $\R^r$ by induction. Suppose that the $i/i$-intersections among $q_0,\ldots,q_{r-1}$ gives a conic stratification $A^{(\ell)}$ ($\ell=0,1,2,\ldots,r-1$) of the parameter space $\{(t_1,\ldots,t_{r-1})\mid t_1,\ldots,t_{r-1}\in\R\}=\R^{r-1}$ centered at the origin. We consider the product stratification $\widehat{A}^{(\ell)}=A^{(\ell)}\times \R$ and add to this the strata of $i/i$-intersections including $p$. We see that the result is again a conic stratification of $\R^r$. 

Let $S_{pq_j}$ denote the set of parameters of $i/i$-intersections between $p$ and $q_j$. Then 
$S_{pq_0}=\{(s,t_1,\ldots,t_{r-1})\mid s=0\}$, 
$S_{pq_j}=\{(s,t_1,\ldots,t_{r-1})\mid s=t_j, \,\,0\leq t_1\leq\cdots\leq t_j \}$ $(1\leq j\leq r-1)$.
It is easy to see that these sets form conic strata whose boundaries are on the strata of $\widehat{A}^{(\ell)}$ of codimension $\geq 1$. Thus the addition of these sets gives another conic stratification. For example, when $r=2$, the sets $S_{q_0q_1}=\{(s,t_1)\mid t_1=0\}, S_{pq_0}=\{(s,t_1)\mid s=0\}, S_{pq_1}=\{(s,t_1)\mid s=t_1, t_1\geq 0\}$ are shown in Figure~\ref{fig:conic-stratification} (3). The cases where $\calA_{q_0}(\xi),\ldots,\calA_{q_{r-1}}(\xi)$ are located in a different way is essentially the same as the example given here. 

Finally, we remark that the bundle projection $\pi$ induces immersion of $\calM_{pq}'(\xi)$ into $B$ since $\mathrm{Ker}\,d\pi\cap T(\calD_p(\xi)\cap\calA_q(\xi))$ agrees with the line subbundle of $T(\calD_p(\xi)\cap\calA_q(\xi))$ generated by the differential of the free $\R$-action, which is mapped to zero in $T\calM_{pq}'(\xi)$.
\end{proof}

\subsection{Morse complex}\label{ss:morse-complex}

Let $\xi_0$ be a gradient-like vector field for a Morse function $\mu:\R^4\to \R$ that is standard outside $D^4$. Suppose that $\xi_0$ is Morse--Smale, namely, all the intersections between the descending manifolds and the ascending manifolds of critical points of $\xi_0$ are transversal.
Let $C_k=C_k(\xi_0)$ be the free $\Z$-module generated by the set $P_k$ of critical points of $\mu$ of index $k$ and $\partial:C_{k+1}\to C_k$ is defined for $p\in P_{k+1}$ by
\[\begin{split}
	\partial p=\sum_{q\in P_k}\#\bbcalM{\xi_0}{p}{q}\cdot q,\quad \bbcalM{\xi_0}{p}{q}=(\calD_p(\xi_0)\cap \calA_q(\xi_0))\cap Q_p,\\
\end{split}\]
where $Q_p$ is a level surface of $\mu$ that lies just below $p$ and $\bbcalM{\xi_0}{p}{q}$ is an oriented 0-manifold whose orientation is derived from those of $\calD_p(\xi_0)$ and $\calA_q(\xi_0)$. More precisely, $\calD_p(\xi_0)\cap \calA_q(\xi_0)$ is a union of finitely many integral curves of $-\xi_0$. We orient $\calD_p(\xi_0)$ and $\calA_p(\xi_0)$ so that $o(\calD_p(\xi_0))_p\wedge o(\calA_p(\xi_0))_p \sim o(\R^4)_p$. Especially, if $|p|=4$, we set $o(\calD_p(\xi_0))=o(\R^4)$, $o(\calA_p(\xi_0))=1$, and if $|p|=0$, we set $o(\calA_p(\xi_0))=o(\R^4)$, $o(\calD_p(\xi_0))=1$. Let $o^*_{\R^4}(\calD_p(\xi_0))$ and $o^*_{\R^4}(\calA_p(\xi_0))$ be the coorientations of $\calD_p(\xi_0)$ and $\calA_p(\xi_0)$ in $\R^4$ respectively, as in \S\ref{s:ori}. At each point $b\in \bbcalM{\xi_0}{p}{q}$, the wedge $o^*_{\R^4}(\calD_p(\xi_0))_b\wedge o^*_{\R^4}(\calA_q(\xi_0))_b\in \bigwedge^{d-1}T^*_b \R^4$ defines a coorientation of $\calD_p(\xi_0)\cap \calA_q(\xi_0)$ passing through $b$. Hence there exists a sign $\ve(p,q)_b=\pm 1$ such that
\[ o^*_{\R^4}(\calD_p(\xi_0))_b\wedge o^*_{\R^4}(\calA_q(\xi_0))_b \sim \ve(p,q)_b \,\, \iota(-\xi_0)o(\R^4)_b. \]
The sign $\ve(p,q)_b$ does not depend on the choice of $Q_p$. Then 
\[ \#\bbcalM{\xi_0}{p}{q}=\sum_{b\in \bbcalM{\xi_0}{p}{q}} \ve(p,q)_b. \]
It is known that $(C_*,\partial)$ above is a chain complex (e.g., \cite{Bo, AD}) called the Morse complex\footnote{It should probably be called the ``Morse--Thom--Smale--Witten complex''.}. For a Morse function $\mu:\R^4\to \R$ as above, the complex $(C_*,\partial)$ is acyclic. So there exists a $\Z$-linear map $g:C_*(\xi_0)\to C_{*+1}(\xi_0)$ such that $\partial g+g\partial=\mathrm{id}$. Such a $g$ is called a chain contraction or a {\it combinatorial propagator}.

\subsection{Z-paths}

A family version of the Morse complex is obtained by counting ``Z-paths'' defined below (\cite{Wa4}, see also \cite{Hu}). Take $f,\xi,\eta$ as in \S\ref{ss:FMF}. 
We say that a piecewise smooth embedding $\sigma:[\mu,\nu]\to E$ is {\it vertical} if $\mathrm{Im}\,\sigma$ is included in a single fiber of $\pi$, and say that $\sigma$ is {\it horizontal} if $\mathrm{Im}\,\sigma$ is included in a critical locus of $f$. We say that a vertical embedding (resp. horizontal embedding) $\sigma:[\mu,\nu]\to E$ is {\it descending} if $f(\sigma(\mu))\geq  f(\sigma(\nu))$ (resp. $h\circ\pi(\sigma(\mu))\leq h\circ\pi(\sigma(\nu))$). 

A {\it flow-line of $-\xi$} is a vertical smooth embedding $\sigma:[\mu,\nu]\to E$ such that for each $T\in (\mu,\nu)$, there exists a positive real number $C_T$ such that
\[ d\sigma_T\Bigl(\frac{\partial}{\partial T}\Bigr)=-C_T\,\xi_{\sigma(T)}. \]

\begin{Def}\label{def:al-path}
Let $x,y$ be two points of $E$ such that $h\circ\pi(x)\geq h\circ\pi(y)$. A {\it Z-path for $(\xi,\eta)$ from $x$ to $y$} is a sequence $\gamma=(\sigma_1,\sigma_2,\ldots,\sigma_n)$, $n\geq 1$, where
\begin{enumerate}
\item For each $i$, $\sigma_i$ is either a vertical or horizontal embedding $[\mu_i,\nu_i]\to E$ for some real numbers $\mu_i,\nu_i$. For each $i$, $\sigma_i$ is descending.
\item $\sigma_i(\nu_i)=\sigma_{i+1}(\mu_{i+1})$ for $1\leq i<n$. $\sigma_1(\mu_1)=x$, $\sigma_n(\nu_n)=y$.
\item If $\sigma_i$ is vertical (resp. horizontal) and if $i<n$, then $\sigma_{i+1}$ is horizontal (resp. vertical).
\item If $\sigma_i$ is vertical, then $\sigma_i$ is a flow-line of $-\xi$. If moreover $i\neq 1,n$, then $\mu_i<\nu_i$. If $\sigma_i$ is horizontal, then $\mu_i<\nu_i$ and $\pi\circ \sigma_i:[\mu_i,\nu_i]\to B$ is a flow-line of $-\eta$ in $B$.
\item If $n=1$, then $\mu_1<\nu_1$.
\end{enumerate}
\begin{figure}
\includegraphics[height=25mm]{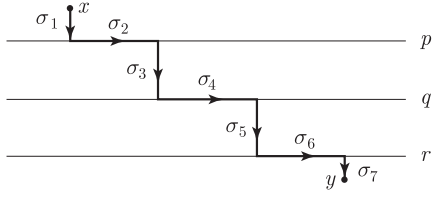}
\caption{A Z-path from $x$ to $y$ passing through three critical loci $p,q,r$. Each vertical segment between critical loci is in $i/i+\ell$-intersection for some $\ell$. This is a point of $\int_B X\,\Omega\,\Omega\,Y^T$ (\S\ref{ss:iterated-int-Q}). }\label{fig:z-path}
\end{figure}
(See Figure~\ref{fig:z-path} for an example of a Z-path.) We say that two Z-paths are {\it equivalent} if they differ only by orientation-preserving reparametrizations on segments. For a Z-path $\gamma=(\sigma_1,\ldots,\sigma_n)$ for $(-\xi,\eta)$, the {\it inverse Z-path} is the sequence $\gamma'=(\sigma_n',\ldots,\sigma_1')$, where $\sigma_i'$ is the inverse path of $\sigma_i$: $\sigma_i'(T)=\sigma_i(\mu_i+\nu_i-T)$.
\end{Def}

\subsection{Counting Z-paths over a path in $B$}\label{ss:count-Z-path}

We assume Assumption~\ref{hyp:eta} and that $\eta$ is Morse--Smale. Let $\alpha:I\to B$ be a flow-line of $-\eta$ between two points $a,b\in B$. Let $x\in F_a,y\in F_b$ be critical points of $f_a,f_b$ respectively with $|x|=|y|=i$. By choosing $\xi$ generically, we may assume that $\alpha$ intersects $B^{(1)}$ (Lemma~\ref{lem:i/i-str}) transversally at finitely many points and does not intersect $B^{(k)}$, $k\geq 2$. 

Under the assumptions above, $i/i$-intersections may occur finitely many times in the bundle $\alpha^*\pi:\alpha^* E\to I$ restricted over $\alpha$, and hence there may be only finitely many Z-paths from $x$ to $y$. We count these Z-paths with signs that are determined as follows. For an $i/i$-intersection $\sigma$ between critical loci $p,q$ of $f$, its sign $\ve(\sigma)$ is determined by the orientations of the ascending and descending manifolds. Namely, we choose coorientations $o^*_E(\calD_p(\xi))$, $o^*_E(\calA_q(\xi))$ of $\calD_p(\xi)$, $\calA_q(\xi)$ in $E$, respectively. Let $o^*_E(\widetilde{L}_z)$ be the coorientation of the level surface locus $\widetilde{L}_z$ of $f$ that passes through $z\in\sigma$, which restricts to $\iota(-\xi_z)\,o(F_{\pi(z)})_z$ on a fiber. Then $o^*_E(\calD_p(\xi))_z\wedge o^*_E(\calA_q(\xi))_z\wedge o^*_E(\widetilde{L}_z)_z$ is a 5-form in $\bigwedge^*T_zE$. We consider the pullback of this 5-form to $\alpha^*E$ by the natural bundle map $\hat{\alpha}:\alpha^*E\to E$. If this is equivalent to the original orientation of $\alpha^*E$ given by the rule (\ref{eq:o(E)}), then set $\ve(\gamma)=1$ and otherwise set $\ve(\gamma)=-1$. Let $\sigma_1,\sigma_2,\ldots,\sigma_r$ be all the $i/i$-intersections included in a Z-path $\gamma$ from $x$ to $y$ in the $(\R^4,U_\infty')$-bundle $\alpha^*\pi:\alpha^*E\to I$. Then we define the sign of $\gamma$ by 
\begin{equation}\label{eq:e(gamma)}
  \ve(\gamma)=\ve(\sigma_1)\ve(\sigma_2)\cdots \ve(\sigma_r).
\end{equation}

We define a $\Z$-linear map $\Phi_\alpha:C_*(\xi_a)\to C_*(\xi_b)$ by letting
\begin{equation}\label{eq:n(x,y)}
\Phi_\alpha(x)=\sum_{y\in P_*(\xi_b)}n_\alpha(x,y)y,\quad n_\alpha(x,y)=\sum_{\gamma}\ve(\gamma)
\end{equation}
for each $x\in P_*(\xi_a)$, where the sum for $n_\alpha(x,y)$ is taken for all Z-paths in $\alpha^*E$ that goes from $x$ to $y$.
\begin{Lem}\label{lem:Phi-chain}
$\Phi_\alpha$ is a chain map, namely, $\Phi_\alpha\circ \partial^a=\partial^b\circ \Phi_\alpha$ for the boundary operators $\partial^a,\partial^b$ of $C_*(\xi_a),C_*(\xi_b)$ respectively.
\end{Lem}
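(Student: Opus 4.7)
The plan is a standard boundary--of--a--$1$-dimensional--moduli--space argument. Fix $x\in P_i(\xi_a)$ and $z\in P_{i-1}(\xi_b)$, and define the moduli space $\widetilde\calM(x,z)$ of Z-paths in $\alpha^{*}E$ from $x$ to $z$ containing exactly one vertical segment of index drop one (a Morse trajectory from an index-$i$ critical point to an index-$(i-1)$ critical point in some fiber $F_{\alpha(t^{*})}$, $t^{*}\in I$); all other vertical segments are $i/i$- or $(i-1)/(i-1)$-intersections. Extend the sign rule \eqref{eq:e(gamma)} multiplicatively by the Morse sign $\ve(p,q)$ of the single index-drop trajectory, so each element of $\widetilde\calM(x,z)$ is signed. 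I would first check that $\langle \Phi_{\alpha}\partial^{a}x,z\rangle$ and $\langle \partial^{b}\Phi_{\alpha}x,z\rangle$ arise as signed counts of two distinguished subsets of ends of $\widetilde\calM(x,z)$.

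By Assumption~\ref{hyp:eta} and the parametrized Morse--Smale condition, together with the conic stratification of Lemma~\ref{lem:i/i-str}, the parameter $t^{*}$ varies in a $1$-dimensional way and $\widetilde\calM(x,z)$ is a $1$-manifold in the interior. Compactness uses Assumption~\ref{hyp:eta}(3) to forbid spurious breakings via $i/i+\ell$-intersections with $\ell\geq 1$. The compactified space has three types of boundary points:
\begin{itemize}
\item Type (a): the drop degenerates to $t^{*}=0$, so the Z-path factors as a trajectory $x\to y$ in $\calM'_{xy}(\xi_{a})$ followed by an index-$(i-1)$ Z-path from $y$ to $z$ over $\alpha$; with signs, these ends sum to $\langle\Phi_{\alpha}\partial^{a}x,z\rangle$.
\item Type (b): the drop is at $t^{*}=1$, so the Z-path decomposes as an index-$i$ Z-path from $x$ to $w\in P_{i}(\xi_{b})$ over $\alpha$ followed by a trajectory $w\to z$ in $\calM'_{wz}(\xi_{b})$; these give $\langle\partial^{b}\Phi_{\alpha}x,z\rangle$.
\item Type (c): $t^{*}\in\alpha^{-1}(B^{(1)})$, where the index-drop trajectory coexists with an $i/i$- or $(i-1)/(i-1)$-intersection.
\end{itemize}

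The heart of the argument, and what I expect to be the main obstacle, is the pairwise cancellation of Type (c) ends. Near such a $t^{*}$, two Z-paths meet: one using the pre-slide route, one using the post-slide route, i.e., they are related by replacing a vertical/horizontal subarc by its handle-slide partner (in the local model of Figure~\ref{fig:conic-stratification}). The task is to verify that the two contributions to $\ve(\gamma)$ differ exactly by the Morse sign of the $i/i$-intersection being created or destroyed at the slide, so that the two ends appear with opposite signs. This is a local coorientation computation: compare the wedge $o^{*}_{E}(\calD_{p}(\xi))\wedge o^{*}_{E}(\calA_{q}(\xi))\wedge o^{*}_{E}(\widetilde L)$ defining $\ve(\sigma)$ of the $i/i$-intersection with the Morse sign $\ve(p,q)$ of the index-drop trajectory on either side of the handle-slide, using the orientation conventions of \S\ref{ss:morse-complex} and \S\ref{ss:count-Z-path}.

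Once the Type (c) cancellation is in hand, the identity $\#\partial\widetilde\calM(x,z)=0$ for the compact oriented $1$-manifold $\widetilde\calM(x,z)$ yields $\langle\Phi_{\alpha}\partial^{a}x,z\rangle=\langle\partial^{b}\Phi_{\alpha}x,z\rangle$ for all $x,z$, which is the chain-map identity. The transversality, $1$-manifold, and compactness statements are routine from Assumption~\ref{hyp:eta} and Lemma~\ref{lem:i/i-str}; the delicate point really is the signed local analysis at a handle-slide, which reduces to a finite check in the standard model described in Lemma~\ref{lem:i/i-str}.
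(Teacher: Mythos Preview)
Your cobordism argument is correct in outline, but the paper takes a different and more algebraic route. In \S\ref{ss:parallel} (Lemma~\ref{lem:parallel}), the paper writes $\Phi_\alpha$ explicitly as a finite ordered product of elementary matrices $\prod_j(\bvec{1}+\Omega_\alpha^1(\alpha(t_j)))$, one factor for each handle-slide time $t_j$ along $\alpha$, and then observes that each individual factor is a chain map with respect to the boundary operators immediately before and after that handle-slide; this last fact is not argued in the paper but is cited from \cite[Lemma~9.3]{Wa2}. The chain-map property of $\Phi_\alpha$ then follows by composition. Your approach instead treats all handle-slides at once via the $1$-dimensional moduli space $\widetilde\calM(x,z)$ and a global $\#\partial=0$ argument; the Type~(c) cancellation you isolate --- matching the end where the $i/(i-1)$ trajectory breaks through a handle-slide with the end where the horizontal segment between the slide and the drop collapses --- is precisely the content that the paper outsources to \cite{Wa2}. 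Your route is more self-contained and closer in spirit to standard continuation-map arguments in Morse/Floer theory, at the cost of having to carry out the local sign check you flag; the paper's route is cleaner to state and makes the factorization of $\Phi_\alpha$ into elementary chain equivalences explicit, which is also what is actually used later (e.g.\ in verifying Assumption~\ref{hyp:cv-const}(2) in Lemma~\ref{lem:morse-complex-invariant}).
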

Lemma~\ref{lem:Phi-chain} will be proved later in \S\ref{ss:parallel}.
Under the identification $C_*(\xi_a)=C_*(\xi_b)=C_*(\xi_0)$ as $\Z$-modules induced by critical loci, where $\xi_0=\xi_{b_0}$ is a gradient-like vector field for the Morse function $f_0:F_0=F_{b_0}\to \R$ on the base fiber, 
we may consider $\Phi_\alpha$ as a $\Z$-linear endomorphism $C_*(\xi_0)\to C_*(\xi_0)$. 

Here, we make the following additional assumption on $f,\xi,\eta$, for simplicity, which is enough for our purpose.
\begin{Assum}\label{hyp:cv-const}
\begin{enumerate}
\item The critical values of $f_b$ are constants over $B$.
\item For any pair $a,b\in P_*(\eta)$ with $|a|=|b|+1$ and for a flow-line $\alpha:I\to B$ of $-\eta$ between $a$ and $b$, the path $\alpha$ intersects $B^{(1)}$ transversally and does not intersect $B^{(k)}$ for $k\geq 2$. Under the identification $C_*(\xi_a)=C_*(\xi_b)=C_*(\xi_0)$ by critical loci, $\Phi_\alpha=1:C_*(\xi_0)\to C_*(\xi_0)$.
\end{enumerate}
\end{Assum}

\subsection{Graph counting formula}\label{ss:gcf}

Let $f^{(1)},f^{(2)},\ldots,f^{(3k)}:E\to \R$ be a sequence of fiberwise Morse functions and let $\xi^{(i)}$ be a $v$-gradient for $f^{(i)}$. We assume that $(f^{(i)},\xi^{(i)})$ satisfies Assumption~\ref{hyp:eta} for each $i$. Let $f_0^{(i)}:F_0\to \R$ be the restriction of $f^{(i)}$ on the base fiber $F_0=F_{b_0}$ and let $\xi_0^{(i)}=\xi_{b_0}^{(i)}$. We consider a \emph{connected} edge-oriented trivalent graph with its sets of vertices and edges labelled and with $2k$ vertices and $3k$ edges. Choose some of the edges and split each chosen edge into two arcs. We attach elements of $P_*(\xi_0^{(i)})$ on the two univalent vertices ({\it white vertices}) that appear after the splitting of the $i$-th edge. We call such obtained graph a {\it $\vec{C}$-graph} ($\vec{C}=(C_*(\xi_0^{(1)}),\ldots,C_*(\xi_0^{(3k)}))$, see Figure~\ref{fig:cgraph}). A $\vec{C}$-graph has two kinds of (possibly split) ``edges'': a {\it compact edge}, which is connected,  and a {\it separated edge}, which consists of two arcs. We call vertices that are not white vertices {\it black vertices}. If $p_i$ (resp. $q_i$) is the critical point of $f_0^{(i)}$ attached on the input (resp. output) white vertex of a separated edge $i$, we define the degree of $i$ by $\deg(i)=|p_i|-|q_i|$. We define the degree of a compact edge $i$ by $\deg(i)=1$. We define the degree of a $\vec{C}$-graph by $\deg(\Gamma)=(\deg(1),\deg(2),\ldots,\deg(3k))$. 
\begin{figure}
\fig{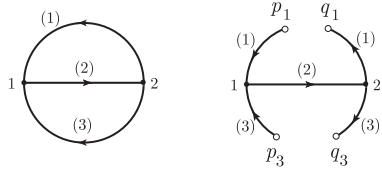}
\caption{A trivalent graph (left) and a $\vec{C}$-graph (right)}\label{fig:cgraph}
\end{figure}

We say that a continuous map $I$ from a $\vec{C}$-graph $\Gamma$ to $E$ is a {\it Z-graph} for the sequence $\vec{\xi}=(\xi^{(1)},\xi^{(2)},\ldots,\xi^{(3k)})$ of $v$-gradients and the $h$-gradient $\eta$ if it satisfies the following conditions (see Figure~\ref{fig:flow-graph}).
\begin{figure}
\fig{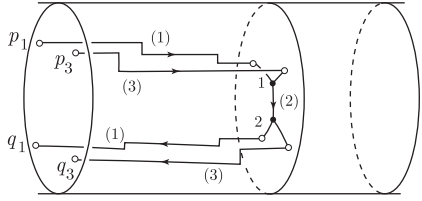}
\caption{Z-graph for $\vec{\xi}=(\xi^{(1)},\xi^{(2)},\xi^{(3)})$}\label{fig:flow-graph}
\end{figure}

\begin{enumerate}
\item Every white vertex equipped with $p_i$ is mapped by $I$ to the corresponding critical point $p_i$ in $F_0$.
\item The restriction of $I$ to the $i$-th edge of $\Gamma$ is either a vertical flow-line of $-\xi^{(i)}$ or a pair $(\gamma,\delta)$ consisting of a Z-path $\gamma$ from a critical point of $f_0^{(i)}$ and an inverse Z-path $\delta$ to a critical point of $f_0^{(i)}$ both for $(\xi^{(i)},\eta)$ such that the terminal endpoint of $\gamma$ and the initial endpoint of $\delta$ lie in the same fiber of $\pi$ and the projections of $\gamma$ and $\delta$ on $B$ give equivalent (piecewise smooth) flow-lines of $-\eta$.
\end{enumerate}
For a $\vec{C}$-graph $\Gamma$, let $\acalM_\Gamma(\vec{\xi},\eta)$ be the set of equivalence classes of all Z-graphs for $(\vec{\xi},\eta)$ from $\Gamma$ to $E$, where we say that two Z-graphs are {\it equivalent} if they differ only by orientation-preserving reparametrizations on segments. By definition, all the black vertices (or trivalent vertices) of a Z-graph must be contained in a single fiber of $\pi$. Hence a Z-graph consists of a uni-trivalent graph $V$ in a single fiber with some Z-paths or inverse Z-paths from/to the base fiber $F_0$ attached to the univalent vertices of $V$. The following lemma is a straightforward analogue of \cite{Fu, Wa3}.
 
\begin{Lem}\label{lem:0-mfd}
Suppose that $(f,\vec{\xi},\eta)$ satisfies Assumptions~\ref{hyp:eta} and \ref{hyp:cv-const}. If $\dim{B}=k$ and if $\vec{f}=(f^{(1)},f^{(2)},\ldots,f^{(3k)})$ and $\vec{\xi}$ are generic, then for every $\vec{C}$-graph $\Gamma$ with $2k$ black vertices and $\deg(\Gamma)=(1,1,\ldots,1)$, the space $\acalM_\Gamma(\vec{\xi},\eta)$ is a compact 0-dimensional manifold. 
\end{Lem}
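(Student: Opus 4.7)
My plan is to compute the expected dimension of $\acalM_\Gamma(\vec{\xi},\eta)$ as an intersection of Morse-theoretic propagator chains inside a finite-dimensional configuration space, verify its vanishing under the degree-$(1,\ldots,1)$ hypothesis, and then upgrade the set-theoretic intersection to a compact smooth $0$-manifold via parametric transversality and standard Morse-theoretic compactness.

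I work in the fiberwise configuration space
\[ C_{2k}^\pi = \{(x_1,\ldots,x_{2k}) \in E^{2k} \mid \pi(x_1)=\cdots=\pi(x_{2k})\}, \]
of dimension $k + 4\cdot 2k = 9k$, which contains the $2k$-tuple of black-vertex positions of any Z-graph. For each edge $j$ of $\Gamma$ I introduce the propagator chain $T_j \subset C_{2k}^\pi$ consisting of configurations whose endpoint pair on the $j$-th edge admits a realization as a Z-graph edge: either a $(-\xi^{(j)})$-flow-line in the common fiber (for a compact edge) or a pair of Z-path $\gamma$ from $p_j$ and inverse Z-path $\delta$ to $q_j$ with matching base trajectory (for a separated edge). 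Parallel to Fukaya's parametrized descending/ascending Morse propagator (cf.~\cite{Fu, Wa3, Sh}), each $T_j$ is a codimension-$3$ chain in its natural ambient fiber-product space: for a compact edge this is because the $(-\xi^{(j)})$-related pairs form a $5$-dimensional subset of $F_b \times F_b$; for a separated edge of degree $1$ the codimension $3$ follows from the top stratum of the Z-path closures $\mathscr{D}_{p_j}^Z \cap F_b$ and $\mathscr{A}_{q_j}^Z \cap F_b$, of dimensions $|p_j|$ and $4 - |q_j|$ respectively, whose product in $F_b\times F_b$ has codimension $(4-|p_j|)+|q_j|=3$. A careful accounting over compact and separated edges together (taking into account that separated edges additionally constrain the base point to lie on the $(-\eta)$-trajectory issuing from $b_0$) yields that the expected dimension of $\acalM_\Gamma(\vec{\xi},\eta)=T_1\cap\cdots\cap T_{3k}$ is at most zero, with equality for $\Gamma$ with all edges compact.

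For a generic choice of $(\vec{f},\vec{\xi},\eta)$ satisfying Assumptions~\ref{hyp:eta} and \ref{hyp:cv-const}, I invoke the standard parametric transversality theorem to conclude that the chains $T_j$ meet strata-transversely in $C_{2k}^\pi$, whence $\acalM_\Gamma(\vec{\xi},\eta)$ is a smooth $0$-manifold (possibly empty). Compactness follows from three observations: the standard-at-infinity condition (Assumption~\ref{hyp:eta}(4)) confines every black vertex to the unit disk in each fiber; $B$ is compact; and every possible degeneration (vertex collisions in a fiber, flow-line breakings at critical points of $f^{(j)}$, or base-trajectory bifurcations across the codimension-$1$ stratum $B^{(1)}$ of Lemma~\ref{lem:i/i-str}) contributes a stratum of strictly lower expected dimension under the degree-$(1,\ldots,1)$ hypothesis, hence is empty for generic data.

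I expect the main technical obstacle to be the codimension analysis for separated edges, which requires a careful stratification of the Z-path closures $\mathscr{D}_{p_j}^Z$ and $\mathscr{A}_{q_j}^Z$ by the number and pattern of horizontal segments, coupled through the matching-base-trajectory constraint that links $\gamma$ and $\delta$. The relevant bookkeeping is handled by an induction on Z-path length, appealing to Assumption~\ref{hyp:cv-const}(2) so that passages across $B^{(1)}$ do not introduce unexpected strata, and to the constancy of critical values of $f_b$ in $b$ which keeps the level-surface geometry uniform.
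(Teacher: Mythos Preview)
Your approach aligns with the paper's at the conceptual level—both reduce the statement to a dimension count plus transversality plus compactness—but you leave the crucial transversality step unjustified. Saying you ``invoke the standard parametric transversality theorem'' is not enough here: the $T_j$ are not smooth submanifolds but images of stratified maps from compactified Z-path moduli, and the parameters being varied (the $v$-gradients) live in an infinite-dimensional function space. The paper (Appendix~\ref{s:transversality}) handles this by the Fukaya--Floer mechanism: it sets up a universal moduli space over a Banach manifold $\prod_\ell\widetilde{\calU}^{(\ell)}$ of $C^r$ families of gradient-like vector fields on an open $k$-disk $O$, constructs an evaluation map $\Phi$ into $F_0^{6k}$, proves $\Phi$ is transversal to the multi-diagonal $\Delta$ (because perturbing $\xi^{(\ell)}$ near a trivalent vertex moves its image freely in $F_0$), shows the projection $\Phi^{-1}(\Delta)\to\prod_\ell\widetilde{\calU}^{(\ell)}$ is Fredholm of index $(-16k)+16k=0$, and then applies Sard--Smale to conclude that generic fibers are $0$-manifolds. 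Your codimension bookkeeping is essentially this index computation, but without the Banach-manifold setup and Sard--Smale you have not actually established that a \emph{generic} choice achieves strata transversality of the $T_j$.

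Two smaller points. First, your claim that the expected dimension is ``at most zero, with equality for $\Gamma$ with all edges compact'' is off: under $\deg(\Gamma)=(1,\ldots,1)$ the expected dimension is exactly $0$ regardless of how many edges are separated, since a separated edge of degree $1$ is still codimension $3$ in $F_b\times F_b$. Second, the paper works locally over an open disk $O\subset B$, trivializing the fiber as $F_0$ and treating an element of $\widetilde{\calU}^{(\ell)}$ as a map $O\to\calU^{(\ell)}$; this sidesteps the base-trajectory matching issue you identify as the main obstacle, and is worth adopting.
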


We give a proof of Lemma~\ref{lem:0-mfd} in \S\ref{s:transversality}. From Lemma~\ref{lem:0-mfd}, it follows that there are only finitely many points in $B$ on which uni-trivalent graphs of Z-graphs are included.

When the assumption of Lemma~\ref{lem:0-mfd} is satisfied, we may define an orientation of $\acalM_\Gamma(\vec{\xi},\eta)$ in a similar way as \cite{Wa3}. Roughly, an orientation of $\acalM_\Gamma(\vec{\xi},\eta)$ is defined as follows. The space $\acalM_\Gamma(\vec{\xi},\eta)$ can be considered as the intersection of several smooth manifold strata in $EC_{2k}(\pi)$ each corresponds to the space of an edge of $\Gamma$. We orient $\acalM_\Gamma(\vec{\xi},\eta)$ by the coorientation $\bigwedge_{e}v_e$ of $\acalM_\Gamma(\vec{\xi},\eta)$ in $EC_{2k}(\pi)$ for some coorientations $v_e$ of the strata for each compact or separated edge $e$. Each compact or separated edge $e$ has two black vertices and $v_e$ is a vector in $\bigwedge^3T_{(x,y)}EC_2(\pi)$, where $x,y$ are the images from the black vertices of $e$.  Then $\#\acalM_\Gamma(\vec{\xi},\eta)\in \Z$ is defined as the count of the Z-graphs with orientations.

Let $\vec{g}=(g^{(1)},g^{(2)},\ldots,g^{(3k)})$ be a sequence of combinatorial propagators for $\vec{C}=(C_*(\xi_0^{(1)}),\ldots,C_*(\xi_0^{(3k)}))$. Then we define
\begin{equation}\label{eq:Z_k}
Z_k^\Morse(\vec{\xi},\eta)=\frac{1}{2^{3k}(2k)!(3k)!}\Tr_{\vec{g}}\Bigl(\sum_\Gamma \#\acalM_\Gamma(\vec{\xi},\eta)\, \Gamma\Bigr)\in\calA_k. 
\end{equation}
Here, the sum is taken over all possible $\vec{C}$-graphs $\Gamma$ with $2k$ black vertices and $\deg(\Gamma)=(1,1,\ldots,1)$, and $\Tr_{\vec{g}}$ is defined as follows. For simplicity, we assume that the labels for the separated edges in a $\vec{C}$-graph $\Gamma$ is $1,2,\ldots,a$. Let $p_i,q_i$ be the critical points of $f_0^{(i)}$ on the input and output of the $i$-th edge of $\Gamma$, respectively and let $g_{q_ip_i}^{(i)}\in\Q$ be the coefficient of $p_i$ in the expansion of $g^{(i)}(q_i)$. Then $\Tr_{\vec{g}}(\Gamma)$ is defined by the following formula.
\begin{center}
 $\Tr_{\vec{g}}\,\, $\raisebox{-0.4\height}{\includegraphics{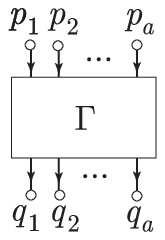}}
  $=\displaystyle \prod_{i=1}^a(-g_{q_ip_i}^{(i)})\times$ \raisebox{-0.4\height}{\includegraphics{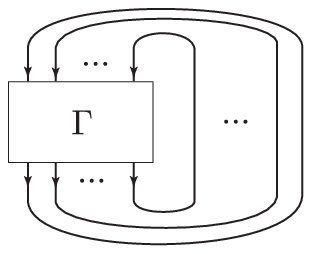}}
\end{center}
The definition of $\Tr$ can be generalized to graphs with other degrees in the same manner. The following theorem, which gives an analogue of Shimizu's identity in \cite{Sh}, is the main result of this section.

\begin{Thm}\label{thm:GCF}
For $(f,\vec{\xi},\eta)$ satisfying Assumptions~\ref{hyp:eta} and \ref{hyp:cv-const}, there exists a tuple of admissible propagators $\vec{\theta}=(\theta^{(1)},\ldots,\theta^{(3k)})$ such that the following identity holds.
\[ Z_k^\adm(\vec{\theta})=\frac{(-1)^{3k}}{2^{3k}}\sum_{\ve_i=\pm 1}Z_k^\Morse((\ve_1\xi^{(1)},\ldots,\ve_{3k}\xi^{(3k)}),\eta) \]
In such a case, we will denote $\hat{Z}_k^\adm(\vec{\theta})$ by $\hat{Z}_k^\Morse(\vec{\xi},\eta)$. 
\end{Thm}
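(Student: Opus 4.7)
My plan is to construct an explicit admissible propagator $\theta^{(i)}$ out of the Morse data $(f^{(i)},\xi^{(i)})$, show that the intersection in the definition of $Z_k^\adm(\vec\theta)$ geometrically collects exactly the Z-graphs counted by $Z_k^\Morse$, and account for the combinatorial propagator $\vec g$ and the sign average through correction chains and the $\ve_i=\pm 1$ symmetrization. This is the direct 4-dimensional, unframed, family version of Shimizu's identification of Kontsevich's configuration space integral with Fukaya's Morse homotopy invariant, with the correction term $\alpha_k^\adm$ hidden in the passage from the chain-level identity to $\hat Z_k^\adm$.

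For each $i$, let $\Psi^{(i)}\subset\EC_2(\pi)$ be the closure of the fiberwise trajectory locus $\{(x,\Phi_{-\xi^{(i)}}^t(x))\mid x\in E,\ t>0\}$ in $\EC_2(\pi)$. This is a $(k+5)$-chain whose boundary decomposes into: (a) a piece on $S_{\Delta_E}$ that coincides (after normalizing) with the chain $\phi_0^{-1}\{a,-a\}$ extended vertically, realizing the Gauss-type section required by the admissible propagator; (b) a piece in the interior equal to $\sum_{p}(-1)^{|p|}\calD_p(\xi^{(i)})\times_B\calA_p(\xi^{(i)})$, coming from trajectories escaping into critical loci as $t\to\infty$; and (c) pieces on the infinite face of $\EC_2(\pi)$ controlled by Assumption~\ref{hyp:eta}(4). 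I would then define
\[
\theta^{(i)}\;=\;\Psi^{(i)}\;+\;\sum_{p,q\in P_*(\xi_0^{(i)})}\;g^{(i)}_{qp}\;\overline{\calD_p(\xi^{(i)})\times_B \calA_q(\xi^{(i)})},
\]
where the second summand is built from the horizontal chains in $\EC_2(\pi)$ supported on pairs of critical loci and glued along families via the Z-path description. The chain contraction identity $\partial g+g\partial=\mathrm{id}$, combined with the Morse-complex boundary formula in families (Lemma~\ref{lem:Phi-chain}, Assumption~\ref{hyp:cv-const}(2)), exactly cancels the interior boundary (b), so that $\partial\theta^{(i)}$ is supported on $\partial^\fib\EC_2(\pi)\cup\bConf_2(\pi)^{-1}(\partial B)$ and is $\Z_2$-symmetric. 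This verifies admissibility.

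Next I would compute $\langle \phi_1^{-1}\theta^{(1)},\ldots,\phi_{3k}^{-1}\theta^{(3k)}\rangle_{\EC_{2k}(\pi)}$ by expanding each $\theta^{(i)}$ into its geometric and corrective parts. A term in the expansion involves choosing, for each edge $i$, either the flow-trajectory chain $\Psi^{(i)}$ (a compact edge) or a $g^{(i)}_{qp}$-weighted $\calD_p\times\calA_q$-chain (a separated edge with white vertex data $(p,q)$). Under genericity and Assumption~\ref{hyp:eta}, the transverse intersection of the chosen chains inside $\EC_{2k}(\pi)$ consists precisely of the evaluation points of Z-graphs whose combinatorial type is the corresponding $\vec C$-graph $\Gamma$: the black vertices come from the shared fiber where trivalent flow-line intersections occur, the Z-paths attached to white vertices come from the $v$-gradient/$h$-gradient alternation built into $\Psi^{(i)}$ near its boundary on critical loci, and $\#\acalM_\Gamma(\vec\xi,\eta)$ records this count. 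Summing with the combinatorial weights $\prod_i(-g^{(i)}_{q_ip_i})$ reproduces $\Tr_{\vec g}$; the global normalization $\frac{1}{2^{3k}(2k)!(3k)!}$ matches. Finally, $\Psi^{(i)}$ is not $\Z_2$-symmetric along fibers, so only half of the Gauss-map boundary condition is fulfilled by one choice of sign; taking the average $\frac{1}{2^{3k}}\sum_{\ve_i=\pm1}$ symmetrizes each edge by the involution $\xi^{(i)}\leftrightarrow-\xi^{(i)}$ (which swaps ascending and descending manifolds and flips trajectory direction), yielding an admissible propagator after symmetrization and producing the prefactor $(-1)^{3k}/2^{3k}$ once one tracks the orientation sign coming from each edge reversal.

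The main obstacle will be a careful bookkeeping of orientations and signs: verifying that the coorientation conventions from \S\ref{s:ori} make the intersection sign $\ve(\gamma)$ of (\ref{eq:e(gamma)}) match the sign arising from the transverse intersection in $\EC_{2k}(\pi)$, that the factors $(-1)^{|p|}$ arising in $\partial\Psi^{(i)}$ combine with the sign conventions in $\partial g^{(i)}$ to produce the cancellation above, and that the $\ve_i$ sum produces precisely $(-1)^{3k}/2^{3k}$. A secondary technical point is to rule out unwanted boundary contributions: intersections in the anomalous face, in infinite faces, and at codimension $\ge 2$ strata $B^{(\ell)}$ of $B$ must all be excluded. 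The anomalous face contribution is what becomes $\alpha_k^\adm$ after passing to $\hat Z_k^\adm$, and is not needed for the chain-level identity stated here; the infinite faces are handled by Assumption~\ref{hyp:eta}(4); and codimension $\geq 2$ bifurcations are ruled out by Assumption~\ref{hyp:cv-const}(2) together with Lemma~\ref{lem:i/i-str} and the transversality statement of Lemma~\ref{lem:0-mfd}, whose proof is deferred to \S\ref{s:transversality}.
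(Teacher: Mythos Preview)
Your overall strategy matches the paper's: build an explicit Morse-theoretic propagator from the trajectory chain plus a $g$-weighted correction, prove it is admissible (this is the paper's Theorem~\ref{thm:P-propagator}), and then read off the intersection as a sum over $\vec{C}$-graphs, which are precisely Z-graphs. The paper's short proof of Theorem~\ref{thm:GCF} simply invokes Theorem~\ref{thm:P-propagator} and expands. So you have identified the correct architecture. Two points in your execution, however, do not work as written.

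\textbf{The correction chain must be built from Z-path spaces, not from $\calD_p\times_B\calA_q$.} In the paper the correction term is $H_{pq}(\xi,\eta)$, defined via $\bA_q(\xi,\eta)\times_B\bD_p(\xi,\eta)$ where $\bD_p(\xi,\eta)$ is the compactified space of \emph{Z-paths} from $p\cap F_0$ (Definition~\ref{def:al-path}): alternating vertical flow-lines of $-\xi$ and horizontal segments along critical loci following $-\eta$. Your formula uses only the fiberwise descending/ascending manifolds $\calD_p(\xi),\calA_q(\xi)$, which carry no horizontal structure. Over a nontrivial base this matters: the boundary of $\bcalD_p(\xi)$ picks up contributions from $i/i$-intersections (handle-slides) along $B^{(1)}$ (Proposition~\ref{prop:bD}), and these are exactly what the horizontal Z-path segments organize. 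The identity $\partial g+g\partial=\mathrm{id}$ in a \emph{single} fiber is not enough; one needs Proposition~\ref{prop:dD} for $\partial\bD_p(\xi,\eta)_\sigma$, whose proof uses the iterated-integral description (\S\ref{ss:iterated-int-Q}) and crucially Assumption~\ref{hyp:cv-const}(2) ($\Phi_\alpha=1$) to kill the $\partial\sigma$ terms. Without the Z-path correction, your $\partial\theta^{(i)}$ would have residual interior pieces over $B^{(1)}$ and would not be supported on $\partial^\fib\EC_2(\pi)$.

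\textbf{The one-sign propagator is \emph{not} admissible; the $\pm\xi$ sum is what makes it so.} You first claim $\partial\theta^{(i)}$ is $\Z_2$-symmetric and admissible, then later say symmetrization is needed. The paper resolves this by setting $\hat\theta_{\mathrm Z}(\xi,\eta)=\theta_{\mathrm Z}(\xi,\eta)+\theta_{\mathrm Z}^*(\xi,\eta)$ with $\theta_{\mathrm Z}^*$ built from $-\xi$ and $g^*=-g^T$, and proving $\partial\hat\theta_{\mathrm Z}=-\bar s_\xi-\bar s_{-\xi}$. The point is that $\partial\theta_{\mathrm Z}(\xi,\eta)$ alone contains, besides $-\bar s_\xi$, a nonzero term on $S_{\Delta_E}$ coming from the collision of the two endpoints of a separated edge along a flow-line $\overline L_{p_iq_i}$ (see the proof of Theorem~\ref{thm:P-propagator}); this term is cancelled by the matching term in $\partial\theta_{\mathrm Z}^*$ because $g^*_{qp}=-g_{pq}$. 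So the symmetrization is not a post-hoc averaging of $Z_k^\adm$ values but is built into the admissible propagator itself; expanding $(-\hat\theta_{\mathrm Z})^{\otimes 3k}$ edge by edge is what produces the sum $\sum_{\ve_i=\pm1}$ and the prefactor $(-1)^{3k}/2^{3k}$ on the right-hand side.
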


\subsection{Moduli space of vertical flow-lines}\label{ss:M2}

Here we give a preliminary for an iterated integral description of spaces of Z-paths, which is used to describe their boundaries. 
Let $\pi:E\to B$ be an $(\R^4,U_\infty')$-bundle over a closed oriented manifold $B$ and $\xi$ be a $v$-gradient for a fiberwise Morse function $f:E\to \R$, as above. For simplicity, we assume that $B$ is path connected. We define
\[ \calM_2(\xi)=\{(x,y)\in E\times E\mid \pi(x)=\pi(y),\,y=\Phi_{-\xi}^T(x)\mbox{ for some $T>0$}\}. \]
Let $\Sigma(\xi)$ denote the union of all the critical loci of $\xi$. 
For a pair $(x,y)$ of distinct points of $E-\Sigma(\xi)$ such that $\pi(x)=\pi(y)=s$, a {\it ($r$ times) broken flow-line between $x$ and $y$} is a sequence $\gamma_0,\gamma_1,\ldots,\gamma_r$ ($r\geq 1$) of integral curves of $-\xi$ in the fiber $\pi^{-1}(s)$ satisfying the following conditions:
\begin{enumerate}
\item The domain of $\gamma_0$ is $[0,\infty)$, the domain of $\gamma_r$ is $(-\infty,0]$ and the domain of $\gamma_i$, $1\leq i\leq r-1$, is $\R$.
\item $\gamma_0(0)=x$, $\gamma_r(0)=y$.
\item There is a sequence $q_1,q_2,\ldots,q_r$ of distinct critical loci of $\xi$ such that $\lim_{T\to \infty}\gamma_{i-1}(T)=\lim_{T\to -\infty}\gamma_i(T)\in q_i$ ($1\leq i\leq r$).
\end{enumerate}

A {\it fiberwise space over a space $X$} is a pair of a space $Y$ and a continuous map $\phi:Y\to X$. A {\it fiber} over a point $s\in X$ is $Y(s)=\phi^{-1}(s)$ (\cite{CJ}). For two fiberwise spaces $Y_1=(Y_1,\phi_1)$ and $Y_2=(Y_2,\phi_2)$ over $X$, a {\it fiberwise product} $Y_1\times_X Y_2$ is defined as the following subspace of $Y_1\times Y_2$:
\[ Y_1\times_X Y_2=\int_{s\in X}Y_1(s)\times Y_2(s), \]
where $\int_{s\in X}$ means $\bigcup_{s\in X}$. Namely, $Y_1\times_X Y_2$ is the pullback of $Y_1\stackrel{\phi_1}{\to} X \stackrel{\phi_2}{\leftarrow} Y_2$. 

Let $\winfty$ denote the image of the $\infty$-section of the associated $(S^4,U_\infty)$-bundle $\pi^\infty:E^\infty\to B$ to $\pi:E\to B$ and let $\wDelta_E=\Delta_E\cup (E^\infty\times_B \ell_\infty)\cup (\ell_\infty\times_B E^\infty)$. Let $C(\xi)$ be the set of path-components in $\Sigma(\xi)$. The following proposition is a straightforward analogue of \cite[Proposition~3.4, 8.4]{Wa3}.
\begin{Prop}\label{prop:bM2_mfd}
If $\xi$ is generic, then there is a natural compactification $\bcalM_2(\xi)$ of $\calM_2(\xi)$ into a stratified space satisfying the following conditions.
\begin{enumerate}
\item Let $\ev:\bcalM_2(\xi)\to E^\infty\times E^\infty$ be the natural map, which assigns the endpoints. Then $\bcalM_2(\xi)-\ev^{-1}(\wDelta_{E})$ is a manifold with corners.
\item $\ev$ induces a diffeomorphism $\mathrm{Int}\,\bcalM_2(\xi)\to \calM_2(\xi)$, where $\mathrm{Int}$ denotes the codimension 0 stratum.
\item The codimension $r$ stratum of $\bcalM_2(\xi)-\ev^{-1}(\wDelta_{E})$ consists of $r$ times broken flow-lines. The codimension $r$ stratum of $\bcalM_2(\xi)-\ev^{-1}(\wDelta_{E})$ for $r\geq 1$ is canonically diffeomorphic to
\[ \left\{\begin{array}{ll}
\displaystyle\int_{s\in B}\sum_{q_1\in C(\xi)}\calA_{q_1}(\xi_{s})\times \calD_{q_1}(\xi_{s})-\sum_{q_1\in C(\xi)}\Delta_{q_1} & (\mbox{if $r=1$})\\
\displaystyle\int_{s\in B}\sum_{{{q_1,\ldots,q_r\in C(\xi)}\atop{q_1,\ldots,q_r\,\mathrm{distinct}}}}\calA_{q_1}(\xi_{s})\times \bbcalM{\xi_{s}}{q_1}{q_2}\times\cdots\times\bbcalM{\xi_{s}}{q_{r-1}}{q_r}\times\calD_{q_r}(\xi_{s}) & (\mbox{if $r\geq 2$})
\end{array}\right.
\]
\end{enumerate}
\end{Prop}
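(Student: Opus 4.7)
The plan is to construct $\bcalM_2(\xi)$ fiberwise first and then glue the fibers using the parametrized Morse--Smale hypothesis, following the pattern of \cite[Propositions~3.4, 8.4]{Wa3}. For a fixed parameter $s\in B$, the fiber space $\calM_2(\xi_s)\subset F_s\times F_s$ of pairs on a common trajectory of $-\xi_s$ has a classical Morse-theoretic compactification (cf.\ \cite{AD, Bo, HW}). Sequences in $\calM_2(\xi_s)$ can degenerate only in one of three ways: the two endpoints can collide (giving $\Delta_E$), one endpoint can escape to infinity along the standard end of $\xi_s$ (using Assumption~\ref{hyp:eta}(4)), giving the $\ell_\infty$-strata, or the trajectory can lengthen and break at critical points of $\xi_s$. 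The first two events are the reason we excise $\ev^{-1}(\wDelta_E)$ in (1); only the third contributes interior boundary strata.

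Step one is then to identify the broken trajectories in a single fiber. Given an ordered tuple $q_1,\ldots,q_r$ of distinct critical loci, a broken flow line through these loci in $F_s$ is specified by its initial point in $\calA_{q_1}(\xi_s)$, its terminal point in $\calD_{q_r}(\xi_s)$, and for each $1\leq i<r$ one unparametrized trajectory segment, that is, a point of $\bbcalM{\xi_s}{q_i}{q_{i+1}}$. The fiber-product in the statement is thus, tautologically, the set of $r$-times broken flow lines. The codimension count is a dimension bookkeeping: each fiber-product factor $\bbcalM{\xi_s}{q_i}{q_{i+1}}$ has dimension $|q_i|-|q_{i+1}|-1$ by Morse--Smale, so the sum telescopes and yields total codimension $r$ inside $\calM_2(\xi_s)$. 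The $r=1$ case permits $q_1=q_2$, corresponding to the degenerate situation in which the broken trajectory sits entirely on the critical locus; this accounts for the correction $-\sum_{q_1}\Delta_{q_1}$.

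Step two is to assemble the fiberwise picture over $B$. Assumption~\ref{hyp:eta}(1) asserts parametrized Morse--Smale transversality, so the family descending and ascending manifolds intersect transversely in $E$, not merely fiberwise. This transversality guarantees that the fiber products over $B$ appearing as candidate strata are smooth manifolds of the expected dimension and that the local bifurcation model near a codimension $r$ stratum is the standard one (the same local model encountered in Lemma~\ref{lem:i/i-str} generalizes here to sequences of break points). Patching these local models gives $\bcalM_2(\xi)-\ev^{-1}(\wDelta_E)$ the structure of a manifold with corners, with $\ev$ smooth and restricting to a diffeomorphism on the top stratum by its definition.

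The principal obstacle is the gluing of corner charts across $B$ in the presence of simultaneous critical-point crossings, i.e.\ showing that the universal moduli space of broken flow lines over $B$ has no hidden strata of codimension greater than expected. This is exactly the content of the companion proof \cite[Propositions~3.4, 8.4]{Wa3}, where the same parametrized Morse--Smale analysis is carried out in the $(4k-1)$-dimensional context; the dimension of the fiber plays no essential role in that argument beyond the dimension count above, so the construction transports to our four-dimensional fibers without modification.
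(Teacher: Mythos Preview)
Your proposal is correct and follows the same approach as the paper, which offers no proof beyond the sentence ``The following proposition is a straightforward analogue of \cite[Proposition~3.4, 8.4]{Wa3}''; you have simply spelled out what that analogy entails. One minor wording issue: in your explanation of the $-\sum_{q_1}\Delta_{q_1}$ correction you write ``The $r=1$ case permits $q_1=q_2$'', but there is no $q_2$ when $r=1$; the point is rather that the pair $(q_1(s),q_1(s))\in\calA_{q_1}(\xi_s)\times\calD_{q_1}(\xi_s)$ lies on the fiberwise diagonal $\Delta_E\subset\wDelta_E$, which has been excised.
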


Let $\{p_1,\ldots,p_N\}$ be the set of all critical loci of $f$ numbered so that $f(p_1(b_0))>f(p_2(b_0))>\cdots>f(p_N(b_0))$. The formula of the codimension $r$ stratum of $\bcalM_2(\xi)-\ev^{-1}(\wDelta_{E})$ for $r\geq 2$ in Proposition~\ref{prop:bM2_mfd} can be abbreviated as 
\[ \int_{s\in B}X_0^\circ(s)\times \underbrace{\Omega_0^\circ(s)\times \cdots\times \Omega_0^\circ(s)}_{r-1}\times Y_0^{\circ T}(s),\quad\mbox{where} \]
$X_0^\circ(s)=(\calA_{p_1}(\xi_{s})\ \ \calA_{p_2}(\xi_{s})\ \ \cdots\ \ \calA_{p_N}(\xi_{s}))$, $Y_0^\circ(s)=(\calD_{p_1}(\xi_{s})\ \ \calD_{p_2}(\xi_{s})\ \ \cdots\ \ \calD_{p_N}(\xi_{s}))$, 
\[   \Omega_0^\circ(s)=((1-\delta_{ij})\calM'_{p_ip_j}(\xi_s))=\left(\begin{array}{cccc}
    \emptyset & \calM'_{p_1p_2}(\xi_{s}) &  \cdots & \calM'_{p_1p_N}(\xi_{s})\\
    \calM'_{p_2p_1}(\xi_{s}) & \emptyset &  \cdots & \calM'_{p_2p_N}(\xi_{s})\\
    \vdots & \vdots & \ddots & \vdots\\
    \calM'_{p_Np_1}(\xi_{s}) & \calM'_{p_Np_2}(\xi_{s}) & \cdots & \emptyset
    \end{array}\right),
\]
and the direct product of matrices is given by matrix multiplication with the multiplications and the sums given respectively by direct products and disjoint unions (\S\ref{ss:iterated-integral}). The codimension $r$ stratum of a matrix of stratified spaces will denote the matrix whose entries are the codimension $r$ strata of the given matrix. The following two propositions are the restrictions of Proposition~\ref{prop:bM2_mfd}.

\begin{Prop}\label{prop:bD}
Let $p$ be a critical locus of $f$ and let $\bcalD_p(\xi)=\ev^{-1}(p\times E^\infty)$, $\bcalA_p(\xi)=\ev^{-1}(E^\infty\times p)$. If $\xi$ is generic, then
\begin{enumerate}
\item $\bcalD_p(\xi)$ (resp. $\bcalA_p(\xi)$) is a compact manifold with corners.
\item $\ev$ induces a diffeomorphism $\mathrm{Int}\,\bcalD_p(\xi)\to \calD_p(\xi)$ (resp. $\mathrm{Int}\,\bcalA_p(\xi)\to \calA_p(\xi)$).
\item The codimension $r$ stratum of $\t{Y}_0=(\bcalD_{p_1}(\xi)\ \ \bcalD_{p_2}(\xi)\ \ \cdots\ \ \bcalD_{p_N}(\xi))^T$ (resp. $X_0=(\bcalA_{p_1}(\xi)\ \ \bcalA_{p_2}(\xi)\ \ \cdots\ \ \bcalA_{p_N}(\xi))$) for $r\geq 1$ is canonically diffeomorphic to
$\displaystyle\int_{s\in B}\underbrace{\Omega_0^\circ(s)\times \cdots \times\Omega_0^\circ(s)}_{r}\times Y_0^{\circ T}(s)$ $
\mbox{(resp. } \displaystyle\int_{s\in B}X_0^\circ(s)\times\underbrace{\Omega_0^\circ(s)\times \cdots \times\Omega_0^\circ(s)}_{r}\mbox{)}$.
\end{enumerate}
\end{Prop}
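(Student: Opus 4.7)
My plan is to deduce Proposition~\ref{prop:bD} from Proposition~\ref{prop:bM2_mfd} by restricting the evaluation map $\ev$ to $\ev^{-1}(p\times E^\infty)$; I will treat $\bcalD_p(\xi)$ only, since $\bcalA_p(\xi)$ is handled symmetrically after flow reversal. The plan has three steps: sit $\bcalD_p(\xi)$ inside the smooth-with-corners part of $\bcalM_2(\xi)$, restrict each stratum, and then invoke a family version of Morse gluing to promote the restricted stratification to a manifold with corners.

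First I would observe that the critical locus $p$ sits in the bounded part of each fiber of $E$, so $p\times E^\infty$ is disjoint from $\wDelta_E$, and hence $\bcalD_p(\xi)$ lies entirely inside the open subset $\bcalM_2(\xi)-\ev^{-1}(\wDelta_E)$ that Proposition~\ref{prop:bM2_mfd}(1) already equips with a manifold-with-corners structure. It therefore suffices to identify each stratum explicitly and to check compatibility of the resulting collar charts.

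Next I would restrict the codimension-$(r{+}1)$ stratum of $\bcalM_2(\xi)-\ev^{-1}(\wDelta_E)$ for $r\geq 0$ via the first endpoint evaluation $\ev_1$: its leading factor $\calA_{q_1}(\xi_s)$ meets $p$ in the empty set when $q_1\neq p$, because ascending manifolds of distinct critical loci of a Morse--Smale gradient are pairwise disjoint, and in the single critical point $p(s)\in F_s$ when $q_1=p$. Collapsing this leading factor, for $r=0$ one recovers $\int_{s\in B}\calD_p(\xi_s)=\calD_p(\xi)$, so $\ev_2$ yields the diffeomorphism $\mathrm{Int}\,\bcalD_p(\xi)\to\calD_p(\xi)$ in~(2); for $r\geq 1$ one obtains
\[
\int_{s\in B}\sum_{q_2,\ldots,q_{r+1}}\calM'_{pq_2}(\xi_s)\times\calM'_{q_2q_3}(\xi_s)\times\cdots\times\calM'_{q_rq_{r+1}}(\xi_s)\times\calD_{q_{r+1}}(\xi_s),
\]
which is precisely the $p$-entry of the matrix product $(\Omega_0^\circ)^{r}\times Y_0^{\circ T}$ appearing in~(3). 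Summands with repeated non-consecutive indices are empty under Assumption~\ref{hyp:eta}, since flow lines strictly decrease Morse index. Note that codimensions shift by one in passing from $\bcalM_2(\xi)$ to $\bcalD_p(\xi)$, which reflects that the leading $\calA_{q_1}$ direction is frozen by the condition $q_1=p$.

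Finally I would install a collar around each stratum by the family-over-$B$ version of the standard Morse gluing of broken trajectories: each of the $r$ breaks in a broken flow line from $p$ through $q_2,\ldots,q_{r+1}$ contributes a small positive gluing parameter, producing local charts $[0,\varepsilon)^{r}\times(\text{stratum})\hookrightarrow\bcalD_p(\xi)$ that are smooth embeddings for $\xi$ generic in the sense of Assumption~\ref{hyp:eta}(1). Compactness of $\bcalD_p(\xi)$ follows from the classical Morse compactness of broken flow lines, the disk-bundle confinement of Assumption~\ref{hyp:eta}(4), and the compactness of $B$. The main technical obstacle will be the gluing step, but it runs in parallel with the corresponding step in the proof of Proposition~\ref{prop:bM2_mfd} (declared there to be a straightforward analogue of~\cite{Wa3}); restricting to a single critical locus $p$ introduces no new analytic difficulty.
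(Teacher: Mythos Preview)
Your approach---restricting Proposition~\ref{prop:bM2_mfd} along the first endpoint---is exactly what the paper does: it states Propositions~\ref{prop:bD} and~\ref{prop:bMpq} together as ``restrictions of Proposition~\ref{prop:bM2_mfd}'' with no further argument, and your stratum-by-stratum identification and codimension shift are correct.

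There is one small slip. Your claim that $p\times E^\infty$ is disjoint from $\wDelta_E$ is false: they meet along $\Delta_p=\{(p(s),p(s)):s\in B\}\subset\Delta_E$, since $\wDelta_E$ contains the full fiberwise diagonal, not just the part at infinity. Consequently $\bcalD_p(\xi)$ is \emph{not} entirely contained in $\bcalM_2(\xi)-\ev^{-1}(\wDelta_E)$, and you cannot invoke Proposition~\ref{prop:bM2_mfd}(1) directly at those points. The fix is immediate: the intersection with $\ev^{-1}(\wDelta_E)$ is precisely the locus $p$ itself inside $\calD_p(\xi)$, and there the family Morse lemma already gives a smooth disk-bundle chart for the descending manifold, so no corner structure is needed near $p$. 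Away from $p$ your argument goes through verbatim. (This also explains why, in your stratum count, the interior of $\bcalD_p(\xi)$ receives the missing point $p$ from $\mathrm{Int}\,\bcalM_2(\xi)$ rather than from the codimension~$1$ stratum, where $\Delta_p$ is explicitly excised in Proposition~\ref{prop:bM2_mfd}(3).)
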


\begin{Prop}\label{prop:bMpq}
Let $p,q$ be critical loci of $f$ and let $\pbcalM_{pq}(\xi)=\ev^{-1}(p\times q)$. If $\xi$ is generic, then
\begin{enumerate}
\item $\pbcalM_{pq}(\xi)$ is a compact manifold with corners.
\item There is a natural diffeomorphism $\mathrm{Int}\,\pbcalM_{pq}(\xi)\to \bbcalM{\xi}{p}{q}$.
\item The codimension $r$ stratum of $\Omega_0=((1-\delta_{ij})\pbcalM_{p_ip_j}(\xi))$ for $r\geq 1$ is canonically diffeomorphic to
$\displaystyle\int_{s\in B}\underbrace{\Omega_0^\circ(s)\times \cdots \times \Omega_0^\circ(s)}_{r+1}$.
\end{enumerate}
\end{Prop}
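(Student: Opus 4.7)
The plan is to derive Proposition~\ref{prop:bMpq} as a restriction of Proposition~\ref{prop:bM2_mfd} along the inclusion $p\times q\hookrightarrow E^\infty\times E^\infty$, directly paralleling the derivation of Proposition~\ref{prop:bD}. Since $p\neq q$ (off-diagonal entries of $\Omega_0$) and neither critical locus meets $\winfty$, the product $p\times q$ is disjoint from $\wDelta_E$, so $\pbcalM_{pq}(\xi)=\ev^{-1}(p\times q)$ lies entirely in the manifold-with-corners part of $\bcalM_2(\xi)$ supplied by Proposition~\ref{prop:bM2_mfd}(1).

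The central technical input is stratum-by-stratum transversality of the endpoint map $\ev$ to $p\times q$. By the parametrized Morse--Smale condition in Assumption~\ref{hyp:eta}(1) and genericity of $\xi$, all relevant descending and ascending manifolds are mutually transverse in $E$, and the manifold-with-corners framework of \cite[Appendix]{BT} (already invoked for Propositions~\ref{prop:bM2_mfd} and \ref{prop:bD}) upgrades this to the required strata-transversality of $\ev$ against $p\times q$. Consequently $\pbcalM_{pq}(\xi)$ inherits a manifold-with-corners structure whose strata are the transverse intersections of the strata of $\bcalM_2(\xi)$ with $\ev^{-1}(p\times q)$, and compactness follows from closedness of $p\times q$ inside the compact space $\bcalM_2(\xi)$. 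This yields (1).

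For (2) and (3), the observation to exploit is that when $p\neq q$ are both critical loci, the codim $0$ and codim $1$ strata of $\bcalM_2(\xi)$ contribute nothing to $\ev^{-1}(p\times q)$: a condition $x\in\calA_{q_1}\cap p$ on a critical point $x\in p$ forces the flow to fix $x$, hence $q_1=p$, and the analogous constraint at the other endpoint would impose $q_1=q$ as well, contradicting $p\neq q$. Hence the top stratum of $\pbcalM_{pq}(\xi)$ is the transverse intersection of $\ev^{-1}(p\times q)$ with the codim $2$ stratum $\int_{s\in B}\calA_{q_1}\times\calM'_{q_1q_2}\times\calD_{q_2}$ of $\bcalM_2(\xi)$; imposing the endpoint constraints forces $q_1=p$, $q_2=q$, collapses the outer factors $\calA_p\cap p=\{p\}$ and $\calD_q\cap q=\{q\}$ to single points, and leaves $\int_{s\in B}\calM'_{pq}(\xi_s)=\bbcalM{\xi}{p}{q}$. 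This will establish (2).

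For (3), the same analysis applied to the codim $(r+2)$ stratum of $\bcalM_2(\xi)$ shows that the codim $r$ stratum of $\pbcalM_{p_ip_j}(\xi)$ is the intersection of $\ev^{-1}(p_i\times p_j)$ with $\int_{s\in B}\sum\calA_{q_1}\times\calM'_{q_1q_2}\times\cdots\times\calM'_{q_{r+1}q_{r+2}}\times\calD_{q_{r+2}}$ (summed over $(r+2)$-tuples of distinct critical loci); the endpoint constraints force $q_1=p_i$, $q_{r+2}=p_j$ and collapse the outer $\calA$ and $\calD$ factors to points, leaving $\int_{s\in B}\sum\calM'_{p_iq_2}\times\calM'_{q_2q_3}\times\cdots\times\calM'_{q_{r+1}p_j}$ summed over distinct intermediate loci, which is exactly the $(i,j)$-entry of $\int_{s\in B}\Omega_0^\circ(s)^{r+1}$. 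The main point requiring care is upgrading these set-level identifications into diffeomorphisms of stratified manifolds with corners, but this is routine given the explicit product structure already made manifest in Proposition~\ref{prop:bM2_mfd}(3).
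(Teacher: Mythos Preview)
Your proposal is correct and follows precisely the approach indicated in the paper: immediately before Proposition~\ref{prop:bD} the paper states that ``the following two propositions are the restrictions of Proposition~\ref{prop:bM2_mfd},'' and gives no further argument. Your write-up supplies exactly the details one would fill in for that restriction---disjointness of $p\times q$ from $\wDelta_E$, strata-transversality from the parametrized Morse--Smale condition, and the index shift by two coming from collapsing the outer $\calA_{q_1}\cap p$ and $\calD_{q_{r+2}}\cap q$ factors to points---so there is nothing to add.
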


\begin{Rem}\label{rem:M-infty}
Let 
\[\begin{split}
\calD_\infty(\xi)&=\{x\in E\mid \lim_{t\to -\infty}\Phi_{-\xi}^t(x)\in\winfty\}, \quad \calA_\infty(\xi)=\{x\in E\mid \lim_{t\to \infty}\Phi_{-\xi}^t(x)\in\winfty\},\\
\calM_{p\infty}'(\xi)&=(\calD_p(\xi)\cap\calA_\infty(\xi))/\R, \qquad \calM_{\infty q}'(\xi)=(\calD_\infty(\xi)\cap\calA_q(\xi))/\R.
\end{split} \] 
Although $\calD_\infty(\xi)$ (resp. $\calA_\infty(\xi)$) is similar to $\calD$ (resp. $\calA$) of critical locus of index 4 (resp. 0), we define its coorientation by $o^*_{E^\infty}(\calD_\infty(\xi))=-1$ (resp. $o^*_{E^\infty}(\calA_\infty(\xi))=-1$), which is opposite to that of usual critical loci given in \S\ref{ss:morse-complex}. The reason for the minus sign is the orientation convention for the infinite 3-sphere on the boundary of $\bConf_2(S^4,\infty)$. 

There are natural compactifications 
\[\begin{split}
\bcalD_\infty(\xi)&=\ev^{-1}(\ell_\infty\times E^\infty),\qquad \bcalA_\infty(\xi)=\ev^{-1}(E^\infty\times \ell_\infty),\\
\pbcalM_{p\infty}(\xi)&=\ev^{-1}(p\times \ell_\infty),\qquad \pbcalM_{\infty q}(\xi)=\ev^{-1}(\ell_\infty\times p)
\end{split} \]
of $\calD_\infty(\xi)$, $\calA_\infty(\xi)$, $\calM_{p\infty}'(\xi)$ and $\calM_{\infty q}'(\xi)$ respectively into smooth compact manifolds with corners, as analogues of Propositions~\ref{prop:bD} and \ref{prop:bMpq}. We consider $\winfty\subset E^\infty$ as a critical locus of $\xi$ and will allow Z-paths in $E^\infty$ to pass through $\winfty$ for the compactification of the moduli space of Z-paths.
\end{Rem}

\subsection{Admissible propagator from Z-paths}

Let $\acalM_2(\xi,\eta)$ be the set of equivalence classes of all Z-paths in $E$ for $(\xi,\eta)$. It will turn out that there is a natural structure of non-compact manifold on $\acalM_2(\xi,\eta)$. Roughly, since an equivalence class of $\gamma=(\sigma_1,\ldots,\sigma_n)$ in $\acalM_2(\xi,\eta)$ may be described by a sequence of vertical flow-lines, it may be locally described as a subset of the direct product of spaces of (vertical) flow-lines of $-\xi$.

\begin{Prop}[Proof in \S\ref{ss:iterated-int-Q}]\label{prop:bM}
Suppose that $B$ is path-connected. There is a natural path-connected compactification $\bacalM_2(\xi,\eta)$ of $\acalM_2(\xi,\eta)$ that has the structure of a sum of finitely many smooth compact manifolds with corners.
\end{Prop}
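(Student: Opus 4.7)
The plan is to stratify $\acalM_2(\xi,\eta)$ by the combinatorial type of a Z-path and present each stratum as an iterated fiber product over $B$ of already-compactified moduli pieces, then glue the finite list of resulting manifolds with corners along their codimension-$1$ faces to form the desired compactification.

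\textbf{Combinatorial enumeration.} Record the type of a Z-path $\gamma=(\sigma_1,\ldots,\sigma_n)$ by its V/H pattern together with the ordered list of critical loci of $f$ visited by the horizontal segments. Because every vertical segment strictly decreases $f$ across critical values and Assumption~\ref{hyp:eta}(3) excludes $i/i+\ell$-intersections for $\ell\geq 1$, the Morse index along $\gamma$ is monotone non-increasing and must strictly drop across any interior vertical segment that joins two distinct critical loci; hence $n$ is bounded by a constant depending only on $f$, and only finitely many combinatorial types $T_1,\ldots,T_M$ arise.

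\textbf{Fiber-product construction and compactification.} For a fixed type $T$, realise $\acalM_2^{T}(\xi,\eta)$ as an iterated fiber product over $B$: a horizontal segment along $p_j$ contributes the moduli space $\calN_2(\eta)$ of integral $-\eta$-arcs in $B$, lifted to $p_j\subset E$ through the tautological section of the critical locus; an interior vertical segment between $p_{j}$ and $p_{j'}$ contributes the entry $\calM'_{p_{j}p_{j'}}(\xi)$ of $\Omega_0^\circ$; the initial and terminal vertical segments contribute entries of $X_0^\circ$ or $\t{Y_0^\circ}$; endpoint matching of consecutive segments is the fiber-product condition over $B$ via $\pi$. Compactify each vertical factor by Propositions~\ref{prop:bM2_mfd}--\ref{prop:bMpq}, and compactify $\calN_2(\eta)$ into $\bcalN_2(\eta)$ by broken $-\eta$-trajectories in $B$ --- which is the same construction applied to the trivial bundle over a point, and whose codimension-$1$ strata sit over the conic stratification of Lemma~\ref{lem:i/i-str}. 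Under the genericity in Assumptions~\ref{hyp:eta} and \ref{hyp:cv-const}, the evaluation maps into $B$ and along the critical loci are mutually strata-transversal, so iterated fiber products of these manifolds with corners are again manifolds with corners in the sense of \cite[Appendix]{BT}. Set
\[
\bacalM_2(\xi,\eta)=\tcoprod_{i=1}^{M}\overline{\acalM_2^{T_i}(\xi,\eta)},
\]
which is a sum of finitely many compact manifolds with corners; path-connectedness follows because $B$ and each fiber $F_b\cong \R^4$ are path-connected, and any two Z-paths can be joined by a chain of codimension-$1$ degenerations together with endpoint sliding along flow-lines of $-\eta$ and $-\xi$.

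\textbf{Main obstacle.} The delicate step is the codimension-$1$ face analysis. A face of $\overline{\acalM_2^{T_i}(\xi,\eta)}$ arises either by (a) breaking a vertical segment at an intermediate critical locus of $f$, (b) breaking a horizontal segment at a critical point of $\eta$, or (c) collapsing a vertical or horizontal segment as its length shrinks to zero, which pushes the adjacent critical-locus endpoint onto an $i/i$-intersection described by Lemma~\ref{lem:i/i-str}. One must verify that each such face also occurs as the corresponding face of some $\overline{\acalM_2^{T_j}(\xi,\eta)}$, so that the disjoint union assembles into a single stratified space, and that the transversality assumptions are compatible with all these matchings. Doing the bookkeeping carefully --- especially at faces of type (c), where the conic stratification of $B$ controls the local model --- is the main technical content, closely parallel to the analogous gluing carried out in \cite{Wa3}.
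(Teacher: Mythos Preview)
Your approach is essentially the paper's: both assemble $\bacalM_2(\xi,\eta)$ from iterated fiber products over $B$ of the compactified vertical pieces of Propositions~\ref{prop:bM2_mfd}--\ref{prop:bMpq} together with horizontal $(-\eta)$-data, then glue along codimension-$1$ faces. The paper packages this as Chen-style iterated integrals $\ibcalM_2(\xi)+\sum_n\int_\sigma X\Omega^n Y^T$ over the chain $\sigma:\bcalD_c(\eta)\to PB$ (\S\ref{ss:iterated-integral}--\ref{ss:iterated-int-Q}); your $\calN_2(\eta)$-factors play the role of the simplex $\Delta^{n+1}$ in that formalism, and the face-matching you describe is exactly the identification $S_n\cong T_{n+1}$ of Proposition~\ref{prop:dw=ww}.

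There is one genuine error in your finiteness argument. You claim the Morse index ``must strictly drop across any interior vertical segment that joins two distinct critical loci,'' and deduce the bound on $n$ from this. That is false: $i/i$-intersections between distinct critical loci of the \emph{same} index do occur, over the codimension-$1$ stratum $B^{(1)}$ of Lemma~\ref{lem:i/i-str}; Assumption~\ref{hyp:eta}(3) only rules out the index-\emph{increasing} direction. The correct finiteness argument is the one you state first but then do not use: the critical \emph{value} of $f$ strictly decreases across each interior vertical segment, and since the loci $p_1,\ldots,p_N$ have distinct constant critical values (Assumption~\ref{hyp:cv-const}(1)), a Z-path visits each at most once. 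Equivalently, the matrix $\Omega$ is strictly upper-triangular in the ordering by critical value, hence nilpotent, so the sum over $n$ terminates.
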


Now we shall define a chain $\bar{s}_\beta$ in $\partial^\fib\EC_2(\pi)$ and chains $M(\xi)$, $H_{pq}(\xi,\eta)$, $\theta_\mathrm{Z}(\xi,\eta)$ and $\theta_\mathrm{Z}^*(\xi,\eta)$ in $\EC_2(\pi)$. 

\subsubsection{$\bar{s}_\beta$} 
For a Morse $v$-gradient $\beta$ of $T^v(E)$ that is constant near $\ell_\infty$ with respect to the partial trivialization of $\pi$ near $\ell_\infty$, let $\Sigma(\beta)=\{x\in E\mid \beta_x=0\}$ and let $s_{\beta}:E-\Sigma(\beta)\to ST^v(E)$ be the pointwise normalization $\beta_x/\|\beta_x\|$ of $\beta$. By the local formula of the $v$-gradient near critical points, we see that $s_\beta$ has a natural smooth extension
\[ s_\beta:B\ell^{\,\fib}(E,\Sigma(\beta))\to ST^v(E),\]
where $B\ell^\fib(X,Y)$ for subbundles $X,Y$ of a fiber bundle is the fiberwise blow-up: $\bigcup_b B\ell(X_b,X_b\cap Y_b)$. 
We identify $ST^v(\Delta_E)$ with $ST^v(E)$, and we denote by $\overline{s}_\beta$ the chain of $\partial^\fib\EC_2(\pi)$ obtained by extending $s_\beta$ by $\phi_0^{-1}(\{a\})$ for some $a\in S^3$ (see \S\ref{ss:adm-propagator} for the definition of $\phi_0$). We orient $\overline{s}_\beta$ by the coorientation that extends $\phi_0^*\omega_{S^3}$ on $\partial^\fib\EC_2(\pi)-\mathrm{Int}\,S_{\Delta_E}$, where $\omega_{S^3}$ is the $SO_4$-invariant unit volume form on $S^3$. 

\subsubsection{$M(\xi)$} 
The natural map $i:\bcalM_2(\xi)-\ev^{-1}(\wDelta_E)\to \mathrm{Int}\,\EC_2(\pi)$ restricts to an embedding on the preimage of a neighborhood of $\partial^\fib\EC_2(\pi)$ and the closure of the partial image in $\EC_2(\pi)$ is a manifold with corners. By extending $\bcalM_2(\xi)-\ev^{-1}(\wDelta_E)$ by attaching the corners now obtained, a compact manifold with corners is obtained and we denote it by $\ibcalM_2(\xi)$. The map $i$ is extended to a smooth map 
\[ M(\xi):\ibcalM_2(\xi)\to \EC_2(\pi), \]
which gives a $(k+5)$-chain if an orientation of $\ibcalM_2(\xi)$ is given. We orient $\calM_2(\xi)$ as follows. For a generic parameter $t\in B$ such that $\xi_t$ is Morse--Smale, we give $\calM_2(\xi_t)$ a coorientation in $F_t\times F_t$ and extend it to that of $\calM_2(\xi)$ in $E\times_B E$. More precisely, since $\calM_2(\xi_t)$ is the image of the embedding $\varphi:F_t\times (0,\infty)\to F_t\times F_t$, $\varphi(u,T)=(u,\Phi_{-\xi_t}^T(u))$, we may define
\[\begin{split}
o(\calM_2(\xi_t))_{(u,v)}&=d\varphi_*(o(F_t)_u\wedge dT), \quad o^*_{F_t\times F_t}(\calM_2(\xi_t))_{(u,v)}=*\, o(\calM_2(\xi_t))_{(u,v)}, 
\end{split}\]
where $d\varphi_*$ is the map induced by $d\varphi$ under the identifications $T(F_t\times (0,\infty))=T^*(F_t\times (0,\infty))$ and $T(F_t\times F_t)=T^*(F_t\times F_t)$ by orthonormal bases and $*$ is the Hodge star operator in $F_t\times F_t$. We choose $o^*_{E\times_B E}(\calM_2(\xi))$ so that its restriction to $F_t\times F_t$ is equivalent to $o^*_{F_t\times F_t}(\calM_2(\xi_t))_{(u,v)}$. Then we choose the orientation of $\ibcalM_2(\xi)$ that is compatible with $o(\calM_2(\xi))$. 

\subsubsection{$H_{pq}(\xi,\eta)$} 
Let $\acalM_2(\xi,\eta)'$ denote the space of equivalence classes of inverse Z-paths and let $\bacalM_2(\xi,\eta)'$ be its compactification defined similarly as $\bacalM_2(\xi,\eta)$. Let $\ev_1,\ev_2:\bacalM_2(\xi,\eta)\to E^\infty$ be the maps giving the initial and terminal endpoints, respectively. Similarly, let $\overline{\ev}_1,\overline{\ev}_2:\bacalM_2(\xi,\eta)'\to E^\infty$ be the maps giving the initial and terminal endpoints, respectively. For critical loci $p,q$ of $\xi$, we define $\bD_p(\xi,\eta), \bA_q(\xi,\eta)$ as
\[ \bD_p(\xi,\eta)=\ev_1^{-1}(p\cap F_0),\quad \bA_q(\xi,\eta)=\overline{\ev}_2^{-1}(q\cap F_0). \]
These are the space of Z-paths from $p$ and that of inverse Z-paths to $q$, respectively. We will define the orientations of $\bD_p(\xi,\eta)$ and $\bA_q(\xi,\eta)$ later in \S\ref{ss:ori}.
We define the space of separated paths $\calH_{pq}^0(\xi,\eta)=\bA_q(\xi,\eta)\times_B\bD_p(\xi,\eta)$ by the pullback of $\pi\circ\overline{\ev}_1:\bA_q(\xi,\eta)\to B$ by $\pi\circ\ev_2: \bD_p(\xi,\eta)\to B$ (\S\ref{ss:iterated-integral}). The map $\ev_1\times \ev_2:\bA_q(\xi,\eta)\times \bD_p(\xi,\eta)\to E^\infty\times E^\infty$ induces a map $\ev:\calH_{pq}^0(\xi,\eta)\to E^\infty\times_B E^\infty$, which lifts to a smooth map
\[ H_{pq}(\xi,\eta):B\ell^\fib(B\ell^\fib(\calH_{pq}^0(\xi,\eta),\Sigma_0),\overline{\Sigma_1-\Sigma_0})\to  \EC_2(\pi), \]
where $\Sigma_0, \Sigma_1$ are the subbundles of $E^\infty\times_B E^\infty$ obtained by restricting the fiber to $\Sigma_0, \Sigma_1$ of \S\ref{ss:adm-propagator}, abusing the notation.

\subsubsection{$\theta_\mathrm{Z}(\xi,\eta)$ and $\theta_\mathrm{Z}^*(\xi,\eta)$} 
Let $g:C_*(\xi_0)\to C_{*+1}(\xi_0)$ be a combinatorial propagator for $C_*(\xi_0)$. Then 
\[ \theta_\mathrm{Z}(\xi,\eta)=M(\xi,\eta)-\sum_{p,q\in P_*(\xi_0)}g_{qp}H_{pq}(\xi,\eta) \]
defines a $(k+5)$-chain of $\EC_2(\pi)$. Let $\partial^*=-\partial^T:C_{*+1}(-\xi_0)\to C_*(-\xi_0)$, where $\partial^T$ is defined by the matrix transpose of $\partial$. Then $\partial^*$ is the boundary operator of the Morse complex for $-\xi_0$, and $g^*=-g^T:C_*(-\xi_0)\to C_{*+1}(-\xi_0)$ is a combinatorial propagator for $\partial^*$. We define
\[ \theta_\mathrm{Z}^*(\xi,\eta)=M(-\xi,\eta)-\sum_{p,q}g^*_{qp}H_{pq}(-\xi,\eta). \]

\begin{Thm}\label{thm:P-propagator}
Suppose $(f,\xi,\eta)$ satisfies Assumption~\ref{hyp:cv-const}. Then
\[ \hat\theta_\mathrm{Z}(\xi,\eta)=\theta_\mathrm{Z}(\xi,\eta)+\theta_\mathrm{Z}^*(\xi,\eta)\]
is a relative $(k+5)$-cycle of $(\EC_2(\pi),\partial \EC_2(\pi))$ such that
\[\partial \hat\theta_\mathrm{Z}(\xi,\eta)=-\overline{s}_\xi-\overline{s}_{-\xi}.\]
Hence $-\hat\theta_\mathrm{Z}(\xi,\eta)$ is an admissible propagator. 
\end{Thm}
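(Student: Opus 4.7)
The plan is to show that $\partial \hat{\theta}_\mathrm{Z}(\xi,\eta)$ is supported on $\partial \EC_2(\pi)$ and equals $-\bar{s}_\xi - \bar{s}_{-\xi}$ on the fiberwise part, by stratifying the boundaries of the Z-path and separated-path moduli and then exploiting the combinatorial identity $\partial g + g\partial = \mathrm{id}$. First I would dissect the codimension-one boundary of $\bacalM_2(\xi,\eta)$ using the iterated-integral matrix description implicit in Propositions~\ref{prop:bM2_mfd}--\ref{prop:bMpq}: a codimension-one stratum arises either (i) by collision of the two endpoints into $S_{\Delta_E}$, (ii) by escape of an endpoint to $\ell_\infty$, (iii) by acquisition of an extra critical-locus passage, i.e.\ a new $\Omega_0^\circ$ factor in $X\,\Omega\cdots\Omega\,Y^T$, or (iv) by collapse of a vertical segment to a critical point, so that the Z-path either terminates at $p\in F_0$ or bifurcates into a separated pair in $\bA_q \times_B \bD_p$. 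Assumption~\ref{hyp:cv-const}(2) ensures that the base flow meets higher-codimension strata only at isolated points, so these four types exhaust the codimension-one boundary.

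Next I would carry out the analogous analysis for $H_{pq}(\xi,\eta)$: its codimension-one boundary splits into (a) breaking of $\bD_p(\xi,\eta)$ or $\bA_q(\xi,\eta)$, algebraically reproducing the action of the Morse boundary on the indices $p,q$, and (b) collision or infinite escape of the two separated endpoints. The key cancellation is then that the type-(iii) and the bifurcation part of type-(iv) boundary of $M(\xi,\eta)$, combined with $\partial \bigl(\sum g_{qp} H_{pq}(\xi,\eta)\bigr)$, reassemble via the chain-homotopy relation $\partial g + g\partial = \mathrm{id}$ so that every interior contribution disappears and only the collision stratum and the infinite-escape stratum survive. The residual self-pairing coming from $\mathrm{id}$ is absorbed by the terminal part of type-(iv), namely closed Z-loops through a single critical point in $F_0$.

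What is left is purely fiberwise. On the diagonal face $S_{\Delta_E}$, the orientation convention for $\calM_2(\xi)$ coming from the embedding $(u,T)\mapsto (u,\Phi_{-\xi}^T u)$ identifies the $T\to 0^+$ end with the outward-pointing unit vector $\xi/\|\xi\|$ and produces exactly $-\bar{s}_\xi$; on $\partial^\fib\EC_2(\pi) - \mathrm{Int}\,S_{\Delta_E}$, the standardness of $\xi$ near $\partial^\fib E$ extends this to the constant Gauss value $\phi_0^{-1}(a)$ required by admissibility. Applying the same argument to $-\xi$ with $g^* = -g^T$, whose sign is exactly what makes $\partial^* g^* + g^* \partial^* = \mathrm{id}$ survive the transposition, gives $\partial \theta_\mathrm{Z}^*(\xi,\eta) = -\bar{s}_{-\xi}$ on the fiberwise part. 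Summing the two chains produces the claimed identity, and $\Z_2$-symmetry under swap of endpoints is automatic because swapping reverses the direction vector on $S_{\Delta_E}$ and thus exchanges $\bar{s}_\xi$ with $\bar{s}_{-\xi}$, matching $\phi_0^{-1}(\{a,-a\})$ on the constant part.

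The hard part will be the sign bookkeeping. Each face inherits a coorientation assembled from those of the descending and ascending manifolds (\S\ref{ss:morse-complex}), from the base orientation, and from the convention of Remark~\ref{rem:M-infty} at the anomalous face; verifying that the geometric signs on the boundary strata of $M$ and $H_{pq}$ agree term by term with the algebraic signs in $\partial g + g\partial$, and that the prescription $g^* = -g^T$ indeed forces the symmetric partner $-\bar{s}_{-\xi}$ rather than $+\bar{s}_{-\xi}$ or a different pairing, will demand a careful orientation calculus across the entire stratified picture.
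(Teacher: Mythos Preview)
Your overall architecture is right --- stratify the codimension-one boundaries of $M(\xi)$ and of the $H_{pq}$, and let $\partial g + g\partial = \mathrm{id}$ eat the interior strata --- but there is a genuine gap in how you handle the diagonal face. You assert that after the $\partial g + g\partial$ cancellation the only surviving contribution on $S_{\Delta_E}$ is the $T\to 0^+$ end of $M(\xi)$, giving exactly $-\overline{s}_\xi$, and hence that $\partial\theta_\mathrm{Z}(\xi,\eta)=-\overline{s}_\xi$ individually. This is not the case. Your own item~(b) for $H_{pq}$, ``collision of the two separated endpoints'', produces an additional term on $S_{\Delta_E}$: when the endpoint of $\bA_q(\xi,\eta)$ and the endpoint of $\bD_p(\xi,\eta)$ collide, they do so along a flow-line between $p$ and $q$, contributing a chain of the form $ST^v(\overline{L}_{pq})$ where $\overline{L}_{pq}=\bcalD_p(\xi)\times_{E^\infty}\bcalA_q(\xi)$, with coefficient $g_{qp}$. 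There is nothing inside $\theta_\mathrm{Z}(\xi,\eta)$ alone that kills this.

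The actual role of the sum $\theta_\mathrm{Z}+\theta_\mathrm{Z}^*$ is not merely to symmetrize $\overline{s}_\xi \leftrightarrow \overline{s}_{-\xi}$ under swap, as you suggest in your last paragraph; it is precisely to cancel these residual $ST^v(\overline{L}_{pq})$ terms. In $\partial\theta_\mathrm{Z}^*$ the analogous collision term appears over $\overline{L}_{qp}$ (for $-\xi$) with coefficient $g^*_{pq}=-(g^T)_{pq}=-g_{qp}$. One must then check that the coorientations of $ST^v(\overline{L}_{pq})$ and $ST^v(\overline{L}_{qp})$ in $ST^vE$ agree --- this follows from $o^*_E(\calA_q)\wedge o^*_E(\calD_p)=(-1)^{|q|(4-|p|)}o^*_E(\calD_p)\wedge o^*_E(\calA_q)$ with $|p|=|q|+1$ --- so that the opposite coefficients force a cancellation. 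Without this step your argument does not close, and the reason you give for summing with $\theta_\mathrm{Z}^*$ is the wrong one.
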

The minus sign in the formula of $\partial \hat\theta_\mathrm{Z}(\xi,\eta)$ is due to the outward-normal-first convention for the boundary orientation (Appendix~\ref{s:ori} (\ref{eq:inward_first})). We shall prove Theorem~\ref{thm:P-propagator} in the rest of this section. 

\begin{proof}[Proof of Theorem~\ref{thm:GCF} assuming Theorem~\ref{thm:P-propagator}] 
If we take $-\hat{\theta}_\mathrm{Z}(\xi^{(j)},\eta)$ as an admissible propagator, the configuration of each intersection point of (\ref{eq:I^adm}) consists of the vertices in a mapping from a $\vec{C}$-graph to $E$ such that each edge is either a vertical flow-line of $M(\pm\xi^{(j)},\eta)$ or of a pair of Z-paths in $H_{pq}(\pm\xi^{(j)},\eta)$ for some $p,q$. This is precisely a Z-graph and then the right hand side is obtained immediately.
\end{proof}

\subsection{Iterated integrals of fiberwise spaces}\label{ss:iterated-integral}

The spaces of Z-paths given in the definition of $\theta_\mathrm{Z}$ can be described by a geometric analogue of K.~T.~Chen's iterated integrals (\cite{Ch}). Let $b_0$ be a base point of $B$. Let $PB$ denote the space of piecewise smooth paths $\gamma:[0,1]\to B$ such that $\gamma(0)=b_0$. Let $\sigma:C\to PB$ be a chain from a compact oriented manifold $C$ with corners. Let $\phi_i:A_i\to B$ ($i=1,\ldots,k$) be fiberwise spaces over $B$ (\S\ref{ss:M2}). We define the map $\hat\sigma:C\times \Delta^{k-1}\to B^k$ by 
\[ (\gamma,s_1,\ldots,s_{k-1}) \mapsto (\bar\gamma(s_1),\cdots,\bar\gamma(s_{k-1}),\bar\gamma(1)), \]
where $\bar\gamma=\sigma(\gamma)$, $\Delta^{k-1}=\{(s_1,\ldots,s_{k-1})\in \R^{k-1}\mid 0\leq s_1\leq\cdots\leq s_{k-1}\leq 1\}$. Then the iterated integral is defined by  $\displaystyle \int_\sigma A_1\cdots A_k=\hat\sigma^*(A_1\times\cdots\times A_k)$, or
\[ \begin{split}
  \int_{(\gamma,s_1,\ldots,s_{k-1})\in C\times \Delta^{k-1}}A_1(\bar{\gamma}(s_1))\times\cdots\times A_{k-1}(\bar{\gamma}(s_{k-1}))\times A_{k}(\bar{\gamma}(1)).
\end{split} \]
Here, $\hat\sigma^*(A_1\times\cdots\times A_k)$ is the pullback of the fiberwise space $\phi_1\times\cdots\times\phi_k:A_1\times\cdots\times A_k\to B^k$ by $\hat\sigma$. Then the iterated integral is a fiberwise space over $C\times\Delta^{k-1}$.

Iterated integrals can be extended to matrices whose entries are fiberwise spaces over $B$. For matrices $A_1, A_2$ of fiberwise spaces over $B$, its direct product $A_1\times A_2$ is defined by the following matrix of fiberwise spaces over $B\times B$:
\[ A_1(b_1)\times A_2(b_2)=\Bigl(\sum_k (A_1(b_1))_{ik}\times (A_2(b_2))_{kj} \Bigr) \qquad ((b_1,b_2)\in B\times B).\]
$A_1\times\cdots\times A_k$ etc. can be defined similarly. Iterated integrals of matrices can be defined by the formula above with this convention.

\subsection{Parallel transport}\label{ss:parallel}

As a 0-chain $\sigma$ of $PB$, we take a map $\sigma:\{*\}\to \{\alpha\}\subset PB$ that assigns to $*$ a flow-line $\alpha:I\to B$ of an $h$-gradient $-\eta$ such that $\alpha(0)=b_0$. Let $\pbcalM_{p_ip_j}(\xi)_\alpha$ denote the pullback of $\pbcalM_{p_ip_j}(\xi)\to B$ by $\alpha$. Put $\Omega_\alpha=((1-\delta_{ij})\pbcalM_{p_ip_j}(\xi)_\alpha)$ and $\bvec{e}_{p_i}=(\emptyset\ \ \cdots\ \ \emptyset\ \ \{p_i\}\ \ \emptyset\ \ \cdots\ \ \emptyset)$. For critical loci $p,q$ of $\xi$, the set of Z-paths from $p_{\alpha(0)}=p\cap \pi^{-1}(\alpha(0))$ to $q_{\alpha(1)}=q\cap \pi^{-1}(\alpha(1))$ can be written as
\begin{equation}\label{eq:eooe}
 \sum_{k=0}^\infty \int_\sigma \bvec{e}_p\underbrace{\Omega_\alpha\cdots\Omega_\alpha}_k\bvec{e}_q^T. 
\end{equation}
If $|p_{\alpha(0)}|=|q_{\alpha(1)}|$ and $\alpha,\xi$ are generic, then as shown in the following lemma, this is a finite set and can be counted with signs as in (\ref{eq:e(gamma)}).

\begin{Lem}\label{lem:parallel} If $|p_{\alpha(0)}|=|q_{\alpha(1)}|$ and $\alpha,\xi$ are generic, then the following identity holds.
\begin{equation}\label{eq:e0101e}
 n_\alpha(p_{\alpha(0)},q_{\alpha(1)})=\#\sum_{k=0}^\infty\int_\sigma \bvec{e}_p\underbrace{\Omega_\alpha^1\cdots\Omega_\alpha^1}_k\bvec{e}_q^T
\end{equation}
Here, $\Omega_\alpha^1$ is the matrix obtained from $\Omega_\alpha$ by replacing all the $(p_i,p_j)$-entries for $|p_i|\neq |p_j|$ with $\emptyset$. Moreover, $\Phi_\alpha:C_*(\xi_{\alpha(0)})\to C_*(\xi_{\alpha(1)})$ is a chain map.
\end{Lem}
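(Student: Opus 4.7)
The approach is to match Z-paths with the intersection points captured by the iterated integral, and then derive the chain map property from a boundary count on a one-parameter moduli. First I would catalog the possible shapes of a Z-path from $p_{\alpha(0)}$ to $q_{\alpha(1)}$ when $|p|=|q|=i$. On horizontal segments the index of the hosting critical locus is constant, and on vertical segments it is non-increasing: strictly decreasing by one for a standard Morse flow-line and constant for an $i/i$-intersection, since Assumption~\ref{hyp:eta}(3) forbids any $i/i+\ell$-intersection with $\ell\geq 1$. Because the two endpoints share the same index, every vertical segment in such a Z-path must actually be an $i/i$-intersection, and every intermediate critical locus has index $i$. Next, a point of $\int_\sigma \bvec{e}_p\Omega_\alpha^1\cdots\Omega_\alpha^1\bvec{e}_q^T$ with $k$ factors of $\Omega_\alpha^1$ consists of an ordered tuple $0<s_1<\cdots<s_{k-1}<1$ in $\Delta^{k-1}$ together with, at each $s_j$, a flow-line in $\pbcalM_{r_{j-1}r_j}(\xi)_{\alpha(s_j)}$ between critical loci $r_{j-1}, r_j$ of index $i$ (with $r_0=p$, $r_k=q$). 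By Lemma~\ref{lem:i/i-str} and the genericity of $\alpha$ with respect to $B^{(1)}$, these fibered products are transverse and zero-dimensional; the $s_j$ are precisely the parameters along $\alpha$ where an $i/i$-intersection is witnessed, so one obtains a natural bijection between equivalence classes of Z-paths counted by $n_\alpha(p_{\alpha(0)},q_{\alpha(1)})$ and points of the iterated integral.

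To compare signs, I would unpack the coorientation of each $\pbcalM_{r_{j-1}r_j}(\xi)$ in $B$, obtained from those of $\calD_{r_{j-1}}(\xi)$, $\calA_{r_j}(\xi)$ and the vertical level-surface $\widetilde{L}$ as in Proposition~\ref{prop:bMpq}, and compare it with $\ve(\sigma_j)$ defined just above~(\ref{eq:e(gamma)}), which is exactly the same recipe after pullback by the bundle map $\hat\alpha$ associated to $\alpha$. Since the iterated-integral intersection sign is the product of these local coorientations, it matches $\ve(\gamma)=\ve(\sigma_1)\cdots\ve(\sigma_r)$ term by term, proving the identity.

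For the chain map property $\Phi_\alpha\partial^a=\partial^b\Phi_\alpha$, I would run the same matching on Z-paths from $p_{\alpha(0)}$ to $q_{\alpha(1)}$ with $|p|-|q|=1$. The index analysis now forces every such Z-path to contain exactly one standard Morse flow-line that drops the index, inserted between blocks of $i/i$- and $(i-1)/(i-1)$-intersections, so that the associated moduli is a compact oriented one-manifold. Its boundary splits into two kinds: either the Morse flow-line is pushed to an endpoint of $\alpha$, contributing $\Phi_\alpha(\partial^a p)$ at $\alpha(0)$ and $\partial^b(\Phi_\alpha p)$ at $\alpha(1)$ (with signs dictated by the outward-normal-first convention), or two adjacent transitions collide along $\alpha$. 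The conic-stratification description from Lemma~\ref{lem:i/i-str} supplies the local model of a collision, and the parametrized Morse--Smale condition together with Assumption~\ref{hyp:cv-const}(2) shows that these collision contributions pair up and cancel, in exactly the same way as in the proof that $\partial^2=0$ for the Morse complex. Vanishing of the algebraic boundary of this one-chain then yields the chain map identity.

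The main obstacle lies in the collision step: one must verify that at every codimension-two degeneration along $\alpha$ (two $i/i$-intersections colliding, or an $i/i$- and an $(i-1)/(i-1)$-intersection merging with the Morse flow-line) the local handle-slide model of Lemma~\ref{lem:i/i-str} produces cancelling pairs, and that no additional bubbling arises from the non-compactness of the space of vertical flow-lines. This is precisely where the no-$i/i+\ell$ hypothesis of Assumption~\ref{hyp:eta} and the transversality of $\alpha$ to $B^{(1)}$ (avoiding $B^{(\geq 2)}$) are doing the real work.
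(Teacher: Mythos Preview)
Your argument for the identity~(\ref{eq:e0101e}) is correct and essentially matches the paper's. Both observe that when $|p|=|q|$ every vertical segment of a Z-path must be an $i/i$-intersection (by Assumption~\ref{hyp:eta}(3)), so Z-paths correspond bijectively to points of the iterated integral with matching signs. The paper then packages this as the finite product $\bvec{e}_p\prod_j(\bvec{1}+\Omega_\alpha^1(\alpha(t_j)))\bvec{e}_q^T$ over the handle-slide times $t_1<\cdots<t_n$ along $\alpha$.

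For the chain map property the paper takes a much shorter and genuinely different route. Since $\alpha$ meets $B^{(1)}$ transversally and avoids $B^{(\geq 2)}$, the factorization above exhibits $\Phi_\alpha$ as a product of elementary matrices, one per handle-slide. That a single handle-slide gives a chain map is then quoted from \cite[Lemma~9.3]{Wa2}; a composition of chain maps is a chain map, and one is done. Your one-dimensional moduli/cobordism argument is a legitimate alternative in principle, but your invocation of Assumption~\ref{hyp:cv-const}(2) is misplaced: that assumption asserts $\Phi_\alpha=1$ for flow-lines between critical points of $\eta$, is introduced \emph{after} this lemma as an additional simplifying hypothesis, and plays no role in the cancellation you need. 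Once you strip it out, the real content of your ``collision'' step---what happens as the index-dropping Morse segment crosses a handle-slide time $t_j$---is precisely that a single handle-slide commutes with $\partial$, i.e.\ you would be reproving \cite[Lemma~9.3]{Wa2} locally. The paper's algebraic factorization sidesteps the need to assemble this into a global compact $1$-manifold and analyze its ends.
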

\begin{proof}
In the case $|p_{\alpha(0)}|=|q_{\alpha(1)}|$, the vertical segments of a Z-path are only $i/i$-intersections. Thus the count of (\ref{eq:eooe}) is equal to the right hand side of (\ref{eq:e0101e}). Since there are finitely many parameters at which $i/i$-intersections occur, each of the integrals on the right hand side is a finite sum and we have
\[ \begin{split}
  \mathrm{RHS}&=\bvec{e}_p(\alpha(0))\times\Bigl(\sum_{k=0}^\infty
\sum_{s_1<s_2<\cdots<s_k}\Omega_\alpha^1(\alpha(s_1))\times\cdots\times\Omega_\alpha^1(\alpha(s_k))\Bigr)\times \bvec{e}_q(\alpha(1))^T\\
&\cong\bvec{e}_p(\alpha(0))\times(\bvec{1}+\Omega_\alpha^1(\alpha(t_1)))\times \cdots \times (\bvec{1}+\Omega_\alpha^1(\alpha(t_n)))\times \bvec{e}_q(\alpha(1))^T.
\end{split} \]
Here, $t_1,\ldots,t_n$ are the times at which $i/i$-intersections occur, and $0<t_1<\cdots<t_n<1$. Since the count of each term $\bvec{1}+\Omega_\alpha^1(\alpha(t_j))$ is the elementary matrix corresponding to a handle-slide, the left hand side of the identity is obtained. That each term $\bvec{1}+\Omega_\alpha^1(\alpha(t_j))$ gives a chain map follows from \cite[Lemma~9.3]{Wa2}.
\end{proof}

\subsection{Iterated integral expressions for $\bD_p(\xi,\eta)$ and $\bA_q(\xi,\eta)$}\label{ss:iterated-int-Q}

Let $\{p_1,\ldots,p_N\}$ be the set of all critical loci of $f$ numbered so that $f(p_1(b_0))>\cdots>f(p_N(b_0))$ and put $p_0=p_{N+1}=\ell_\infty$. We put 
$X=(-\ell_\infty\ \ \bcalA_{p_1}(\xi)\ \ \cdots\ \ \bcalA_{p_N}(\xi)\ \ \bcalA_\infty(\xi))$, $Y=(\bcalD_\infty(\xi)\ \ \bcalD_{p_1}(\xi)\ \ \cdots\ \ \bcalD_{p_N}(\xi)\ \ -\ell_\infty)$, $\Omega=((1-\delta_{ij})\pbcalM_{p_ip_j}(\xi))$, 
whose entries are compact in the sense of Propositions~\ref{prop:bD}, \ref{prop:bMpq} and Remark~\ref{rem:M-infty}. For a critical point $c$ of $\eta$, let $\sigma:\bcalD_c(\eta)\to B$ be the chain given by the natural map extending the inclusion $\calD_c(\eta)\to B$. Moreover, by parametrizing a flow-line from $c$ to a terminal point by a parameter that is proportional to the height of $h$, we may consider a point of $\bcalD_c(\eta)$ as a point of $PB$ base pointed at $c$. Then we may consider $\sigma:\bcalD_c(\eta)\to B$ as a chain $\bcalD_c(\eta)\to PB$. 

Let $P'B$ denote the space of piecewise smooth paths $\gamma:[0,1]\to B$ such that $\gamma(1)=c$. There is a map $\iota:PB\to P'B$ induced by reversing paths. For the chain $\sigma$ of $PB$, we write $\sigma'=\iota\circ \sigma$. We put
\[ 
   \bD_p^0(\xi,\eta)_\sigma=\sum_{k=0}^\infty \int_\sigma \bvec{e}_p\underbrace{\Omega\cdots\Omega}_kY^T,\qquad \bA_q^0(\xi,\eta)_\sigma=\sum_{k=0}^\infty \int_{\sigma'} X\underbrace{\Omega\cdots\Omega}_k\bvec{e}_q^T.
\]
The sums are disjoint unions and finite. A generic point of $\bD_p^0(\xi,\eta)_\sigma$ represents a Z-path from $p\cap F_c$ over a flow-line of $-\eta$ from $c$. A generic point of $\bA_q^0(\xi,\eta)_\sigma$ represents a Z-path to $q\cap F_c$ over a flow-line of $\eta$ to $c$. 

\begin{LemD}\label{lem:dxooy}
Let $n\geq 1$. For a generic $\xi$, the space $\displaystyle\int_\sigma X\underbrace{\Omega\cdots\Omega}_n\,\t{Y}$ is a disjoint union of finitely many manifolds with corners, and the closure of its codimension 1 stratum is the sum of $S_n,T_n,U_n,V_n$ given as follows.
\begin{equation}\label{eq:dxooy}\small
\begin{split}
  S_n=&\int_\sigma (\partial X)\underbrace{\Omega\cdots\Omega}_{n}\,\t{Y}
  +\sum_{i=1}^n\int_\sigma X\underbrace{\Omega\cdots\Omega}_{i-1}(\partial\Omega)\underbrace{\Omega\cdots\Omega}_{n-i}\,\t{Y}+\int_\sigma X\underbrace{\Omega\cdots\Omega}_n(\partial\t{Y})\\
  T_n=&\int_\sigma (X\times_B\Omega)\underbrace{\Omega\cdots\Omega}_{n-1}\,\t{Y}
  +\sum_{i=1}^{n-1}\int_\sigma X\underbrace{\Omega\cdots\Omega}_{i-1}(\Omega\times_B\Omega)\underbrace{\Omega\cdots\Omega}_{n-i-1}\,\t{Y}\\
  &+\int_\sigma X\underbrace{\Omega\cdots\Omega}_{n-1}(\Omega\times_B\t{Y})\\
  U_n=&X(c)\times \int_\sigma \underbrace{\Omega\cdots\Omega}_n\,Y^T,\qquad V_n=\int_{\partial \sigma}X\underbrace{\Omega\cdots\Omega}_n\,\t{Y}
\end{split}
\end{equation}
\end{LemD}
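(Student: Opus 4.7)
The plan is to realize the iterated integral $\int_\sigma X\,\Omega^n\,\t{Y}$ as the pullback $\hat\sigma^*(X\times\Omega^{\times n}\times\t{Y})$ of a matrix of fiberwise spaces by the piecewise smooth map $\hat\sigma:\bcalD_c(\eta)\times\Delta^{n+1}\to B^{n+2}$ of \S\ref{ss:iterated-integral}. Every entry of $X$, $\Omega$, and $\t{Y}$ is a compact manifold with corners by Propositions~\ref{prop:bD} and~\ref{prop:bMpq} (with Remark~\ref{rem:M-infty} handling the $\infty$-entries). For a generic $v$-gradient $\xi$ satisfying Assumption~\ref{hyp:eta}, a Sard-type argument analogous to the one used for $\bcalM_2(\xi)$ should yield strata-transversality of $\hat\sigma$ against every broken-flow-line stratum of $X\times\Omega^{\times n}\times\t{Y}$. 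Once this is in place, the pullback is a finite disjoint union of compact manifolds with corners of the expected dimension, which is taken as the definition of $\int_\sigma X\,\Omega^n\,\t{Y}$.

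I would then enumerate the codimension $1$ boundary strata, which arise from three geometric sources: (a) the boundary $\partial\bcalD_c(\eta)$ of the chain $\sigma$, contributing $V_n=\int_{\partial\sigma}X\,\Omega^n\,\t{Y}$; (b) the $n+2$ facets of the simplex $\Delta^{n+1}=\{0\leq s_1\leq\cdots\leq s_{n+1}\leq 1\}$; and (c) the boundary strata of the entries of $X$, $\Omega$, $\t{Y}$, which by Propositions~\ref{prop:bD} and~\ref{prop:bMpq} consist of singly-broken flow-lines and together contribute $S_n$, one term for each of the three positions (first, middle, last) where the broken factor sits. For (b): each interior facet $\{s_i=s_{i+1}\}$ collapses two adjacent simplex parameters, converting the adjacent pair of factors evaluated at $\bar\gamma(s_i)$ into their fiberwise product over $B$, and together with the top facet $\{s_{n+1}=1\}$ (where the last $\Omega$ meets $\t{Y}$) these produce the three kinds of terms constituting $T_n$. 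The bottom facet $\{s_1=0\}$ evaluates $X$ at $\bar\gamma(0)$; since paths in the image of $\sigma$ are base-pointed at $c$ under the $h$-height reparametrization, one has $\bar\gamma(0)\equiv c$ on all of $\bcalD_c(\eta)$, so this stratum splits as a direct product $X(c)\times\int_\sigma\Omega^n\,\t{Y}=U_n$. Summing (a), (b), (c) yields the claimed $S_n+T_n+U_n+V_n$, with signs determined by the outward-normal-first convention of Appendix~\ref{s:ori} together with the standard orientation of $\Delta^{n+1}$.

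The main obstacle will be arranging simultaneous strata-transversality: the codimension $\geq 1$ strata of $X$, $\Omega$, $\t{Y}$ from Propositions~\ref{prop:bD} and~\ref{prop:bMpq} are themselves built from matrix products of moduli of broken flow-lines of arbitrary combinatorial type, so a single generic $\xi$ must be transversal to all of them at once. This is standard once the stratified manifold structure is in hand, but requires care with the matrix-indexed families. A secondary point is to verify that the two types of codimension $1$ degeneration affecting the first slot, namely the $\partial X$ contribution in $S_n$ (where $X$ breaks while $s_1$ stays in the open simplex) and the $X\times_B\Omega$ contribution in $T_n$ (where $X$ and the first $\Omega$ remain unbroken while $s_1$ collides with $s_2$), correspond to disjoint open loci in the boundary and therefore contribute independently without cancellation.
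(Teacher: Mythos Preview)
Your decomposition of the codimension~1 boundary into the contributions $S_n$ (from $\partial$ of the factors), $T_n+U_n$ (from $\partial\Delta^{n+1}$), and $V_n$ (from $\partial\sigma$) is exactly what the paper does in \S\ref{ss:transversality}, and your identification of the $\{s_1=0\}$ facet with $U_n$ via $\bar\gamma(0)\equiv c$ is correct. So the enumeration step is fine.

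The gap is in the transversality step. You write that a Sard-type argument ``should yield strata-transversality'' and that the obstacle is merely handling the matrix-indexed family of strata simultaneously. That is not the real difficulty. The paper is explicit on this point: ``since $\hat\sigma$ cannot be perturbed in arbitrary direction in the space of smooth maps, the transversality is not obvious from Thom's transversality theorem.'' The map $\hat\sigma$ is built from flow-lines of the fixed $h$-gradient $-\eta$; you cannot wiggle it freely in $C^\infty(\bcalD_c(\eta)\times\Delta^{n+1},B^{n+2})$. The only parameter you may perturb is $\xi$, which moves the strata $B^{(1)}$ of $i/i$-intersections but does not move $\hat\sigma$ at all.

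The actual obstruction occurs when some $\phi_j:\pbcalM_{q_{j-1}q_j}\to B$ is an immersion onto a codimension~1 stratum of $B^{(1)}$ and $\bar\gamma$ happens to be tangent to $B^{(1)}$ at $y_j$; worse, this may happen at several $y_j,y_{\ell_1},\dots,y_{\ell_r}$ simultaneously along the same flow-line, and the tangencies may be carried onto one another by the flow of $\eta$. The paper handles this (Case~2 of \S\ref{ss:transversality}) by introducing a level surface $Q$ just below $c$, projecting $B^{(1)}_\sigma$ up along $\eta$ to a map $\varpi:B^{(1)}_\sigma\to Q$, invoking the genericity of finite maps (\cite[Thm.~VII.2.6]{GG}) to make $\varpi^{-1}(v)$ finite, and then perturbing $\xi$ so that the finitely many sheets of $\varpi$ through a multiple singular value meet transversally in $Q$. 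Only then can one produce the missing direction $\bar\mu_j\notin T_{y_j}B^{(1)}_\sigma$ while cancelling the unwanted components $\bar\mu_{\ell_m}$ by vectors in $\mathrm{Im}\,d\phi_{\ell_m}$. Your proposal does not contain this argument, and the analogy with $\bcalM_2(\xi)$ does not supply it, since Proposition~\ref{prop:bM2_mfd} involves no $\hat\sigma$ at all.
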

We will prove Lemma~\ref{lem:dxooy} in \S\ref{ss:transversality}.

Proofs of the following two lemmas are similar to Lemma~\ref{lem:dxooy}.
\begin{LemD}\label{lem:dooy}
Let $n\geq 1$. For a generic $\xi$, the space $\displaystyle\int_\sigma \bvec{e}_p\underbrace{\Omega\cdots\Omega}_n\,\t{Y}$ is a disjoint union of finitely many manifolds with corners, and the closure of its codimension 1 stratum is given by the following formula.
\[ \small\begin{split}
  &\sum_{i=1}^n\int_\sigma \bvec{e}_p\underbrace{\Omega\cdots\Omega}_{i-1}(\partial\Omega)\underbrace{\Omega\cdots\Omega}_{n-i}\,\t{Y}
  +\int_\sigma \bvec{e}_p\underbrace{\Omega\cdots\Omega}_n(\partial\t{Y})+\int_\sigma (\bvec{e}_p\times_c \Omega)\underbrace{\Omega\cdots\Omega}_{n-1}Y^T\\
  &+\sum_{i=1}^{n-1}\int_\sigma \bvec{e}_p\underbrace{\Omega\cdots\Omega}_{i-1}(\Omega\times_B\Omega)\underbrace{\Omega\cdots\Omega}_{n-i-1}\,\t{Y}
  +\int_\sigma \bvec{e}_p\underbrace{\Omega\cdots\Omega}_{n-1}(\Omega\times_B\t{Y})\\
  &+\int_{\partial \sigma}\bvec{e}_p\underbrace{\Omega\cdots\Omega}_n\,\t{Y}.
\end{split}\]
Let $S_n'$ be the first row, and let $T_n'$ be the second row.
\end{LemD}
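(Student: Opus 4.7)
The plan is to mirror the proof of Lemma--Definition~\ref{lem:dxooy}, with the simplification that the leading factor $\bvec{e}_p$ --- the indicator vector at the single critical point $p\cap F_c\in F_c$ --- carries no internal boundary. Interpreted as in the description of $\bD_p^0(\xi,\eta)_\sigma$, a generic point of $\int_\sigma \bvec{e}_p\,\Omega^n\,\t{Y}$ is a tuple consisting of a flow-line $\bar\gamma$ of $-\eta$ parametrised by $\gamma\in\bcalD_c(\eta)$, time-ordered parameters $0\le t_1\le\cdots\le t_n\le 1$ at which the $n$ vertical segments occur, matrix-compatible points $z_i\in\Omega(\bar\gamma(t_i))$ whose first incoming index equals $p$, and a terminal point $y\in\t{Y}(\bar\gamma(1))$. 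Formally, the space is the pullback along the evaluation map of the fibrewise product $\bvec{e}_p\times\Omega^{\times n}\times\t{Y}\to B^{n+1}$.

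The first step will be transversality. By Propositions~\ref{prop:bD} and~\ref{prop:bMpq}, a generic $\xi$ already makes each $\Omega$ and $\t{Y}$ a smooth manifold with corners whose codimension-$r$ stratum consists of $r$-fold broken flow-lines, and a further generic perturbation of $(\xi,\eta)$ places the evaluation map from $\bcalD_c(\eta)\times\Delta^n$ to $B^{n+1}$ in general position with respect to the product of these factors. Consequently, the pullback is a finite disjoint union of smooth manifolds with corners of the predicted dimension. The detailed Sard-type verification will be deferred to \S\ref{ss:transversality}, which also supplies the transversality input used by Lemma--Definition~\ref{lem:dxooy}.

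The second step will enumerate the codimension-1 strata of the closure, which fall into three groups. \emph{Internal boundaries of factors:} each $\Omega$-slot contributes $\int_\sigma \bvec{e}_p\,\Omega^{i-1}(\partial\Omega)\Omega^{n-i}\,\t{Y}$, and the $\t{Y}$-slot contributes $\int_\sigma \bvec{e}_p\,\Omega^n(\partial\t{Y})$; there is no $\partial\bvec{e}_p$ term because $\bvec{e}_p$ is a single interior point of $F_c$. These constitute $S_n'$. \emph{Simplex-face collapses:} the face $t_1=0$ places the first vertical segment in the base fibre $F_c$ alongside $\bvec{e}_p$, producing $\int_\sigma(\bvec{e}_p\times_c\Omega)\Omega^{n-1}Y^T$; the faces $t_i=t_{i+1}$ for $1\le i\le n-1$ collapse adjacent $\Omega$'s onto a common fibre, producing the $n-1$ terms of the form $\int_\sigma\bvec{e}_p\,\Omega^{i-1}(\Omega\times_B\Omega)\Omega^{n-i-1}\,\t{Y}$; and the face $t_n=1$ collapses the last $\Omega$ onto $\t{Y}$, producing $\int_\sigma\bvec{e}_p\,\Omega^{n-1}(\Omega\times_B\t{Y})$. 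These constitute $T_n'$. \emph{Parameter boundary:} the stratum from $\partial\sigma$ yields $\int_{\partial\sigma}\bvec{e}_p\,\Omega^n\,\t{Y}$.

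The final step is sign bookkeeping using the outward-normal-first convention of Appendix~\ref{s:ori}, matching the contributions above with the signs built into the orientations of $\Omega$, $\t{Y}$, and their strata. The main obstacle is Step 1: to make the enumeration rigorous one needs the pullback to be genuinely strata transversal, simultaneously for every simplex face and every codimension-$r$ stratum of each factor, with no unintended coincidences (in particular one must check that no further face collapse, such as ``$\bvec{e}_p$ meeting the first $\Omega$ away from $c$'', gives an extra codimension-1 stratum; since $\bvec{e}_p$ is a section this is not the case, but verifying it carefully is what the transversality argument buys). Granting the transversality input from \S\ref{ss:transversality}, the enumeration above is exhaustive and yields the asserted formula, with $S_n'$ and $T_n'$ defined as the first and second rows of the displayed sum.
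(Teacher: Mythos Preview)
Your approach is essentially the paper's: it just says the proof is ``similar to Lemma~\ref{lem:dxooy}'', and you have correctly unpacked that, identifying the three sources of codimension~1 boundary (internal $\partial$'s of the factors, simplex faces, and $\partial\sigma$) and matching each to a term in the displayed formula. The enumeration is complete and correct.

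There is, however, one labeling slip. You place the term $\int_\sigma(\bvec{e}_p\times_c\Omega)\Omega^{n-1}Y^T$ into $T_n'$ (since it arises from the simplex face $t_1=0$), whereas the statement \emph{defines} $S_n'$ to be the entire first row of the display, which includes this term; $T_n'$ is only the second row, namely the $\Omega\times_B\Omega$ and $\Omega\times_B Y^T$ contributions. Your grouping is conceptually natural (by type of boundary), but it is not the one being defined here, and the distinction matters downstream: in Proposition~\ref{prop:dw=ww} the isomorphism $S_n'\cong T_{n+1}'$ is meant only for the first two summands of $S_n'$ (as the proof of Proposition~\ref{prop:dD} makes explicit), with the $\bvec{e}_p\times_c\Omega$ term surviving as the leftover that produces $\sum_r\langle\overline{\partial}p,r\rangle\,\bD_r$. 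With your labeling, the bookkeeping in the gluing of Proposition~\ref{prop:Dp} and the boundary computation of Proposition~\ref{prop:dD} would not line up. Simply reassign that term to $S_n'$ and your argument goes through verbatim.
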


\begin{LemD}\label{lem:dxoo}
Let $n\geq 1$. For a generic $\xi$, the space $\displaystyle\int_{\sigma'} X\underbrace{\Omega\cdots\Omega}_n\,\t{\bvec{e}_q}$ is a disjoint union of finitely many manifolds with corners, and the closure of its codimension 1 stratum is given by the following formula.
\[ \small\begin{split}
  &\int_{\sigma'}(\partial X)\underbrace{\Omega\cdots\Omega}_n\t{\bvec{e}_q}
  +\sum_{i=1}^n\int_{\sigma'} X\underbrace{\Omega\cdots\Omega}_{i-1}(\partial\Omega)\underbrace{\Omega\cdots\Omega}_{n-i}\,\t{\bvec{e}_q}\\
  &+\int_{\sigma'}(X\times_B\Omega)\underbrace{\Omega\cdots\Omega}_{n-1}\t{\bvec{e}_q}
  +\sum_{i=1}^{n-1}\int_{\sigma'} X\underbrace{\Omega\cdots\Omega}_{i-1}(\Omega\times_B\Omega)\underbrace{\Omega\cdots\Omega}_{n-i-1}\,\t{\bvec{e}_q}\\
  &+\int_{\sigma'}X\underbrace{\Omega\cdots\Omega}_{n-1}(\Omega\times_c\t{\bvec{e}_q})+\int_{\partial \sigma'}X\underbrace{\Omega\cdots\Omega}_n\,\t{\bvec{e}_q}.
\end{split}\]
Let $S_n''$ be the first row, and let $T_n''$ be the second row.
\end{LemD}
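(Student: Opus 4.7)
The plan is to mirror the proof of Lemma~\ref{lem:dxooy} almost line by line, with the one substantive modification that the terminal factor is now the fixed critical vector $\t{\bvec{e}_q}$ (a $0$-chain supported at $q\cap F_c$) rather than the column $\t{Y}$ of compactified descending manifolds, and that the base chain $\sigma'$ consists of paths ending, rather than starting, at $c$.

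First, I realize $\int_{\sigma'}X\,\Omega^n\,\t{\bvec{e}_q}$ as the pullback of the fiberwise space $X\times\Omega\times\cdots\times\Omega\times\t{\bvec{e}_q}$ over $B^{n+2}$ along the evaluation map
\[ \hat\sigma':\bcalD_c(\eta)\times\Delta^{n+1}\to B^{n+2},\qquad (\gamma;s_1,\ldots,s_{n+1})\mapsto\bigl(\bar\gamma(s_1),\ldots,\bar\gamma(s_{n+1}),c\bigr), \]
with $\bar\gamma=\sigma'(\gamma)$ and the last coordinate fixed at $\bar\gamma(1)=c$. By the same parametric transversality argument to be carried out in \S\ref{ss:transversality}, for generic $\xi$ each entry of the resulting matrix of fiberwise spaces is a compact manifold with corners; compactness is inherited from Propositions~\ref{prop:bD}, \ref{prop:bMpq}, Remark~\ref{rem:M-infty}, together with the compactness of $\bcalD_c(\eta)$ and $\Delta^{n+1}$.

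Second, I would read off the codimension $1$ strata by applying the usual Leibniz rule for boundaries of pullbacks: contributions come either from the boundary of a factor space, from faces of $\Delta^{n+1}$, or from $\partial\sigma'$. The factor boundaries $\partial X$ and $\partial\Omega$, expanded via Propositions~\ref{prop:bD}(3) and \ref{prop:bMpq}(3), give exactly the terms collected in $S_n''$; no $\partial\t{\bvec{e}_q}$ contribution appears because $\t{\bvec{e}_q}$ is a $0$-chain, and this is the sole structural asymmetry with Lemma~\ref{lem:dxooy}. Internal faces $s_i=s_{i+1}$ of the simplex contribute the fiber products $X\times_B\Omega$ (collision of the leftmost two factors) and $\Omega\times_B\Omega$ (collisions of adjacent interior factors), assembling into $T_n''$ in the same combinatorial pattern as in Lemma~\ref{lem:dxooy}. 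The face $s_{n+1}=1$, where the rightmost $\Omega$ collides with the fixed terminal factor at $\bar\gamma(1)=c$, gives $\Omega\times_c\t{\bvec{e}_q}$, where $\times_c$ is the fiber product over the singleton $\{c\}$ since $\t{\bvec{e}_q}$ is supported there. Finally, $\partial\bcalD_c(\eta)$ pushed forward by $\sigma'$ produces $\int_{\partial\sigma'}X\,\Omega^n\,\t{\bvec{e}_q}$.

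The main obstacle, exactly as in Lemma~\ref{lem:dxooy}, is the parametric transversality step: one must verify that for generic $\xi$ the iterated fiber products of ascending manifolds, descending manifolds, and trajectory spaces are strata transversal inside the relevant power of $E\times_B E$, so that the evaluation pullback is genuinely a manifold with corners stratified precisely by the schema above. Since the target of $\hat\sigma'$ now has one of its $n+2$ coordinates pinned to the critical point $c\in B$, the transversality arguments in \S\ref{ss:transversality} must be applied to perturbations that preserve this pinning; however this is already the setting handled in the proof of Lemma~\ref{lem:dooy}, so no genuinely new analytic input is needed. Once parametric transversality is in place, the codimension $1$ decomposition in the statement follows as a formal consequence of the pullback construction.
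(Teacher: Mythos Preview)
Your proposal is correct and follows essentially the same route as the paper, which simply states that the proof is similar to Lemma~\ref{lem:dxooy}. You have accurately identified the structural modifications: the vanishing of $\partial\t{\bvec{e}_q}$, the appearance of $\Omega\times_c\t{\bvec{e}_q}$ from the face $s_{n+1}=1$ (since $\bar\gamma(1)=c$ for $\sigma'$), and the absence of an analogue of the $U_n$ term; your reduction of the transversality step to the argument of \S\ref{ss:transversality} is also what the paper intends.
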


\begin{LemD}\label{lem:dxy}
For a generic $\xi$, the spaces $\displaystyle\int_\sigma XY^T$, $\displaystyle\int_\sigma \bvec{e}_pY^T$, $\displaystyle\int_{\sigma'} X\t{\bvec{e}_q}$ are disjoint unions of finitely many manifolds with corners, and the closure of their codimension 1 strata are given by the following formulas.
\[ \small\begin{split}
  \partial\int_\sigma XY^T&=\int_\sigma ((\partial X)Y^T + X(\partial Y^T)) + \int_\sigma X\times_B Y^T + X(c)\times\int_\sigma Y^T + \int_{\partial \sigma} XY^T \\
  \partial\int_\sigma \bvec{e}_pY^T&=\int_\sigma\bvec{e}_p(\partial Y^T) + \bvec{e}_p\times_c Y^T + \int_{\partial \sigma}\bvec{e}_p Y^T\\
  \partial\int_{\sigma'} X\t{\bvec{e}_q}&=\int_{\sigma'}(\partial X)\bvec{e}_q^T + X\times_c\bvec{e}_q^T + \int_{\partial \sigma'}X\bvec{e}_q^T
\end{split}\]
We denote the four terms in the first row by $S_0, T_0, U_0, V_0$, the first two terms in the second row by $S_0',T_0'$, and the first two terms in the third row by $S_0'',T_0''$.
\end{LemD}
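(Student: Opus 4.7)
The plan is to treat Lemma~\ref{lem:dxy} as the $n=0$ base case of the more general Lemmas~\ref{lem:dxooy}, \ref{lem:dooy}, \ref{lem:dxoo}, and to invoke the standard boundary formula for iterated integrals of fiberwise spaces. Each integral has the form $\int_\sigma A_1 A_2 = \hat{\sigma}^*(A_1\times A_2)$, with $\hat{\sigma}:C\times\Delta^1\to B^2$ defined by $(\gamma,s)\mapsto (\bar\gamma(s),\bar\gamma(1))$, where $C=\bcalD_c(\eta)$ (or its image under $\iota$ in the third case). Compactness and smoothness of $A_1\times A_2$ on its codimension~$0$ stratum follow from Propositions~\ref{prop:bD}, \ref{prop:bMpq} and Remark~\ref{rem:M-infty}; compactness of $C$ is the analogue of these results for the $h$-gradient $\eta$ on $B$. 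A generic choice of $\xi$ ensures that $\hat{\sigma}$ is strata-transverse to $A_1\times A_2$, so each pullback inherits a smooth manifold-with-corners structure on its interior.

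The codimension~$1$ boundary then decomposes into four groups, obtained by separating $\partial$ on the target from $\partial$ on the source $C\times\Delta^1$: (i)~target-boundary contributions $\int_\sigma (\partial A_1)A_2 \pm \int_\sigma A_1(\partial A_2)$; (ii)~the chain boundary $\int_{\partial\sigma}A_1A_2$; (iii)~the stratum $s=0$ of $\partial\Delta^1$, where $\bar\gamma(0)=c$ forces $A_1$ to be evaluated at $c$, giving $A_1(c)\times\int_\sigma A_2$; and (iv)~the stratum $s=1$, where $A_1$ and $A_2$ collide at the common point $\bar\gamma(1)$, giving the fiberwise product $\int_\sigma A_1\times_B A_2$. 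For $\int_\sigma XY^T$, all four types of term appear and produce precisely $S_0,T_0,U_0,V_0$. For $\int_\sigma \bvec{e}_p Y^T$ and $\int_{\sigma'}X\t{\bvec{e}_q}$, the factors $\bvec{e}_p$ and $\bvec{e}_q$ are fiberwise singletons, so $\partial\bvec{e}_p=\partial\bvec{e}_q=\emptyset$; moreover, because $\bvec{e}_p$ pins one endpoint to the critical locus $p$ at $c$, the simplex-boundary contributions at $s=0$ and $s=1$ consolidate into the single term $\bvec{e}_p\times_c Y^T$, and symmetrically $X\times_c \bvec{e}_q^T$ in the third case (after reversing parametrization via $\iota$, which exchanges the roles of $s=0$ and $s=1$).

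The main obstacle will be the verification of the requisite generic transversality: one needs $\xi$ (and $\eta$) chosen so that $\bcalA_p$, $\bcalD_p$, $\pbcalM_{pq}$ are individually compact manifolds with corners as in Propositions~\ref{prop:bD}, \ref{prop:bMpq}, and also so that their fiberwise products over $B$, pulled back along $\hat\sigma$, remain transverse to the stratification. This is treated in the general-position arguments of Section~\ref{s:transversality}, which simultaneously handle all of Lemmas~\ref{lem:dxooy}--\ref{lem:dxoo}. The orientation signs on $\partial(C\times\Delta^1)$ are then a bookkeeping matter that is resolved uniformly using the outward-normal-first convention of Appendix~\ref{s:ori}, yielding the formulas as stated and defining the symbols $S_0,T_0,U_0,V_0,S_0',T_0',S_0'',T_0''$ that will be used in the boundary analysis of $\theta_\mathrm{Z}$ in the proof of Theorem~\ref{thm:P-propagator}.
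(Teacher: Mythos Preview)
Your approach is essentially the paper's own: these three statements are the $n=0$ instances of Lemmas~\ref{lem:dxooy}--\ref{lem:dxoo}, and the paper proves all of them by the single transversality argument of \S\ref{ss:transversality}, identifying the codimension~1 strata of the pullback $\hat\sigma^*(A_1\times\cdots\times A_k)$ with (i) boundaries of the target factors, (ii) faces of $C\times\Delta^{k-1}$, and (iii) $\partial C$. Your sketch of this decomposition for $\int_\sigma XY^T$ giving $S_0,T_0,U_0,V_0$ is exactly right.

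One point deserves more care. Your explanation for why $\int_\sigma\bvec{e}_pY^T$ has three boundary terms rather than four is not quite correct: you say the $s=0$ and $s=1$ contributions ``consolidate'' because $\bvec{e}_p$ ``pins one endpoint to the critical locus $p$ at $c$''. But in the iterated-integral convention of \S\ref{ss:iterated-integral}, the factor $\bvec{e}_p$ sits at $\bar\gamma(s_1)$, not at $c$; it only lands at $c$ on the face $s_1=0$. The actual reason the count drops is twofold: first, $\partial\bvec{e}_p=\emptyset$ kills one of the two target-boundary terms in $S_0$; second, because the fiber of $\bvec{e}_p$ over any $b\in B$ is a single point, the face $s_1=1$ (the would-be collision $\bvec{e}_p\times_B Y^T$) and the face $s_1=0$ (the would-be $U_0$-term) are canonically diffeomorphic to the same stratum---both are identified with the pullback of $\bcalD_p(\xi)$ over $\sigma$---and the paper records this single stratum as $T_0'=\bvec{e}_p\times_c Y^T$. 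The same remark applies to $X\times_c\bvec{e}_q^T$ in the third line. This is a bookkeeping subtlety rather than a gap in your argument; once it is stated correctly the rest of your proof goes through as in \S\ref{ss:transversality}.
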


The following proposition follows from Propositions~\ref{prop:bM2_mfd}, \ref{prop:bD}, \ref{prop:bMpq}.

\begin{Prop}\label{prop:dw=ww}
There are natural stratification preserving diffeomorphisms
\[ \begin{split}
&\partial X\cong  X\times_B \Omega,\quad 
\partial\,Y^T\cong  \Omega\times_B Y^T,\\ 
&\partial \Omega\cong  \Omega\times_B \Omega,\quad
\partial \ibcalM_2(\xi)\cong \overline{s}_\xi + X_0\times_B Y_0^T.
\end{split}\]
They induce strata preserving diffeomorphisms $S_n\cong T_{n+1}$, $S_n'\cong T_{n+1}'$, $S_n''\cong T_{n+1}''$ for $n\geq 0$. 
\end{Prop}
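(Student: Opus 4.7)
The plan is to reduce the proposition to matrix-algebra bookkeeping of the stratifications already given in Propositions~\ref{prop:bM2_mfd}, \ref{prop:bD}, \ref{prop:bMpq}, together with their extensions to the infinite critical loci from Remark~\ref{rem:M-infty}.

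First I would establish the three ``matrix'' boundary identifications. A codimension 1 boundary point of $\bcalD_{p_i}(\xi)$ is a once-broken flow-line: it runs from $p_i$ along some trajectory in $\calM'_{p_ip_k}(\xi)$ into an intermediate critical locus $p_k$, and then continues in $\bcalD_{p_k}(\xi)$. Rearranging these as the $i$-th entry of the column $\Omega \times_B Y^T$ gives $\partial Y^T \cong \Omega \times_B Y^T$. The identifications $\partial X \cong X \times_B \Omega$ and $\partial \Omega \cong \Omega \times_B \Omega$ are analogous, with the $\ell_\infty$-entries handled using the convention in Remark~\ref{rem:M-infty}. These identifications are stratification preserving because the codimension $r$ stratum formulas of Propositions~\ref{prop:bD}, \ref{prop:bMpq} are themselves compatible, via the associativity of broken-trajectory concatenation, with taking boundaries term by term.

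For the identification $\partial \ibcalM_2(\xi) \cong \overline{s}_\xi + X_0 \times_B Y_0^T$, recall that $\ibcalM_2(\xi)$ is obtained from $\bcalM_2(\xi) - \ev^{-1}(\wDelta_E)$ by attaching corners along the fiberwise diagonal direction. Its codimension 1 boundary thus splits into two pieces. The ``short trajectory'' limit $T \to 0^+$ produces the corners that were attached; the fiberwise normalization $(x,\Phi_{-\xi}^T(x)) \mapsto \xi_x/\|\xi_x\|$ extends across the diagonal blow-up to give the map $\overline{s}_\xi$, and this accounts both for the $ST^v\Delta_E$ contribution and for the diagonals $\sum_i \Delta_{p_i}$ subtracted in Proposition~\ref{prop:bM2_mfd}(3). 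The ``long trajectory'' limit $T \to \infty$ produces once-broken flow-lines through a single critical locus $p_i$, whose space by Proposition~\ref{prop:bM2_mfd}(3) with $r=1$ is $\sum_i \bcalA_{p_i}(\xi) \times_B \bcalD_{p_i}(\xi) = X_0 \times_B Y_0^T$ in matrix notation.

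Given these four identifications, the matching $S_n \cong T_{n+1}$ (and similarly the primed versions) becomes a termwise application: each occurrence of $\partial X$, $\partial \Omega$, or $\partial Y^T$ in the expression for $S_n$ from Lemma~\ref{lem:dxooy} is replaced using the corresponding identification, producing exactly the terms of $T_{n+1}$ with one extra factor of $\Omega$ inserted via $\times_B$ at the same position. For $S_n'$ and $S_n''$, the vectors $\bvec{e}_p$ and $\bvec{e}_q$ consist of single critical points and carry no boundary of their own, so only the interior $\partial \Omega$ terms and the adjacent $\partial Y^T$ or $\partial X$ term translate. The main obstacle is organizational rather than geometric: one must verify that the natural identifications of Propositions~\ref{prop:bM2_mfd}, \ref{prop:bD}, \ref{prop:bMpq} extend compatibly across codimension-$r$ strata, which reduces to an associativity check on the way broken trajectories concatenate, most subtly at the strata involving $\ell_\infty$.
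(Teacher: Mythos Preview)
Your proposal is correct and takes essentially the same approach as the paper: the paper states only that the proposition ``follows from Propositions~\ref{prop:bM2_mfd}, \ref{prop:bD}, \ref{prop:bMpq}'' without further elaboration, and you have unpacked exactly this derivation---reading off the codimension~$1$ strata from those propositions (extended to the $\ell_\infty$ entries via Remark~\ref{rem:M-infty}), identifying the $T\to 0^+$ and $T\to\infty$ limits for $\ibcalM_2(\xi)$, and then substituting termwise into the expressions for $S_n$, $S_n'$, $S_n''$ from Lemmas~\ref{lem:dxooy}--\ref{lem:dxy}.
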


\begin{proof}[Proof of Proposition~\ref{prop:bM} assuming Lemmas~\ref{lem:dxooy}--\ref{lem:dxy} and Proposition~\ref{prop:dw=ww}] 
When $\sigma$ is the fundamental cycle of $B$, put
\[ \bacalM_2(\xi,\eta)_\sigma=\Bigl\{\ibcalM_2(\xi)+\sum_{n=0}^\infty \int_\sigma X \underbrace{\Omega\cdots\Omega}_n\,\t{Y}\Bigr\}\Bigl/\sim. \]
Here, we identify the strata by the diffeomorphisms $S_n\cong T_{n+1}$ for $n\geq 0$ of Proposition~\ref{prop:dw=ww}. Since any Z-path can be shrinked to a point by reducing the number of vertical segments, this gives a natural path-connected compactification of $\acalM_2(\xi,\eta)$. 
\end{proof}

Let $\bD_p(\xi,\eta)_\sigma$ (resp. $\bA_q(\xi,\eta)_\sigma$) denote the pullback of the fiberwise spaces $\bD_p(\xi,\eta)$ (resp. $\bA_q(\xi,\eta)$) over $B$ by $\sigma$.
The following proposition can be proved by an argument similar to the proof of Proposition~\ref{prop:bM}.
\begin{Prop}\label{prop:Dp}
There are canonical bijections $\bD_p(\xi,\eta)_\sigma\approx\bD_p^0(\xi,\eta)_\sigma/{\sim}$ and $\bA_q(\xi,\eta)_\sigma\approx\bA_q^0(\xi,\eta)_\sigma/{\sim}$. 
Here, we identify the strata by the diffeomorphisms $S_n'\cong T_{n+1}'$, $S_n''\cong T_{n+1}''$ for $n\geq 0$ of Proposition~\ref{prop:dw=ww}. 
\end{Prop}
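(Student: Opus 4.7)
The plan is to adapt the argument used for Proposition~\ref{prop:bM} to the case where the initial (resp. terminal) endpoint of the Z-path is constrained to lie on a critical locus $p$ (resp. $q$). First I will define a tautological map
\[
\Psi_p:\bD_p^0(\xi,\eta)_\sigma\longrightarrow \bD_p(\xi,\eta)_\sigma
\]
as follows. A point of $\int_\sigma \bvec{e}_p\underbrace{\Omega\cdots\Omega}_k Y^T$ over $\sigma$ is a quadruple $(\gamma,s_1,\ldots,s_k;v_0,v_1,\ldots,v_k)$ where $\bar\gamma=\sigma(\gamma)$ is a flow-line of $-\eta$ from $c$, $0\le s_1\le\cdots\le s_k\le 1$, $v_0\in\bvec{e}_p(\bar\gamma(0))$, $v_i\in\Omega(\bar\gamma(s_i))$ for $1\le i\le k$ satisfying the fiberwise matching condition, and $v_k$ output matches the input index of $v_{k+1}\in Y^T(\bar\gamma(1))$. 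The components $v_1,\ldots,v_k$ specify vertical (flow-line) segments lying in the fibers $F_{\bar\gamma(s_i)}$ connecting prescribed critical loci, and the horizontal pieces between consecutive fibers are determined uniquely by the flow of $-\eta$ along $\bar\gamma$. Concatenating these vertical segments with the determined horizontal segments produces a Z-path from $p\cap F_c$ over $\bar\gamma$; this is the value of $\Psi_p$.

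Next I show $\Psi_p$ descends to the quotient and is a bijection onto $\bD_p(\xi,\eta)_\sigma$. The equivalence relation on the disjoint sum comes from identifying the face $S_n'$ (where one vertical $\Omega$ entry degenerates to a broken flow-line) with the corresponding face $T_{n+1}'$ (one more $\Omega$ inserted) via Proposition~\ref{prop:dw=ww}: both correspond to the same Z-path in which two consecutive vertical segments are separated by a zero-length horizontal piece at a critical locus. Conversely, any Z-path from $p$ over $\sigma(\gamma)$ has a well-defined number of nontrivial horizontal segments (those with $\mu_i<\nu_i$) and the sequence of vertical segments between them is uniquely recorded by an element of $\int_\sigma\bvec{e}_p\Omega^k Y^T$ for this specific $k$; non-uniqueness comes precisely from the possibility of subdividing a horizontal segment at an extra critical locus, which is exactly the $S_n'\sim T_{n+1}'$ identification. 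Hence $\Psi_p$ induces a bijection on the quotient. A parallel argument, now using the reversed-path chain $\sigma'=\iota\circ\sigma$, the matrix $X$, and the identifications $S_n''\cong T_{n+1}''$, gives the analogous statement
\[
\bA_q(\xi,\eta)_\sigma\approx\bA_q^0(\xi,\eta)_\sigma/{\sim}.
\]

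Finally, I will verify that the bijection is canonical in the sense that it does not depend on the combinatorial data beyond the invariants already fixed (the base point $c$, the $v$-gradient $\xi$, and the chain $\sigma$). The smoothness and the stratification-preserving character of $\Psi_p$ on each summand $\int_\sigma\bvec{e}_p\Omega^k Y^T$ follow from Propositions~\ref{prop:bD} and~\ref{prop:bMpq} together with the iterated-integral construction of \S\ref{ss:iterated-integral}, just as in the proof of Proposition~\ref{prop:bM}. The main technical point, and the only place requiring care, is to check that the identifications $S_n'\cong T_{n+1}'$ coming from Proposition~\ref{prop:dw=ww} are exactly the gluings needed so that no further quotient is required; but this is dictated by the combinatorial bookkeeping of broken flow-lines and is completely analogous to the $\ibcalM_2(\xi)$-free part of the proof of Proposition~\ref{prop:bM}, so no genuinely new argument is needed beyond carrying over that bookkeeping to the boundary-fixed case.
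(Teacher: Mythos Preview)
Your proposal is correct and follows essentially the same approach as the paper, which simply states that Proposition~\ref{prop:Dp} ``can be proved by an argument similar to the proof of Proposition~\ref{prop:bM}.'' You have spelled out that adaptation in more detail than the paper does: build the tautological map from the iterated integral expression to Z-paths from $p$, and observe that the gluings $S_n'\cong T_{n+1}'$ (resp.\ $S_n''\cong T_{n+1}''$) are precisely the redundancies in the iterated-integral description, so the quotient is in bijection with $\bD_p(\xi,\eta)_\sigma$ (resp.\ $\bA_q(\xi,\eta)_\sigma$).
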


\subsection{Orientations of $\bD_p(\xi,\eta)_\sigma$ and $\bA_q(\xi,\eta)_\sigma$}\label{ss:ori}

We shall give an orientation convention for the spaces $\bD_p(\xi,\eta)_\sigma$, $\bA_q(\xi,\eta)_\sigma$ of Z-paths on the chain $\sigma:\bcalD_c(\eta)\to B$. 
Let $\gamma,\gamma'$ be points of codimension 0 strata of $\bD_p(\xi,\eta)_\sigma,\bA_q(\xi,\eta)_\sigma$ respectively.

\subsubsection{Nondegenerate strata of $\bD_p(\xi,\eta)_\sigma$ and $\bA_q(\xi,\eta)_\sigma$}
Suppose that the vertical segments of $\gamma$ between critical loci are only $i/i$-intersections.
If $\sigma_1,\ldots,\sigma_r$ are the $i/i$-intersections included in $\gamma$ and if the terminal vertical segment of $\gamma$ is of $\calD_{p'}(\xi)$, then we define the orientation of $\bD_p(\xi,\eta)_\sigma$ at $\gamma$ as follows. 
\[ o(\bD_p(\xi,\eta)_\sigma)_\gamma = \ve(\sigma_1)\ve(\sigma_2)\cdots\ve(\sigma_r)\,o(\calD_{p'}(\xi))_{\bar{\gamma}(1)}, \]
where $\ve(\sigma_i)$ is the same as that given in \S\ref{ss:count-Z-path}.
Suppose that the vertical segments of $\gamma'$ between critical loci are only $i/i$-intersections. If $\sigma_1,\ldots,\sigma_r$ are the $i/i$-intersections in $\gamma'$ and the initial vertical segment of $\gamma'$ is of $\calA_{q'}(\xi)$, then we define the orientation of $\bA_q(\xi,\eta)_\sigma$ at $\gamma'$ as follows.
\[ o(\bA_q(\xi,\eta)_\sigma)_{\gamma'} = \ve(\sigma_1)\ve(\sigma_2)\cdots\ve(\sigma_r)\,o(\calA_{q'}(\xi))_{\bar{\gamma}'(0)}.\]
With the conventions given here, one can check that the orientations on $S_n'\cong T_{n+1}'$ and $S_n''\cong T_{n+1}''$ in the gluings in Proposition~\ref{prop:Dp} are consistent. 

\subsubsection{Degenerate strata of $\bD_p(\xi,\eta)_\sigma$ and $\bA_q(\xi,\eta)_\sigma$}
Suppose that the vertical segments of $\gamma$ between critical loci consist of one $i+1/i$-intersection $\tau$ and $i/i$-intersections $\sigma_1,\ldots,\sigma_r$. Suppose that the terminal vertical segment of $\gamma$ is of $\tau'=\calD_{p'}(\xi)$. Then we define the orientation of $\bD_p(\xi,\eta)_\sigma$ at $\gamma$ by
\begin{equation}\label{eq:o(degenerate)}
 o(\bD_p(\xi,\eta)_\sigma)_\gamma = \ve(\sigma_1)\ve(\sigma_2)\cdots\ve(\sigma_r)\,o(\tau, \tau') 
\end{equation}
for some orientation $o(\tau,\tau')$ determined by the pair $\tau$, $\calD_{p'}(\xi)$. We define $o(\tau,\tau')$ as follows.

Suppose that $\gamma$ goes within $\alpha^*E$ for a flow-line $\alpha:I\to B$ of $-\eta$ with $\alpha(0)=c$, and that the vertical segments $\tau, \tau'$ are located in the fibers over $u_0,v_0\in \mathrm{Im}\,\alpha$. We consider that $\tau$ depends smoothly on a parameter $u$ in a neighborhood $U$ of $u_0\in B$ and write $\tau=\tau(u)$. Then $K=\bigcup_{u\in U}\tau(u)$ is a subbundle of $\pi^{-1}U\to U$, which has a local parametrization $(u,z)\mapsto K(u,z)$, $(u,z)\in U\times (-\ve,\ve)$. Similarly, we write $\tau'=\tau'(v)$ for a parameter $v$ in a neighborhood $V$ of $v_0\in B$, and we obtain a subbundle $L=\bigcup_{v\in V}\tau'(v)\subset \pi^{-1}V$, which has a local parametrization $(v,w)\mapsto L(v,w)$, $(v,w)\in V\times (-\ve,\ve)$. Since $\gamma$ is a point of a codimension 0 stratum of $\bD_p(\xi,\eta)_\sigma$, two generic points $u,v$ on $\mathrm{Im}\,\alpha$ are related by $v=\Phi_{-\eta}^T(u)$ for $T>0$. We can take, as a neighborhood of $\gamma$ in $\bD_p(\xi,\eta)_\sigma$, the image of the embedding $\mu: U\times (-\ve,\ve)\times (T_0-\ve,T_0+\ve)\times (-\ve,\ve)\to \pi^{-1}U\times \pi^{-1}V$ given by
\[ \mu(u,z,T,w) = (u,K(u,z))\times (\Phi_{-\eta}^T(u), L(\Phi_{-\eta}^T(u),w)), \]
and the orientation of $\mathrm{Im}\,\mu$ gives $o(\tau,\tau')$. The Jacobian matrix $J\mu$ of $\mu$ can be modified by elementary column operations into the following form.
\[ \left(\begin{smallmatrix}
  \mathbf{1} & 0& O &  O\\
  O & 0 & \ddx{z}K(u,z)  & O\\
  \ddx{u}\Phi_{-\eta}^T(u) & -\eta & O &  O\\
  O & 0 & O & \ddx{w}L(\Phi_{-\eta}^T(u),w)
  \end{smallmatrix}\right) \]
where $\ddx{u}K(u,z)$ etc. denotes the Jacobian matrix of $K(u,z)$ etc. with respect to $u$. The column vectors of the matrix correspond to tangent vectors of $\mathrm{Im}\,\mu$. The left half of the matrix corresponds to a basis of a tangent space of $\calM_2(\eta)$, and the right half corresponds to bases of tangent spaces of $K,L$. This gives a direct sum decomposition of $T_{(u,z,T,w)}\mathrm{Im}\,\mu$. 

Based on this observation, we define the coorientation $o^*(\tau,\tau')=*\,o(\tau,\tau')$ of $\mathrm{Im}\,\mu$ in $\pi^{-1}U\times \pi^{-1}V\approx (U\times F)\times (V\times F)=(U\times V)\times (F\times F)$ by the tensor product
\begin{equation}\label{eq:o(M_2)}
 o^*(\tau,\tau')=o^*_{U\times V}(\calM_2(\eta)) \otimes (o^*_E(K)\wedge o^*_E(L)).
\end{equation}
Since $\calM_2(\eta)$ is the image of the embedding $\varphi:B\times (0,\infty)\to B\times B$, $\varphi(u,T)=(u,\Phi_{-\eta}^T(u))$, we may define
\[ \begin{split}
  o(\calM_2(\eta))_{(u,v)}&=d\varphi_*(o(B)_u\wedge dT),\quad o^*_{U\times V}(\calM_2(\eta))_{(u,v)}=*\,o(\calM_2(\eta))_{(u,v)},
\end{split} \]
where $*$ is the Hodge star operator and $d\varphi_*$ is the map induced by $d\varphi$ under the identification $T(B\times (0,\infty))=T^*(B\times (0,\infty))$ by an orthonormal basis.
If $\tau$ is a flow-line between critical loci $t,t'$, then $o^*_E(K), o^*_E(L)$ can be given as follows.
\[ o^*_E(K)_b=o^*_E(\calD_t(\xi))_b\wedge o^*_E(\calA_{t'}(\xi))_b, \quad 
o^*_E(L)=o^*_E(\calD_{p'}(\xi)). \]
Similarly, $o(\bA_q(\xi,\eta)_\sigma)_{\gamma'}$, $o(\calH_{pq}^0(\xi,\eta)_\sigma)_\gamma$ can be defined by the same formula as (\ref{eq:o(M_2)}), letting $\tau'=\calA_{q'}(\xi)$ for $\bA_q(\xi,\eta)_\sigma$, $\tau'=\calA_{q'}(\xi)\times_B\calD_{p'}(\xi)$, $o^*_E(L)_{(x,y)}=o^*_E(\calA_{q'}(\xi))_x\wedge o^*_E(\calD_{p'}(\xi))_y$ for $\calH_{pq}^0(\xi,\eta)_\sigma$, respectively.

\subsubsection{Preliminary lemmas concerning the orientation of $\partial\bD_p(\xi,\eta)_\sigma$ and $\partial\bA_q(\xi,\eta)_\sigma$}
According to (\ref{eq:o(M_2)}), the coorientation of the stratum of Z-paths induced on a path in $\partial \bcalM_2(\eta)$ is given by
the tensor product of the coorientation of $\partial\bcalM_2(\eta)$ and $o^*_E(K)\wedge o^*_E(L)$. Since the natural map $\partial\bcalM_2(\eta)\to B\times B$ is locally an immersion near a generic point, the coorientation of $\partial\bcalM_2(\eta)$ in $B\times B$ at a generic point makes sense and it is induced from that of $\bcalM_2(\eta)$ as follows.

\begin{Lem}[{\cite[Lemma~5.4]{Wa2}}]\label{lem:o(dM_2)}
Suppose that a flow-line of $\partial\bcalM_2(\eta)$ between $u,v\in B$ passes through one critical point $r\in P_*(\eta)$. Then we have
\[ o^*_{B\times B}(\partial\bcalM_2(\eta))_{(u,v)} = (-1)^{(|r|+1)\dim{B}}o^*_B(\calA_r(\eta))_u\wedge o^*_B(\calD_r(\eta))_v. \]
\end{Lem}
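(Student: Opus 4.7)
The plan is to do a purely local computation in Morse coordinates around $r$, since both sides of the identity depend only on $\eta$ in a neighborhood of $r$.

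First I would fix Morse coordinates $(x,y)\in\R^{|r|}\times\R^{n-|r|}$ centered at $r$, with $n=\dim B$, in which $-\eta=2\sum_i x_i\partial_{x_i}-2\sum_j y_j\partial_{y_j}$, so that $\Phi_{-\eta}^T(x,y)=(e^{2T}x,e^{-2T}y)$, $\calD_r(\eta)=\{y=0\}$, $\calA_r(\eta)=\{x=0\}$, and the convention $o(\calD_r)\wedge o(\calA_r)\sim o(B)$ is realized by $o(\calD_r)=dx_1\wedge\cdots\wedge dx_{|r|}$ and $o(\calA_r)=dy_1\wedge\cdots\wedge dy_{n-|r|}$. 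Near the broken-flow stratum through $r$, introduce collar coordinates on $\bcalM_2(\eta)$ by $s=e^{-2T}\ge 0$ and $x'=e^{2T}x^u$; then $(x',y,s)\mapsto(u,v)=((sx',y),(x',sy))$ parametrizes a neighborhood of the boundary, identifies $\{s=0\}$ with $\calA_r\times\calD_r$ via $(x',y)\mapsto((0,y),(x',0))$, and has outward normal $-\partial_s$.

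Second, I would push forward the defining orientation $o(\calM_2(\eta))=\varphi_*(o(B)\wedge dT)$ into these collar coordinates by evaluating the Jacobian of $(u,T)\mapsto(x',y,s)$. Using $dT=-\frac{1}{2s}\,ds$ and $dx^u_i=s\,dx'_i+x'_i\,ds$, one finds $dx^u\wedge dy^u\wedge dT=-\frac{s^{|r|-1}}{2}\,dx'\wedge dy\wedge ds$; for $s>0$ this has negative sign, so the extended orientation of $\bcalM_2(\eta)$ on the collar is represented by $-dx'\wedge dy\wedge ds$. Contracting with the outward normal $-\partial_s$ according to the outward-normal-first convention then yields
\[ o(\partial\bcalM_2(\eta))_{(u,v)}=(-1)^n\,dx^v\wedge dy^u \]
at the limit $s=0$, using $x^v|_{s=0}=x'$ and $y^u|_{s=0}=y$.

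Third, I would convert orientations to coorientations in $B\times B$ using the paper's convention $o(Y)\wedge o^*_X(Y)=o(X)$. This gives $o^*_B(\calA_r)_u=(-1)^{|r|(n-|r|)}\,dx^u$ and $o^*_B(\calD_r)_v=dy^v$, so the right-hand side of the claim reads $(-1)^{|r|(n-|r|)}\,dx^u\wedge dy^v$. For the left-hand side, one solves $o(\partial\bcalM_2(\eta))\wedge o^*_{B\times B}(\partial\bcalM_2(\eta))=o(B\times B)$, which amounts to evaluating the shuffle sign that reorders $(dx^v,dy^u,dx^u,dy^v)$ into $(dx^u,dy^u,dx^v,dy^v)$; this sign is $(-1)^{|r|}$, giving $o^*_{B\times B}(\partial\bcalM_2(\eta))=(-1)^{n+|r|}\,dx^u\wedge dy^v$. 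Dividing, the overall factor reduces modulo $2$ to $(-1)^{n+|r|-|r|(n-|r|)}=(-1)^{n(|r|+1)}$, as claimed.

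The main obstacle is the sign bookkeeping rather than any genuine geometric input: four conventions must be tracked consistently (the pushforward through the collar, where the Jacobian actually degenerates at $s=0$ for $|r|\ge 2$; the outward-normal-first boundary convention; the coorientation convention $o(Y)\wedge o^*(Y)=o(X)$; and the shuffle between the $u$- and $v$-factors of $B\times B$). The substantive content of the calculation matches \cite[Lemma~5.4]{Wa2}, to which the paper attributes the result.
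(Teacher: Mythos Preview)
Your computation is correct: the local Morse-coordinate calculation, the collar change $(u,T)\to(x',y,s)$, the contraction with the outward normal $-\partial_s$, and the final shuffle all check out, yielding the sign $(-1)^{n(|r|+1)}$ as claimed. The paper gives no proof of this lemma at all---it simply cites \cite[Lemma~5.4]{Wa2}---so your argument is precisely the kind of direct local verification one expects the cited reference to contain, and there is nothing to compare against in the present paper.
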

In particular, when $|r|=\dim{B}$, the sign is $(-1)^{(|r|+1)\dim{B}}=1$. For a critical point $m$ of $\eta$, let $\bcalD_m(\eta),\bcalA_m(\eta)$ denote the compactifications of $\calD_m(\eta), \calA_m(\eta)$ obtained by adding singular flow-lines passing through several critical points (e.g., \cite{BH, Wa3}). If a flow-line $\gamma\in \bcalD_m(\eta)$ of $-\eta$ that starts from $m$ has one singular point at a critical point $r$ ($|r|=|m|-1$), then $\gamma$ is determined uniquely by a point $b$ on a flow-line between $m$ and $r$, and the terminal point $a=\gamma(1)$. Similarly, if a flow-line $\gamma\in \bcalA_m(\eta)$ of $-\eta$ that ends at $m$ has one singular point at a critical point $r$ ($|r|=|m|+1$), then $\gamma$ is determined uniquely by a point $b$ on a flow-line between $r$ and $m$, and the initial point $a=\gamma(0)$. The orientations induced on the strata of such singular flow-lines are as follows.
\begin{Lem}[{\cite[Lemma~5.1, 5.2]{Wa2}}]\label{lem:o(dD)}
\[ \begin{split}
  o(\partial \bcalD_m(\eta))_{\gamma}&=(-1)^{|r|}\ve(m,r)_b\,o(\calD_r(\eta))_a,\quad o(\partial \bcalA_m(\eta))_{\gamma}=-\ve(r,m)_b\,o(\calA_r(\eta))_a.
\end{split} \]
\end{Lem}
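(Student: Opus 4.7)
The plan is to realize a neighborhood of the singular stratum in $\bcalD_m(\eta)$ (resp.\ $\bcalA_m(\eta)$) by the standard Morse-theoretic gluing construction and then match the ambient orientation (given in the interior by $o(\calD_m(\eta))$, resp.\ $o(\calA_m(\eta))$) against a product orientation coming from $\bbcalM{\eta}{m}{r}$ and $\calD_r(\eta)$ (resp.\ $\bbcalM{\eta}{r}{m}$ and $\calA_r(\eta)$), invoking the outward-normal-first convention.

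First I would set up a local chart near the broken trajectory $\gamma$. Choose a small level hypersurface $Q_r \subset B$ just below $r$ and Morse coordinates at $r$ in which $\calD_r(\eta)$ and $\calA_r(\eta)$ meet $Q_r$ in standard complementary discs. Pre-gluing then gives a diffeomorphism
\[
  \bbcalM{\eta}{m}{r} \times (T_0,\infty) \times \calD_r(\eta) \longrightarrow \mathrm{Int}\,\bcalD_m(\eta)
\]
near $\gamma$, where the parameter $T$ measures how long the trajectory hovers near $r$; the boundary stratum is the $T=\infty$ limit and, after the change of variable $\tau = 1/T$, the outward normal is $-\partial/\partial\tau$.

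Second I would identify each ingredient orientation. At $b$, the orientation of the $0$-manifold $\bbcalM{\eta}{m}{r}$ is the sign $\ve(m,r)_b$ by the defining identity of \S\ref{ss:morse-complex}, namely
\[
 o^*_B(\calD_m(\eta))_b\wedge o^*_B(\calA_r(\eta))_b \;\sim\; \ve(m,r)_b\,\iota(-\eta)\,o(B)_b.
\]
On the interior side, the tangent space of $\calD_m(\eta)$ at a point close to $\gamma$ splits in the chart as $T_b\bbcalM{\eta}{m}{r}\oplus \R\langle\partial/\partial T\rangle\oplus T_a\calD_r(\eta)$, and combining this description with $o(\calD_m(\eta))\wedge o(\calA_m(\eta))\sim o(B)$ gives, at the interior point approaching $\gamma$,
\[
 o(\calD_m(\eta)) \;\sim\; (-1)^{|r|}\,\ve(m,r)_b\,\bigl(\partial/\partial T\bigr)\wedge o(\calD_r(\eta))_a,
\]
the sign $(-1)^{|r|}$ arising from commuting the gluing-parameter direction past the $|r|$-dimensional tangent space of $\calD_r(\eta)$. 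The outward-normal-first rule then extracts the first identity. The ascending case $|r|=|m|+1$ is parallel, with $\bbcalM{\eta}{r}{m}$ and $\calA_r(\eta)$ in place of $\bbcalM{\eta}{m}{r}$ and $\calD_r(\eta)$, and with defining identity $o^*_B(\calD_r(\eta))_b\wedge o^*_B(\calA_m(\eta))_b\sim\ve(r,m)_b\,\iota(-\eta)\,o(B)_b$; the joint effect of transposing $\calD\leftrightarrow\calA$ in the ambient identity and of reversing the flow direction along the collar leaves the overall sign $-\ve(r,m)_b$ (with no dimensional parity factor), giving the second identity.

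The main obstacle is the disciplined bookkeeping of signs, especially the parity $(-1)^{|r|}$ and the apparent asymmetry between the descending and ascending formulas. All of this is purely formal once a single consistent choice of orientation of $Q_r$, of the time parameter $T$, and of the outward collar direction has been fixed, but it constitutes the only substantive content of the lemma.
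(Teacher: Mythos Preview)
The paper does not give its own proof of this lemma: it is simply quoted with a citation to \cite[Lemma~5.1, 5.2]{Wa2}, so there is nothing to compare against in the present paper. Your outline is the standard gluing-chart computation that such orientation statements are always proved by, and it is essentially what one finds in \cite{Wa2} as well: model a collar neighborhood of the broken stratum by $\bbcalM{\eta}{m}{r}\times(T_0,\infty)\times\calD_r(\eta)$, identify the interior orientation $o(\calD_m(\eta))$ there via the defining relation $o(\calD_m)\wedge o(\calA_m)\sim o(B)$ together with the sign convention for $\ve(m,r)_b$, and read off the boundary orientation from the outward-normal-first rule \eqref{eq:inward_first}. Your identification of the only nontrivial content --- the parity $(-1)^{|r|}$ from commuting the collar direction past $T\calD_r(\eta)$, and the asymmetry with the ascending case --- is accurate; the argument is correct as sketched, with the usual caveat that every sign must be checked against a single fixed convention for $o(Q_r)$ and the collar parameter.
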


\subsection{Boundaries of chains of Z-paths}\label{ss:boudary}

In the following, we assume that $(f,\xi,\eta)$ satisfies Assumptions~\ref{hyp:eta} and \ref{hyp:cv-const}. Here, we let $c$ to be the maximal point of the Morse function $h:B\to \R$. Here, we denote the chains $\ev_2:\bD_p(\xi,\eta)\to E^\infty$, $\overline{\ev}_1:\bA_q(\xi,\eta)\to E^\infty$ by $\bD_p(\xi,\eta)$, $\bA_q(\xi,\eta)$, abusing the notation. Let $C_*=C_*(\xi_c)=\Z^{P_*}$ be the Morse complex for the $v$-gradient $\xi_c$ and let $g:C_*\to C_{*+1}$ be a combinatorial propagator for $C_*$. Let $\overline{C}_*$ be the graded $\Z$-module defined by $\overline{C}_k=C_k$ for $k\neq 0,4$, $\overline{C}_4=C_4\oplus\langle \ell_\infty^+\rangle$, and $\overline{C}_0=C_0\oplus \langle \ell_\infty^-\rangle$. We define a $\Z$-linear map $\overline{\partial}:\overline{C}_*\to \overline{C}_{*-1}$ by 
\[ \overline{\partial}p=\left\{\begin{array}{ll}
\partial p & \mbox{if $p\in P_*$ and $|p|\neq 1$},\\
\partial p+\#\calM'_{p\infty}(\xi_c)\,\ell_\infty^- & \mbox{if $|p|=1$}\\
\sum_{q\in P_3}\#\calM'_{\infty q}(\xi_c)\,q & \mbox{if $p=\ell_\infty^+$}\\
0 & \mbox{if $p=\ell_\infty^-$}
\end{array}\right. \] 
Then $(\overline{C}_*,\overline{\partial})$ can be considered as the Morse complex of a `singular' gradient on $S^4$ and one can show that $H_*(\overline{C}_*,\overline{\partial})\cong H_*(S^4;\Z)\cong \Z\oplus \Z$ by perturbing the singular gradient on $S^4$ slightly. We put $\overline{P}_*=P_*\cup\{\ell_\infty^-,\ell_\infty^+\}$, $\bD_{\ell_\infty^+}(\xi,\eta)_\sigma=\bcalD_\infty(\xi)$, $\bA_{\ell_\infty^-}(\xi,\eta)_\sigma=\bcalA_\infty(\xi)$\footnote{Modulo degenerate chains, this definition of $\bD_{\ell_\infty^+}(\xi,\eta)_\sigma$ (resp. $\bA_{\ell_\infty^-}(\xi,\eta)_\sigma$) is the same as the space of Z-paths from $\ell^+_\infty$ (resp. the space of inverse Z-paths to $\ell^-_\infty$).}, and $\bD_{\ell_\infty^-}(\xi,\eta)_\sigma=\bA_{\ell_\infty^+}(\xi,\eta)_\sigma=\ell_\infty$. The following proposition will be used in \S\ref{ss:dg}.

\begin{Prop}\label{prop:dD}
Let $\sigma$ be the fundamental cycle of $B$ and let $p$ be an element of $\overline{P}_*$. Then the following identities for chains in $E^\infty$ hold modulo degenerate chains. 
\[
\partial\bD_p(\xi,\eta)_\sigma=\displaystyle\sum_{r\in \overline{P}_{|p|-1}} \langle \overline{\partial} p,r\rangle\, \bD_r(\xi,\eta)_\sigma,\quad 
\partial\bA_q(\xi,\eta)_\sigma=\displaystyle\sum_{r\in \overline{P}_{|q|+1}} \bA_r(\xi,\eta)_\sigma \langle\overline{\partial} r,q\rangle
\]
\end{Prop}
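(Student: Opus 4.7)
The plan is to take $\sigma=[B]$, invoke the iterated integral description of $\bD_p(\xi,\eta)_\sigma$ given by Proposition~\ref{prop:Dp}, compute its boundary using Lemmas~\ref{lem:dooy} and \ref{lem:dxy}, cancel interior strata via Proposition~\ref{prop:dw=ww}, and identify the surviving faces with the right-hand side of the proposition. Explicitly, writing $\bD_p(\xi,\eta)_\sigma\cong\bigsqcup_{n\geq 0}\int_\sigma\bvec{e}_p\underbrace{\Omega\cdots\Omega}_n\t{Y}/{\sim}$, the identifications $S_n'\cong T_{n+1}'$ from Proposition~\ref{prop:dw=ww} glue the $\partial\Omega$- and $\partial\t{Y}$-faces of the $n$-th summand to the $\Omega\times_B\Omega$- and $\Omega\times_B\t{Y}$-faces of the $(n{+}1)$-st summand; in the glued quotient these become interior strata and therefore drop out of $\partial\bD_p(\xi,\eta)_\sigma$. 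What survive are the initial faces $\int_\sigma(\bvec{e}_p\times_c\Omega)\underbrace{\Omega\cdots\Omega}_{n-1}Y^T$ for $n\geq 1$, the $n=0$ face $\bvec{e}_p\times_c Y^T$, and the boundary-of-parameter terms $\int_{\partial\sigma}\bvec{e}_p\underbrace{\Omega\cdots\Omega}_n\t{Y}$.

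Next I would dispose of the $\int_{\partial\sigma}$ contributions. Although $\sigma=[B]$ is a cycle in $B$, the parameterization $\sigma\colon\bcalD_c(\eta)\to B$ carries codimension-$1$ strata over broken flow-lines of $-\eta$ through lower critical points $r\in P_*(\eta)$ with $|r|<\dim B$. Z-paths fibered over such a stratum are forced to have their base path in a proper submanifold of $B$, so the resulting subchain of $E^\infty$ has dimension strictly less than that of $\partial\bD_p(\xi,\eta)_\sigma$ and is degenerate.

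To identify the surviving initial faces, note that for $p\in P_*(\xi_c)$ the fiberwise product $\bvec{e}_p\times_c\Omega$ extracts the row of $\Omega|_{F_c}$ starting at $p$, whose $(p,r)$-entry is $\pbcalM_{pr}(\xi_c)$. Generically, its codimension-$0$ part is the zero-dimensional moduli space $\bbcalM{\xi_c}{p}{r}$ precisely when $|r|=|p|-1$, counted with signs by $\#\bbcalM{\xi_c}{p}{r}=\langle\partial p,r\rangle$; entries with $|r|<|p|-1$ produce chains of strictly smaller dimension in $E^\infty$. Absorbing the $n=0$ face $\bvec{e}_p\times_c Y^T$ into the $n'=0$ summand of $\bD_r(\xi,\eta)_\sigma$ yields
\[ \sum_{n\geq 1}\int_\sigma(\bvec{e}_p\times_c\Omega)\underbrace{\Omega\cdots\Omega}_{n-1}Y^T+\bvec{e}_p\times_c Y^T\equiv\sum_{r\in\overline{P}_{|p|-1}}\#\bbcalM{\xi_c}{p}{r}\,\bD_r(\xi,\eta)_\sigma \]
modulo degenerate chains. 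When $|p|=1$, trajectories of $\xi_c$ from $p$ escaping to infinity contribute the extra summand $\#\calM'_{p\infty}(\xi_c)\,\bD_{\ell_\infty^-}(\xi,\eta)_\sigma=\#\calM'_{p\infty}(\xi_c)\,\ell_\infty$, matching the additional term in $\overline{\partial}p$. For $p=\ell_\infty^+$ the claim reduces to $\partial\bcalD_\infty(\xi)=\sum_{q\in P_3}\#\calM'_{\infty q}(\xi_c)\,\bcalD_q(\xi)$, which is the $\infty$-version of Proposition~\ref{prop:bD} described in Remark~\ref{rem:M-infty}; the case $p=\ell_\infty^-$ is trivial. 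The identity for $\bA_q(\xi,\eta)_\sigma$ is obtained by a strictly symmetric argument, using $\bA_q^0(\xi,\eta)_\sigma=\sum_k\int_{\sigma'}X\underbrace{\Omega\cdots\Omega}_k\bvec{e}_q^T$, Lemma~\ref{lem:dxoo} in place of Lemma~\ref{lem:dooy}, the identifications $S_n''\cong T_{n+1}''$, and the terminal face $\Omega\times_c\bvec{e}_q^T$ in place of $\bvec{e}_p\times_c\Omega$.

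The main obstacle will be sign bookkeeping. The orientation of each surviving initial face, prescribed by (\ref{eq:o(degenerate)}) and the conventions of \S\ref{ss:ori}, must be matched, after the stratum identifications, with the orientation of the target $\bD_r(\xi,\eta)_\sigma$ multiplied by the sign $\ve(p,r)_b$ appearing in the Morse differential of \S\ref{ss:morse-complex}. At $\ell_\infty$ the minus signs $o^*_{E^\infty}(\calD_\infty(\xi))=o^*_{E^\infty}(\calA_\infty(\xi))=-1$ from Remark~\ref{rem:M-infty} must be incorporated correctly in the $p=\ell_\infty^+$ and $|p|=1$ cases so that the operator $\overline{\partial}$ extends $\partial$ exactly as advertised; modulo this routine but delicate bookkeeping, the decomposition-then-cancellation pattern produces the claimed identity.
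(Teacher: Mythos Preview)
Your overall strategy---iterated integral description, cancellation of $S_n'$ against $T_{n+1}'$, identification of the $\bvec{e}_p\times_c\Omega$ faces with the Morse differential---is exactly the paper's. The gap is in your treatment of the $\int_{\partial\sigma}$ terms.

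You argue that Z-paths over the broken strata of $\partial\bcalD_c(\eta)$ have base confined to a proper submanifold of $B$, hence give a degenerate chain in $E^\infty$. This is not correct. A broken flow-line factors as $(\alpha,\gamma')\in\calM'_{cc'}(\eta)\times\bcalD_{c'}(\eta)$ with $\alpha$ zero-dimensional and $\gamma'$ of dimension $k-1$; a Z-path over it decomposes as a (finite) Z-path from $p(c)$ to some $p'(c')$ along $\alpha$, followed by a Z-path in $\bD_{p'}(\xi,\eta)_{\bcalD_{c'}}$. Under $\ev_2$ the terminal point sweeps out $\bigcup_{b\in\calD_{c'}(\eta)}\calD_{p'}(\xi_b)$, which has dimension $(k-1)+|p'|=(k-1)+|p|=\dim\partial\bD_p(\xi,\eta)_\sigma$. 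The chain is therefore \emph{not} degenerate: its image has full dimension.

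What actually kills this face is Assumption~\ref{hyp:cv-const}(2), which you never invoke. The paper splits the $\partial\sigma$ contribution as
\[
(-1)^{|c|}\sum_{c'}\sum_{\alpha\in\calM'_{cc'}(\eta)}\ve(\alpha)\sum_{|p'|=|p|}n_\alpha(p,p')\,\bD_{p'}(\xi,\eta)_{\bcalD_{c'}}.
\]
The hypothesis $\Phi_\alpha=1$ forces $n_\alpha(p,p')=\delta_{p,p'}$ for every $\alpha$, so the inner sums collapse and the coefficient of $\bD_p(\xi,\eta)_{\bcalD_{c'}}$ becomes $\sum_\alpha\ve(\alpha)=\langle\partial_\eta c,c'\rangle$. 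Since $c$ is the unique index-$k$ critical point of the closed oriented manifold $B$, $\partial_\eta c=0$, and the whole term vanishes. Without $\Phi_\alpha=1$ the various $\alpha$ could land on different $p'$ and there is no reason for cancellation; the $\partial\sigma$ face would then genuinely survive in $\partial\bD_p(\xi,\eta)_\sigma$.
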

\begin{proof}
In the iterated integral description of $\bD_p$ of Proposition~\ref{prop:Dp}, the first two terms in $S_n'$ (Lemma~\ref{lem:dooy}) and the two terms in $T_{n+1}'$ cancel each other by Proposition~\ref{prop:dw=ww}. The following holds.
\[ \partial\bD_p(\xi,\eta)_\sigma
=\sum_{k=0}^\infty\int_\sigma (\bvec{e}_p\times_c \Omega)\underbrace{\Omega\cdots\Omega}_{k-1}Y^T
+\sum_{k=0}^\infty \int_{\partial\sigma}\bvec{e}_p\underbrace{\Omega\cdots\Omega}_kY^T
\]
Here, the signs are determined by using (\ref{eq:o(degenerate)}), (\ref{eq:o(M_2)}), and Lemma~\ref{lem:o(dM_2)} (the case $|r|=\dim{B}$ in the notation of Lemma~\ref{lem:o(dM_2)}). The first term in the right hand side is $\displaystyle\sum_r \langle \overline{\partial} p,r\rangle\, \bD_r(\xi,\eta)_\sigma$, and by Lemma~\ref{lem:o(dD)}, the second term is fibered over $\displaystyle\partial\bcalD_c(\eta)=(-1)^{|c|}\sum_{c'} \calM'_{cc'}(\eta)\times \bcalD_{c'}(\eta)$. Thus, considered modulo degenerate chains, 
\[ \begin{split}
  &\sum_{k=0}^\infty \int_{\partial\sigma}\bvec{e}_p\underbrace{\Omega\cdots\Omega}_kY^T
  =(-1)^{|c|} \sum_{c'} \sum_{k_1,k_2\geq 0} \int_{\calM'_{cc'}}\bvec{e}_p\underbrace{\Omega\cdots\Omega}_{k_1}\times \int_{\bcalD_{c'}}\underbrace{\Omega\cdots\Omega}_{k_2}Y^T\\
  &=(-1)^{|c|} \sum_{c'}\sum_{{p'}\atop{|p'|=|p|}} \Bigl(\sum_{k_1\geq 0}\int_{\calM'_{cc'}}\bvec{e}_p\underbrace{\Omega\cdots\Omega}_{k_1}\bvec{e}_{p'}^T\Bigr)\times \Bigl(\sum_{k_2\geq 0}\int_{\bcalD_{c'}}\bvec{e}_{p'}\underbrace{\Omega\cdots\Omega}_{k_2}Y^T\Bigr)\\
  &=(-1)^{|c|} \sum_{c'}\langle \partial_\eta c,c'\rangle  \Bigl(\sum_{k_2\geq 0}\int_{\bcalD_{c'}}\bvec{e}_p\underbrace{\Omega\cdots\Omega}_{k_2}Y^T\Bigr) \\
  &=(-1)^{|c|} \sum_{c'}\langle \partial_\eta c,c'\rangle  \bD_p(\xi,\eta)_{\bcalD_{c'}}=(-1)^{|c|}\bD_p(\xi,\eta)_{\partial \sigma}=0.
\end{split} \]
Here, we used Lemma~\ref{lem:parallel} and $\Phi_\alpha=1$ of Assumption~\ref{hyp:cv-const} (2) in the third equality. The proof for $\partial\bA_q$ is similar.
\end{proof}

Let $\calH_{pq}^0(\xi,\eta)_\sigma=\bA_q(\xi,\eta)_\sigma\times_B \bD_p(\xi,\eta)_\sigma$ be the pullback of the fiberwise space $\calH_{pq}^0(\xi,\eta)$ over $B$ by $\sigma:\bcalD_c(\eta)\to B$.
By an argument similar to the proof of Proposition~\ref{prop:dD}, the following proposition is obtained.
\begin{Prop}\label{prop:dH}
If $\partial \sigma=0$, then $\partial \calH_{pq}^0(\xi,\eta)_\sigma$, considered as a chain of $E^\infty\times_B E^\infty$, is given by the following formula modulo degenerate chains.
\[\begin{split}
 &\partial\bA_q(\xi,\eta)_\sigma\times_B \bD_p(\xi,\eta)_\sigma+\bA_q(\xi,\eta)_\sigma\times_B \partial\bD_p(\xi,\eta)_\sigma\\
 &=\sum_{{r\in \overline{P}_*}\atop{|r|=|q|+1}}\langle \overline{\partial} r,q\rangle \bA_r(\xi,\eta)_\sigma\times_B \bD_p(\xi,\eta)_\sigma
 +\sum_{{s\in \overline{P}_*}\atop{|s|=|p|-1}}\langle \overline{\partial} p,s\rangle \bA_q(\xi,\eta)_\sigma\times_B \bD_s(\xi,\eta)_\sigma
\end{split}
\]
\end{Prop}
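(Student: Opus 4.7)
The plan is to imitate the proof of Proposition~\ref{prop:dD}, but carried out for the fiber product $\calH_{pq}^0(\xi,\eta)_\sigma = \bA_q(\xi,\eta)_\sigma \times_B \bD_p(\xi,\eta)_\sigma$ rather than for a single factor. Concretely, I would first express each factor via Proposition~\ref{prop:Dp} as a disjoint union of iterated integrals over $\sigma$ and $\sigma'=\iota\circ\sigma$:
\[
  \bD_p(\xi,\eta)_\sigma \approx \sum_{k\ge 0}\int_\sigma \bvec{e}_p \underbrace{\Omega\cdots\Omega}_k Y^T\Big/{\sim},\quad
  \bA_q(\xi,\eta)_\sigma \approx \sum_{k\ge 0}\int_{\sigma'} X \underbrace{\Omega\cdots\Omega}_k \bvec{e}_q^T\Big/{\sim},
\]
and then form the fiber product using the natural maps $\pi\circ\ev_2,\ \pi\circ\overline{\ev}_1$ to $B$.

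Next, I would apply the Leibniz rule for boundaries of transverse fiber products of manifolds with corners to obtain
\[
  \partial\calH_{pq}^0(\xi,\eta)_\sigma
  = \bigl(\partial \bA_q(\xi,\eta)_\sigma\bigr) \times_B \bD_p(\xi,\eta)_\sigma \;+\; \bA_q(\xi,\eta)_\sigma \times_B \bigl(\partial \bD_p(\xi,\eta)_\sigma\bigr),
\]
with signs determined by the orientation conventions of \S\ref{ss:ori}. The fiber-product construction itself could in principle contribute extra boundary strata where the two endpoints coincide or escape to $\ell_\infty$; for generic $\xi$ such strata lie on $\wDelta_E$ or on the infinity section and are degenerate chains of $E^\infty\times_B E^\infty$, so they can be ignored.

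Inside the boundary of each individual factor, Lemmas~\ref{lem:dxoo} and~\ref{lem:dooy} together with the diffeomorphisms $S_n'\cong T_{n+1}'$ and $S_n''\cong T_{n+1}''$ of Proposition~\ref{prop:dw=ww} cancel all internal $\partial X,\ \partial \Omega,\ \partial Y^T$ terms against the corresponding $X\times_B\Omega,\ \Omega\times_B\Omega,\ \Omega\times_B Y^T$ terms, exactly as in the proof of Proposition~\ref{prop:dD}. What survives are the handle-slide contributions $\bvec{e}_p \times_c \Omega$ and $\Omega \times_c \bvec{e}_q^T$, together with $\int_{\partial\sigma}$ and $\int_{\partial\sigma'}$ terms. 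Since $\partial\sigma = 0$ and $\sigma'=\iota\circ\sigma$ implies $\partial\sigma'=0$, the latter two vanish as chains, while the former are identified in the proof of Proposition~\ref{prop:dD} with
\[
  \sum_{s:\,|s|=|p|-1}\langle \overline{\partial}p,s\rangle\, \bD_s(\xi,\eta)_\sigma\quad\text{and}\quad
  \sum_{r:\,|r|=|q|+1}\bA_r(\xi,\eta)_\sigma\,\langle \overline{\partial}r,q\rangle
\]
respectively, modulo degenerate chains. Substituting these into the Leibniz decomposition yields the claimed formula.

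The principal obstacle will be sign tracking through the Leibniz rule for $\times_B$ and the orientation conventions (\ref{eq:o(degenerate)}), (\ref{eq:o(M_2)}) together with Lemmas~\ref{lem:o(dM_2)} and~\ref{lem:o(dD)}; once those signs are checked to line up as in Proposition~\ref{prop:dD}, the identity follows mechanically. A minor secondary issue is to verify that the "degeneracy" one must quotient by in Proposition~\ref{prop:dH} is precisely the one inherited from Propositions~\ref{prop:Dp} and~\ref{prop:dD}, so that the boundary identity is well-defined on the quotient.
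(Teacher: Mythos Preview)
Your proposal is correct and follows essentially the same route as the paper, whose proof is literally the one line ``By an argument similar to the proof of Proposition~\ref{prop:dD}''; you have simply spelled out what that argument is, namely Leibniz for the fiber product together with the iterated-integral boundary analysis and the $S_n'\cong T_{n+1}'$, $S_n''\cong T_{n+1}''$ cancellations. The only minor imprecision is the remark about ``extra boundary strata where the two endpoints coincide'': since the chain lives in $E^\infty\times_B E^\infty$ (not in the blow-up $\EC_2(\pi)$), coincidence of endpoints is not a boundary stratum here, so no such extra term arises in the Leibniz formula.
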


Let $g$ be a combinatorial propagator for $C_*(\xi_c)$ and let $\overline{g}:\overline{C}_*\to \overline{C}_{*+1}$ be the $\Z$-linear map defined by $\overline{g}(p)=g(p)$ for $p\in P_*$ and $\overline{g}(\ell_\infty^-)=0$. Then $\overline{\partial}\overline{g}+\overline{g}\overline{\partial}$ is not the identity. More precisely, for $|p|=1,2,3$, $(\overline{\partial}\overline{g}+\overline{g}\overline{\partial})(p)=(\partial g+g\partial)(p)=p$, and for $|p|=0$, 
\[ (\overline{\partial}\overline{g}+\overline{g}\overline{\partial})(p)=\overline{\partial}\overline{g}(p)
 =\left\{\begin{array}{ll}
 p+\ell_\infty^- & (p\neq \ell_\infty^-)\\
 0 & (p=\ell_\infty^-)
 \end{array}\right.
\]
Indeed, for $p\neq \ell_\infty^-$, $|p|=0$, we have $\overline{\partial}\overline{g}(p)=\overline{\partial}g(p)$, where $g(p)$ is a 1-chain with $\partial g(p)=p$. Hence $g(p)$ has another end at $\ell_\infty^-$ and $\overline{\partial}g(p)=p-(-\ell_\infty^-)=p+\ell_\infty^-$ by the orientation convention of Remark~\ref{rem:M-infty}. For $|p|=4$, 
\[ (\overline{\partial}\overline{g}+\overline{g}\overline{\partial})(p)=g\overline{\partial}(p)
 =\left\{\begin{array}{ll}
 p & (p\neq \ell_\infty^+)\\
 \sum_{p'\in P_4}p' & (p=\ell_\infty^+)
\end{array}\right.
 \]
where the last identity for $p=\ell_\infty^+$ can be obtained by the identities $\overline{\partial}(-\ell_\infty^+ + \sum_{p'\in P_4}p')=0$ and $g\overline{\partial}(\ell_\infty^+)=\sum_{p'\in P_4}g\partial(p')=\sum_{p'\in P_4} p'$, by the orientation convention of Remark~\ref{rem:M-infty} again. 

Let $\Tr_g:S_*(E^\infty\times_B E^\infty)\otimes C_*(\xi_c)^{\otimes 2}\to S_*(E^\infty\times_B E^\infty)$ be defined for $\tau\in S_*(E^\infty\times_B E^\infty)$, $x,y\in P_*(\xi_c)$ by 
\[ \Tr_g(\tau\otimes x\otimes y)=-\langle g(x),y\rangle\tau. \]
\begin{Lem}\label{lem:dtheta-0} Let $\sigma$ be the fundamental cycle of $B$ and let $\theta^0$ be the chain of $E^\infty\times_BE^\infty$ defined by
\[ \theta^0(\xi,\eta)=\displaystyle M_0(\xi)+\Tr_g\Bigl(\sum_{p,q\in P_*(\xi_c)}\calH_{pq}^0(\xi,\eta)_\sigma\, q\otimes p\Bigr),\] 
 where $M_0(\xi)$ is the natural map $\ibcalM_2(\xi)\to E^\infty\times_B E^\infty$ and the sum is taken for $p,q\in P_*(\xi_c)$ such that $|p|=|q|+1$. Then the following identity holds modulo degenerate chains.
\[ \partial \theta^0(\xi,\eta)=\pm \Delta_E - E^\infty\times_B \ell_\infty - \ell_\infty\times_B E^\infty. \]
\end{Lem}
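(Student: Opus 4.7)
The plan is to decompose $\theta^0(\xi,\eta)=M_0(\xi)+\theta^H$ with $\theta^H=-\sum_{p,q\in P_*(\xi_c)}\langle g(q),p\rangle\,\calH_{pq}^0(\xi,\eta)_\sigma$, compute $\partial M_0(\xi)$ and $\partial\theta^H$ separately, and produce the required identity via the commutator relations for $\overline\partial\,\overline g+\overline g\,\overline\partial$ on the extended complex $(\overline C_*,\overline\partial)$ whose explicit values are listed just before the lemma statement.

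First I would compute $\partial M_0(\xi)$ by applying Proposition~\ref{prop:dw=ww} to identify $\partial\ibcalM_2(\xi)\cong\overline s_\xi + X_0\times_B Y_0^T$ inside $\EC_2(\pi)$, and then pushing these chains down to $E^\infty\times_B E^\infty$. The diagonal-face component $\overline s_\xi|_{S_{\Delta_E}}$ projects to $\pm\Delta_E$ via the $S^3$-bundle projection $S_{\Delta_E}\to\Delta_E$, and the broken-flow-line stratum $X_0\times_B Y_0^T$ projects to $\sum_{p\in P_*(\xi_c)}\bcalA_p(\xi)\times_B\bcalD_p(\xi)$. The images of $\overline s_\xi$ on the two infinite faces of $\partial^\fib\EC_2(\pi)-\mathrm{Int}\,S_{\Delta_E}$, when combined with the orientation convention $o^*_{E^\infty}(\calD_\infty)=o^*_{E^\infty}(\calA_\infty)=-1$ of Remark~\ref{rem:M-infty}, account for the $\ell_\infty$-terms in the desired identity.

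Next, for $\partial\theta^H$ I would apply Proposition~\ref{prop:dH} (with $\partial\sigma=0$ since $\sigma$ is the fundamental cycle of $B$), substitute the resulting decomposition of $\partial\calH_{pq}^0$, and regroup by the pair $(\bA_{q'},\bD_{p'})$. The key algebraic step uses the identities $\sum_q\langle g(q),p\rangle\langle\overline\partial r,q\rangle=\langle\overline g\,\overline\partial(r),p\rangle$ and $\sum_p\langle g(q),p\rangle\langle\overline\partial p,s\rangle=\langle\overline\partial\,\overline g(q),s\rangle$, producing
\[ \partial\theta^H \equiv -\sum_{q',p'}\bigl\langle(\overline\partial\,\overline g+\overline g\,\overline\partial)(q'),\,p'\bigr\rangle\,\bA_{q'}(\xi,\eta)_\sigma\times_B\bD_{p'}(\xi,\eta)_\sigma \pmod{\text{degenerate chains}}. \]
Plugging in the listed values of $(\overline\partial\,\overline g+\overline g\,\overline\partial)(q')$: the identity contribution on $P_*(\xi_c)$ produces $-\sum_{p\in P_*(\xi_c)}\bA_p\times_B\bD_p$, which cancels precisely the broken-flow sum in $\partial M_0(\xi)$; the correction $q'+\ell_\infty^-$ at $q'\in P_0\setminus\{\ell_\infty^-\}$ produces $-\sum_{q\in P_0}\bA_q\times_B\ell_\infty$; and the correction $\sum_{p'\in P_4}p'$ at $q'=\ell_\infty^+$ produces $-\sum_{p\in P_4}\ell_\infty\times_B\bD_p$. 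Modulo degenerate chains these last two equal $-E^\infty\times_B\ell_\infty$ and $-\ell_\infty\times_B E^\infty$, since $\bigsqcup_{q\in P_0}\bcalA_q(\xi)$ and $\bigsqcup_{p\in P_4}\bcalD_p(\xi)$ cover $E^\infty$ up to lower-dimensional strata.

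Summing the two contributions, and combining with the $\ell_\infty$-contributions from $\overline s_\xi$ in $\partial M_0(\xi)$, yields $\pm\Delta_E-E^\infty\times_B\ell_\infty-\ell_\infty\times_B E^\infty$ modulo degenerate chains. The main obstacle will be a careful orientation bookkeeping—consistently tracking signs through the projection $\EC_2(\pi)\to E^\infty\times_B E^\infty$ applied to $\overline s_\xi$, through the $\Tr_g$ convention $\Tr_g(\tau\otimes x\otimes y)=-\langle g(x),y\rangle\tau$, through the orientation rules for Z-paths in \S\ref{ss:ori}, and through the $-1$ coorientations at $\ell_\infty$ from Remark~\ref{rem:M-infty}; only when all four are assembled with the correct relative signs do the $\ell_\infty$-terms from $\partial M_0(\xi)$ and from the chain contraction combine to give exactly $-E^\infty\times_B\ell_\infty-\ell_\infty\times_B E^\infty$ on the right-hand side.
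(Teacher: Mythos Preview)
Your overall architecture matches the paper's proof: split $\theta^0=M_0+\theta^H$, compute $\partial\theta^H$ via Proposition~\ref{prop:dH}, regroup into $-\sum_{p,r}\langle(\overline\partial\,\overline g+\overline g\,\overline\partial)(r),p\rangle\,\bA_r\times_B\bD_p$, and evaluate the commutator degree by degree. Two steps, however, are not handled correctly.

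\textbf{The cancellation $\sum_{p\in P_*}\bA_p\times_B\bD_p=X_0\times_B Y_0^T$ is not automatic.} The objects $\bA_p,\bD_p$ are spaces of Z-paths, which over a general parameter in $B$ include contributions from $i/i$-intersections along flow-lines of $-\eta$; they are strictly larger than the ordinary $\bcalA_p,\bcalD_p$ that appear in $X_0\times_B Y_0^T$. The paper bridges this by decomposing the fundamental cycle as $\sigma=\sum_i\sigma_i$ along the conic stratification of Lemma~\ref{lem:i/i-str}, introducing the parallel transport $\psi_i:C_*(\xi_0)\to C_*(\xi_{b_i})$ along a flow-line of $-\eta$, and using the tautology $\psi_i\circ\mathrm{id}\circ\psi_i^{-1}=\mathrm{id}$ to rewrite $\sum_r\bA_r\times_B\bD_r$ on each $\sigma_i$ as $\Tr_{\mathrm{id}}(\sum\bcalA_{q_i}\times_B\bcalD_{p_i}\,q_i\otimes p_i)=-(X_0)_{\sigma_i}\times_B(Y_0)_{\sigma_i}^T$. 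This step (and its reliance on Assumption~\ref{hyp:cv-const}) is absent from your outline; without it the ``cancels precisely'' claim has no justification.

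\textbf{The $\ell_\infty$ bookkeeping is inconsistent.} The union $\bigsqcup_{q\in P_0}\bcalA_q(\xi)$ does \emph{not} cover $E^\infty$: points whose forward $-\xi$-trajectory escapes to infinity lie in $\bcalA_\infty(\xi)$, not in any $\bcalA_q$ with $q\in P_0$. The correct mechanism is that $\partial\theta^H$ produces $-(\sum_{q\in P_0}\bA_q)\times_B\ell_\infty$ while $\partial M_0(\xi)$ contributes the complementary $-\bcalA_\infty(\xi)\times_B\ell_\infty$ (from the broken-flow stratum at $\ell_\infty$, not from $\overline s_\xi$ on the infinite face as you wrote); only their sum equals $-(\sum_{q\in\overline P_0}\bA_q)\times_B\ell_\infty=-E^\infty\times_B\ell_\infty$. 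Your text first asserts the $P_0$-sum alone already gives $E^\infty$, and then separately adds contributions from $\partial M_0$, which would double-count.
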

\begin{proof} By Proposition~\ref{prop:dH}, $\Tr_g\Bigl(\sum_{p,q}\partial \calH_{pq}^0(\xi,\eta)_\sigma\, q\otimes p\Bigr)$ can be rewritten as
\begin{equation}\label{eq:Tr(dH)}
 \begin{split}
  &\Tr_g\Bigl(\sum_{p,q\in P_*}\sum_{r\in \overline{P}_*}\bA_r\times_B\bD_p\,\langle\overline{\partial} r,q\rangle q\otimes p\Bigr)
  + \Tr_g\Bigl(\sum_{p,q\in P_*}\sum_{s\in \overline{P}_*}\bA_q\times_B\bD_s\,\langle\overline{\partial} p,s\rangle q\otimes p \Bigr)\\
  =&\Tr_{\overline{g}}\Bigl(\sum_{p,q\in \overline{P}_*}\sum_{r\in \overline{P}_*}\bA_r\times_B\bD_p\,\langle\overline{\partial} r,q\rangle q\otimes p\Bigr)
  + \Tr_{\overline{g}}\Bigl(\sum_{p,q\in \overline{P}_*}\sum_{s\in \overline{P}_*}\bA_q\times_B\bD_s\,\langle\overline{\partial} p,s\rangle q\otimes p \Bigr)\\
  =&\Tr_{\overline{g}\overline{\partial}+\overline{\partial} \overline{g}}\Bigl(\sum_{p,r\in\overline{P}_*}\bA_r\times_B\bD_p\,r\otimes p\Bigr)
  =-\sum_{p,r\in\overline{P}_*}\langle (\overline{g}\overline{\partial}+\overline{\partial} \overline{g})(r),p\rangle\,\bA_r\times_B\bD_p,
\end{split} 
\end{equation}
where $\bA_*=\bA_*(\xi,\eta)_\sigma$ and $\bD_*=\bD_*(\xi,\eta)_\sigma$. For $1\leq |p|=|r|\leq 3$, we have $\langle (\overline{g}\overline{\partial}+\overline{\partial} \overline{g})(r),p\rangle=\langle r,p\rangle$. For $|p|=|r|=0$, we have $(\overline{g}\overline{\partial}+\overline{\partial} \overline{g})(r)=r+\ell_\infty^-$ if $r\neq \ell_\infty^-$, and $(\overline{g}\overline{\partial}+\overline{\partial} \overline{g})(r)=0$ if $r=\ell_\infty^-$. Hence the sum of terms for $|p|=|r|=0$ in (\ref{eq:Tr(dH)}) is
\[ \begin{split}
  &-\sum_{r\in P_0}\sum_{p\in \overline{P}_0}\langle r+\ell_\infty^-,\,p\rangle\, \bA_r\times_B \bD_p
  =-\sum_{r\in P_0}(\bA_r\times_B\bD_r + \bA_r\times_B \bD_{\ell_\infty^-})\\
  =&-\sum_{r\in P_0}\bA_r\times_B\bD_r - \Bigl(\sum_{r\in P_0}\bA_r\Bigr)\times_B \ell_\infty.
\end{split} \]
For $|p|=|r|=4$, we have $(\overline{g}\overline{\partial}+\overline{\partial} \overline{g})(r)=r$ if $r\neq \ell_\infty^+$, and $(\overline{g}\overline{\partial}+\overline{\partial} \overline{g})(r)=\sum_{p'\in P_4}p'$ if $r=\ell_\infty^+$. Hence the sum of terms for $|p|=|r|=4$ in (\ref{eq:Tr(dH)}) is
\[ \begin{split}
  &-\sum_{r\in P_4}\sum_{p\in \overline{P}_4}\langle r,\,p\rangle\, \bA_r\times_B \bD_p - \sum_{p\in\overline{P}_4}\Bigl\langle \sum_{p'\in P_4} p',\,p\Bigr\rangle\,\bA_{\ell_\infty^+}\times_B \bD_p\\
  =&-\sum_{r\in P_4}\bA_r\times_B \bD_r - \ell_\infty\times_B\Bigl(\sum_{p\in P_4}\bD_p\Bigr).
\end{split} \]
Hence (\ref{eq:Tr(dH)}) is equal to
\begin{equation}\label{eq:XY}
 -\sum_{r\in P_*}\bA_r\times_B \bD_r - \Bigl(\sum_{r\in P_0}\bA_r\Bigr)\times_B \ell_\infty - \ell_\infty\times_B\Bigl(\sum_{p\in P_4}\bD_p\Bigr). 
\end{equation}
Here, we decompose as $\sigma=\sum_i\sigma_i$, where $\sigma_i$ is the restriction of $\sigma$ on the closure of a codimension 0 stratum in the conic stratification of $\bcalD_{b_0}(\eta)$ of Lemma~\ref{lem:i/i-str}. Let $b_i$ be a base point of $\sigma_i$ in the interior of the image, and let $\psi_i:C_*(\xi_0)\to C_*(\xi_{b_i})$ denote the parallel transport defined by Z-paths on a flow-line of $-\eta$ between $b_0$ and $b_i$. Let $(X_0)_{\sigma_i},(Y_0)_{\sigma_i}$ be the pullbacks of the fiberwise spaces $X_0, Y_0$ over $B$ respectively by $\sigma_i$. Then the first term in (\ref{eq:XY}) above can be rewritten as follows.
\[ \begin{split}
  &\sum_i\Tr_{\psi_i\circ\mathrm{id}\circ\psi_i^{-1}}\Bigl(\sum_{p_i,q_i}\bcalA_{q_i}(\xi,\eta)_{\sigma_i}\times_B\bcalD_{p_i}(\xi,\eta)_{\sigma_i}\,q_i\otimes p_i\Bigr)\\
  &= \sum_i\Tr_{\mathrm{id}}\Bigl(\sum_{p_i,q_i}\bcalA_{q_i}(\xi,\eta)_{\sigma_i}\times_B\bcalD_{p_i}(\xi,\eta)_{\sigma_i}\,q_i\otimes p_i\Bigr)\\
  &=-\sum_i (X_0)_{\sigma_i}\times_B (Y_0)_{\sigma_i}^T=-X_0\times_B Y_0^{T}.
\end{split} \]
Then we use the identity 
$ \partial \ibcalM_2(\xi)\cong -s_\xi + X_0\times_B Y_0^{T} - \bcalA_\infty(\xi)\times_B\ell_\infty - \ell_\infty\times_B \bcalD_\infty(\xi)$
of Proposition~\ref{prop:dw=ww}, where the signs are correct by Lemma~\ref{lem:o(dM_2)} and the convention (see also the proof of \cite[Proposition~5.5]{Wa3}), and obtain
$\partial\theta^0(\xi,\eta)=\pm\Delta_E - \Bigl(\sum_{r\in \overline{P}_0}\bA_r\Bigr)\times_B \ell_\infty - \ell_\infty\times_B\Bigl(\sum_{p\in \overline{P}_4}\bD_p\Bigr)=\pm\Delta_E - E^\infty\times_B \ell_\infty - \ell_\infty\times_B E^\infty$.
This completes the proof.
\end{proof}

\subsection{Proof of Theorem~\ref{thm:P-propagator}}

By the definition of $\theta_\mathrm{Z}(\xi,\eta)$, we have
\[ \partial \theta_\mathrm{Z}(\xi,\eta)=\partial M(\xi,\eta) + \Tr_g\Bigl( \sum_{p,q} \partial H_{pq}(\xi,\eta)_\sigma q\otimes p \Bigr). \]
We define $\overline{L}_{p_iq_i}(\xi)=\bcalD_{p_i}(\xi)\times_{E^\infty}\bcalA_{q_i}(\xi)$ with respect to the natural maps $\bcalD_{p_i}(\xi)\to E^\infty$ and $\bcalA_{q_i}(\xi)\to E^\infty$.
By $\partial M(\xi,\eta)=-\overline{s}_\xi+X_0\times_B Y_0^{T}$ (Proposition~\ref{prop:dw=ww}) and the argument in the proof of Lemma~\ref{lem:dtheta-0}, $\partial \theta_\mathrm{Z}(\xi,\eta)$ can be rewritten as follows.
\[ \begin{split}
  &-\overline{s}_\xi+X_0\times_B Y_0^{T} -X_0\times_B Y_0^{T}
  +\sum_i\Tr_{\psi_i g \psi_i^{-1}}\Bigl(\sum_{p_i,q_i} \partial H_{p_iq_i}(\xi,\eta)_{\sigma_i}q_i\otimes p_i\Bigr)\\
  &=-\overline{s}_\xi-\sum_i\Tr_{\psi_i g \psi_i^{-1}}\Bigl(\sum_{p_i,q_i}ST^v(\overline{L}_{p_iq_i})_{\sigma_i}q_i\otimes p_i\Bigr).
\end{split} \]
The last $\Tr$ term in the second row corresponds to the collision of two endpoints of a separated segment that occurs on a flow-line between $p_i$ and $q_i$. This term and the corresponding term in $\partial \theta_\mathrm{Z}^*(\xi,\eta)$ cancel each other since $o^*_E(\calA_{q_i}(\xi))_z\wedge o^*_E(\calD_{p_i}(\xi))_z=(-1)^{|q_i|(4-|p_i|)}o^*_E(\calD_{p_i}(\xi))_z\wedge o^*_E(\calA_{q_i}(\xi))_z=o^*_E(\calD_{p_i}(\xi))_z\wedge o^*_E(\calA_{q_i}(\xi))_z$ implies the equivalence of the coorientations of $ST^v(\overline{L}_{p_iq_i})$ and $ST^v(\overline{L}_{q_ip_i})$ in $ST^v(E)$, and since the coefficients of them are given by $(\psi_i g \psi_i^{-1})_{q_ip_i}$ and $-(\psi_i g \psi_i^{-1})^T_{p_iq_i}=-(\psi_i g \psi_i^{-1})_{q_ip_i}$, respectively. Hence we have $\partial \theta_\mathrm{Z}(\xi,\eta)+\partial \theta_\mathrm{Z}^*(\xi,\eta)=-\overline{s}_\xi-\overline{s}_{-\xi}$.\hfill\qed

\subsection{Transversality: Proof of Lemma~\ref{lem:dxooy}}\label{ss:transversality}

By the definition of iterated integrals, each term of $\displaystyle\int_\sigma X\underbrace{\Omega\cdots\Omega}_n\,\t{Y}$ consists of spaces of the form of the pullback in the following diagram:
\begin{equation}\label{eq:pullback}
 \xymatrix{
   & \bcalA_{q_1}\times\pbcalM_{q_1q_2}\times\cdots\times\pbcalM_{q_nq_{n+1}}\times\bcalD_{q_{n+2}} \ar[d]^-{\phi_1\times\cdots\times \phi_{n+2}}\\
  \bcalD_c(\eta)\times \Delta^{n+1} \ar[r]_-{\hat{\sigma}} & B^{n+2}
} 
\end{equation}
where $\hat{\sigma}(\gamma;s_1,\ldots,s_{n+1})=(\bar{\gamma}(s_1),\ldots,\bar{\gamma}(s_{n+1}),\bar{\gamma}(1))$. We shall prove that $\hat{\sigma}$ and $\phi_1\times\cdots\times \phi_{n+2}$ can be made strata transversal (\cite[Appendix]{BT} for the definition of strata transversality). Note that since $\hat{\sigma}$ cannot be perturbed in arbitrary direction in the space of smooth maps, the transversality is not obvious from Thom's transversality theorem. 

For each point $\bvec{y}=(y_1,\ldots,y_{n+2})=(\bar\gamma(s_1),\cdots,\bar\gamma(s_{n+1}),\bar\gamma(1))\in B^{n+2}$ of the image of $\hat{\sigma}$, we prove that 
$T_{\tbvec{y}}B^{n+2}=T_{y_1}B\oplus\cdots\oplus T_{y_{n+2}}B$ is spanned by the images of $d\hat{\sigma}$ and $d\phi_1\oplus\cdots\oplus d\phi_{n+2}$. It suffices to prove that for each $j$, the subspace $T_{y_j}B$ of $T_{\tbvec{y}}B^{n+2}$ is spanned by vectors of these images. In the following, we assume that $\sigma:\bcalD_c(\eta)\to B$ and the stratum $B^{(1)}$ of $i/i$-intersections are strata transversal, without loss of generality. 
\par\medskip

\noindent{\bf Case 1}: Suppose that $\pbcalM_{q_jq_{j+1}}$ is the moduli space of $i+\ell/i$-intersection ($\ell\geq 1$) for all $j$ and that $y$ is in the image from the codimension 0 stratum of $\bcalA_{q_1}\times\pbcalM_{q_1q_2}\times\cdots\times\pbcalM_{q_nq_{n+1}}\times\bcalD_{q_{n+2}}$. In this case, $\phi_1\times\cdots\times \phi_{n+2}$ is locally a submersion, and hence transversal to $\hat{\sigma}$. 
\par\medskip

\noindent{\bf Case 2}: Suppose that either $\pbcalM_{q_{j-1}q_j}$ is the space of $i/i$-intersections for some $j$, or $\bvec{y}$ comes from a point of $\bcalA_{q_1}\times\pbcalM_{q_1q_2}\times\cdots\times\pbcalM_{q_nq_{n+1}}\times\bcalD_{q_{n+2}}$ that includes a point of codimension 1 stratum of $\pbcalM_{q_{j-1}q_j}$ of $i+1/i$-intersections. Since the latter is equivalent to the former, we need only to consider the former case. Note that by Lemma~\ref{lem:i/i-str} the strata of $B^{(1)}$ of codimension $\geq 2$ are transversal intersections of several codimension 1 strata. So it suffices to check the transversality at a point of each codimension 1 stratum gathering around a codimension $\geq 2$ stratum. Since $B^{(1)}$ consists of immersed codimension 1 submanifolds in $B$, the image of $d\phi_j:T\pbcalM_{q_{j-1}q_j}\to TB$ is a codimension 1 subbundle over the image of $\phi_j$. Since we assume that $\sigma$ and $B^{(1)}$ are strata transversal, $B^{(1)}_\sigma=\sigma^{-1}(B^{(1)})$ consists of immersed codimension 1 submanifolds of $\bcalD_c(\eta)$ and intersects $\partial \bcalD_c(\eta)$ transversally.

The image of $d\hat{\sigma}$ in $T_{\tbvec{y}}B^{n+2}$ is spanned by $(\frac{d\bar{\gamma}}{ds})_{y_1},\ldots,(\frac{d\bar{\gamma}}{ds})_{y_{n+2}}$, and the tuple of tangent vectors that 
a perturbation of $\bar{\gamma}$ along $\bcalD_c(\eta)$ induces on $T_{y_1}B,\ldots,T_{y_{n+2}}B$. Now we assume that $\pbcalM_{q_{j-1}q_j}$ is a space of $i/i$-intersections. If $\phi_j:\pbcalM_{q_{j-1}q_j}\to B$ and $\bar{\gamma}$ is transversal at $y_j$, then $T_{y_j}B$ is spanned by $\mathrm{Im}\,d\phi_j$ and $(\frac{d\bar{\gamma}}{ds_j})_{y_j}$. If such a transversality condition is satisfied for each $i/i$-intersection over $\bvec{y}$, then it follows that $\hat{\sigma}$ and $\phi_1\times\cdots\times\phi_{n+2}$ are transversal at $\bvec{y}$. If $\phi_j$ and $\bar{\gamma}$ are not transversal at $y_j$, then we need extra work. In this case, $(\frac{d\bar{\gamma}}{ds_j})_{y_j}$ is included in the image of $d\phi_j$. Since $B^{(1)}$ consists of codimension 1 strata of $B$, $T_{y_j}B^{(1)}$ is the image of $d\phi_j$ in $T_{y_j}B$. The remaining 1-dimension needs to be taken from a direction in $T_{y_j}\calD_c(\eta)-T_{y_j}B^{(1)}_\sigma$. We shall see below that such a direction can be obtained by a perturbation of $\bar{\gamma}$.

If, for each $\ell\neq j$, $\phi_\ell$ and $\bar{\gamma}$ are transversal at $y_\ell$ (e.g., Figure~\ref{fig:D_c} (1)), then the proof is easy. For a vector $\bar{\nu}_j$ in $T_{y_j}\calD_c(\eta)-T_{y_j}B^{(1)}_\sigma$, choose a tangent vector $\nu\in T_\gamma \calD_c(\eta)$ that induces $\bar{\nu}_j$ in $T_{y_j}\calD_c(\eta)$. Namely, if we put $\bar{\nu}=d\hat{\sigma}(\nu)=(\bar{\nu}_1,\ldots,\bar{\nu}_{n+2})$, then $-\bar{\nu}_\ell$ can be written as a linear combination of a vector in $\mathrm{Im}\,d\phi_\ell$ and $(\frac{d\bar{\gamma}}{ds_\ell})_{y_\ell}$ since 
$d\phi_\ell$ and $\bar{\gamma}$ are transversal at $y_\ell$. Then we may obtain $\bar{\nu}-\sum_{\ell\neq j} \bar{\nu}_\ell=(0,\ldots,0,\bar{\nu}_j,0,\ldots,0)$. This implies the transversality at $\bvec{y}$. 

\begin{figure}
\begin{center}
\begin{tabular}{ccc}
\includegraphics[height=30mm]{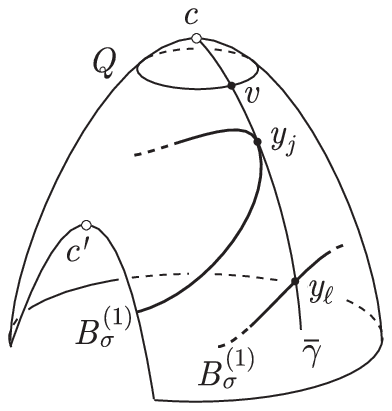} & & \includegraphics[height=40mm]{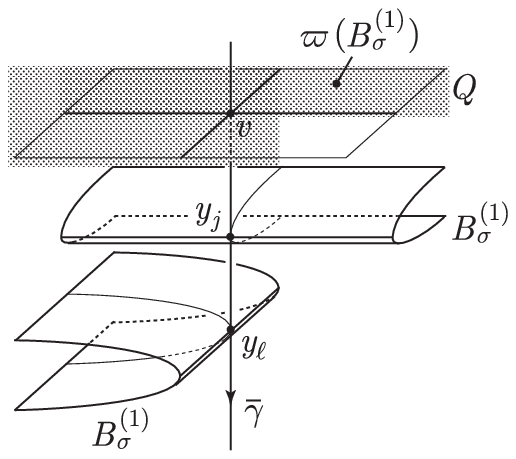}\\
(1) & \phantom{aaa} & (2)
\end{tabular}
\end{center}
\caption{The case where $\bar{\gamma}$ is not transversal to $B_\sigma^{(1)}$}\label{fig:D_c}
\end{figure}

When there is $\ell\neq j$ such that $\phi_\ell$ and $\bar{\gamma}$ is not transversal at $y_\ell$ too (e.g., Figure~\ref{fig:D_c} (2)), one cannot construct the correction term $-\bar{\nu}_\ell$ in arbitrary direction in $T_{y_\ell}B$ as the span of $\mathrm{Im}\,d\phi_\ell$ and $(\frac{d\bar{\gamma}}{ds_\ell})_{y_\ell}$, but only in $T_{y_\ell}B^{(1)}$. If $T_{y_\ell}B^{(1)}_\sigma$ is carried by the flow of $\eta$ to $T_{y_j}B^{(1)}_\sigma$, then the argument above does not work. Namely, if 
$\bar{\gamma}$ is perturbed by moving in a direction independent from $T_{y_j}B^{(1)}_\sigma$, then one cannot construct a vector in the image of $d\phi_\ell$ that cancels $\bar{\nu}_\ell\in T_{y_\ell}B$. We shall see that such a situation can be avoided by perturbing $\xi$. For the critical point $c$ of $h$, put $a_0=h(c)$. For a small $\ve>0$, put $Q=\sigma^{-1}(h^{-1}(a_0-\ve))$, which is a codimension 1 submanifold of $\bcalD_c(\eta)$. Without loss of generality, we may assume that $h(B^{(1)}_\sigma)\subset (-\infty,a_0-\ve)$, namely, $B^{(1)}_\sigma$ lies under $Q$ with respect to the height of $h$. If $B^{(1)}_\sigma$ is carried upward by the flow of $\eta$, then $B_\sigma^{(1)}-\partial\bcalD_c(\eta)$ arrives at $Q$. This gives a piecewise smooth map $\varpi:B^{(1)}_\sigma-\partial\bcalD_c(\eta)\to Q$.

Now we assume that $\bar{\gamma}(1)\in \calD_c(\eta)$ for $\bar{\gamma}$ as above. In this case, $\bar{\gamma}$ is nonsingular and intersects $Q$ at a point $v$ in the image of $\varpi$. By assumption, $v$ is a critical value of $\varpi$ and there are at least two singular points, including $y_j,y_\ell$, in the preimage $\varpi^{-1}(v)\subset B^{(1)}_\sigma$. It is known that when $\dim{X}=\dim{Y}$, finite maps $X\to Y$ between manifolds is residual in $C^\infty(X,Y)$ (\cite[Theorem~VII.2.6 (p.169)]{GG}). If a smooth map $X\to Y$ is finite, it is ``finite-to-one'' on compact subsets of $X$ (\cite[Proposition~VII.2.2 (p.167)]{GG}). 
Hence after a small perturbation of $\xi$ which may perturb $B^{(1)}_\sigma$, we may assume that the number of singular points in the compact set $\varpi^{-1}(w)\subset B^{(1)}_\sigma$ is finite for every point $w$ in the image of $\varpi$. Under this assumption, the preimage $\varpi^{-1}(O)$ of a small neighborhood $O$ of $v\in Q-\varpi(\partial\bcalD_c(\eta))$ consists of finitely many sheets. Let $U_1,\ldots,U_r$ be the sheets of $\varpi^{-1}(O)$ that have singularities mapped by $\varpi$ to $v$. By perturbing $B^{(1)}_\sigma$ further inside $U_1,\ldots,U_r$, we may assume that the restrictions of $\varpi$ on $U_1,\ldots,U_r$ is transversal at $v$. By an inductive argument with a locally finite open cover $\{O_\alpha\}$ of the paracompact set of multiple singular values in $Q-\varpi(\partial\bcalD_c(\eta))$, i.e., the values $v$ of $\varpi$ with at least two singular points in $\varpi^{-1}(v)$, the transversality of $\varpi$ between singularities at every multiple singular value can be proved. 

For the resulting $B^{(1)}_\sigma$, if the singular points of $\varpi$ in $\{y_1,\ldots,y_{n+2}\}$ are $y_j,y_{\ell_1},\ldots,y_{\ell_r}$, then $d\varpi_{y_{\ell_1}}(T_{y_{\ell_1}}B^{(1)}_\sigma)\cap \cdots\cap d\varpi_{y_{\ell_r}}(T_{y_{\ell_r}}B^{(1)}_\sigma)$ and $d\varpi_{y_j}$ are transversal in $T_vQ$. We take a nonzero vector $\delta$ in $d\varpi_{y_{\ell_1}}(T_{y_{\ell_1}}B^{(1)}_\sigma)\cap \cdots\cap d\varpi_{y_{\ell_r}}(T_{y_{\ell_r}}B^{(1)}_\sigma)$. Then a perturbation of $\bar{\gamma}$ that induces $\delta$ on $T_vQ$ induces a tangent vector $\bar{\mu}=(\bar{\mu}_1,\ldots,\bar{\mu}_{n+2})\in T_{\tbvec{y}}B^{n+2}$. By the assumption on $\delta$, we see that $\bar{\mu}_{\ell_1},\ldots,\bar{\mu}_{\ell_r}$ is tangent to $B^{(1)}_\sigma$, and $\bar{\mu}_j$ is independent from $T_{y_j}B_\sigma^{(1)}$. Hence for every $\ell\neq j$, we may obtain $\bar{\mu}'=(0,\ldots,0,\bar{\mu}_j,0,\ldots,0)$ by adding vectors in $T_{y_\ell}B^{(1)}_\sigma=\mathrm{Im}\,d\phi_{\ell}$. Since $\bar{\mu}_j$ and $T_{y_j}B^{(1)}_\sigma$ spans $T_{y_j}B$, this proves the desired transversality at $\bvec{y}$ under every multiple singular value in $Q-\varpi(\partial\bcalD_c(\eta))$. 

Next we assume that $\bar{\gamma}(1)$ lies in the image of $\partial \sigma:\partial\bcalD_c(\eta)\to B$, in which case $\bar{\gamma}$ may pass through critical points of $h$ on the way. At a critical point $c_j$ of $h$ that $\bar{\gamma}$ passes, take a level surface $Q_j$ located just below $c_j$. Then applying the argument given above for $c=c_j$ and $Q=Q_j$, we may assume that the intersections between singularities of $\varpi$ are transversal in $Q_j$ by perturbing $B^{(1)}_\sigma$. Also, by applying similar argument for $\calM'_{cc_j}(\eta)$ with the $Q$ just below $c$, we may assume a similar transversality condition for $\calM'_{cc_j}(\eta)$. Hence the transversality at $\bvec{y}$ on a singular flow-line from $\calM'_{cc_j}(\eta)\times \calD_{c_j}(\eta)$ is proved. Since transversality is a generic condition, we may then assume that the transversality of (\ref{eq:pullback}) is satisfied on a small neighborhood of $\bar{\gamma}$ in $\bcalD_c(\eta)$. This proves the transversality of (\ref{eq:pullback}) on the codimension $\leq 1$ strata of $\bcalD_c(\eta)$. The result for higher-codimension strata can be proved inductively in a similar way. 

Now, we have proved that $\hat{\sigma}$ and $\phi_1\times\cdots\times\phi_{n+2}$ are strata transversal. Since each entry of the iterated integral $\displaystyle\int_\sigma X\underbrace{\Omega\cdots\Omega}_n\,\t{Y}$ is the fiber product of $\hat{\sigma}$ and $\phi_1\times\cdots\times\phi_{n+2}$, its codimension 1 stratum is given by the fiber product of codimension 1 stratum and a codimension 0 stratum or vice versa (\cite[Proposition~A.5]{BT}). $S_n$ of (\ref{eq:dxooy}) comes from $\partial(X\times\underbrace{\Omega\times\cdots\times\Omega}_n\times\t{Y})$, $T_n+U_n$ comes from $C\times\partial \Delta^{n+1}$, and $V_n$ comes from $\partial C\times\Delta^{n+1}$.
\qed

\mysection{Cycles in $B\Diff(D^4,\partial)$ associated to graphs}{s:cycles}

We shall construct $(D^4,\partial)$-bundles by an analogue of Goussarov--Habiro's graph-clasper surgery that would be detected by $\hat{Z}_k^\adm$, and shall review some fundamental properties of the surgery. 

\subsection{Borromean rings (e.g., \cite{Ma})}\label{ss:borromean}

If $d$ is a positive integer and if $p,q,r$ are integers such that $0<p,q,r<d, p+q+r=2d-3$, then the Borromean rings is defined as the three-component link
$B(p,q,r)_d:S^p\cup S^q\cup S^r\to \R^d$, 
whose components are given by the submanifolds of $\R^d=\R^{d-p-1}\times \R^{d-q-1}\times \R^{d-r-1}$ of points $(x,y,z)$ such that
\[ \begin{split}
&\textstyle\frac{|y|^2}{4}+|z|^2=1,\quad x=0\quad \mbox{or}\\
&\textstyle\frac{|z|^2}{4}+|x|^2=1,\quad y=0\quad \mbox{or}\\
&\textstyle\frac{|x|^2}{4}+|y|^2=1,\quad z=0.
\end{split} \]

Standard (normal) framings for the Borromean rings is given as follows. Let $n_1,n_2,n_3$ be the outward unit normal vector field on $S^p\subset \R^{p+1}$, $S^q\subset\R^{q+1}$, $S^r\subset \R^{r+1}$, respectively. Then the normal framings on the three components are given by $(\frac{\partial}{\partial x_1},\ldots, \frac{\partial}{\partial x_{d-p-1}},n_1)$, $(\frac{\partial}{\partial y_1},\ldots, \frac{\partial}{\partial y_{d-q-1}},n_2)$, $(\frac{\partial}{\partial z_1},\ldots, \frac{\partial}{\partial z_{d-r-1}},n_3)$, respectively.

A long version of the (framed) Borromean rings is obtained as follows. We call an affine embedding $\R^p\to \R^d$ a standard inclusion. Given a link $L:\R^p\cup \R^q\cup \R^r \to \R^d$ consisting of disjoint standard inclusions, and a Borromean rings $B(p,q,r)_d$ that is disjoint from $L$, we join the images of $\R^p$ and $S^p$, $\R^q$ and $S^q$, $\R^r$ and $S^r$, by three mutually disjoint arcs that are also disjoint from components of the links $L$ and $B(p,q,r)_d$ except their endpoints. Then replace the arcs with thin tubes $S^{p-1}\times I$, $S^{q-1}\times I$, $S^{r-1}\times I$ to construct connected sums. The result is a (naturally framed) long link 
$B(\underline{p},\underline{q},\underline{r})_d:\R^p\cup \R^q\cup \R^r\to \R^d$. One may also consider partial connected sum, which joins $B(p,q,r)_d$ to a link of standard inclusions with less components and denote the resulting embedding by $B(\underline{p},\underline{q},r)_d$ etc. Long Borromean embeddings $D^p\cup D^q\cup D^r\to D^d$ such that the preimage of $\partial D^d$ is $\partial D^p\cup \partial D^q\cup \partial D^r$ can also be defined similarly and we denote them by the same symbols as above. 

Let $\fEmb(D^p\cup D^q\cup D^r,D^d)$ denote the space of (normally) framed long embeddings such that the boundaries are all mapped to $\partial D^d$. 
Its subspace consisting of embeddings that are properly isotopic to the standard inclusion is denoted by $\fEmb_0(D^p\cup D^q\cup D^r,D^d)$. The subspace of $\fEmb(D^p\cup D^q\cup D^r,D^d)$ of embeddings such that some components are standard near the boundaries is denoted like $\fEmb(\underline{D}^p\cup D^q\cup D^r,D^d)$, where the underlined component(s) is standard near the boundary.

\subsection{Vertex-oriented arrow graph}\label{ss:arrow-graph}

We orient each edge of a trivalent graph such that each vertex has both input and output incident edges. That any trivalent graph has such an orientation can be proved by induction on the number of edges. We call a trivalent graph equipped with such an orientation an {\it arrow graph}. The edge orientation for arrow graph is independent of the edge orientation for $\vec{C}$-graph. Possible status of input/output of the three incident edges at a vertex of an arrow graph are as shown in the following figure.
\[  \includegraphics[height=20mm]{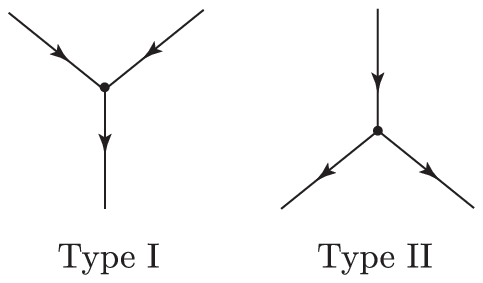} \]

The quotient by the label change relation in the definition of $\calA_k$ is equivalent to considering unlabeled oriented graph $(\Gamma,o)$ ($o$ is an orientation of the real vector space $\R^{\mathrm{Edges}(\Gamma)}$) modulo the relation $(\Gamma,-o)\sim -(\Gamma,o)$. This orientation determines a vertex-orientation for an arrow graph as follows. We decompose each edge $e$ of an arrow graph $\Gamma$ into half-edges $H(e)=\{e_+,e_-\}$ ordered according to the arrow orientation of $e$, namely, so that $e_-$ is input and $e_+$ is output, and let $\deg\,{e_+}=1$, $\deg\,{e_-}=2$. Then an orientation of $\Gamma$ is given by $o=(e_{1+}\wedge e_{1-})\wedge \cdots\wedge (e_{3k+}\wedge e_{3k-})$, and this can be rewritten as $o=\tau_1\wedge\tau_2\wedge\cdots\wedge \tau_{2k}$, $\tau_i=e_{p\pm}\wedge e_{q\pm}\wedge e_{r\pm}$ by rearrangement, where $e_{p\pm}, e_{q\pm}, e_{r\pm}$ are half-edges meeting at the $i$-th vertex. Each term $\tau_i$ gives a vertex-orientation, namely, an ordering of the half-edges meeting at each vertex. Conversely, given a vertex-labelling and a vertex-orientation to an arrow graph, an orientation of $\R^{\mathrm{Edges}(\Gamma)}$ is obtained. In this section, we mainly represent oriented graphs by vertex-labelled vertex-oriented arrow graphs. 

\subsection{Y-link associated to a trivalent vertex}\label{ss:Y-link}

To a vertex-oriented arrow graph $\Gamma$, we associate a Y-link $G=G_1\cup\cdots\cup G_{2k}$ in $D^4$ as follows (Figure~\ref{fig:G-to-Y-graph}).
\begin{enumerate}
\item We take an embedding $\iota:\Gamma\to \mathrm{Int}\,D^4$.
\item For each edge $e$, let $P(e)\subset \mathrm{Int}\,D^4$ be a small closed 4-ball centered at the middle point of $\iota(e)$ such that $P(e)$ is disjoint from vertices and other edges of $\iota(\Gamma)$. Further, we assume that $P(e)\cap P(e')=\emptyset$ if $e\neq e'$.
\item We decompose the closed interval $P(e)\cap \iota(e)$ into three subintervals: $P(e)\cap \iota(e)=[a,b]\cup [b,c]\cup [c,d]$. Then we remove the middle one $[b,c]$ and insert a standard Hopf link $S^1\cup S^2\to \mathrm{Int}\,P(e)$ instead, so that the image of $S^2$ is attached to $b\in [a,b]$ and the image of $S^1$ is attached to $c\in [c,d]$. 
\end{enumerate}
\begin{figure}
\begin{center}
\includegraphics[height=35mm]{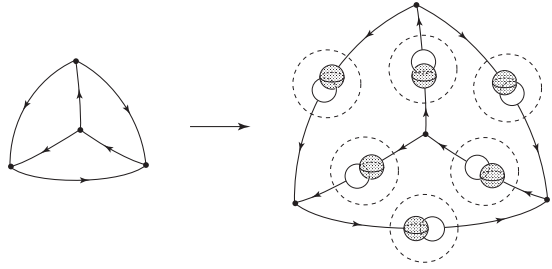}
\end{center}
\caption{An embedded arrow graph to a Y-link}\label{fig:G-to-Y-graph}
\end{figure}

\if0
\begin{wrapfigure}[9]{r}[0pt]{5cm}
\begin{center}
\end{center}
\end{wrapfigure}
\fi
The above procedure gives a disjoint union $G_1\cup G_2\cup \cdots \cup G_{2k}$ of $2k=|V(\Gamma)|$ components. We call each component $G_i$ a {\it Y-graph}, and $G=G_1\cup G_2\cup \cdots \cup G_{2k}$ a {\it Y-link} (or a graph clasper). There are two types for a Y-graph, according to whether the corresponding vertex is of type I or II in the following figure.
\[  \includegraphics[height=25mm]{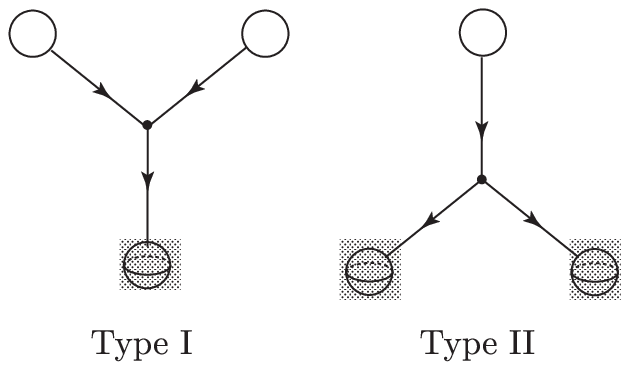} \]

By taking a small smooth closed tubular neighborhood $V_i\subset\mathrm{Int}\,D^4$ for each component $G_i$, we obtain a tuple $\vec{V}_G=(V_1,\ldots,V_{2k})$ of mutually disjoint handlebodies in $\mathrm{Int}\,D^4$. 

\subsection{Surgery along Y-links}

We shall construct a $(D^4,\partial)$-bundle by a family of surgery along $\vec{V}_G=(V_1,\ldots,V_{2k})$.
We take a family $\alpha_i:K\to \Diff(\partial V_i)$ of diffeomorphisms parametrized by a compact manifold $K$. This defines a bundle automorphism $\bar{\alpha}_i:\partial V_i\times K\to \partial V_i\times K$ of the trivial $V_i$-bundle over $K$ in a natural way. We put
\[ (D^4\times K)^{V_i,\alpha_i}=((D^4-\mathrm{Int}\,V_i)\times K)\cup_{\bar{\alpha}_i}(V_i\times K), \]
where the boundaries are glued together by $\bar{\alpha}_i$. Then the product structures on the two parts induce a bundle projection $\pi(\alpha_i):(D^4\times K)^{V_i,\alpha_i}\to K$.

Since the handlebodies $V_i$ are mutually disjoint, the surgery can be done at every $V_i$ simultaneously. Taking $\vec{\alpha}=(\alpha_1,\ldots,\alpha_{2k})$, $\alpha_i:K_i\to \Diff(\partial V_i)$, we do surgery at each $V_i$ by using $\alpha_i$, and then we obtain a family of surgeries parametrized by $K_1\times \cdots \times K_{2k}$ and a bundle projection
\[ \pi(\vec{\alpha}):(D^4\times\prod_{i=1}^{2k}K_i)^{\vec{V}_G,\vec{\alpha}}\to K_1\times\cdots\times K_{2k}.\]
More precisely, we may write the sub $(V_i,\partial)$-bundle of $(D^4\times K_i)^{V_i,\alpha_i}$ corresponding to $V_i\times K_i$ as $\bigcup_{t_i\in K_i}V_i(t_i)$, and we define $(D^4\times\prod_{i=1}^{2k}K_i)^{\vec{V}_G,\vec{\alpha}}$ by
\[ ((D^4-\mathrm{Int}\,(V_1\cup\cdots\cup V_{2k}))\times\prod_{i=1}^{2k}K_i)\cup_\partial\bigcup_{(t_1,\ldots,t_{2k})}(V_1(t_1)\cup\cdots\cup V_{2k}(t_{2k})). \]

In the following, we take a special one as $\alpha_i$. Put $V=V_i$ for simplicity. 
\begin{itemize}
\item Let $\alpha_\mathrm{I}:S^0\to \Diff(\partial V)$, $S^0=\{-1,1\}$, $\alpha_\mathrm{I}(-1)=\mathrm{id}$, $\alpha_\mathrm{I}(1)$ be the ``Borromean twist'' corresponding to $B(\underline{2},\underline{2},\underline{1})_4$. Detailed definition of $\alpha_\mathrm{I}$ will be given in \S\ref{ss:p-borr-I}.

\item Let $\alpha_{\mathrm{II}}:S^1\to \Diff(\partial V)$ be the ``parametrized Borromean twist'' given by parametrizing $B(\underline{2},\underline{2},\underline{1})_4$ over $S^1$. Detailed definition of $\alpha_\mathrm{II}$ will be given in \S\ref{ss:p-borr-II}.
\end{itemize}
Vertex-orientation is used to associate the components in the Borromean string link $B(\underline{2},\underline{2},\underline{1})_4$ to handles of a handlebody.

\begin{Def} Let $\Gamma$ be a vertex-oriented labelled arrow graph with $2k$ vertices. According to the type of the $i$-th vertex of $\Gamma$, we put $\alpha_i=\alpha_{\mathrm{I}}$ or $\alpha_{\mathrm{II}}$, and put $\vec{\alpha}=(\alpha_1,\ldots,\alpha_{2k})$. Then we put
\[ \pi^\Gamma=\pi(\vec{\alpha}),\quad E^\Gamma=(D^4\times\prod_{i=1}^{2k}K_i)^{\vec{V}_G,\vec{\alpha}},\quad B_\Gamma=\prod_{i=1}^{2k}K_i. \]
We also consider the straightforward analogue of this surgery for $(\R^4,U_\infty')$-bundles which is given by replacing $D^4$ with $\R^4$ in the definition above. 
\end{Def}

\begin{Thm}[Proof in \S\ref{ss:primitiveness} and \S\ref{s:computation}]\label{thm:Z(G)}
\begin{enumerate}
\item $\pi^\Gamma:E^\Gamma\to B_\Gamma$ is a $(D^4,\partial)$-bundle.
\item The $(D^4,\partial)$-bundle bordism class of $\pi^\Gamma:E^\Gamma\to B_\Gamma$ corresponds to an element of $\mathrm{Im}\,H$, and the following holds.
\[ \hat{Z}_k^\adm(\pi^\Gamma)=[\Gamma]. \]
\end{enumerate}
\end{Thm}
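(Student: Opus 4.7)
The plan has three parts: a routine verification of (1), a primitivity argument for the bordism-class claim in (2), and a graph-counting computation of $\hat{Z}_k^\adm(\pi^\Gamma)$ based on Theorem~\ref{thm:GCF}.

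For (1), since $V_1,\ldots,V_{2k}$ are pairwise disjoint smooth handlebodies in $\mathrm{Int}\,D^4$, the surgeries commute, and once $\alpha_{\mathrm{I}},\alpha_{\mathrm{II}}$ are defined (later in \S\ref{ss:p-borr-I} and \S\ref{ss:p-borr-II}) as smooth maps into $\Diff(\partial V_i)$ that are the identity on a collar of the attaching region, the parametrized gluing $\bar{\alpha}_i$ is smooth in $t_i\in K_i$ and fixes a neighborhood of $\partial D^4$. Hence $\pi^\Gamma$ is a smooth $(D^4,\partial)$-bundle. A half-edge count at trivalent vertices with mixed in/out orientation shows that there are exactly $k$ type-I vertices and $k$ type-II vertices, so $\dim B_\Gamma=k$.

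For the bordism-class part of (2), the base $B_\Gamma\cong (S^0)^k\times (S^1)^k$ is not a sphere. The plan is to prove, as the forthcoming Proposition~\ref{prop:primitive}, that the class of $\pi^\Gamma$ in $\Omega_k(B\Diff(D^4,\partial))\otimes\Q$ is \emph{primitive} with respect to the Hopf-algebra coproduct of $H_*(B\Diff(D^4,\partial);\Q)$ coming from the boundary connected-sum $H$-space structure. Connectedness of $\Gamma$ is crucial here: restricting the Y-link surgery to any proper subset $S\subsetneq V(\Gamma)$ of the vertices yields a bundle whose class splits as a product in $H_*(B\Diff\otimes B\Diff;\Q)$, and the coproduct of $[\pi^\Gamma]$ can be computed as a sum over all such splittings, leaving only the trivial terms $1\otimes [\pi^\Gamma]+[\pi^\Gamma]\otimes 1$ because no proper subgraph spans $\Gamma$. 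By the Milnor--Moore theorem, the primitive subspace equals $\pi_*B\Diff(D^4,\partial)\otimes\Q$, and the natural homomorphism $H$ embeds this rationally into $\Omega_*(B\Diff(D^4,\partial))\otimes\Q$. Proposition~\ref{prop:bordism-inv} then lets one evaluate $\hat{Z}_k^\adm$ on this class, yielding the linear map $\Psi_k:\calG_k'\to \pi_kB\Diff(D^4,\partial)\otimes\Q$.

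The identity $\hat{Z}_k^\adm(\pi^\Gamma)=[\Gamma]$ is then proved by the graph-counting formula of Theorem~\ref{thm:GCF}. The plan is to choose fiberwise Morse functions $\vec{f}=(f^{(1)},\ldots,f^{(3k)})$ and $v$-gradients $\vec\xi$ that outside $V_1\cup\cdots\cup V_{2k}$ agree with a single standard Morse function on $D^4$ and that are ``adapted'' to the Borromean twists inside each $V_i$, so that as $t_i$ crosses a critical surgery parameter in $K_i$ the trajectories of $\xi^{(j)}$ reroute along the three Borromean strands underlying $\alpha_i$. The invariant then reduces to a signed count of Z-graphs for $(\vec\xi,\eta)$, and the heart of the argument is a local \emph{capture lemma}: the only Z-graphs with nonempty moduli are those with exactly one trivalent black vertex inside each $V_i$, whose three incident edges exit $V_i$ along the three strands of the Borromean link associated to $\alpha_i$. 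Summing these Z-graphs with the signs determined by the vertex orientation from \S\ref{ss:arrow-graph} yields exactly $[\Gamma]\in\calA_k$.

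The hard part will be the capture lemma. Constructing a tuple $\vec\xi$ whose trajectories near the surgery loci have the required structure---what the introduction calls a ``coherent $v$-gradient''---and then excluding all other Z-graph topologies requires both dimensional arguments and a precise use of the homological linking of the Borromean strands. The 4-dimensional ad hoc results promised in the introduction (Lemmas~\ref{lem:T-bundle-II}, \ref{lem:morse-complex-invariant}, \ref{lem:Y-restrict}) will drive this step, playing the role in dimension~$4$ analogous to the Kuperberg--D.~Thurston computation \cite{KT} and to its odd-dimensional counterpart in \cite{Wa2}. Once capture is established, the remainder is a sign-bookkeeping exercise, absorbing any residual boundary configurations via the IHX relation in $\calA_k$.
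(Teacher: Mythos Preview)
Your outline for the computation in (2) is broadly right and matches the paper's strategy: choose fiberwise Morse functions adapted to the $V_i$, establish a capture lemma (the paper's Key Lemma~\ref{lem:occupied}) forcing each black vertex into a distinct $\widetilde V_i'$, and reduce to a homological triple-intersection count in the closed manifolds $S_i=\widetilde V_i\cup_\partial(-V_i\times K_i)$. Two corrections are worth flagging. First, the vanishing of the correction term $\alpha_k^\adm$ is not automatic: Key Lemma~\ref{lem:occupied}(1) requires constructing a specific cobordism $W^\Gamma=(T_1'\cup\cdots\cup T_{2k}')\cup W^0\cup W^1$ and checking $\alpha_k^\adm(\vec\eta_{W^\Gamma})=0$ piece by piece via product structures and dimension counts. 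Second, no IHX is used in the count itself; the answer comes out to $[\Gamma]$ exactly (Lemma~\ref{lem:eval-hatZ}), with IHX appearing only in the earlier invariance proof of $\hat Z_k^\adm$.

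There is a genuine gap in your treatment of (1). Smoothness of the gluing only shows $\pi^\Gamma$ is a smooth fiber bundle trivialized near $\partial D^4$; it does \emph{not} show the fiber is $D^4$. For a type~II vertex $K_i=S^1$ is connected, so Ehresmann together with $\alpha_{\mathrm{II}}(\text{base})=\mathrm{id}$ suffices. But for a type~I vertex $K_i=S^0$, and the fiber over $1\in S^0$ is $(D^4\setminus\mathrm{Int}\,V_i)\cup_{\varphi_{\mathrm{I}}}V_i$; nothing you wrote rules out this being an exotic or nonstandard $4$-manifold. The paper handles this via the Borromean unlinking property: extend $V_i$ to $V_{i[\ell]}$ by filling one leaf, observe that surgery along $V_{i[\ell]}$ yields the same bundle, and use Lemma~\ref{lem:V[l]-trivial} (removing one component of the Borromean rings unlinks the rest) to trivialize it. This is the Corollary following Lemma~\ref{lem:V[l]-trivial}, and it is not routine.

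Your route to the $\mathrm{Im}\,H$ claim is different from the paper's and, as written, incomplete. The paper does not argue primitivity via the Hopf-algebra coproduct; it shows directly (Proposition~\ref{prop:primitive}) that the classifying map $\psi_\Gamma:B_\Gamma\to B\Diff(D^4,\partial)$ is bordant to a map from $S^k$. The mechanism is Lemma~\ref{lem:null-leaf}: the restriction of $\psi_\Gamma$ to each $B_\Gamma[i]=K_1\times\cdots\times\{*\}\times\cdots\times K_{2k}$ is nullhomotopic, again by Borromean unlinking cascading along a spanning tree. One then checks (Lemma~\ref{lem:B}) that these nullhomotopies are homotopically coherent on the intersections $B_\Gamma[i]\cap B_\Gamma[i']$ and deeper strata, so that $B_\Gamma^*=\bigcup_i B_\Gamma[i]$ can be collapsed to a point. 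Your phrase ``no proper subgraph spans $\Gamma$'' is gesturing at the null-leaf phenomenon, but the coproduct computation you sketch would need exactly this as input, and the coherence of nullhomotopies---where the real work lies---is missing from your plan.
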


Theorem~\ref{thm:commute} follows immediately from Theorem~\ref{thm:Z(G)}. Namely, letting 
\[ \Psi_k:\calG_k'\to \mathrm{Im}\,H\otimes \Q\]
be defined by $\Psi_k(\Gamma)=[\pi^\Gamma:E^\Gamma\to B_\Gamma]$ by choosing arrows on $\Gamma$ arbitrarily,
then by Theorem~\ref{thm:Z(G)}, this satisfies the condition of Theorem~\ref{thm:commute}. We do not know whether the bordism class of $\Psi_k(\Gamma)$ depends on the choice of the arrows.

Let $M$ be a compact 4-manifold. For an embedding $\iota:\Gamma\to M$, one may also consider the straightforward analogue of the surgery defined above, giving an $(M, \partial)$-bundle $\pi^\iota:E^\iota\to B_\Gamma$. The following theorem can be proved just by replacing $D^4$ with $M$ in the proof of Theorem~\ref{thm:Z(G)}.
\begin{Thm}
The class of $\pi^\iota$ represents an element of $\Omega_k(B\Diff(M,\partial))$, and it is contained in the image of the natural map $H:\pi_kB\Diff(M,\partial)\to \Omega_k(B\Diff(M,\partial))$.
\end{Thm}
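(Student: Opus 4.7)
The plan is to reduce to Theorem~\ref{thm:Z(G)} by a functoriality argument, exploiting that the entire construction of $\pi^\iota$ is localized inside an arbitrarily small tubular neighborhood of $\iota(\Gamma)\subset\mathrm{Int}\,M$. Choose such a neighborhood $N$ small enough to contain $\iota(\Gamma)$ together with each handlebody $V_i$ from the Y-link $G$, and fix a diffeomorphism $\phi\colon N\xrightarrow{\sim}D^4$. Under $\phi$ the surgery data $(\iota(\Gamma),G,\vec V_G,\vec\alpha)$ transports to surgery data $(\Gamma',G',\vec V_{G'},\vec\alpha)$ on $D^4$ entering the construction of Theorem~\ref{thm:Z(G)}, and $\pi^\iota$ is recovered from $\pi^{\Gamma'}\colon E^{\Gamma'}\to B_{\Gamma'}$ by extending every fiber diffeomorphism by the identity on $M\setminus N$. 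Concretely, the classifying map of $\pi^\iota$ factors as
\[ B_\Gamma=B_{\Gamma'}\longrightarrow B\Diff(D^4,\partial)\xrightarrow{j_\ast}B\Diff(M,\partial), \]
where $j_\ast$ is induced by the inclusion $\Diff(D^4,\partial)\hookrightarrow\Diff(M,\partial)$ of ``extension by the identity on $M\setminus N$''.

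First I would check that $\pi^\iota$ is a smooth locally trivial $(M,\partial)$-bundle. This is immediate from the $D^4$-case (Theorem~\ref{thm:Z(G)}~(1)): plumbing the parametrized surgeries at the pairwise disjoint handlebodies $V_i\subset\mathrm{Int}\,N$ produces a smooth bundle with fiber $M$, and the standard trivialization near $\partial M$ is preserved since no surgery reaches $\partial N$. To deduce $[\pi^\iota]\in\mathrm{Im}\,H_M$ for the natural map $H_M\colon\pi_kB\Diff(M,\partial)\to\Omega_k(B\Diff(M,\partial))$, I would invoke the naturality of $H$ under $j_\ast$: the map induced by $j_\ast$ on $\pi_k$ and the map induced by $j_\ast$ on $\Omega_k(B(-))$ commute with the respective $H$'s. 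By Theorem~\ref{thm:Z(G)}~(2), $[\pi^{\Gamma'}]=H_{D^4}(x)$ for some $x\in\pi_kB\Diff(D^4,\partial)$, so
\[ [\pi^\iota]=j_\ast[\pi^{\Gamma'}]=j_\ast H_{D^4}(x)=H_M(j_\ast x)\in\mathrm{Im}\,H_M. \]

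The main obstacle is the underlying Theorem~\ref{thm:Z(G)}~(2), whose proof (promised in \S\ref{ss:primitiveness}) must exhibit $[\pi^\Gamma]$ as $H(x)$ for an explicit $x\in\pi_kB\Diff(D^4,\partial)$. The strategy uses $\alpha_{\mathrm{I}}(-1)=\mathrm{id}$ to extend each of the $k$ factors $S^0\subset B_\Gamma$ coming from type-I vertices to an interval on which the corresponding Borromean twist degenerates continuously to the identity near the ``$-1$'' end, together with the strict commutativity of surgeries at different vertices (disjoint supports) to organize the resulting classifying map $I^k\times T^k\to B\Diff(D^4,\partial)$ into a relative map $(I^k,\partial I^k)\to(B\Diff(D^4,\partial),*)$ by collapsing the $T^k$-coordinates along null-homotopic faces of the cube. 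Verifying that the Borromean linking pattern of $G$ produces exactly the null-homotopies required for this collapse---so that the outcome is a single element of $\pi_kB\Diff(D^4,\partial)$---is the delicate step in \S\ref{ss:primitiveness}; once in place, the present theorem follows by the functoriality argument above, for \emph{any} embedding $\iota\colon\Gamma\to\mathrm{Int}\,M$.
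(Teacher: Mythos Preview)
Your functoriality argument is correct and is a genuinely different route from the paper's. The paper simply says the theorem ``can be proved just by replacing $D^4$ with $M$ in the proof of Theorem~\ref{thm:Z(G)}'', i.e., it reruns the entire primitiveness argument of \S\ref{ss:primitiveness} (Lemmas~\ref{lem:V[l]-trivial}--\ref{lem:null-leaf} and Proposition~\ref{prop:primitive}) with $M$ in place of $D^4$; this works verbatim because those lemmas concern only the local surgeries inside the $V_i$'s. You instead observe that the whole construction lives inside a 4-ball $N\subset M$, so the classifying map of $\pi^\iota$ factors through the extension-by-identity map $j_\ast:B\Diff(D^4,\partial)\to B\Diff(M,\partial)$, and then apply naturality of $H$ to push forward the already-established $D^4$ result. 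Your approach is cleaner and explains conceptually why the $M$ case follows from the $D^4$ case; the paper's approach avoids invoking Theorem~\ref{thm:Z(G)} as a black box but at the cost of rechecking everything.

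One remark on your final paragraph: your sketch of how \S\ref{ss:primitiveness} establishes primitiveness for $D^4$ is not quite how the paper does it. There is no ``extension of $S^0$ to an interval'' or organization into a map from $I^k\times T^k$; rather, the paper shows (Lemma~\ref{lem:null-leaf}) that the classifying map is nullhomotopic on each $B_\Gamma[i]$ by iteratively collapsing Y-graphs along a spanning tree, and then (Proposition~\ref{prop:primitive}) that these nullhomotopies are coherent on deeper intersections $B_\Gamma[i][i']$, etc., using Lemma~\ref{lem:B}, so that the classifying map descends to $B_\Gamma/B_\Gamma^\ast\simeq S^k$. Since you only use Theorem~\ref{thm:Z(G)}(2) as a black box, this inaccuracy does not affect your proof, but you should not present that paragraph as an outline of the actual argument.
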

The class of $\pi^\iota$ depends only on the homotopy class of $\iota$, which can be described by $\Gamma$ as above with edges decorated by elements of $\pi_1(M)$, considered modulo certain relations as in \cite{GL}.

\subsection{Parametrized Borromean surgery of type I}\label{ss:p-borr-I}

In the following, we shall define parametrized Borromean twists $\alpha_\mathrm{I}$. As a preliminary, we fix a coordinate on $V$. Let $T$ be a handlebody obtained from a 3-disk by removing several 1-handles and 0-handles, and we put $V=T\times I$. We fix a coordinate on $T$ as follows. Let $T_0=[0,4]\times [-1,1]\times [0,1]$, and for $n=1,2,3$ and $\ve>0$, we define $T$ as follows (Figure~\ref{fig:T-type-I-II}). 
\[ \begin{split} 
  &h_n^1=\{(x,y)\in\R^2\mid (x-n)^2+y^2< \ve^2\}\times [0,1],\\
  &h_n^0=\{(x,y,z)\in \R^3\,|\, (x-n)^2+y^2+(z-\textstyle\frac{1}{2})^2< \ve^2\},\\
  &T=T_0-(h_1^{e_1}\cup h_2^{e_2}\cup h_3^{e_3}),\qquad (e_1,e_2,e_3)=\left\{\begin{array}{ll}
  (1,1,0) & \mbox{($V$: type I)}\\
  (1,0,0) & \mbox{($V$: type II)}
  \end{array}\right.
\end{split} \]

\begin{figure}
\begin{center}
\begin{tabular}{ccc}
\includegraphics[height=30mm]{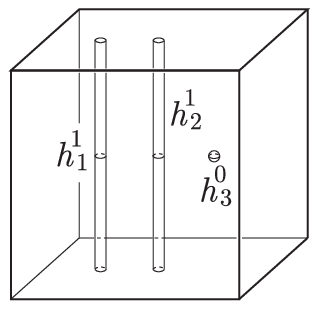} & & \includegraphics[height=30mm]{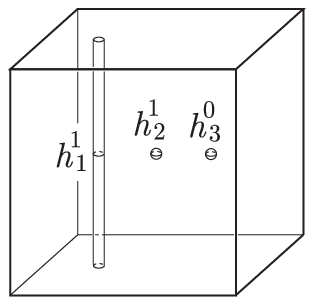}\\
(1) & & (2)
\end{tabular}
\end{center}
\caption{(1) $T$ in $V$ of type I. (2) $T$ in $V$ of type II.}\label{fig:T-type-I-II}
\end{figure}

The handlebody $V$ of type I is diffeomorphic to a handlebody obtained from $D^4$ by removing two 2-handles and one 1-handle, which are thin. We represent the thin handles in $D^4$ by a framed string link. If this framed string link is changed, the complement of it in $D^4$ changes accordingly. Especially, we consider the handlebody $V'$ that is the complement to the framed Borromean string link $B(\underline{2},\underline{2},\underline{1})_4$ of $\pi_0 \fEmb(\underline{D}^2\cup \underline{D}^2\cup \underline{D}^1,D^4)$. Since string links are standard near boundary and we are considering framed embeddings, a framed string link induces a trivialization of the sides of handles as sphere bundles over the cores, and $\partial V'$ is naturally identified with $\partial V$. 

For the type I handlebody $V$, we shall see that the handlebody $V'$ thus obtained can be realized as the mapping cylinder of a relative diffeomorphism $\varphi_0:(T,\partial T)\to (T,\partial T)$, which is defined by $C(\varphi_0)=(T\times I)\cup_{\varphi_0}(T\times \{0\})$. Note that the boundary of $C(\varphi_0)$ is $(T\times \{0,1\})\cup (\partial T\times I)=\partial V$. 
\begin{Lem}\label{lem:T-bundle-I}
For a handlebody $V$ of type I, there exists a relative diffeomorphism $\varphi_0:(T,\partial T)\to (T,\partial T)$ and a relative diffeomorphism $(V',\partial V)\to (C(\varphi_0),\partial V)$ that restricts to $\mathrm{id}$ on $\partial V$.
\end{Lem}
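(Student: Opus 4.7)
Plan: The strategy is to exhibit $V'$ via a movie of 3-dimensional slices adapted to the Borromean string link, identifying the monodromy of this movie with the desired $\varphi_0$.

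Writing $D^4 \cong T_0 \times I_*$ (with $I_*$ singling out the ``movie direction'', distinct from the $I$-factor in $V = T\times I$), I would arrange the embedding $B = B(\underline{2},\underline{2},\underline{1})_4$ so that outside a central slab $T_0\times [1/2-\varepsilon,1/2+\varepsilon]$ the link has product form $B^{(0)}\times J$ for each component of $I_*\setminus [1/2-\varepsilon,1/2+\varepsilon]$, where $B^{(0)}\subset T_0$ is the trivial link of disks and an arc whose complement is exactly $T$. This is possible because the string link is standard near $\partial I_*$ and its Borromean entanglement can be concentrated in an arbitrarily thin middle slab. Outside the slab, the complement of the tubular neighborhood $\nu(B)$ in $T_0\times J$ is then canonically the product $T\times J$.

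Next I would analyze the central slab $X := V'\cap (T_0\times [1/2-\varepsilon,1/2+\varepsilon])$, which is a smooth 4-dimensional cobordism from $T$ to $T$ rel the horizontal part $\partial T\times [1/2-\varepsilon,1/2+\varepsilon]$ of its boundary. Using an ambient 4-dimensional isotopy (not an isotopy of embeddings) that unlinks the Borromean entanglement inside the slab, the slab can be trivialized as a product, and the induced 3-dimensional monodromy on the slices gives a self-diffeomorphism $\varphi_0:(T,\partial T)\to(T,\partial T)$. Concretely, the ambient flow restricts to a flow on each slice $T_0\times\{t\}$ whose time-one map, restricted to the complement of the link, is $\varphi_0$; since the ambient isotopy is supported in the interior of the slab and away from $\partial T_0$ (the link is product-form there), $\varphi_0$ restricts to the identity on $\partial T$. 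This realizes $X$ as the mapping cylinder $C(\varphi_0)$ rel the horizontal boundary.

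Gluing the two product pieces $T\times[0,1/2-\varepsilon]$ and $T\times[1/2+\varepsilon,1]$ onto $X\cong C(\varphi_0)$ along the two copies of $T$, and then rescaling the resulting $I_*$-interval, produces the required relative diffeomorphism $(V',\partial V)\to(C(\varphi_0),\partial V)$ which is the identity on $\partial V$. The main obstacle is the second step: producing the trivializing ambient isotopy explicitly enough to identify $\varphi_0$ and verify $\varphi_0|_{\partial T}=\mathrm{id}$. I would handle this by exploiting the explicit symmetric formulas defining $B(p,q,r)_d$ in \S\ref{ss:borromean}, writing down a symmetric movie presentation of the Borromean string link inside the slab and an explicit simultaneous ambient unknotting isotopy of the three components. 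The type-I condition, which dictates the number and types of handles of $T$ (two 1-handles and one 0-handle removed, matching the two $2$-disk components and the $1$-disk component of $B$), is exactly what ensures that such a compatible trivialization exists and that the resulting monodromy $\varphi_0$ is a genuine relative diffeomorphism of $T$.
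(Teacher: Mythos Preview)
Your overall strategy---slice $V'$ along the $I$-direction to exhibit it as a $(T,\partial T)$-bundle over $I$, hence as a mapping cylinder---is the same as the paper's. But your middle step contains a genuine error. You claim there is an ambient isotopy of the slab, \emph{supported in its interior}, that ``unlinks the Borromean entanglement.'' No such isotopy exists: an ambient isotopy supported in the interior fixes both the top and bottom copies of $T_0$, so it would carry the Borromean string link to itself up to isotopy rel $\partial$, not to the trivial link. Equivalently, if the trivializing isotopy were compactly supported in the slab then the monodromy $\varphi_0$ would be forced to be the identity, and the lemma would be vacuous. What you actually need is a level-preserving diffeomorphism $\Psi(x,t)=(\psi_t(x),t)$ of the slab which fixes $\partial T_0\times I$ and the \emph{bottom} face $T_0\times\{1/2-\varepsilon\}$ but \emph{not} the top face; the failure $\psi_{1/2+\varepsilon}|_T$ to be the identity is precisely $\varphi_0$. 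Producing the family $\psi_t$ requires knowing in advance that for every $t$ the slice $B\cap(T_0\times\{t\})$ is a configuration isotopic to the standard $\underline{D}^1\cup\underline{D}^1\cup D^0$---you never establish this.

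The paper bypasses this by reversing the logic: rather than slicing a given $B(\underline{2},\underline{2},\underline{1})_4$ and analyzing the slices, it \emph{constructs} the $4$-dimensional link as the graph of an explicit loop in $\fEmb_0(\underline{D}^1\cup\underline{D}^1\cup D^0,D^3)$, obtained by reading the third (one-dimensional) component of the $3$-dimensional Borromean tangle $B(\underline{1},\underline{1},1)_3$ as a one-parameter family of points. In this presentation each slice is by definition the complement of a tangle in the path-component of the standard inclusion, hence diffeomorphic to $T$; the bundle structure and the mapping-cylinder description follow immediately, with no need to produce any auxiliary isotopy. Your approach can be repaired by making exactly this move, at which point it becomes the paper's argument.
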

\begin{proof} By considering the third component of the framed tangle $B(\underline{1},\underline{1},1)_3$ (Figure~\ref{fig:beta-0} (1)) as a 1-parameter family of points, we obtain an element of $\pi_1\fEmb_0(\underline{D}^1\cup \underline{D}^1\cup D^0,D^3)$. By realizing this as a graph in the trivial $D^3$-bundle $T_0\times I\to I$, we obtain a framed string link $B(\underline{2},\underline{2},\underline{1})_4$ in $D^4$. Since the complement of a tangle of $\fEmb_0(\underline{D}^1\cup \underline{D}^1\cup D^0,D^3)$ is a handlebody relatively diffeomorphic to $T$, $V'$ can be considered as the total space of a $(T,\partial T)$-bundle that is trivialized on $\partial I$. Hence $V'$ is the mapping cylinder of a relative diffeomorphism of $T$.
\end{proof}
The relative diffeomorphism $\varphi_0:(T,\partial T)\to (T,\partial T)$ of Lemma~\ref{lem:T-bundle-I} extends to a diffeomorphism $\varphi_\mathrm{I}$ of $\partial V=(T\times\{0,1\})\cup (\partial T\times I)$ by setting $\varphi_0$ on $T\times\{0\}$ and $\mathrm{id}$ otherwise. 
\begin{Def}\label{def:alpha-I}
We define the map $\alpha_\mathrm{I}:S^0\to \Diff(\partial V)$ by $\alpha_\mathrm{I}(-1)=\mathrm{id}$, $\alpha_\mathrm{I}(1)=\varphi_\mathrm{I}$. 
Let $\widetilde{V}$ be the total space of the bundle $V'\cup (-V)\to S^0$ that is the union of $V'\to \{1\}$ and $-V\to \{-1\}$. 
\end{Def}

\subsection{Parametrized Borromean surgery of type II}\label{ss:p-borr-II}

The handlebody $V$ of type II is diffeomorphic to a handlebody obtained from $D^4$ by removing one 2-handle and two 1-handles, which are thin. Let us construct a $(V,\partial)$-bundle $\widetilde{V}\to S^1$ by using an element $\beta\in\pi_1\fEmb_0(\underline{D}^2\cup \underline{D}^1\cup \underline{D}^1,D^4)$ corresponding to a framed Borromean rings $B(\underline{2},\underline{2},\underline{1})_4$. If the second component of $B(\underline{2},\underline{2},\underline{1})_4$ is considered as the locus of a 1-parameter family of string knots $I\to  T_0\times I$ whose endpoints are mapped to $T_0\times\{0,1\}$, then a map $\beta'':I\to \fEmb_0(\underline{D}^2\cup D^1\cup \underline{D}^1,D^4)$ is obtained. Although this is not a loop, one may obtain a loop $\beta':S^1\to \fEmb_0(\underline{D}^2\cup D^1\cup \underline{D}^1,D^4)$ by taking fiberwise closures of the second component by a trivial arc, as in Figure~\ref{fig:beta-0} (2). 
Moreover, we deform the family of 1-disks for the second component by pressing a neighborhood of the boundary onto that of the standard inclusion $I\to T_0\times I; w\mapsto (2,0,0,w)$ simultaneously for all parameters, and get a family
\[ \beta\in \pi_1\fEmb_0(\underline{D}^2\cup \underline{D}^1\cup \underline{D}^1,D^4). \]
For a family of framed long embeddings $D^2\cup D^1\cup D^1\to D^4$ giving $\beta$, the complement $\widetilde{W}$ of its open $\ve$-tubular neighborhood in $D^4$ is a family of manifolds each diffeomorphic to $V$. The framing gives a bundle isomorphism $\partial \widetilde{W}\cong \partial V\times S^1$ over $S^1$, which gives $\widetilde{V}$ a structure of a $(V,\partial)$-bundle $\widetilde{V}\to S^1$. 

Next, let us show that thus obtained $\widetilde{V}$ is a 1-parameter family of mapping cylinders for an element of $\pi_1\Diff(T,\partial T)$. For a family of relative diffeomorphisms $\varphi_{0,t}:(T,\partial T)\to (T,\partial T)$ ($t\in S^1$), we put $\widetilde{C}(\{\varphi_{0,t}\})=\bigcup_{t\in S^1}C(\varphi_{0,t})$. This has a natural structure of a $(V,\partial)$-bundle over $S^1$. 
\begin{Lem}\label{lem:T-bundle-II}
For a handlebody $V$ of type II, there exist a family of relative diffeomorphisms $\varphi_{0,t}:(T,\partial T)\to (T,\partial T)$ ($t\in S^1$) and a relative bundle isomorphism
\[ (\widetilde{V},\partial V\times S^1)\to (\widetilde{C}(\{\varphi_{0,t}\}),\partial V\times S^1)\]
that restricts to $\mathrm{id}$ on the boundary. 
\end{Lem}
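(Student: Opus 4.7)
The plan is to push the dimensional-reduction trick of Lemma~\ref{lem:T-bundle-I} one step further, working parameter-wise over $S^1$. Recall that in Lemma~\ref{lem:T-bundle-I} the total space $V'$ of type I was identified with a mapping cylinder by representing the Borromean tangle $B(\underline{2},\underline{2},\underline{1})_4$ as the graph, along the $I$-factor of $D^4=T_0\times I$, of a path in $\fEmb_0(\underline{D}^1\cup\underline{D}^1\cup D^0,D^3)$ from the standard inclusion to itself; taking complements fiberwise in $D^3$ converted the path of tangles into a $(T,\partial T)$-bundle over $I$ trivialized on $\partial I$, which is exactly $C(\varphi_0)$.

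First I will fix a level-preserving representative for $\beta$. Using the cyclic Borromean symmetry, I will isotope $\beta$ inside $\pi_1\fEmb_0(\underline{D}^2\cup\underline{D}^1\cup\underline{D}^1,D^4)$ so that, for every $t\in S^1$, the two long $1$-disk components of $\beta(t)$ sit as the constant vertical arcs $\{p_1\}\times I$ and $\{p_2\}\times I$ in $T_0\times I=D^4$, while the long $2$-disk component of $\beta(t)$ carries all of the twisting and is transverse to the projection $D^4\to I$. This requires only rearranging which component carries the Borromean monodromy; the pressing already performed in the construction of $\beta$ makes the two $1$-disk components standard near $\partial D^4$, and an ambient isotopy supported away from a neighborhood of $\partial D^4$ then straightens them globally.

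Second, with such a representative in hand, slice $\beta(t)$ by $T_0\times\{s\}$ for each $(t,s)\in S^1\times I$ to obtain a tangle
\[
\tilde\beta(t,s)\in\fEmb_0(\underline{D}^1\cup D^0\cup D^0,D^3),
\]
whose long $1$-disk is the slice of the $2$-disk component of $\beta(t)$ and whose two marked points are $p_1,p_2$. Since $\beta(t)$ is standard near $T_0\times\partial I\subset\partial D^4$, the restriction of $\tilde\beta$ to $S^1\times\partial I$ is the standard inclusion. By the same calculation as in Lemma~\ref{lem:T-bundle-I}, the complement in $D^3$ of $\tilde\beta(t,s)$ is a handlebody relatively diffeomorphic to the type~II model $T$ (the two marked points do not affect the diffeomorphism type of the complement). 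Hence $\tilde\beta$ produces a $(T,\partial T)$-bundle over $S^1\times I$ trivialized on $S^1\times\partial I$. For each fixed $t\in S^1$, restricting to $\{t\}\times I$ gives the mapping cylinder $C(\varphi_{0,t})$ of the monodromy $\varphi_{0,t}\in\Diff(T,\partial T)$ of the path $s\mapsto\tilde\beta(t,s)$, exactly as in Lemma~\ref{lem:T-bundle-I}; letting $t$ vary I obtain the family $\widetilde{C}(\{\varphi_{0,t}\})$. Because the chosen representative of $\beta$ is level-preserving with respect to $D^4\to I$, the fiberwise complement of $\beta(t)$ in $D^4$ decomposes, slice by slice in $s\in I$, as the union of the complements of $\tilde\beta(t,s)$ in $T_0$; this tautological slice-wise decomposition assembles over $t\in S^1$ into a relative bundle isomorphism $\widetilde{V}\to\widetilde{C}(\{\varphi_{0,t}\})$ equal to the identity on $\partial V\times S^1$.

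The principal obstacle lies in the first step: straightening the two $1$-disk components of $\beta$ to globally vertical position, as a family over $S^1$ and without altering the class of $\beta$ in $\pi_1\fEmb_0$. In dimension four the relevant ambient isotopy extension is delicate, and this is precisely one of the ad hoc $4$-dimensional ingredients flagged in the introduction. Once the correct representative is secured, the rest of the argument is a parameter-wise translation of Lemma~\ref{lem:T-bundle-I}.
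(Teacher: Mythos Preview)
Your overall strategy---reduce one more dimension by slicing $\beta(t)$ along the $I$-factor of $D^4=T_0\times I$ to obtain a $2$-parameter family in $\fEmb_0(\underline{D}^1\cup D^0\cup D^0,D^3)$, then take complements---is exactly the paper's strategy, and your second step is essentially identical to the paper's conclusion. The difference, and the gap, is entirely in your first step.

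You propose to homotope $\beta$ so that both long $1$-disk components become \emph{constant vertical} arcs and the $2$-disk absorbs all the monodromy while remaining transverse to $T_0\times I\to I$. You correctly flag this as the delicate point, but you do not carry it out: the sentence ``an ambient isotopy supported away from a neighborhood of $\partial D^4$ then straightens them globally'' hides the real content. What you need is that the image of $\beta$ under the forgetful map $\pi_1\fEmb_0(\underline{D}^2\cup\underline{D}^1\cup\underline{D}^1,D^4)\to\pi_1\fEmb_0(\underline{D}^1\cup\underline{D}^1,D^4)$ vanishes, and then that the resulting $S^1$-family of long $2$-disks can be made level-preserving (no fiberwise Morse critical points) simultaneously for all $t$. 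Neither is obvious; both require an argument you have not supplied. Invoking ``cyclic Borromean symmetry'' is a statement about a single link, not about a loop of links.

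The paper avoids this difficulty by not manipulating $\beta$ at all. Instead it returns to the object from which $\beta$ was \emph{defined}: the fixed Borromean link $B(\underline{2},\underline{2},\underline{1})_4\subset T_0\times I$ with components $C_1,C_2,C_3$. In that model one may take $C_1$ and $C_2$ to be standard flat $2$-disks (e.g.\ $C_2(z,w)=(2,0,z,w)$) and $C_3$ to be a single arc monotone in the $w$-direction. The loop $\beta$ was obtained by reading the $z$-coordinate of $C_2$ as the $S^1$-parameter $s$; the paper now also reads the $w$-coordinate as a second parameter. Projecting to $T_0$ yields directly a $2$-parameter family $C_1'\cup C_2'(s,w)\cup C_3'(w)$ of type $\underline{D}^1\cup D^0\cup D^0$ in $D^3$, i.e.\ an element $\beta_0\in\pi_2\fEmb_0(\underline{D}^1\cup D^0\cup D^0,D^3)$, whose graph in the $I$-direction reproduces $\beta$. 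No straightening is needed because the components were already in sliceable position \emph{by construction}: the $2$-disk $C_1$ (not the $1$-disks) is the one held standard, and the two moving points come from the standard $C_2$ and the monotone $C_3$. The complements then give the $(T,\partial T)$-bundle over $S^1\times I$, i.e.\ the family of mapping cylinders $\widetilde{C}(\{\varphi_{0,t}\})$.

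So the fix is not to straighten the $1$-disks of $\beta$, but to go back to $B(\underline{2},\underline{2},\underline{1})_4$ and reinterpret its coordinates; this sidesteps both obstacles you identified.
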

\begin{proof}
We shall see that $\beta\in \pi_1\fEmb_0(\underline{D}^2\cup \underline{D}^1\cup \underline{D}^1,D^4)$ is obtained by rewriting an element $\beta_0\in \pi_2\fEmb_0(\underline{D}^1\cup D^0\cup D^0,D^3)$ into a family over $I$ by suspension. 

\begin{figure}
\begin{center}
\begin{tabular}{ccccc}
\raisebox{5mm}{\includegraphics[height=25mm]{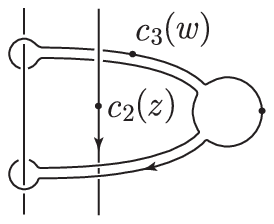}} &  & \raisebox{5mm}{\includegraphics[height=25mm]{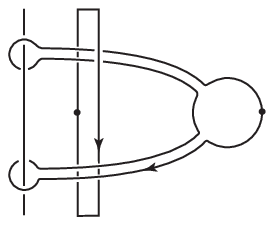}} & & \includegraphics[height=40mm]{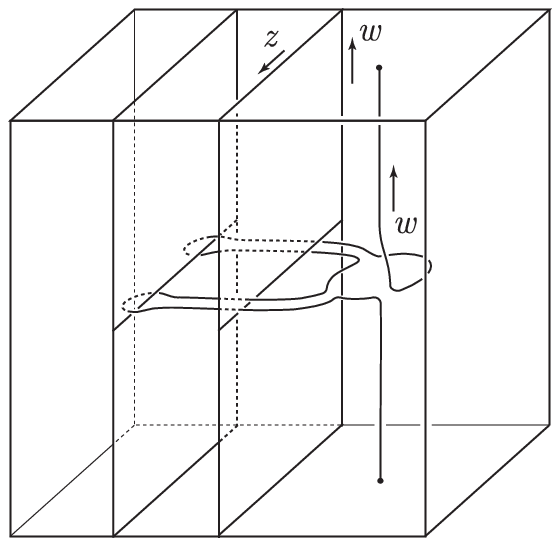}\\
(1) & & (2) & & (3)
\end{tabular}
\end{center}
\caption{(1) $B(\underline{1},\underline{1},1)_3$ parametrized by $(z,w)\in I\times I$. (2) $B(\underline{1},1,1)_3$ parametrized by $S^1\times I$. (3) $\beta'':I\to \fEmb_0(\underline{D}^2\cup D^1\cup \underline{D}^1,D^4)$. Horizontal section is parallel to the 3-disk $T_0$ on the top.}\label{fig:beta-0}
\end{figure}
We shall construct $\beta_0$ explicitly. The 1-handles and 0-handles in $T_0$ become 2-handles and 1-handles in $T_0\times I$, whose complement is $V$. We saw that $\beta$ is obtained by replacing the trivial $S^1$-family of the handles in $(T_0\times I)\times S^1$ by a family corresponding to the Borromean string link $B(\underline{2},\underline{2},\underline{1})_4$. 
Let $w$ be the parameter of $I$ in $V=T\times I$ and let $s$ be the parameter of $J=[0,1]\subset S^1=[0,2\pi]/{0\sim 2\pi}$. We may assume that the first and second components $C_1,C_2\subset T_0\times I$ of $B(\underline{2},\underline{2},\underline{1})_4$ are standard 2-disks. Hence in particular, the second component can be written by using $z\in I$ as
\[ C_2(z,w)=(2,0,z,w)\in T_0\times I. \]
This is a map to a single fiber of $(T_0\times I)\times S^1\to S^1$. By replacing the parameter $z$ with $s\in J$, the second component $C_2$ can be considered as a 1-parameter family of 1-disks in $T_0\times I$ varying with respect to $s\in J$. 
Also, the third component may be assumed to be monotonic with respect to the height $w$, and may be written as a curve $C_3(w)$ ($w\in I$) in $T_0\times I$. Moreover, $C_3(w)$ can be taken as the lift of a simple curve $c_3(t)$ in $T_0$. If $C_1',C_2'(s,w),C_3'(w)$ are the projections of $C_1,C_2(s,w),C_3(w)$ on $T_0$, then 
\[ C_1' \cup C_2'(s,w) \cup C_3'(w) \subset T_0\]
gives a 2-parameter family of $D^1\cup D^0\cup D^0\to T_0$ with respect to $(w,s)\in I\times J$.
The locus of the projection of the family of embeddings on $T_0$ is $D^1\cup c_2\cup c_3$ of Figure~\ref{fig:beta-0} (1), which gives $B(\underline{1},\underline{1},1)_3$. 

Let us see that this 2-parameter family gives $\beta$ and $\beta_0$. If we realize the $I\times J$-family of embeddings $D^1\cup D^0\cup D^0\to T_0$ as a graph in $T_0\times I\times J$, an element
\[ \beta'':J\to \fEmb_0(\underline{D}^2\cup D^1\cup \underline{D}^1,D^4) \]
is obtained. Then by closing this family by replacing $J$ with $S^1$ and by shrinking the boundary of the second component as before, we obtain an element $\beta\in \pi_1\fEmb_0(\underline{D}^2\cup \underline{D}^1\cup \underline{D}^1,D^4)$, and moreover, by taking a 2-parameter family of the complemental 3-dimensional handlebodies for $\beta$ as explained above, we obtain an element $\beta_0\in\pi_2\fEmb_0(\underline{D}^1\cup D^0\cup D^0,D^3)$. 

Finally, $\beta_0$ corresponds to a $(T,\partial T)$-bundle over $I\times J$, and it is a 1-parameter family of the mapping cylinder on $T$. 
\end{proof}

\begin{Def}
We define the map $\alpha_\mathrm{II}:S^1\to \Diff(\partial V)$ by extending $\{\varphi_{0,t}\}$ to a 1-parameter family of diffeomorphisms of $\partial V$ by $\mathrm{id}$. 
\end{Def}

There is a natural ``graphing'' map $G:\pi_1\fEmb_0(\underline{D}^2\cup \underline{D}^1\cup \underline{D}^1,D^4)\to \pi_0\fEmb(\underline{D}^3\cup \underline{D}^2\cup \underline{D}^2,D^5)$, which is obtained by representing a 1-parameter family of framed long embeddings $D^2\cup D^1\cup D^1\to D^4$ by a single map $(D^2\cup D^1\cup D^1)\times I\to D^4\times I$. 
\begin{Lem}\label{lem:B(3,2,2)}
The image of $\beta\in \pi_1\fEmb_0(\underline{D}^2\cup \underline{D}^1\cup \underline{D}^1,D^4)$ under $G$ is the class of $B(\underline{3},\underline{2},\underline{2})_5$.
\end{Lem}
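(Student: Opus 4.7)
The plan is to unwind the construction of $\beta$ from the proof of Lemma~\ref{lem:T-bundle-II} into an explicit geometric family, compute $G(\beta)\subset D^5$ concretely, and then produce an ambient framed isotopy to the standard $B(\underline{3},\underline{2},\underline{2})_5$.

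Recall that $\beta$ originates from a 2-parameter family $(w,s)\in I\times J\mapsto C_1'\cup C_2'(s,w)\cup C_3'(w)\subset T_0\subset\R^3$ of embeddings $D^1\cup D^0\cup D^0\to T_0$ whose swept locus is the 3-dimensional Borromean configuration $B(\underline{1},\underline{1},1)_3$. Treating $\beta$ as a loop $S^1_s\to\fEmb_0(\underline{D}^2\cup\underline{D}^1\cup\underline{D}^1,D^4)$ based at the standard inclusion, the graphing map $G$ sends it to the embedding $(x,s)\mapsto(\beta_s(x),s)$ in $D^4\times I\cong D^5$. Unpacking, the three components of $G(\beta)$ are: (i) a 3-disk obtained from $C_1'\times I_w\times S^1_s$, smoothed so as to match the standard inclusion at the boundary; (ii) the 2-disk swept by the arc $\{(C_2'(s,w),w):w\in I\}$ as $s$ varies over $S^1$; and (iii) the 2-disk swept by $\{(C_3'(w),w):w\in I\}$ as $s$ varies.

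Next I would exhibit an explicit ambient isotopy of $D^5$ carrying $G(\beta)$ to the standard $B(\underline{3},\underline{2},\underline{2})_5$ defined in $\R^5=\R^1_x\times\R^2_y\times\R^2_z$ by the equations $|y|^2/4+|z|^2=1$ on $\{x=0\}$, $|z|^2/4+|x|^2=1$ on $\{y=0\}$, and $|x|^2/4+|y|^2=1$ on $\{z=0\}$. The conceptual reason the isotopy exists is the suspension/graphing principle for Borromean rings: a 2-parameter family in $\R^3$ whose swept locus realizes $B(\underline{1},\underline{1},1)_3$, graphed twice, produces an embedding of the same framed isotopy class as the double suspension $B(\underline{3},\underline{2},\underline{2})_5$. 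Concretely, the two sweep parameters $(w,s)\in I_w\times S^1_s$ become, after an affine change of coordinates, the $\R^2_y$ and $\R^2_z$ directions of the standard Borromean equations, and the pairwise unlinked/triply-linked pattern of the three subsets $C_1'\cup c_2\cup c_3\subset T_0$ is carried by graphing to the analogous pattern in $D^5$.

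The principal obstacle is Step 2, producing the ambient isotopy explicitly rather than merely identifying the two classes by an invariant, since one must verify that the sweeps actually assemble into the same configuration and not a nontrivially modified one. Once the underlying unframed embeddings are identified, the framings agree automatically: the difference of two normal framings on the 3-disk component lies in $\pi_3 SO_2=0$, and on each 2-disk component in $\pi_2 SO_3=0$, so any two choices of normal framing are homotopic and hence $G(\beta)=B(\underline{3},\underline{2},\underline{2})_5$ in $\pi_0\fEmb(\underline{D}^3\cup\underline{D}^2\cup\underline{D}^2,D^5)$.
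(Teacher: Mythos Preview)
Your outline is on the right track but leaves the decisive step undone: you yourself flag that producing the ambient isotopy in Step~2 is the principal obstacle, and indeed the proposal offers only a heuristic (the ``suspension/graphing principle'') rather than an argument. As written this is a genuine gap.

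The paper closes this gap with a simpler observation that you nearly make. In your own description of $G(\beta)$, only component (ii) depends on the loop parameter $s$; components (i) and (iii) are constant in $s$. Hence after graphing, (i) and (iii) are already the standard suspensions of a $D^2$ and a $D^1$ sitting in $D^4$ --- no isotopy is needed for them. For (ii), recall that the $I_s$-family of arcs $w\mapsto(C_2'(s,w),w)$ was constructed precisely so that its union over $s$ recovers the second $D^2$-component of $B(\underline{2},\underline{2},\underline{1})_4$. Thus the graph of (ii) in $D^5$ can be isotoped, rel boundary and rel the other two components, into a single fiber $L=D^4\times\{s_0\}$, where it becomes that $D^2$. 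After this collapse one sees in $L$ the full link $B(\underline{2},\underline{1},\underline{2})_4$ (reindexed), with the two fixed components trivially suspended across $I_s$; that is $B(\underline{3},\underline{2},\underline{2})_5$. So the missing idea is: do not treat $G(\beta)$ as a double graphing of $B(1,1,1)_3$ requiring a global isotopy, but as a single suspension of $B(2,1,2)_4$, exploiting that two of the three components of $\beta$ are stationary. Your framing remark via $\pi_3SO_2=\pi_2SO_3=0$ is correct but then unnecessary, since the standard framings are carried along throughout.
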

\begin{proof}
The element $G(\beta)$ can be represented by the graph of an $I$-family of embeddings $D^1\to D^4-(D^2\cup D^1)$, whose image covers the third component in $B(\underline{2},\underline{1},\underline{2})_4$. Thus in the graph of the family, the first two components are standard, and most of the third component can be collapsed into a single fiber $L$, which coincides with the third one in $B(\underline{2},\underline{1},\underline{2})_4$. The result is an embedding $D^3\cup D^2\cup D^2\to D^5$, which is obtained by taking suspensions of the first two components in $B(\underline{2},\underline{1},\underline{2})_4$ in $L$. It is $B(\underline{3},\underline{2},\underline{2})_5$.
\end{proof}

\subsection{Homotopical properties of $\alpha_\mathrm{I}$ and $\alpha_\mathrm{II}$}

We take standard cycles $a_1,a_2,a_3,b_1,b_2,b_3$ of $\partial V$ as follows. Here we again use the standard coordinate of $V$ fixed in \S\ref{ss:p-borr-I}.
When $V$ is of type I, we let $b_1,b_2,b_3\subset T=T\times\{1\}$ be defined by
\[ \begin{split}
  b_1=S^1_{2\ve}(1,0)\times \{\textstyle\frac{1}{2}\},\quad  
  b_2=S^1_{2\ve}(2,0)\times \{\textstyle\frac{1}{2}\},\quad
  b_3=S^2_{2\ve}(n,0,\frac{1}{2}).
\end{split}\]
Here, we denote by $S^1_\delta(a,b)\subset\R^2$, $S^2_\delta(a,b,c)\subset \R^3$, the codimension 1 spheres centered at $(a,b)$, $(a,b,c)$ respectively, with radius $\delta$. We consider $b_1,b_2$ as 1-cycles by counter-clockwise orientations in circles of $[0,4]\times [-1,1]$. We consider $b_3$ as a 2-cycle by inducing an orientation from $\R^3$ by outward-normal-first convention. We define disks $a_1^T,a_2^T,a_3^T\subset T$ by 
$a_1^T=\{1\}\times[-1,-\ve]\times[0,1]$, $a_2^T=\{2\}\times[-1,-\ve]\times[0,1]$, $a_3^T=\{3\}\times[-1,-\ve]\times\{\textstyle\frac{1}{2}\}$, and put
\[ a_\ell=(a_\ell^T\times\{1\})\cup (\partial a_\ell^T\times I)\cup (-a_\ell^T\times\{0\})\subset \partial V.\]
We orient $a_\ell$ so that $a_\ell\cdot b_\ell=1$. When $V$ is of type II, we change $b_2$ and $a_2^T$ as
\[ b_2=S^2_{2\ve}(n,0,\textstyle\frac{1}{2}), \quad a_2^T=\{2\}\times[-1,-\ve]\times\{\textstyle\frac{1}{2}\} \]
in the definitions above for type I. We define the cycles $\widetilde{a}_\ell, \widetilde{b}_\ell$ of $\partial V\times S^1$ by 
\[ \widetilde{a}_\ell=a_\ell\times S^1,\quad  \widetilde{b}_\ell=b_\ell\times S^1,
\]
and orient them so that $\widetilde{a}_\ell\cdot b_\ell=1$, $a_\ell\cdot\widetilde{b}_\ell=1$.
\begin{figure}
\begin{center}
\begin{tabular}{ccc}
\includegraphics[height=30mm]{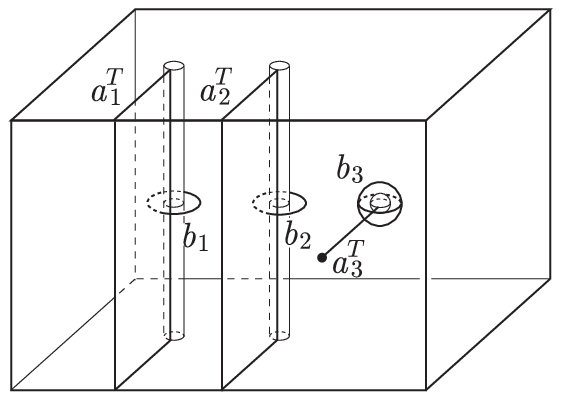} & \phantom{aaaaa} & \includegraphics[height=30mm]{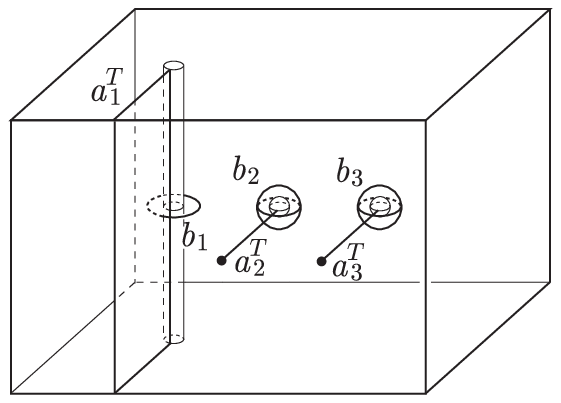}\\
(1) & & (2)
\end{tabular}
\end{center}
\caption{$a_1^T,a_2^T,a_3^T,b_1,b_2,b_3\subset T$, (1) in $V$ of type I, (2) in $V$ of type II.}\label{fig:ab-type-I-II}
\end{figure}

\begin{Prop}\label{prop:wh-prod}
\begin{enumerate}
\item \emph{(Type I)} By the diffeomorphism $\alpha_{\mathrm{I}}(1):\partial V\to \partial V$, the cycles $a_\ell, b_\ell$ change as follows.
\begin{equation}\label{eq:whitehead_1}
  \alpha_{\mathrm{I}}(1)_*\,a_\ell \simeq a_\ell \pm [b_m, b_n], \qquad \alpha_{\mathrm{I}}(1)_*\,b_\ell  \simeq b_\ell.
 \end{equation}
Here, $[\,,\,]$ is the Whitehead product, and $m,n$ are numbers such that $\{\ell,m,n\}=\{1,2,3\}$. The $\pm$ on the right hand side is the connected sum between base points. In particular, $a_\ell$ and $\alpha_{\mathrm{I}}(1)_*\,a_\ell$, $b_\ell$ and $\alpha_{\mathrm{I}}(1)_*\,b_\ell$ belong to the same bordism classes of $\Omega_*(\partial V)$.
\item \emph{(Type II)} By the bundle automorphism $\bar{\alpha}_{\mathrm{II}}:\partial V\times S^1\to \partial V\times S^1$, the cycles $\widetilde{a}_\ell, \widetilde{b}_\ell$ change as follows.
\begin{equation}\label{eq:whitehead_2}
  \bar{\alpha}_{\mathrm{II}*}\,\widetilde{a}_\ell  \simeq \widetilde{a}_\ell \pm [{b}_m, {b}_n],\qquad
  \bar{\alpha}_{\mathrm{II}*}\,\widetilde{b}_\ell  \simeq \widetilde{b}_\ell.
\end{equation}
Here, $m,n$ are numbers such that $\{\ell,m,n\}=\{1,2,3\}$. 
In particular, $\widetilde{a}_\ell$ and $\bar{\alpha}_{\mathrm{II}*}\,\widetilde{a}_\ell$, $\widetilde{b}_\ell$ and $\bar{\alpha}_{\mathrm{II}*}\,\widetilde{b}_\ell$ belong to the same bordism classes of $\Omega_*(\partial V\times S^1)$.
\end{enumerate}
\end{Prop}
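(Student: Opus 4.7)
The plan is to exploit the fact that $\alpha_{\mathrm{I}}(1)$ and $\bar{\alpha}_{\mathrm{II}}$ are realized as monodromies of complements of Borromean string links (Lemmas~\ref{lem:T-bundle-I} and \ref{lem:T-bundle-II}), and then identify the $b_\ell$ and $\widetilde{b}_\ell$ with meridians of Borromean components and the $a_\ell$ and $\widetilde{a}_\ell$ with longitudes. The claim then reduces to the classical fact that in the complement of a Borromean link the longitude of one component is homotopic to the Whitehead product of the meridians of the other two.

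For part (1), by Lemma~\ref{lem:T-bundle-I} the effect of $\alpha_{\mathrm{I}}(1)$ on a cycle $c\subset T\times\{1\}\subset\partial V$ is computed by transporting $c$ through $V'=D^4\setminus\nu(B(\underline{2},\underline{2},\underline{1})_4)$ to $T\times\{0\}$. From the explicit coordinates in \S\ref{ss:p-borr-I} and Figure~\ref{fig:ab-type-I-II}, each $b_\ell$ is visibly a small linking sphere (meridian) of the $\ell$-th Borromean strand, and the normal disk to that strand used to contract $b_\ell$ is disjoint from the other two strands; hence $b_\ell$ is isotopic in $\partial V'=\partial V$ to the standard meridian on $T\times\{0\}$, giving $\alpha_{\mathrm{I}}(1)_* b_\ell\simeq b_\ell$. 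Each $a_\ell$, built from $a_\ell^T\times\{1\}$, $\partial a_\ell^T\times I$, and $-a_\ell^T\times\{0\}$, is (rel $\partial V$) a longitude of the $\ell$-th strand. The standard homotopy-theoretic computation of the Borromean complement (for example via the cellular model in which the complement is homotopy equivalent to $S^{p}\vee S^q\vee S^r$ with a top cell attached by $\pm[b_m,b_n]$, or equivalently via Milnor's $\bar\mu_{123}$) shows that this longitude is homotopic to $a_\ell \pm [b_m,b_n]$ in $\partial V$, where the bracket sphere is attached by base-pointed connected sum. This gives (\ref{eq:whitehead_1}). The bordism statement is automatic, since a single Whitehead product $[b_m,b_n]\colon S^{|b_m|+|b_n|-1}\to\partial V$ is represented by a map of a sphere and therefore bounds a disk, so vanishes in $\Omega_*(\partial V)$.

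Part (2) follows by the same argument one dimension up. By Lemma~\ref{lem:B(3,2,2)}, the graphing map sends $\beta$ to the class of $B(\underline{3},\underline{2},\underline{2})_5$, so $\bar{\alpha}_{\mathrm{II}}$ is the monodromy around $S^1$ of the complement of this $5$-dimensional Borromean string link. Under this identification the product cycles $\widetilde{a}_\ell=a_\ell\times S^1$ and $\widetilde{b}_\ell=b_\ell\times S^1$ are precisely the longitudes and meridians of the graphed link, and the same complement-of-Borromean computation yields (\ref{eq:whitehead_2}) and the bordism assertion.

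The main obstacle is the core linking computation identifying the longitude of a component of a higher-dimensional framed long Borromean link with the Whitehead product of the other two meridians. In the three-dimensional unframed case this is classical, but in our framed long setting in $D^4$ and $D^5$ one should verify it by reducing, via the handlebody decomposition of $V$ fixed in \S\ref{ss:p-borr-I}--\S\ref{ss:p-borr-II}, to computing the attaching map of the top cell in a CW model $S^p\vee S^q\vee S^r\cup_{\pm[\iota_m,\iota_n]} e^{p+q+r-1}$ for the complement; the sign ambiguity in (\ref{eq:whitehead_1})--(\ref{eq:whitehead_2}) then simply records the orientation ambiguity in the Borromean symmetry. Once this linking identification is in place, everything else is formal.
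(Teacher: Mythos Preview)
Your argument for part~(1) is essentially the paper's: both identify $\alpha_{\mathrm{I}}(1)$ via the mapping-cylinder description of $V'$ (Lemma~\ref{lem:T-bundle-I}), observe that $b_\ell$ is fixed because it lies near $\partial T$, and reduce the change in $a_\ell$ to Massey's computation that a Borromean component represents the Whitehead product of the other two meridians. The paper inserts an explicit intermediate step (Lemma~\ref{lem:lambda_P_Q}) showing that the difference between the top and bottom copies of $a_\ell^T$ is captured by a relative homotopy class in $V'$; your sentence ``$a_\ell$ is (rel $\partial V$) a longitude of the $\ell$-th strand'' is exactly this step, compressed.

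For part~(2) the routes diverge. The paper does \emph{not} pass through the graphing Lemma~\ref{lem:B(3,2,2)}; it simply repeats the Type~I argument for the $S^1$-family, using Lemma~\ref{lem:T-bundle-II} to write $\widetilde{V}$ as a family of mapping cylinders and replacing $T,a_\ell^T,V'$ by $\widetilde{T}=T\times S^1,\widetilde{a}_\ell^T,\widetilde{V}$. Your graphing approach runs into a genuine identification problem: the cycles $\widetilde{b}_\ell=b_\ell\times S^1$ are $S^1\times S^1$ or $S^2\times S^1$, not spheres, so they are \emph{not} the meridians of the components of $B(\underline{3},\underline{2},\underline{2})_5$ in $D^5$ (those meridians are $S^1,S^2,S^2$); likewise $\widetilde{a}_\ell=a_\ell\times S^1$ are not the longitudinal spheres. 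Note also that the bracket appearing in~(\ref{eq:whitehead_2}) is $[b_m,b_n]$, the Whitehead product of the \emph{fiberwise} meridians, not of any $5$-dimensional meridians. So ``the same complement-of-Borromean computation'' does not apply verbatim; to make the graphing route work you would have to relate the product cycles $\widetilde{a}_\ell$ to the actual spherical longitudes in $D^5$, which is extra work the paper avoids by arguing fiberwise.

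One smaller point: your bordism justification ``a sphere bounds a disk'' is not quite right---a map $S^p\to\partial V$ need not extend over $D^{p+1}$. What is true is that Whitehead products lie in the kernel of the Hurewicz map, hence are null-homologous, and in the relevant degrees ($\leq 3$) the edge map $\Omega_p(\partial V)\to H_p(\partial V)$ is an isomorphism, so null-homologous implies null-bordant.
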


We shall prove Proposition~\ref{prop:wh-prod} in the rest of this subsection.
\par\medskip

\noindent{\bf Type I:} Since $b_\ell$ is parallel to $\partial T$, the relation $\alpha_{\mathrm{I}}(1)_*\,b_\ell  \simeq b_\ell$ is immediate. We shall consider the change of $a_\ell$. We have seen in Lemma~\ref{lem:T-bundle-I} that $V'$ is the mapping cylinder of a relative diffeomorphism $\varphi_0$ of $T$ (Lemma~\ref{lem:T-bundle-I}). The differential of the restriction of the natural map $T\times I\to V'=C(\varphi_0)$ on $\{x\}\times I$ gives a gradient-like vector field $\nu$ on $V'$. By taking $T=T\times\{1\}$ along the flow $\Phi_{-\nu}:\R\times V'\to V'$ until it gets to the bottom face, $\varphi_0$ is obtained.

\begin{wrapfigure}[9]{r}[0pt]{3cm}
\begin{center}
 \includegraphics[height=30mm]{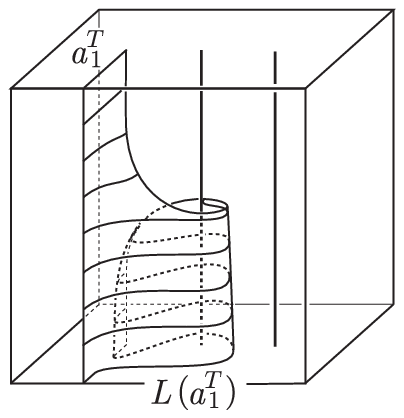}
\end{center}
\end{wrapfigure}
Let $L(a_\ell^T)$ be the locus in $V'$ of $a_\ell^T$ under the flow. 
Put $S=T\times\{0\}$ and $a_\ell^S=L(a_\ell^T)\cap S$. In order to prove (\ref{eq:whitehead_1}), we consider the difference of $a_\ell^T$ and $a_\ell^S$, both considered as disks in $T$ under the identification $S=T$. Since $\varphi_0$ is a relative diffeomorphism of $T$, the boundaries of $a_\ell^T$ and $a_\ell^S$ agree.

Let $p$ be the dimension of the disk $a_\ell^T$, which is 1 or 2. By definition, $L(a_\ell^T)$ is the image of a level preserving embedding $\lambda:D^p\times I\to V'=C(\varphi_0)$. We choose a base point $c\in \partial D^p$ so that $\lambda(c,0)$ is mapped to a point of $\partial T_0\times\{0\}$ that is disjoint from the attaching map of the handle $h_\ell^{e_\ell}$, and put 
$U=(D^p\times \{0\})\cup (\{c\}\times I)\cup (\partial D^p\times \{1\})$, $O=\partial D^p\times \{1\}$, $P=(D^p\times \{0\})\cup (\partial D^p\times I)$, $Q=D^p\times \{1\}$. The following is observed. 
\begin{enumerate}
\item $\lambda$ is standard on $U$.
\item The relative homotopy class of $\lambda|_P:(P, U)\to (V',\lambda(U))$ represents a unique element of $\pi_p(V',\lambda(O))$ since $\lambda(U)$ deformation retracts to $\lambda(O)$. 
\item The relative homotopy class of $\lambda|_Q:(Q,O)\to (V',\lambda(O))$ represents an element of $\pi_p(S,\lambda(O))$.
\item The inclusion map $S\to V'$ induces a bijection $\pi_p(S,\lambda(O))\cong \pi_p(V',\lambda(O))$. In the following, we identify the two by this correspondence.
\end{enumerate}

\begin{Lem}\label{lem:lambda_P_Q}
The identity $[\lambda|_P]=[\lambda|_Q]$ in $\pi_p(V',\lambda(O))$ holds.
\end{Lem}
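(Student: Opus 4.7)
My approach exploits the fact that $D^p \times I$ is itself a $(p+1)$-ball, that $P$ and $Q$ together form its boundary sphere with $P \cap Q = O$, and that $\lambda$ extends the restriction $\lambda|_P \cup \lambda|_Q$ to the whole of this ball. Consequently, $\lambda|_{\partial(D^p \times I)} : S^p \to V'$ is null-homotopic in $V'$, with $\lambda$ itself providing the null-homotopy. The plan is simply to repackage this null-homotopy as a homotopy of pairs between $\lambda|_P$ and $\lambda|_Q$ rel $O$.

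Concretely, I would construct an explicit homotopy $H_s : (D^p,\partial D^p) \to (D^p \times I, O)$ with $H_0 = \iota_Q$ and $H_1$ a parametrization of $P$, constant on $\partial D^p$, and then postcompose with $\lambda$. Fix the tautological parametrization $\iota_Q(x) = (x,1)$ of $Q$, and a parametrization $\gamma : (D^p,\partial D^p) \to (P,O)$ that restricts to the same map $x \mapsto (x,1)$ on $\partial D^p$ (for instance send $r\theta \mapsto (2r\theta,0)$ for $r \le \tfrac12$ and $r\theta \mapsto (\theta,\,2r-1)$ for $r \ge \tfrac12$). Since $D^p \times I$ is convex in $\mathbb{R}^p \times \mathbb{R}$, the linear interpolation $H_s(x) = (1-s)\iota_Q(x) + s\gamma(x)$ stays inside $D^p \times I$ and is stationary on $\partial D^p$. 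Then $\lambda \circ H_s$ is a homotopy of pairs $(D^p,\partial D^p) \to (V',\lambda(O))$ from $\lambda|_Q$ to $\lambda|_P$, giving the desired equality in $\pi_p(V',\lambda(O))$.

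There is no real obstacle here; the only subtlety worth mentioning is that one must use parametrizations of $P$ and $Q$ that induce the same identification of $\partial D^p$ with $O$. The explicit $\gamma$ above does this, so the homotopy is genuinely rel boundary and the conclusion is strict equality (not merely up to sign) in the relative homotopy group, which is what is needed for the orientation-sensitive identities (\ref{eq:whitehead_1}) of Proposition~\ref{prop:wh-prod}.
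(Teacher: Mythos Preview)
Your proposal is correct and takes essentially the same approach as the paper: both exploit that $\lambda$ on $\partial(D^p\times I)=P\cup_O Q$ extends over the full $(p+1)$-ball $D^p\times I$, so that $[\lambda|_P]-[\lambda|_Q]$ is nullhomotopic rel $\lambda(O)$. The paper's proof is the one-line version of yours, simply observing that the extension exists; your explicit linear-interpolation homotopy is a concrete realization of the same idea.
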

\begin{proof}
We denote the boundary $P\cup (-Q)$ of $D^p\times I$ by $P-Q$. The map $\lambda_{P-Q}:(P-Q,O)\to (V',\lambda(O))$ induced by $\lambda$ extends to a relative map $\lambda:(D^p\times I,O)\to (V',\lambda(O))$. Hence $\lambda_{P-Q}$ is relatively nullhomotopic, fixing $\lambda(O)$.
\end{proof}

\begin{proof}[Proof of Proposition~\ref{prop:wh-prod} (1)]
The Borromean string link $B(\underline{2},\underline{2},\underline{1})_4$ can be deformed by isotopy in $D^4$ into one with two trivial components around which the remaining component links. If it is the $\ell$-th component, the homotopy class of the difference between $\alpha_I(1)_*a_\ell$ and $a_\ell$ is given by the third component. The homotopy type of the complement $D^4-(D^p\cup D^q)$ of the two trivial components is the same as $S^{3-p}\vee S^{3-q}$, and as is well-known, the third component in the Borromean string link $D^p\cup D^q\cup D^r\to D^4$, considered as an $r$-disk in $D^4-(D^p\cup D^q)$, represents the Whitehead product in $[\pi_{3-p},\pi_{3-q}]\subset \pi_r\,S^{3-p}\vee S^{3-q}$ (\cite[\S{5}]{Ma}). This together with Lemma~\ref{lem:lambda_P_Q} completes the proof.
\end{proof}

\noindent{\bf Type II:} Recall that $\widetilde{V}$ is a 1-parameter family of mapping cylinders of relative diffeomorphisms of $T$. 
\begin{proof}[Proof of Proposition~\ref{prop:wh-prod} (2)]
Put $\widetilde{T}=T\times S^1$, and let $\widetilde{a}_\ell^T$ be the disk $\widetilde{a}_\ell\cap \widetilde{T}$. After the replacements $V'\to \widetilde{V}$, $a_\ell^T\to \widetilde{a}_\ell^T$, $T\to \widetilde{T}$, the arguments for type I can be applied here and Proposition~\ref{prop:wh-prod} (2) is proved similarly. 
\end{proof}

\subsection{Primitiveness of $\pi^\Gamma$}\label{ss:primitiveness}

We shall prove Theorem~\ref{thm:Z(G)} (1) and the first half of Theorem~\ref{thm:Z(G)} (2).

\begin{Prop}[The first half of Theorem~\ref{thm:Z(G)} (2)]\label{prop:primitive}
The $(D^4,\partial)$-bundle $\pi^\Gamma:E^\Gamma\to B_\Gamma$ is bundle bordant to a $(D^4,\partial)$-bundle $\varpi^\Gamma:F^\Gamma\to S^k$. Namely, there exist a compact oriented $(k+1)$-cobordism $\widetilde{B}$ with $\partial\widetilde{B}=B_\Gamma\tcoprod (-S^k)$ and a $(D^4,\partial)$-bundle $\widetilde{\pi}:\widetilde{E}\to \widetilde{B}$ such that the restriction of $\widetilde{\pi}$ on $\partial \widetilde{B}$ agrees with $\pi^\Gamma$ and $\varpi^\Gamma$.
\end{Prop}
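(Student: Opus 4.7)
My plan is to build the required cobordism $\widetilde{\pi}:\widetilde{E}\to \widetilde{B}$ explicitly by collapsing the sphere factors of $B_\Gamma$ one at a time. I would begin with a vertex count: since each edge of the arrow graph $\Gamma$ carries one tail and one head half-edge, a balance argument forces the number of type~I vertices (one output, two inputs, with parameter space $K_i=S^0$) and type~II vertices (two outputs, one input, with parameter space $K_i=S^1$) to be equal; both must equal $k$ in order for $\dim B_\Gamma =k$. Hence $B_\Gamma=(S^0)^k\times (S^1)^k$ is a closed oriented $k$-manifold with $2^k$ components, each diffeomorphic to $T^k$.

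The key geometric input is that the handlebodies $V_1,\ldots,V_{2k}$ are mutually disjoint, so the bundle $\pi^\Gamma$ really is produced by independent simultaneous surgeries. In particular, restricting $\pi^\Gamma$ to any face of $B_\Gamma$ obtained by holding the $j$-th parameter at its base point coincides with the surgery bundle in which the $j$-th Y-graph is simply omitted. Using Proposition~\ref{prop:wh-prod}, the Borromean twists $\alpha_\mathrm{I}(1)$ and $\bar\alpha_\mathrm{II}$ preserve the bordism classes of the attaching cycles $a_\ell,b_\ell$ (respectively $\widetilde{a}_\ell,\widetilde{b}_\ell$) up to Whitehead products, and these Whitehead products are null-bordant in $\Omega_*(\partial V)$. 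I would use this to construct an elementary cobordism that replaces each factor $K_i$ by a disk ($D^1$ in the type~I case, $D^2$ in the type~II case) while extending the bundle: the extension realises the null-bordism of the Whitehead-product cycle by a parametrised handle-slide performed inside a small thickening of $V_i$, disjoint from all the other $V_{i'}$.

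Iteratively applying such elementary cobordisms to each of the $2k$ factors of $B_\Gamma$ produces a $(k+1)$-dimensional base cobordism $\widetilde{B}$ whose opposite boundary component is the iterated smash $\bigwedge_{i=1}^{2k}K_i\simeq S^k$. The bundle extends canonically over the total cobordism because the disjointness of handlebodies means that the $j$-th filling-in operation only perturbs the surgery at $V_j$ and leaves the others untouched. The restriction of the resulting bundle $\widetilde{\pi}$ to the $S^k$-side then defines the desired $(D^4,\partial)$-bundle $\varpi^\Gamma:F^\Gamma\to S^k$.

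The main obstacle will be ensuring the coherence of these iterated bundle extensions. When collapsing the $j$-th factor, one must check that the Hopf linkings between $V_j$ and the neighbouring handlebodies (along edges of $\Gamma$ incident to the $j$-th vertex) do not obstruct the simultaneous extension over the adjacent faces of $B_\Gamma$, and that the parametrised handle-slides on different factors glue across strata of $B_\Gamma$. I expect the required compatibility to follow from an induction on the number of factors already collapsed, using the fact that the Whitehead-product null-bordisms inside a given $V_i$ can be isotoped to avoid all the remaining active handlebodies. The final $\widetilde{B}$ will be a manifold with corners built from products of disks, directly analogous in spirit to the cobordisms $W$ used in \S\ref{ss:bordism-invariance} and \S\ref{ss:ch-cobordism}.
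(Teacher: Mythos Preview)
Your overall target --- replacing $B_\Gamma=\prod_iK_i$ by the smash $S^k=\bigwedge_iK_i$ and extending the bundle across the collapse --- matches the paper's strategy, and your vertex count is correct. But the mechanism you propose for extending the bundle has a genuine gap.

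Proposition~\ref{prop:wh-prod} is not the right input. That proposition says the twist $\alpha_i$ acts on the standard cycles $a_\ell,b_\ell$ of $\partial V_i$ by adding a Whitehead product, hence acts trivially on bordism classes of cycles. This is a statement about the homology of $\partial V_i$; it does \emph{not} produce a nullhomotopy of the classifying map $K_i\to B\Diff(V_i,\partial)$, nor a bundle null-bordism. Your sentence ``the extension realises the null-bordism of the Whitehead-product cycle by a parametrised handle-slide'' conflates these two levels. What you actually need is Lemma~\ref{lem:V[l]-trivial}: if one attaches a $(k_\ell{+}1)$-handle to $V_i$ along $b_\ell$ to form $V_{i[\ell]}$, the resulting $(V_{i[\ell]},\partial)$-bundle $\widetilde V_{i[\ell]}\to K_i$ is \emph{trivial}, because removing one component of a Borromean link unlinks the remaining two. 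That is the geometric source of the nullhomotopy, and it is strictly stronger than the cycle-level statement of Proposition~\ref{prop:wh-prod}.

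There is a second structural issue. You propose to fill in $K_i$ by a disk while the other parameters $t_j$ still vary. But the extension $V_i\hookrightarrow V_{i[\ell]}$ requires a disk bounding $b_\ell$, and by construction of the Y-link this disk must pass through the neighbouring handlebody $V_j$ (the one linked to the $\ell$-th leaf). Once the $j$-th surgery is active and varying, that disk is no longer available in the family. The paper sidesteps this by working the other way round: it first restricts to $B_\Gamma[i]$ (holding $K_i$ at its basepoint, so $G_i$ is \emph{removed}), and only then extends the neighbouring $V_j$ to $V_{j[\ell]}$ using the now-unlinked leaf; this is the propagation of Lemma~\ref{lem:null-leaf} along a spanning tree of $\Gamma$. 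The coherence you flag as the ``main obstacle'' is then handled by Lemma~\ref{lem:B}, which shows that the nullhomotopies built from different starting vertices $i,i'$ agree on $B_\Gamma[i]\cap B_\Gamma[i']$ up to homotopy, by exhibiting an explicit hexagon of collapsing orders at each trivalent vertex of the tree. Without these three lemmas your argument does not close.
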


Let $k_\ell$ be the index of the $\ell$-th handle of $V$ with nonzero index and let $V_{[\ell]}\subset D^4$ be the handlebody obtained from $V$ by attaching a $(k_\ell+1)$-handle along a $k_\ell$-sphere on $\partial V$ that is parallel to the core of the $\ell$-th handle in $V$, more precisely along $b_\ell$. By extending the bundle projection $\widetilde{V}\to S^a$ by  the trivial family of $(k_\ell+1)$-handles, one obtains a $(V_{[\ell]},\partial)$-bundle $\widetilde{V}_{[\ell]}\to S^a$. 

Let us describe a standard model for $\widetilde{V}_{[\ell]}$. Recall that $\widetilde{V}$ is given by the complement of the family $\beta$ of framed string links of $\pi_0\fEmb_0(\underline{D}^2\cup \underline{D}^2\cup\underline{D}^1,D^4)$ or $\pi_1\fEmb_0(\underline{D}^2\cup \underline{D}^1\cup\underline{D}^1,D^4)$ corresponding to the Borromean rings. Put $U=T_0\times I$, $L_n^2=\{(n,0,z,w)\mid z,w\in I\}$, $L_n^1=\{(n,0,0,w)\mid w\in I\}$ ($n=1,2,3$), and let $H_n^e$ ($e=1,2$) be the open $\ve$-tubular neighborhood of $L_n^e\subset U$ for a sufficiently small $\ve>0$. Let $L_1^{e_1}(t)\cup L_2^{e_2}(t)\cup L_3^{e_3}(t)\subset U$ ($t\in S^a$) be a family of framed string links corresponding to $\beta$ such that $L_\ell^{e_\ell}(t_0)=L_\ell^{e_\ell}$ at the base point $t_0\in S^a$. Let $H_n^e(t)$ be the open $\ve$-tubular neighborhood of $L_n^e(t)\subset U$. Put
\[ \begin{split}
  &W(t)=U-(H_1^{e_1}(t)\cup H_2^{e_2}(t)\cup H_3^{e_3}(t)),\quad W_{[\ell]}(t)=W(t)\cup H_\ell^{e_\ell}(t), \\
  &\widetilde{W}=\bigcup_{t\in S^a} W(t)\times\{t\},\quad \widetilde{W}_{[\ell]}=\bigcup_{t\in S^a} W_{[\ell]}(t)\times\{t\}.
\end{split} \]
Framing on string link gives trivializations $\partial V\times S^a\cong \partial \widetilde{W}$, $\partial V_{[\ell]}\times S^a\cong \partial \widetilde{W}_{[\ell]}$, which are extended to bundle isomorphism $\widetilde{V}\cong \widetilde{W}$, $\widetilde{V}_{[\ell]}\cong \widetilde{W}_{[\ell]}$. Then we may consider $\widetilde{W}$, $\widetilde{W}_{[\ell]}$ as standard models for $\widetilde{V}$, $\widetilde{V}_{[\ell]}$.

Although the following lemma is independent from Proposition~\ref{prop:primitive}, it will be used in Lemma~\ref{lem:occupied}, which plays a key role in computing the value of the invariant.

\begin{Lem}\label{lem:framing}
Let $\tau_0$ be the $SO_4$-framing on a single fiber $V$ induced from the standard framing of $D^4$. Let $\widetilde{\tau}_0$ be the vertical framing on $T^v\widetilde{V}|_{\partial \widetilde{V}=\partial V\times S^a}$ induced from $\tau_0$ by the product structure. Then there exists a vertical framing $\widetilde{\tau}$ on $\widetilde{V}$ that extends $\widetilde{\tau}_0$.
\end{Lem}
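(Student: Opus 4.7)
The plan is to exhibit the desired framing explicitly via the standard model $\widetilde{W}$ and the bundle isomorphism $\widetilde{V}\cong\widetilde{W}$ coming from the framing of the string link $\beta$.

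The key observation is that each fiber $W(t)$ is a codimension-zero submanifold (with corners) of $\R^4$, so $\widetilde{W}$ is a fiberwise codimension-zero subspace of the trivial bundle $\R^4\times S^a\to S^a$. Consequently $T^v\widetilde{W}$ is canonically the restriction of the trivial rank-$4$ bundle $T\R^4\times S^a$, and the standard coordinate frame $(\partial/\partial x_1,\dots,\partial/\partial x_4)$ of $\R^4$ restricts to a canonical global vertical framing $\widetilde{\tau}'$ of $T^v\widetilde{W}$, which transports along $\widetilde{V}\cong\widetilde{W}$ to a vertical framing on $T^v\widetilde{V}$.

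It remains to adjust $\widetilde{\tau}'$ so that it restricts to $\widetilde{\tau}_0$ on the boundary. At the base point $t_0\in S^a$ the two framings coincide by definition, since $W(t_0)=V$ sits in $\R^4$ standardly and $\tau_0$ is the restriction of the ambient coordinate frame. For general $t$ the identification $\partial V\times S^a\cong\partial\widetilde{W}$ is built from the ambient isotopy realizing the family $\beta$ of framed Borromean string links, so $\widetilde{\tau}'|_{\partial\widetilde{V}}$ and $\widetilde{\tau}_0$ differ by a map $g\colon\partial V\times S^a\to GL_4^+\simeq SO_4$ that is the identity at $t_0$. Using a collar neighborhood $\partial V\times S^a\times[0,\ve)\subset\widetilde{V}$, one may deform $\widetilde{\tau}'$ on the collar by a nullhomotopy of $g$ extended radially, producing the framing $\widetilde{\tau}$ that agrees with $\widetilde{\tau}_0$ on $\partial\widetilde{V}$.

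The main technical obstacle is checking that the comparison map $g$ is nullhomotopic (so that the collar modification can be realized). This follows from the explicit parametrized descriptions of $\beta$ in Lemmas~\ref{lem:T-bundle-I} and \ref{lem:T-bundle-II} together with the parallelizability of $V=T\times I$ and standard obstruction theory: the relevant obstruction classes live in $H^{*}(\partial V\times S^a;\pi_{*-1}SO_4)$, which vanish in the needed range since $\partial V\times S^a$ has low homotopy-theoretic dimension relative to $SO_4$.
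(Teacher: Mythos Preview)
Your overall strategy matches the paper's exactly: pass to the standard model $\widetilde{W}\subset U\times S^a$, take the ambient coordinate framing $\widetilde{\sigma}_0$ (your $\widetilde{\tau}'$), and then homotope on a collar so that it matches the boundary framing $\widetilde{\upsilon}_0$ induced from $\tau_0$ via the trivialization $\partial\widetilde{V}\cong\partial\widetilde{W}$. The issue is entirely in your last paragraph, where you justify the nullhomotopy of the comparison map $g$.

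First, a minor point: the identification $\partial V\times S^a\cong\partial\widetilde{W}$ does \emph{not} come from an ambient isotopy of the family $\beta$ --- the Borromean string link is not isotopic to the trivial one, so no such isotopy exists. The identification comes from the normal framings of the individual components $L_i^{e_i}(t)$.

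More seriously, your obstruction-theory claim is false as stated. The space $\partial V\times S^a$ has dimension $3$ or $4$, while $\pi_1 SO_4\cong\Z/2$ and $\pi_3 SO_4\cong\Z^2$ are nonzero; the cohomology groups $H^1(\partial V;\Z/2)$ and $H^3(\partial V;\Z^2)$ are certainly nonzero (for instance $\partial V$ is a closed orientable $3$-manifold), so the obstructions do not vanish ``for dimension reasons.'' Lemmas~\ref{lem:T-bundle-I} and \ref{lem:T-bundle-II} do not address framings and do not help here.

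What the paper actually uses is the specific geometric fact that each individual component $L_i^{e_i}(t)$ of the Borromean family is fiberwise \emph{regularly homotopic} (as a framed string knot) to the standard $L_i^{e_i}$ --- this is a property of the Borromean rings, since each component alone is unknotted. The differential of such a regular homotopy $F(t,s)$ produces a $GL_4(\R)$-framing $\widetilde{\upsilon}_s$ on the tube $\overline{H_i^{e_i}}$ interpolating between $\widetilde{\upsilon}_0$ and $\widetilde{\sigma}_0$. Restricting to $\partial\overline{H_i^{e_i}}$ gives exactly the required homotopy of $g$ to the identity on each piece of $\partial\widetilde{W}$, and then the collar modification goes through. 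This geometric step is the missing ingredient in your argument.
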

\begin{proof}
It suffices to prove the lemma for $V=W(t_0)$. Since $\widetilde{\tau}_0$ on $\partial \widetilde{V}$ induces a vertical framing $\widetilde{\upsilon}_0$ on $\partial\widetilde{W}$ by the differential of the trivialization $\partial\widetilde{V}=\partial V\times S^a\cong \partial\widetilde{W}$ slightly extended to its neighborhood, we need only to find a vertical framing on $\widetilde{W}$ that extends $\widetilde{\upsilon}_0$ induced from outside. Since $\widetilde{\tau}_0$ for $V=W(t_0)$ is induced from the direct sum of normal and tangent framings of $L_n^{e_n}$ on the side of each handle, $\widetilde{\upsilon}_0$ is induced from the direct sum of normal and tangent framings of $L_n^{e_n}(t)$ on the side of each handle, too (Figure~\ref{fig:framing} (2)). 

\begin{figure}
\begin{center}
\includegraphics[height=30mm]{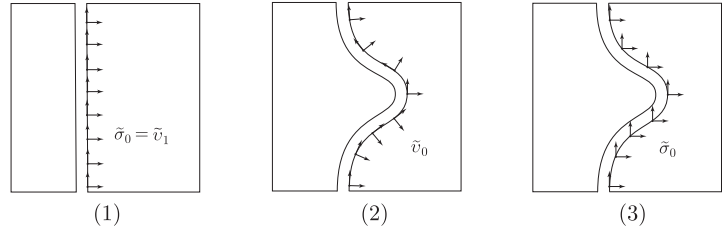}
\end{center}
\caption{Two framings on $\partial \widetilde{W}$}\label{fig:framing}
\end{figure}

Since $\widetilde{W}$ is a codimension 0 submanifold of $U\times S^a$, the standard $SO_4$-framing of $U$ induces a standard vertical framing $\widetilde{\sigma}_0$ on $\widetilde{W}$ (Figure~\ref{fig:framing} (1), (3)). We would like to glue $\widetilde{v}_0$ from outside and $\widetilde{\tau}_0$ from inside of $\widetilde{V}$ after a suitable homotopy. Since $\widetilde{\upsilon}_0$ and $\widetilde{\sigma}_0$ may not agree on $\partial \widetilde{W}$, we shall replace $\widetilde{\sigma}_0$ with the deformed one in a neighborhood of the boundary. It follows from a property of the Borromean rings that each component $L_i^{e_i}(t)$ is fiberwise regularly homotopic to $L_i^{e_i}$ as a framed string knot in $U$. Hence there is a regular homotopy $F(t,s)$ ($s\in I$) from $\overline{H_i^{e_i}(t)}$ to $\overline{H_i^{e_i}}$ in $U$. A $GL_4(\R)$-framing $\widetilde{\upsilon}_s$ on the immersion $F(t,s):\overline{H_i^{e_i}}\to U$ is induced from the standard framing of $\overline{H_i^{e_i}}\subset U$ by $d(F(t,s)):T\overline{H_i^{e_i}}\to TU$. At $s=0$, $\widetilde{\upsilon}_s$ agrees with $\widetilde{\upsilon}_0$ defined above, and at $s=1$, we have $\widetilde{\upsilon}_1=\widetilde{\tau}_0=\widetilde{\sigma}_0$ (Figure~\ref{fig:framing} (1)) since $\overline{{H}_i^{e_i}}$ is a standard embedding. The difference between $\widetilde{\upsilon}_s$ and $\widetilde{\sigma}_0$ gives a map $\overline{H_i^{e_i}}\to GL_4(\R)$, and the restriction of this map on $\partial\overline{H_i^{e_i}}$ gives a homotopy from $\widetilde{\upsilon}_0$ to $\widetilde{\sigma}_0$ in $\partial\widetilde{W}$. Therefore, we may deform $\widetilde{\sigma}_0$ by a homotopy on a neighborhood of $\partial \widetilde{W}$ so that it agrees with $\widetilde{\upsilon}_0$ on $\partial \widetilde{W}$. Let $\widetilde{\tau}$ be the vertical framing of $\widetilde{W}$ thus obtained. Then obviously this extends $\widetilde{\upsilon}_0$ inside $\widetilde{V}$, as desired.
\end{proof}

\begin{Lem}\label{lem:V[l]-trivial}
The bundle $\widetilde{V}_{[\ell]}\to S^a$ is a trivial $(V_{[\ell]},\partial)$-bundle, and hence the corresponding classifying map $S^a\to B\Diff(V_{[\ell]},\partial)$ is nullhomotopic.
\end{Lem}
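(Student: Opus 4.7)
The plan is to identify $\widetilde V_{[\ell]}$ with the complement of a parametrized two-component sub-link obtained from the family of framed Borromean string links by forgetting the $\ell$-th component, and then invoke the sub-link property of the Borromean rings to trivialize the resulting family.

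By construction, the standard model satisfies
\[ \widetilde W_{[\ell]} = \bigcup_{t\in S^a}\Bigl(U - \bigcup_{i\neq\ell} H_i^{e_i}(t)\Bigr)\times\{t\}, \]
so it is the complement in $U\times S^a$ of an $a$-parameter family of framed string links consisting only of the two components indexed by $\{1,2,3\}-\{\ell\}$, with the boundary identification $\partial V_{[\ell]}\times S^a \cong \partial \widetilde W_{[\ell]}$ still provided by the framings. Thus the triviality of $\widetilde V_{[\ell]}\to S^a$ reduces to showing that this two-component sub-family is parametrically trivial through framed long embeddings.

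To establish this null-homotopy, I would trace through the constructions in Lemmas~\ref{lem:T-bundle-I} and \ref{lem:T-bundle-II}: the Borromean element $\beta$ is built by suspension and graphing from the three-dimensional Borromean rings $B(\underline{1},\underline{1},1)_3$, and the forgetful operation that drops the $\ell$-th component commutes with these constructions. The resulting two-component sub-link of $B(\underline{1},\underline{1},1)_3$ is the \emph{unlink} in $D^3$, and is therefore isotopic through framed long embeddings to the standard inclusion; applying the same suspension/graphing to such an isotopy produces an $S^a$-parametrized framed isotopy from our sub-family to the constant family. I would then apply fiberwise isotopy extension, relative to $\partial U\times S^a$, to upgrade this embedded isotopy to an ambient isotopy of $U\times S^a$, which carries $\widetilde W_{[\ell]}$ onto $W_{[\ell]}(t_0)\times S^a$ relatively and yields the required trivialization of $\widetilde V_{[\ell]}$.

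The main obstacle will be the null-homotopy step, namely ensuring \emph{parametrized} and \emph{framed} triviality rather than just unparametrized triviality. This should follow from the explicit presentation of $\beta$ given in Lemma~\ref{lem:T-bundle-II}: $\beta$ is constructed by graphing a $2$-parameter family on $T_0$ that literally exhibits $B(\underline{1},\underline{1},1)_3$, so deleting a strand leaves two manifestly unlinked strands whose Borromean normal framings restrict to the standard normal framings, and the explicit unlinking in $D^3$ can itself be performed as a framed isotopy. A small subtlety, especially in the type~II case, is that one must contract the sub-family in $\pi_1\fEmb_0$ (not merely in $\pi_0$); this is handled by applying the graphing construction to a path of unlinking isotopies in $D^3$, giving the desired contraction in $\pi_a\fEmb_0$ for both $a=0$ and $a=1$.
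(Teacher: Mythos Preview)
Your proposal is correct and rests on the same key observation as the paper: removing any one component from the Borromean family leaves a two-component sub-family that is trivial as a parametrized framed link, hence its complement is a trivial $(V_{[\ell]},\partial)$-bundle.

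The route differs in execution. You descend to the three-dimensional model via Lemmas~\ref{lem:T-bundle-I} and~\ref{lem:T-bundle-II}, trivialize the two-component sub-link of $B(\underline{1},\underline{1},1)_3$ there, and then resuspend/graph the nullhomotopy back up to $\pi_a\fEmb_0$. The paper instead works directly in $U\times S^a$: after deleting $L_\ell^{e_\ell}(t)$, it observes that one of the remaining components (say $L_2^{e_2}(t)$) bounds its standard flat spanning disk in $U$ for every $t$, and the other ($L_3^{e_3}(t)$) bounds a curved spanning disk that avoids $L_2^{e_2}(t)$; shrinking along these disks fiberwise gives the required $S^a$-parametrized framed isotopy to the trivial link in one step (see Figure~\ref{fig:reg_ho}). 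This direct four-dimensional construction sidesteps the suspension bookkeeping and the need to verify that the forgetful map commutes with graphing; it also makes the nullhomotopy explicit enough to be reused in Lemma~\ref{lem:B}, where specific homotopies $A_i$, $A_i'$ between different nullhomotopies are compared. Your approach would work, but extracting those explicit comparison data from a suspended three-dimensional isotopy would be less transparent.
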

\begin{proof}
We only prove the lemma for $\ell=1$ since the proof for the other cases are similar. Replacing $W(t)$ with $W_{[1]}(t)$ corresponds to removing the first component in the string link $L_1^{e_1}(t)\cup L_2^{e_2}(t)\cup L_3^{e_3}(t)$ in $U$. It follows from a property of Borromean rings that the remaining components $L_2^{e_2}(t)\cup L_3^{e_3}(t)$ is fiberwise isotopic to the trivial ones $L_2^{e_2}\cup L_3^{e_e}$ (Figure~\ref{fig:reg_ho}). The fiberwise isotopy can be given by shrinking $L_2^{e_2}(t)$ along its standard spanning disk, and shrinking $L_3^{e_3}(t)$ along a curved spanning disk, which avoids $L_2^{e_2}(t)$. This implies that the bundle $\widetilde{V}_{[1]}\to S^a$ can be deformed into the trivial one. 
\begin{figure}
\includegraphics[height=25mm]{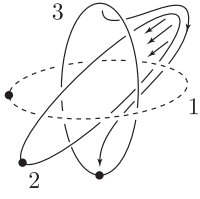}
\caption{Collapsing $L_2^{e_2}(t)\cup L_3^{e_3}(t)$, when the first component is removed.}\label{fig:reg_ho}
\end{figure}
\end{proof}

\begin{Cor}[Theorem~\ref{thm:Z(G)} (1)]
If $\alpha=\alpha_\mathrm{I}$ or $\alpha_\mathrm{II}$, then the bundle $\pi(\alpha):(D^4\times S^a)^{V,\alpha}\to S^a$ obtained from the trivial $D^4$-bundle $D^4\times S^a$ by surgery along $V$ is a trivial $(D^4,\partial)$-bundle. Hence the bundle $\pi^\Gamma:E^\Gamma\to B_\Gamma$ obtained by doing surgery $2k$ times is a $(D^4,\partial)$-bundle, too.
\end{Cor}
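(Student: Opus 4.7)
Plan. The statement has two claims: first, that $\pi(\alpha):(D^4\times S^a)^{V,\alpha}\to S^a$ is a trivial $(D^4,\partial)$-bundle for $\alpha=\alpha_{\mathrm{I}}$ or $\alpha_{\mathrm{II}}$; second, that the $2k$-fold simultaneous surgery that defines $\pi^\Gamma$ is a $(D^4,\partial)$-bundle. My strategy is to prove the first by identifying $(D^4\times S^a)^{V,\alpha}$ with $\widetilde V_{[1,2,3]}$ and invoking Lemma~\ref{lem:V[l]-trivial} (or the standard model directly), and to deduce the second from pairwise disjointness of the $V_i$.

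For the first claim, I would first identify the total space. The complement $D^4-\mathrm{Int}\,V$, viewed as a handlebody attached to $\partial V$, is built from a collar of $\partial V$ by attaching the three handles $H_1,H_2,H_3$ of indices $k_\ell+1$ along spheres parallel to the belt spheres $b_\ell$; these are dual to the non-trivial handles of $V$, and together they cancel them, so $V_{[1,2,3]}=D^4$. By the definition of the surgery,
\[
(D^4\times S^a)^{V,\alpha}=\widetilde V\cup\bigl((D^4-\mathrm{Int}\,V)\times S^a\bigr),
\]
glued along the framing-induced identification $\partial\widetilde V\cong\partial V\times S^a$; this is precisely $\widetilde V_{[1,2,3]}$. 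Lemma~\ref{lem:framing} is what justifies that the surgery framing matches the product framing on $\partial V\times S^a$, so the three handle families are attached trivially in the family.

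The next step is to show $\widetilde V_{[1,2,3]}$ is trivial. Iterated application of Lemma~\ref{lem:V[l]-trivial} gives $\widetilde V_{[1]}\cong V_{[1]}\times S^a$, and attaching $H_2\times S^a$ then $H_3\times S^a$ trivially on both sides yields $\widetilde V_{[1,2,3]}\cong V_{[1,2,3]}\times S^a=D^4\times S^a$. Equivalently and more transparently, in the standard model $\widetilde W\subset U\times S^a$ from the proof of Lemma~\ref{lem:framing}, filling in all three families of Borromean tubular neighborhoods yields $\widetilde W_{[1,2,3]}=\bigcup_{t\in S^a}U\times\{t\}=U\times S^a=D^4\times S^a$ directly, trivial as a $D^4$-bundle.

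For the second claim, the handlebodies $V_1,\ldots,V_{2k}\subset\mathrm{Int}\,D^4$ are pairwise disjoint by the construction of \S\ref{ss:Y-link}, so the parametrized surgeries commute and can be performed one at a time. Fixing all parameters except the $i$-th, the restricted family is the one-surgery bundle $\pi(\alpha_i)$, which by the first claim has $D^4$ as fiber; iterating, every fiber of $\pi^\Gamma$ is diffeomorphic to $D^4$ rel boundary, and the smooth parametrization gives local triviality, so $\pi^\Gamma$ is a $(D^4,\partial)$-bundle. The main obstacle throughout is the first-claim identification $(D^4\times S^a)^{V,\alpha}\cong\widetilde V_{[1,2,3]}$: one must verify that the twisted regluing $\bar\alpha$ of the $V\times S^a$-subbundle is equivalent to attaching the three trivial handle families $H_\ell\times S^a$ to $\widetilde V$, and this compatibility of gluing data is exactly what Lemma~\ref{lem:framing} provides.
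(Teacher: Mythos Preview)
Your approach is correct but takes a slightly different route from the paper. The paper uses a \emph{single} enlargement $V_{[\ell]}$ for one index $\ell$: since $V\subset V_{[\ell]}\subset D^4$ and the additional $(k_\ell+1)$-handle lies in $D^4-\mathrm{Int}\,V$ (hence is trivially parametrized over $S^a$), surgery of $D^4\times S^a$ along $V$ using $\widetilde{V}$ coincides with surgery along $V_{[\ell]}$ using $\widetilde{V}_{[\ell]}$, and Lemma~\ref{lem:V[l]-trivial} immediately gives triviality. You instead enlarge all the way to $V_{[1,2,3]}\cong D^4$, identify the surgered bundle with $\widetilde{V}_{[1,2,3]}$, and trivialize it via the standard model $\widetilde{W}_{[1,2,3]}=U\times S^a$. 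Both arguments are valid; the paper's is more economical (one handle suffices), while yours has the merit of making the global identification of the surgered bundle with a $\widetilde{V}_{[\cdots]}$ explicit and of avoiding the appeal to Lemma~\ref{lem:V[l]-trivial} in the standard-model variant.

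One misattribution: Lemma~\ref{lem:framing} concerns vertical \emph{tangent} framings on $T^v\widetilde{V}$, not the boundary identification $\partial\widetilde{V}\cong\partial V\times S^a$. The latter is built into the definition of $\widetilde{V}$ as a $(V,\partial)$-bundle and comes from the \emph{normal} framing of the Borromean string link (see the sentence just before Lemma~\ref{lem:V[l]-trivial}). You do not need Lemma~\ref{lem:framing} at all for this step; removing the reference does not affect your argument.
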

\begin{proof}
Since the trivial subbundle $V\times S^a$ of $D^4\times S^a$ extends to a trivial subbundle $V_{[\ell]}\times S^a$, surgery of $D^4\times S^a$ along $V$ and surgery along $V_{[\ell]}$ using $\widetilde{V}_{[\ell]}$ produce equivalent results. By Lemma~\ref{lem:V[l]-trivial}, the result is a trivial $D^4$-bundle.
\end{proof}

Let $V_{[\ell,\ell']}\subset\R^4$ $(\ell\neq \ell'$) be the handlebody obtained from $V$ by attaching a $(k_\ell+1)$- and a $(k_{\ell'}+1)$-handle along the cycles $b_{\ell}$ and $b_{\ell'}$, which are parallel to the $\ell$-th and $\ell'$-th handle of positive indices $k_\ell,k_{\ell'}$ respectively. The handlebody $V_{[\ell,\ell',\ell'']}\subset\R^4$ is defined similarly. Also, $\widetilde{V}_{[\ell,\ell']}$, $\widetilde{V}_{[\ell,\ell',\ell'']}$ etc. are defined similarly as $\widetilde{V}_{[\ell]}$.
\begin{Lem}\label{lem:B}
\begin{enumerate}
\item The nullhomotopies of the classifying maps for $\widetilde{V}_{[\ell]}$ and for $\widetilde{V}_{[\ell']}$ of Lemma~\ref{lem:V[l]-trivial} are homotopic through nullhomotopies for $\widetilde{V}_{[\ell,\ell']}$.
\item There is a $\Delta^2$-family of nullhomotopies of the classifying maps for $\widetilde{V}_{[\ell,\ell',\ell'']}$ that extends the homotopies (between nullhomotopies) for $\widetilde{V}_{[\ell,\ell']}$, $\widetilde{V}_{[\ell',\ell'']}$, $\widetilde{V}_{[\ell'',\ell]}$ on the boundary.
\end{enumerate}
\end{Lem}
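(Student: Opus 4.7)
The plan is to interpret each nullhomotopy as a choice of fiberwise trivialization realized by an explicit isotopy of the remaining Borromean components, and then to build the higher coherences (1) and (2) out of canonical interpolations of these isotopies in the capped $4$-manifolds $V_{[\ell,\ell']}$ and $V_{[\ell,\ell',\ell'']}$. Recall from the proof of Lemma~\ref{lem:V[l]-trivial} that the trivialization of $\widetilde{V}_{[\ell]}$ is realized by a specific fiberwise isotopy that first shrinks $L_{\ell'}^{e_{\ell'}}(t)$ along its standard spanning disk and then shrinks $L_{\ell''}^{e_{\ell''}}(t)$ along a curved spanning disk avoiding $L_{\ell'}^{e_{\ell'}}$. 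Let $H_{[\ell]}\colon D^{a+1}\to B\Diff(V_{[\ell]},\partial)$ denote the corresponding nullhomotopy of the classifying map $c_{[\ell]}\colon S^a\to B\Diff(V_{[\ell]},\partial)$. The map $\Phi_{\ell'}\colon B\Diff(V_{[\ell]},\partial)\to B\Diff(V_{[\ell,\ell']},\partial)$ induced by attaching a further $(k_{\ell'}+1)$-handle along $b_{\ell'}$ satisfies $\Phi_{\ell'}\circ c_{[\ell]}=c_{[\ell,\ell']}$, so $\Phi_{\ell'}\circ H_{[\ell]}$ and $\Phi_{\ell}\circ H_{[\ell']}$ are two nullhomotopies of the same classifying map $c_{[\ell,\ell']}$ in $B\Diff(V_{[\ell,\ell']},\partial)$, which we wish to connect.

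For (1), after capping both the $\ell$-th and $\ell'$-th components only $L_{\ell''}^{e_{\ell''}}(t)$ remains as a nontrivial family, and each pushforward nullhomotopy is determined by an $S^a$-family of spanning disks for this single component in $V_{[\ell,\ell']}$. The two families share a common boundary coming from the standard boundary behaviour of the Borromean rings, and the plan is to interpolate between them by a $1$-parameter family of spanning-disk families in $V_{[\ell,\ell']}$. Such an interpolation exists provided the space of proper spanning disks in $V_{[\ell,\ell']}\simeq D^4-\nu(L_{\ell''}^{e_{\ell''}})$ with fixed boundary is path-connected in the relevant degree, and the resulting interpolation is exactly a homotopy through nullhomotopies of $\widetilde{V}_{[\ell,\ell']}$.

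For (2), apply the same idea in $V_{[\ell,\ell',\ell'']}$, where all three Borromean components are now capped. Pushing forward the three nullhomotopies $H_{[\ell]},H_{[\ell']},H_{[\ell'']}$ and the three edge-homotopies supplied by (1) gives a map $\partial\Delta^2\to\{\text{nullhomotopies of }c_{[\ell,\ell',\ell'']}\}$, and one seeks a $\Delta^2$-filling. The filling amounts to a $2$-parameter family of spanning-disk interpolations in the fully capped $V_{[\ell,\ell',\ell'']}$, which exists by the higher connectivity of the corresponding space of spanning disks. The main obstacle will be verifying the requisite $(a+1)$- and $(a+2)$-connectedness of these spanning-disk spaces, which are embedding-space statements in dimension $4$; these should reduce to general-position and Whitney-trick arguments in the capped manifolds, since filling in the linked components removes exactly the obstructions to straightening the remaining Borromean component, but the analysis must be performed carefully with the family parameter $t\in S^a$ fixed throughout.
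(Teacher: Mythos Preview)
Your overall strategy—viewing the nullhomotopies as fiberwise compressions of the remaining Borromean components and building the coherences by interpolating between such compressions—is the right picture, and it is essentially what the paper does as well. But you introduce a genuine gap by passing to abstract connectivity statements about spanning-disk spaces in $4$-manifolds and then hoping to verify them by ``general-position and Whitney-trick arguments''. Two problems with this. First, the Whitney trick is exactly what fails in dimension~$4$, so this is not a safe fallback. Second, and more importantly, what you actually need is not any general connectivity of an embedding space: you need a homotopy between two \emph{specific} compressions, and these two compressions differ in a completely explicit way dictated by the Borromean construction. The paper exploits this directly.

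Concretely, the paper names the elementary compressions: $A_i$ is the straight compression of $L_i^{e_i}(t)$ along its standard spanning disk, and $A_i'$ is the curved compression that avoids $L_{i-1}^{e_{i-1}}(t)$. The pushforward of $H_{[1]}$ to $\widetilde V_{[1,2]}$ is the composite $A_3'\circ A_2$, and that of $H_{[2]}$ is $A_3\circ A_1'$. (Note in particular that your claim ``each pushforward nullhomotopy is determined by an $S^a$-family of spanning disks for this single component'' is not quite right: there is an extra $A_2$ or $A_1'$ factor coming from compressing the now-capped component, which must first be homotoped away.) The required homotopy is then built in elementary steps: shrink the $A_2$-part to the identity (it acts on a component that is already standardized once capped), then homotope $A_3'$ to $A_3$. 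This last step is trivially possible precisely because the only reason $A_3'$ was curved was to avoid $L_2^{e_2}(t)$, and in $V_{[1,2]}$ that component has been filled in. No abstract connectivity statement is invoked; one simply straightens the specific curved disk. For part~(2) the paper organizes the six elementary compressions $A_i,A_i'$ around a hexagon and fills it by the same kind of explicit moves. Your observation that ``filling in the linked components removes exactly the obstructions'' is correct and is in fact the whole proof—you should use it directly rather than routing through an unproven $(a{+}1)$- or $(a{+}2)$-connectedness of an embedding space.
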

\begin{proof}
  (1) We consider nullhomotopies of the classifying maps for $\widetilde{V}_{[1,2]}$. Let $A_i$ be a fiberwise isotopy deformation of $\widetilde{V}_{[1,2]}$ induced by the standard compressing regular homotopy of $L_i^{e_i}(t)$ given in the proof of Lemma~\ref{lem:V[l]-trivial}. This is a path in the space of bundle isomorphisms of $\widetilde{V}_{[1,2]}$ starting from $\mathrm{id}$. Let $A_i'$ be a fiberwise isotopy deformation of $\widetilde{V}_{[1,2]}$ induced by the curved regular homotopy of $L_i^{e_i}(t)$ that avoids $L_{i-1}^{e_{i-1}}(t)$ (considering $L_{-1}=L_3$ and $e_{-1}=e_3$), as in Figure~\ref{fig:reg_ho}. Note that $A_i$ and $A_i'$ are defined only if all the strings that intersects the regular homotopy of the $i$-th component are deleted. Then there are fiberwise isotopies from $A_i'$ to $A_i$. Now the nullhomotopy for $\widetilde{V}_{[1]}$ is induced by the composite path $A_3'\circ A_2$, and that for $\widetilde{V}_{[2]}$ is induced by $A_3\circ A_1'$. A homotopy from $A_3'\circ A_2$ to $A_3\circ A_1'$ in $\widetilde{V}_{[1,2]}$ is constructed in 2 steps: $A_3'\circ A_2\to A_3'\to A_3\circ A_1'$. In the first step, shrink the $A_2$ deformation of $L_2^{e_2}(t)$ to that of the trivial one $L_2^{e_2}$. Note that the $A_2$ deformation on $L_2^{e_2}$ is trivial. In the second step, homotope $A_3'$ to $A_3$, which is possible. The deformation $\mathrm{id}\to A_1'$ is similar to $A_2\to \mathrm{id}$.

(2) Consider a hexagon $P_1P_2P_3P_4P_5P_6$ around the origin $O$ (Figure~\ref{fig:hexagon}). We put the nullhomotopies $A_2\circ A_3'$, $A_3'$, $A_3\circ A_1'$, $A_1'$, $A_1\circ A_2'$, $A_2'$ on the vertices $P_1,P_2,P_3,P_4,P_5,P_6$ respectively. As in the proof of (1), one can find homotopies for edges and extend them to a family of nullhomotopies parametrized by the hexagon. The homotopy on the path $P_1P_2P_3$ is for $\widetilde{V}_{[1,2]}$, $P_3P_4P_5$ is for $\widetilde{V}_{[2,3]}$, $P_5P_6P_1$ is for $\widetilde{V}_{[3,1]}$. This completes the proof.
\end{proof}
\begin{figure}
 \includegraphics[height=50mm]{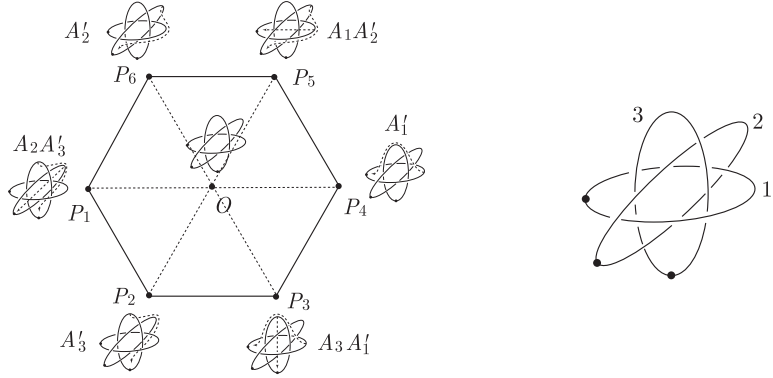} 
\caption{A 2-parameter family of nullhomotopies of the classifying maps for $\widetilde{V}_{[1,2,3]}$.}\label{fig:hexagon}
\end{figure}

Let $B_\Gamma[i]$ be the subspace of $B_\Gamma=K_1\times \cdots\times K_{2k}$ obtained from $B_\Gamma$ by replacing $K_i$ with base point $\{*\}\subset K_i$. The following lemma is an analogue of the fact that surgery along a graph clasper with unlinked leaf is trivial up to isotopy (\cite{Hab}). 

\begin{Lem}\label{lem:null-leaf}
Let $\tilde{f}(\pi^\Gamma):B_\Gamma\to B\Diff(D^4,\partial)$ be the classifying map for $\pi^\Gamma:E^\Gamma\to B_\Gamma$. 
For $0\leq i\leq 2k$, the restriction of $\tilde{f}(\pi^\Gamma)$ to $B_\Gamma[i]$ is nullhomotopic. 
\end{Lem}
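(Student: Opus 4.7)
The plan is to argue inductively, starting from the vertex $V_i$ and propagating trivializations outward using the connectedness of $\Gamma$. First I would observe that since the handlebodies $V_1, \ldots, V_{2k}$ are mutually disjoint in $D^4$, the surgeries commute, and $\tilde{f}(\pi^\Gamma)$ corresponds to a product of commuting families $\phi_j: K_j \to \Diff(D^4, \partial)$ supported in $V_j$. Restricting to $B_\Gamma[i]$ amounts to setting $\phi_i$ to the constant family at the identity (since $\alpha_i$ sends the basepoint to $\mathrm{id}$), so the restricted classifying map is represented by $\prod_{j \neq i} \phi_j$, and the task reduces to showing this product is null-homotopic as a map to $B\Diff(D^4,\partial)$.

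The heart of the proof is a local trivialization for neighbors of $V_i$ via Lemma~\ref{lem:V[l]-trivial}. Fix $V_j$ adjacent to $V_i$ in $\Gamma$ and let $b_\ell \subset \partial V_j$ be the leaf corresponding to the edge $(i, j)$, which is Hopf-linked with the matching leaf of $V_i$. Because $V_i$ is intact across the entire restricted bundle, one may choose a disk $h \subset D^4$ with $\partial h = b_\ell$ that meets $V_i$ transversally in a single cocore arc and is otherwise disjoint from $\bigcup_{m \neq i, j} V_m$. Attaching $h$ to $V_j$ yields a handlebody diffeomorphic to $V_j[\ell]$ in the sense of Lemma~\ref{lem:V[l]-trivial}. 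The family $\phi_j$ extends by the identity across $h$ to a family of diffeomorphisms of $V_j \cup h$, and Lemma~\ref{lem:V[l]-trivial} asserts that the associated classifying map $K_j \to B\Diff(V_j[\ell], \partial)$ is null-homotopic. Composing with the inclusion $B\Diff(V_j[\ell], \partial) \to B\Diff(D^4, \partial)$ kills $\phi_j$ up to homotopy; moreover the homotopy is supported in a neighborhood of $V_j \cup h$ which is disjoint from $V_m$ for $m \neq i, j$, so it can be carried out inside $\prod_{j' \neq i} \phi_{j'}$ without disturbing the other factors.

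Iterating finishes the proof: once $\phi_j$ is trivialized for every neighbor of $V_i$, the corresponding $V_j$ become static, and the same argument applies to any $V_{j'}$ adjacent to such a trivialized $V_j$, using a disk through $V_j$ to cap off the leaf of $V_{j'}$ at the edge $(j, j')$. By connectedness of $\Gamma$, finitely many rounds suffice to trivialize every $\phi_m$ with $m \neq i$, giving the desired null-homotopy. The main obstacle is in the second step, where one must explicitly construct the bounding disk $h$ passing through $V_i$ with the required disjointness from all other handlebodies and verify that the trivializing isotopy supplied by Lemma~\ref{lem:V[l]-trivial} can be globalized inside $D^4$ while respecting the framing conditions along $\partial V_j$ that enter the definition of $\alpha_\mathrm{I}$ and $\alpha_\mathrm{II}$ (cf.\ Lemma~\ref{lem:framing}).
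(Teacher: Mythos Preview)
Your approach is essentially the same as the paper's: remove $V_i$ from the surgery data, then use Lemma~\ref{lem:V[l]-trivial} to trivialize each neighboring $V_j$ via the now-unlinked leaf, and propagate outward by connectedness (the paper phrases this as working along a spanning tree of $\Gamma$). The framing concern you raise at the end is a red herring---Lemma~\ref{lem:framing} concerns vertical framings for the correction term and plays no role here; the only thing to check is that the bounding disk for $b_\ell$ avoids the remaining $V_m$ ($m\neq i,j$), which is immediate from the Hopf-link construction in \S\ref{ss:Y-link}.
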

\begin{proof}
The restriction of $\pi^\Gamma$ on $B_\Gamma[i]$ is obtained by surgery along the tuple $\vec{V}_G[i]$ of $2k-1$ handlebodies that is obtained from $\vec{V}_G=(V_1,\ldots,V_{2k})$ by removing $V_i$. Since $V_j$ ($j\neq i$) with a handle $h_\ell^{(j)}$ that links with a handle of $V_i$ is included in $\vec{V}_G[i]$, and the handle $h_\ell^{(j)}$ of $V_j$ does not link with any handles of other handlebodies in $\vec{V}_G[i]$, $V_j$ can be extended to $V_{j[\ell]}$ by attaching a disk that is disjoint from other handlebodies in $\vec{V}_G[i]$. Thus surgery along $V_{j[\ell]}$ and that along $V_j$ produces equivalent $(D^4,\partial)$-bundles. Moreover, since by Lemma~\ref{lem:V[l]-trivial}, surgery along $V_{j[\ell]}$ can be deformed in $V_{j[\ell]}$ to the trivial surgery, it is equivalent to surgery along the tuple $\vec{V}_G[i,j]$, which is obtained from $\vec{V}_G[i]$ by removing $V_j$. By repeating similar arguments, the bundle $(\pi^\Gamma)^*E^\Gamma\to B_\Gamma[i]$ can be deformed to the trivial $(D^4,\partial)$-bundle. Namely, the restriction of $\tilde{f}(\pi^\Gamma)$ to $B_\Gamma[i]$ is nullhomotopic. The nullhomotopy on $B_\Gamma[i]$ is constructed along a spanning tree $L$ of $\Gamma$, by gradually extending a neighborhood of the $i$-th vertex in $L$.
\end{proof}

\noindent{\it Proof of Proposition~\ref{prop:primitive}}.\ \ 
Put $B_\Gamma^*=\bigcup_{i=1}^{2k}B_\Gamma[i]$. By Lemma~\ref{lem:null-leaf}, the classifying map $\psi_\Gamma:(B_\Gamma,b_0)\to (B\Diff(D^4,\partial),*)$ for $\pi^\Gamma:E^\Gamma\to B_\Gamma$ is homotopic to a map $(B_\Gamma,B_\Gamma[i])\to (B\Diff(D^4,\partial),*)$. To prove that $\tilde{f}(\pi^\Gamma)$ is bordant to a map from $S^{k}=K_1\wedge\cdots\wedge K_{2k}= B_\Gamma/B_\Gamma^*$, we need to check that the nullhomotopies on $B_\Gamma[i]$ and $B_\Gamma[i']$ for $i\neq i'$ induce homotopically consistent nullhomotopies on $B_\Gamma[i]\cap B_\Gamma[i']=B_\Gamma[i][i']$ (i.e., the paths associated to the nullhomotopies are homotopic rel. ends). Fix a spanning tree $L$ of $\Gamma$ and assume that the nullhomotopies on $B_\Gamma[i]$ for every $i$ are constructed inductively along $L$. The two vertices $i$ and $i'$ of $\Gamma$ are connected by a unique path $\gamma_{i,i'}$ in $L$. The essential difference between the induced nullhomotopies on $B_\Gamma[i][i']$ from $B_\Gamma[i]$ and $B_\Gamma[i']$ arises from different orders of collapses at vertices on $\gamma_{i,i'}$: $i\to\cdots\to i'$ and $i'\to \cdots\to i$. By using Lemma~\ref{lem:B} (1), the order of collapses along $\gamma_{i,i'}$ can be reversed through homotopies. This proves the homotopical consistency at $B_\Gamma[i][i']$. 
\begin{wrapfigure}[7]{r}[0pt]{2.5cm}
\includegraphics[height=20mm]{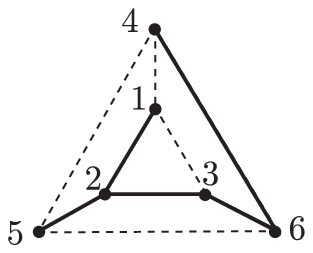}
\end{wrapfigure}
For example, we consider a nullhomotopy on $B_\Gamma[1][6]$ for the spanning tree $L$ in the figure on the right.
Suppose that a nullhomotopy induced from that of $B_\Gamma[1]$ is given by a sequence $(1)\to 2\to 3\to (6)\to 4\to 5$ of collapses, where $(1)$ and $(6)$ correspond to constant homotopies at $V_1$ and $V_6$ respectively. By a continuous parameter change, this is homotopic to $(6)\to (1)\to 2\to 3\to 4\to 5$. The collapse at the vertex 3 is applied after 2, which can be homotoped to the one applied after $(6)$, by Lemma~\ref{lem:B} (1) for the vertex 3. After the homotopy, we obtain $(6)\to 3\to (1)\to 2\to 4\to 5$, followed by $(6)\to 3\to 2\to (1)\to 4\to 5$ by Lemma~\ref{lem:B} (1) again. Now the path $(1)\to 2\to 3\to (6)$ has been reversed and the resulting path gives a nullhomotopy induced from that of $B_\Gamma[6]$. 

The homotopical consistency at deeper intersections, such as $B_\Gamma[i][i'][i'']$ etc., can be checked by using Lemma~\ref{lem:B} (3) at the trivalent vertex of $L$ connecting three input vertices. 

Therefore, one may extend the classifying map $\psi_\Gamma$ to one from $B_\Gamma\cup_{B_\Gamma^*} C$ for a CW complex $C$ that is homotopy equivalent to the cone over $B_\Gamma^*$ with a base point $c_0$ corresponding to the cone point that is mapped to the base point of $B\Diff(D^4,\partial)$. The complex $B_\Gamma\cup_{B_\Gamma^*} C$ can be considered as a deformation retract of its small neighborhood $U$ in $\R^N$ for some $N$. Since $B_\Gamma\cup_{B_\Gamma^*} C$ is homotopic to a cone over $B_\Gamma^*$ embedded in $U$, and $U$ also deformation retracts onto the cone, there is a homotopy of $\psi_\Gamma$ that takes $B_\Gamma^*$ to the base point. The result is bordant to a map from $S^k=B_\Gamma/B_\Gamma^*$.
\qed

\mysection{Computation of the invariant}{s:computation}

In this section, we complete the proof of Theorem~\ref{thm:Z(G)} (2). The main idea is to prove that after modifying the $v$-gradient suitably we need only to count Z-graphs whose trivalent vertices are caught by the handlebodies $V_1\cup\cdots\cup V_{2k}$ for the surgery (Key Lemma~\ref{lem:occupied}) and that the sum of counts of such Z-graphs are given by homological intersections among relative cycles in the handlebodies (Lemma~\ref{lem:homological-formula}). 

\subsection{Fiberwise Morse functions compatible with Y-links}\label{ss:fiberwise-MF}

Let $G=G_1\cup G_2\cup\cdots\cup G_{2k}\subset D^4$ be the Y-link given in \S\ref{ss:Y-link} and $V_1,V_2,\ldots,V_{2k}$ be the handlebodies associated to $G$. We take a Morse function $\mu_i:V_i\to (-\infty,0]$ for each $i$ satisfying the following conditions.
\begin{itemize}
\item $\mu_i^{-1}(0)=\partial V_i$.
\item $\mu_i$ has four critical points. If $G_i$ is a Y-graph of type I, then the indices of critical points of $\mu_i$ are $0,1,1,2$. If $G_i$ is a Y-graph of type II, then the indices of critical points of $\mu_i$ are $0,1,2,2$.
\item $\mu_i$ gives a standard handle decomposition of $V_i$ via some gradient-like vector field.
\end{itemize}
We extend the Morse function $\mu_1\cup\cdots\cup\mu_{2k}$ on $V_1\cup \cdots \cup V_{2k}$ to a Morse function $m:\R^4\to \R$ so that it satisfies Assumption~\ref{hyp:eta} (4) (Figure~\ref{fig:h-adapted}). Moreover, we assume that $m$ satisfies $m^{-1}(-\infty,0]=V_1\cup\cdots\cup V_{2k}\cup \R^4_-$ ($\R^4_-$ is a half-plane). Let $m^{(1)},\ldots,m^{(3k)}:\R^4\to \R$ be a sequence of Morse functions that satisfy Assumption~\ref{hyp:eta} (4) that are obtained from $m$ by compact support small perturbations and small $SO_4$-rotations outside compact sets. We assume $(m^{(j)})^{-1}(0)=\partial V_1\cup\cdots\cup \partial V_{2k}\cup g_j\cdot\partial \R^4_-$ and $(m^{(j)})^{-1}(-\infty,0]=V_1\cup\cdots\cup V_{2k}\cup g_j\cdot\R^4_-$ ($g_j\in SO_4$) for each $j$. 

\begin{figure}
\includegraphics[height=30mm]{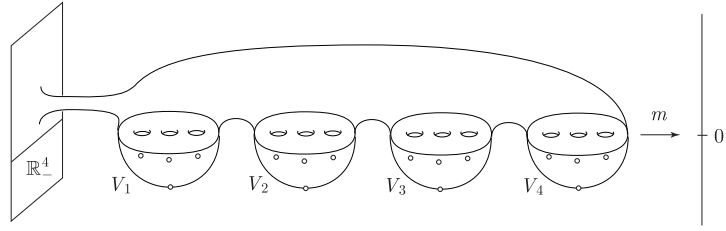}
\caption{Morse function on $\R^4$ adapted to a $Y$-link}\label{fig:h-adapted}
\end{figure}

Next, we extend $m$ to a fiberwise Morse function on a $(\R^4,U_\infty')$-bundle. Let $f$ be the function on the product $\R^4\times B_\Gamma$ defined by $f=m\circ\mathrm{pr}_1$, where $\mathrm{pr}_1:\R^4\times B_\Gamma\to \R^4$ is the projection. Suppose that $\R^4$ is equipped with a Riemannian metric that is standard outside a compact set and whose gradient vector field for $m$ is gradient-like for $m$, and that $\R^4\times B_\Gamma$ is equipped with a product Riemannian metric. Let $\xi$ be the $v$-gradient of $f$ that is obtained from the gradient of $m$. Since the surgery along $G$ can be performed along the 0-level surface locus of $f$, it naturally induces a fiberwise Morse function $f^{G}:E^\Gamma\to \R$. Similarly, for $f^{(j)}=m^{(j)}\circ\mathrm{pr}_1:\R^4\times B_\Gamma\to \R$, fiberwise Morse functions $f^{(j)G}:E^\Gamma\to \R$ are induced. Let $\xi^{(1)},\ldots,\xi^{(3k)}$ be the fiberwise gradient vector fields for $f^{(1)},\ldots,f^{(3k)}$, and let $\xi^{(1)G},\ldots,\xi^{(3k)G}$ be the fiberwise gradient vector fields for $f^{(1)G}\,\ldots,f^{(3k)G}$. Here, $G$ may be replaced by its subset $G'=\bigcup_{i\in I}G_i$, and $f^{(j)G'}$, $\xi^{(j)G'}$ etc. are defined similarly.

Let $B^1_\Gamma,B^2_\Gamma,\ldots,B^{2^k}_\Gamma$ be the path-components of $B_\Gamma$. Each component is a copy of the $k$-torus $S^{1}\times\cdots\times S^{1}$. We take the Morse functions $\lambda:S^{1}\to \R$, $\lambda(e^{i\theta})=\mathrm{Re}\,{e^{i\theta}}=\cos{\theta}$ and $\lambda':S^{1}\times\cdots\times S^{1}\to \R$, $\lambda'(x_1,\ldots,x_k)=\lambda(x_1)+\cdots+\lambda(x_k)$. Let $\eta$ be a Morse--Smale gradient-like vector field for $\lambda'$. This gives an $h$-gradient for $\pi^\Gamma$. We take the maximal point of $\lambda'$ as a base point $b_0$ of $S^{1}\times\cdots\times S^{1}$, and let $b^n_0$ be the corresponding base point of $B^n_\Gamma$.

\subsection{Coherent $v$-gradients}\footnote{The content of this subsection was inspired by Pajitnov's $\mathfrak{C}$-approximation theorem \cite[Ch.~8]{Pa}.}

We would like to count Z-graphs for the $v$-gradients $\xi^{(1)G},\ldots,\xi^{(3k)G}$ and the $h$-gradient $\eta$. 
To simplify the computation, we take convenient $v$-gradients. Let $p_1^{(i)},p_2^{(i)},p_3^{(i)}$ be the three critical points of the Morse function $\mu_i:V_i\to (-\infty,0]$ corresponding to $p_1^{(i)},p_2^{(i)},p_3^{(i)}$ of $\mu_i$. Let $\widetilde{V}_i$ be the subset of $(D^4\times K_i)^{V_i,\alpha_i}$ given by the component of $(f^{G_i})^{-1}(-\infty,0]$ corresponding to $V_i$. We also denote by $p_1^{(i)},p_2^{(i)},p_3^{(i)}$ the critical loci of the induced fiberwise Morse function $\widetilde{\mu}_i:\widetilde{V}_i\to (-\infty,0]$ of index 1 or $2$. Let $B_\ell^{(i)}$ ($\ell=1,2,3$) be the intersection of $\calA_{p_\ell^{(i)}}(\xi)$ with $V_i\times K_i$ and let $A_\ell^{(i)}$ ($\ell=1,2,3$) be the intersection of $\calA_{p_\ell^{(i)}}(\xi^{G_i})$ with $\widetilde{V_i}$. 

The restrictions $A_\ell^{(i)}\cap (\partial V_i\times K_i)$ $(\ell=1,2,3)$ and $B_\ell^{(i)}\cap (\partial V_i\times K_i)$ $(\ell=1,2,3)$ both forms a disjoint triple of simple cycles in $\partial V_i\times K_i$. Note that $A_\ell^{(i)}\cap (\partial V_i\times K_i)$ and $B_\ell^{(i)}\cap (\partial V_i\times K_i)$ may intersect and by Proposition~\ref{prop:wh-prod}, they are bordant in $\partial V_i\times K_i$. Let $C_\ell^{(i)}$ be an oriented bordism between $A_\ell^{(i)}\cap (\partial V_i\times K_i)$ and $B_\ell^{(i)}\cap (\partial V_i\times K_i)$ in $\partial V_i\times K_i$ whose image lies in $(\partial V_i-\{*\})\times K_i$ for some point $*$.

\begin{Lem}\label{lem:ribbon-graph-I}
Suppose that $G_i$ is a Y-graph of type I, in which case $K_i=S^0$. The gradient-like vector fields for $\widetilde{\mu}_i$ and the bordism $C_\ell^{(i)}$ can be chosen so that they satisfy the following.
\begin{enumerate}
\item There exists an embedded handlebody $T_i\subset \partial V_i\times K_i$ with a handle filtration $T_i^{(0)}\subset T_i^{(1)}\subset T_i^{(2)}=T_i$ with at most 2-handles and $C_\ell^{(i)}$ is a map into $T_i$ for $\ell=1,2,3$.
\item The bordism $C_\ell^{(i)}$ is ``parallel'' to $T_i$, where we say that a map $\beta$ from a manifold $S$ into a handlebody $T$ is {\it parallel} to $T$ if the restriction of $\beta$ to the preimage of each $r$-handle $\sigma$ of $T$ with $r> 0$ is a (not necessarily disjoint) union of bundle maps $D^r\times F\to \sigma=D^r\times D^{3-r}$ over $D^r$ for some compact manifolds $F$.
\item The 0-, 1- and $2$-handles of $T_i$ can be taken as arbitrarily small tubular neighborhoods of their cores. Namely, for every $\ve>0$, we may arrange that the diameters of 0-handles are less than $\ve$ and that the thickness of 1- or $2$-handles are less than $\ve/2$. 
\end{enumerate}
We say that such a $v$-gradient $\xi^{G_i}$ is \emph{coherent}. 
\end{Lem}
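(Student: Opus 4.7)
The plan is to make the geometry of the ascending manifolds as close as possible to a product structure induced by the mapping cylinder description of $\widetilde V_i$ provided by Lemma~\ref{lem:T-bundle-I}. Writing $V_i=T\times I$ with $T$ the $3$-dimensional handlebody fixed in \S\ref{ss:p-borr-I}, I first choose a Morse function $h:T\to (-\infty,0]$ with critical points $p_1^{(i)},p_2^{(i)},p_3^{(i)}$ of indices $1,1,2$ plus a single minimum, together with a Morse--Smale gradient-like vector field $\zeta$ for $h$. I then take $\xi^{G_i}$ on $\widetilde V_i=C(\varphi_0)$ so that on each $T$-slice of the cylinder it restricts to $\zeta$. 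Under this choice the ascending manifold $A_\ell^{(i)}$ is essentially the trace of $\calA_{p_\ell}(\zeta)\subset T$ through the mapping cylinder, and its boundary cycle on $\partial V_i\times \{+1\}$ coincides with $\alpha_{\mathrm I}(1)_* a_\ell$ in the notation of Proposition~\ref{prop:wh-prod}. Over $-1\in S^0$ the bundle is trivial, so $A_\ell^{(i)}$ agrees with $B_\ell^{(i)}$ there and the non-trivial part of $C_\ell^{(i)}$ lies entirely in $\partial V_i\times\{+1\}$.

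By Proposition~\ref{prop:wh-prod}(1) the discrepancy between the two cycles is homotopic to the Whitehead product $\pm[b_m,b_n]$ with $\{\ell,m,n\}=\{1,2,3\}$, and the proof of that proposition exhibits an explicit null-homotopy via a disk $\lambda$ with two faces that are relatively homotopic (Lemma~\ref{lem:lambda_P_Q}). Restricting this null-homotopy to $\partial V_i$ produces a concrete geometric bordism between $a_\ell$ and $\alpha_{\mathrm I}(1)_* a_\ell$, supported in an arbitrarily small neighborhood of the $1$-complex consisting of representatives of $a_\ell,b_m,b_n$ and the basepoint arcs used to form the Whitehead product. I take this bordism as $C_\ell^{(i)}$.

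For the handlebody $T_i\subset \partial V_i\times\{+1\}$ I take a small regular neighborhood inside the $3$-manifold $\partial V_i$ of the $2$-complex that is the union of the three spines produced above. Its natural handle decomposition has a single $0$-handle at the common Whitehead-product basepoint, $1$-handles running along small arcs that carry the $1$-dimensional cycles $b_1, b_2$, and $2$-handles whose cores cap off the Whitehead-product spheres $[b_m,b_n]$. With respect to this handle filtration, each $C_\ell^{(i)}$ decomposes over each $r$-handle $D^r\times D^{3-r}$ of $T_i$ as a disjoint union of bundle maps $D^r\times F\to D^r\times D^{3-r}$, where $F$ is the appropriate link-fiber of the Whitehead-product cap, as demanded by (2). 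The smallness condition (3) is then automatic: every handle core can be shrunk by an ambient isotopy of $\partial V_i$ supported away from the other cores.

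The main difficulty I anticipate is the simultaneity required by (1)--(2): a single $T_i$ must be parallel to all three $C_\ell^{(i)}$ at once, yet the three Whitehead-product caps share both the basepoint $0$-handle and several of the $1$-handle cores. Verifying that the restrictions of different $C_\ell^{(i)}$ to such a shared handle are all \emph{simultaneously} of the product form $D^r\times F\to D^r\times D^{3-r}$ requires choosing the spanning disks $\lambda$ in Lemma~\ref{lem:lambda_P_Q}, for the three values of $\ell$, so that their normal directions at each shared core are mutually transverse. This should be possible because $\partial V_i$ is $3$-dimensional while the handle cores have dimension at most $2$, leaving enough normal room for the three spanning disks; making this positioning precise is where the real work of the lemma will lie.
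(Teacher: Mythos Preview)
Your approach is genuinely different from the paper's, and the difficulty you single out at the end is exactly what the paper's method is designed to avoid. Rather than building $T_i$ by hand from the Whitehead-product spines and then worrying about whether the three bordisms can be made simultaneously parallel to the shared handles, the paper fixes an auxiliary \emph{minimal} Morse function $\nu:\partial V_i\to\R$ (with critical points chosen disjoint from all $\partial\delta_\ell^{(i)}$) and lets $T_i$ be the handle neighborhood of its $2$-skeleton. One then takes \emph{any} initial bordisms $\delta_\ell^{(i)}$ between $A_\ell^{(i)}\cap\partial V_i$ and $B_\ell^{(i)}\cap\partial V_i$, and replaces each by $\Phi_{-\nu}(T,\cdot)\circ\delta_\ell^{(i)}$ for $T\gg 0$. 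The long-time gradient flow squeezes every bordism onto the $2$-skeleton, yielding (1) and (3) immediately; the parallel property (2) follows because one can first arrange the $\delta_\ell^{(i)}$ to be transversal to the ascending manifolds of $\nu$, so over each $r$-handle the flowed bordism is a union of bundle maps $D^r\times F\to D^r\times D^{3-r}$. Finally, the isotopy realizing $\Phi_{-\nu}(T,\cdot)$ is absorbed into a modification of the gradient-like vector field for $\mu_i$. The point is that $T_i$ is determined once by $\nu$, independently of $\ell$, so the simultaneity you were concerned about never arises.

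Your explicit construction may well be made to work, but it buys you little: you still need the three Whitehead caps to sit compatibly over common cores, and the dimension-count argument you sketch is not obviously sufficient (the caps are $2$- and $3$-dimensional chains in a $3$-manifold, so generic position alone does not separate them). The paper's gradient-flow trick is both shorter and more robust, and it is the same mechanism reused in Lemma~\ref{lem:ribbon-graph-II} for type~II.
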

\begin{proof}
Since the surgery on $G_i$ is trivial on $-1\in K_i$ (Definition~\ref{def:alpha-I}), the cycles $A_\ell^{(i)}$ and $B_\ell^{(i)}$ agree on $-1$. Thus $A_\ell^{(i)}\cap (\partial V_i\times\{-1\})$ and $B_\ell^{(i)}\cap (\partial V_i\times\{-1\})$ are connected by a trivial bordism, that is, the one of the form $x+(-x)$, and it satisfies the conditions of the lemma trivially.

On $1\in K_i$, they may differ by surgery. Let $\delta_\ell^{(i)}$ be an oriented bordism in $\partial V_i\times\{1\}$ such that $\partial \delta_\ell^{(i)}=B_\ell^{(i)}\cap (\partial V_i\times\{1\})-A_\ell^{(i)}\cap (\partial V_i\times\{1\})$. Then $\delta_\ell^{(i)}$ is given by a smooth map from a manifold of dimension $4-|p_\ell^{(i)}|=2$ or $3$. We take a minimal Morse function $\nu:\partial V_i\to \R$ whose critical points of $\nu$ are disjoint from $\partial\delta_\ell^{(i)}$ for all $\ell$. Then the critical points of $\nu$ form a basis of the Morse homology of $\nu$. 
Now we define a filtration $T_i^{(0)}\subset T_i^{(1)}\subset T_i^{(2)}=T_i$ by the handle filtration for the Morse--Smale gradient-like vector field of $\nu$ up to index $2$. 
Then we modify $\delta_\ell^{(i)}$ by applying the negative gradient flow $\Phi_{-\nu}:\R\times \partial V_i\to \partial V_i$ for $\nu$. For a sufficiently large $T$, the bordism $\Phi_{-\nu}(T,\cdot)\circ \delta_\ell^{(i)}$ concentrates on a small neighborhood of the $2$-skeleton of the cellular decomposition of $\partial V_i$ with respect to the gradient-like vector field for $\nu$ (Figure~\ref{fig:converge-ribbon}) and in particular, the image of the bordism is included in $T_i$. The conditions (1) and (3) are fulfilled. Moreover, we may assume that the intersections of the bordisms $\delta_\ell^{(i)}$ and the horizontal ascending manifolds of critical points of $\nu$ of positive indices are all transversal. Hence we may assume that the restriction of $\Phi_{-\nu}(T,\cdot)\circ \delta_\ell^{(i)}$ for large $T$ to the preimages of 1- or 2-handles of $T_i$ is a sum of morphisms $D^1\times F\to  D^1\times D^{2}$ or $D^{2}\times F\to D^{2}\times D^1$ of bundles over the cores of handles, where $F$ is some compact manifold with boundary. The condition (2) is fulfilled. All the modifications above can be realized by isotopies of $\partial V_i$ and the isotopies can be realized by modifying the gradient-like vector field for $\mu_i$. 
\begin{figure}
\includegraphics[height=25mm]{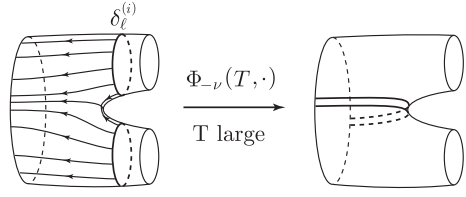}
\caption{Immersed handlebody is obtained after a long time gradient flow}\label{fig:converge-ribbon}
\end{figure}
\end{proof}

Next, we consider the case $K_i=S^{1}$ and $V_i$ is of type II. Let $T_i^{(0)}\subset T_i^{(1)}\subset T_i^{(2)}=T_i$ be a handle filtration of $\partial V_i$ obtained from a minimal Morse function $\nu:\partial V_i\to \R$, as in the proof of Lemma~\ref{lem:ribbon-graph-I}. We also take a handle filtration $U^{(0)}\subset U^{(1)}=S^{1}$ of the base space $S^1$ such that $U^{(0)}$ is a small disk around a point, and $U^{(0)}$ is disjoint from the two critical points of $\lambda:S^1\to \R$. Then 
\[ \widetilde{T}_i^{(m)}=\bigcup_{p+q\leq m}T_i^{(p)}\times U^{(q)}, \]
gives a handle filtration $\widetilde{T}_i^{(0)}\subset \widetilde{T}_i^{(1)}\subset \widetilde{T}_i^{(2)}\subset \widetilde{T}_i^{(3)}=\widetilde{T}_i\subset \partial V_i\times S^{1}$ of $\partial V_i\times S^1$ with at most 3-handles. The direct product of a $p$-handle of $\{T_i^{(p)}\}$ and a $q$-handle of $\{U^{(q)}\}$ is a $(p+q)$-handle. We define its index by $(p,q)$ and call it a $(p,q)$-handle. 

\begin{Lem}\label{lem:ribbon-graph-II}
Suppose that $G_i$ is a Y-graph of type II, in which case $K_i=S^{1}$. The gradient-like vector fields for $\widetilde{\mu}_i$, $\widetilde{T}_i$, and the bordism $C_\ell^{(i)}$ can be chosen so that they satisfy the following. Put $c_\ell^{(i)}=3-|p_\ell^{(i)}|$.
\begin{enumerate}
\item $C_\ell^{(i)}$ is a map into $\widetilde{T}_i$ for $\ell=1,2,3$.
\item If $|p_\ell^{(i)}|=1$, then the handles of $\widetilde{T}_i$ that may intersect $C_\ell^{(i)}$ are of indices $(0,1), (2,1), (0,0), (1,0), (2,0)$.
\item If $|p_\ell^{(i)}|=2$, then the handles of $\widetilde{T}_i$ that may intersect $C_\ell^{(i)}$ are of indices $(0,1), (1,1), (0,0), (1,0), (2,0)$.
\item In a $(c_\ell^{(i)},1)$-handle, $C_\ell^{(i)}$ is a trivial bordism. Namely, a trivial family of immersed subbundle that is parallel to a $c_\ell^{(i)}$-handle of $T_i$. 
\item In a $(1,0)$- or $(2,0)$-handle, $C_\ell^{(i)}$ is the sum of trivial bordism as in (4) and an immersed subbundle that is parallel to the handle. 
\item The handles of $\widetilde{T}_i$ can be taken as arbitrarily small tubular neighborhoods of their cores. Namely, for every $\ve>0$, we may arrange that the diameters of 0-handles are less than $\ve$ and that the thickness of handles with indices $(1,0), (2,0), (0,1)$ are less than $\ve/2$, and that the thickness of handles with indices $(1,1), (2,1)$ are less than $\ve/4$.
\end{enumerate}
We say that such a $v$-gradient $\xi^{G_i}$ is \emph{coherent}. 
\end{Lem}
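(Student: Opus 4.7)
The proof adapts Lemma~\ref{lem:ribbon-graph-I} to the $S^1$-family setting. Let $a_\ell$ denote the $c_\ell^{(i)}$-dimensional cycle $\calA_{p_\ell^{(i)}}(\xi_0)\cap\partial V_i$ in $\partial V_i$, so that $B_\ell^{(i)}\cap(\partial V_i\times S^1)=a_\ell\times S^1$. First, choose a minimal Morse function $\nu:\partial V_i\to \R$ whose critical points are disjoint (after a small perturbation) from the family cycles $A_\ell^{(i)}\cap(\partial V_i\times\{t\})$ and $B_\ell^{(i)}\cap(\partial V_i\times\{t\})$ for every $t\in S^1$. This yields $T_i^{(0)}\subset T_i^{(1)}\subset T_i^{(2)}=T_i$, and by a further adjustment one may arrange that $a_\ell$ lies inside a single $c_\ell^{(i)}$-handle of $T_i$ as a standard $c_\ell^{(i)}$-sphere parallel to the core boundary. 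Fix $U^{(0)}\subset U^{(1)}=S^1$ with $U^{(0)}$ a small disk disjoint from the critical points of $\lambda$.

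Second, homotope the loop $\{\varphi_{0,t}\}$ in $\Diff(T,\partial T)$ of Lemma~\ref{lem:T-bundle-II} so that (i) $\varphi_{0,t}=\mathrm{id}$ for $t\in U^{(0)}$, and (ii) for $t$ in the $U^{(1)}$-$1$-handle, the induced isotopy of $\partial V_i$ is supported in a tubular neighborhood of $a_\ell$ contained in the $c_\ell^{(i)}$-handle of $T_i$. The first is possible by deforming the loop to be constant on a subarc; the second uses the Whitehead-product representative of Proposition~\ref{prop:wh-prod}, which can be realized by a diffeomorphism supported near a linking sphere parallel to $a_\ell$. These preparations trivialize $\widetilde{V}_i$ over $U^{(0)}$, so that $A_\ell^{(i)}=B_\ell^{(i)}$ there, and realize $A_\ell^{(i)}$ over the $U^{(1)}$-$1$-handle as the $S^1$-family of pushforwards of $a_\ell$ under the supported isotopy. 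Then construct $C_\ell^{(i)}$ piecewise: over $U^{(0)}$ take the sum of the trivial cylinder $a_\ell\times U^{(0)}$ (the ``trivial bordism'' of conditions (4), (5)) and, in each $(1,0)$- or $(2,0)$-handle, an additional parallel immersed subbundle produced by the fiberwise type-I construction of Lemma~\ref{lem:ribbon-graph-I} (the ``immersed subbundle'' of condition (5)); over the $U^{(1)}$-$1$-handle, take the trace of the supported isotopy on $a_\ell$, which by construction lies entirely within the $(c_\ell^{(i)},1)$-handle as a trivial family of parallel subbundles (condition (4)), joined to the $U^{(0)}$ piece by thin connectors supported in $(0,1)$-handles.

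Finally, apply a long fiberwise (over $S^1$) gradient flow of $\nu$ to confine $C_\ell^{(i)}$ to $\widetilde{T}_i$ and to make its intersection with every occupied handle parallel, yielding conditions (2), (3), (4), (5); condition (6) follows by continuing the flow and separately shrinking the handles of $U$. The coherent $v$-gradient $\xi^{G_i}$ is realized by implementing all these isotopies and flows inside the gradient-like vector field for $\widetilde{\mu}_i$. The main obstacle is the second step: arranging the parametrized Borromean twist as a family of diffeomorphisms of $\partial V_i$ satisfying the two simultaneous support constraints (identity over $U^{(0)}$, tubular support over the $U^{(1)}$-$1$-handle) and compatible with a handle decomposition of $\partial V_i$ in which $a_\ell$ sits inside a single $c_\ell^{(i)}$-handle. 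This requires combining the geometric Whitehead-product analysis of Proposition~\ref{prop:wh-prod} with careful Morse-theoretic bookkeeping.
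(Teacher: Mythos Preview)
Your proposal has the support of the Borromean twist on the wrong piece of $S^1$. You require $\varphi_{0,t}=\mathrm{id}$ for $t\in U^{(0)}$ and put all the twisting over the $1$-handle of $S^1$; the paper does exactly the opposite, homotoping $\alpha_{\mathrm{II}}$ so that the support of the surgery diffeomorphism projects into the interior of $U^{(0)}$ and the family is \emph{trivial} over the $1$-handle. This is not cosmetic. The $(c_\ell^{(i)},1)$-handles of $\widetilde{T}_i$ lie over the $1$-handle of $S^1$, and condition~(4) asks that $C_\ell^{(i)}$ be a trivial (constant-in-$t$) family there. In the paper's arrangement this is automatic because $A_\ell^{(i)}=B_\ell^{(i)}$ fibrewise over the $1$-handle. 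In your arrangement $A_\ell^{(i)}$ genuinely varies with $t$ over the $1$-handle, so no bordism between $A_\ell^{(i)}$ and $B_\ell^{(i)}$ restricted there can be a trivial family; the ``trace of the supported isotopy on $a_\ell$'' you propose is not even a bordism from $A$ to $B$ --- it is $A$ itself.

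The nontrivial Whitehead-product piece of Proposition~\ref{prop:wh-prod} has to be absorbed somewhere, and in the paper it is the nullbordism of the difference $A-B$ over $U^{(0)}$ that, after the long fiberwise flow of $-\widetilde{\nu}$, produces the parallel immersed pieces in the $(1,0)$- and $(2,0)$-handles of condition~(5). In your setup, with $A=B$ over $U^{(0)}$, there is nothing there to flow, and the ``additional parallel immersed subbundle produced by the fiberwise type-I construction'' you invoke has no source. Your constraint~(ii) is also unattainable as stated: the parametrized Borromean twist alters all three $a_\ell$ simultaneously, each by a connected sum with a Whitehead product $[b_m,b_n]$ that lives in the handles of $T_i$ dual to $b_m,b_n$, not inside a tubular neighborhood of $a_\ell$ in a single $c_\ell^{(i)}$-handle. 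Swapping the roles of $U^{(0)}$ and the $1$-handle back to the paper's convention removes all of these obstructions, and the rest of your outline (minimal $\nu$, long gradient flow, parallelism as in Lemma~\ref{lem:ribbon-graph-I}) then goes through.
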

\begin{proof}
Let $\delta_\ell^{(i)}$ be an oriented bordism in $\partial V_i\times S^1$ such that $\partial \delta_\ell^{(i)}=B_\ell^{(i)}\cap (\partial V_i\times S^1)-A_\ell^{(i)}\cap (\partial V_i\times S^1)$. We take a minimal Morse function $\nu:\partial V_i\to \R$. Let $\widetilde{\nu}$ be the fiberwise Morse function on the trivial bundle $\partial V_i\times S^{1}\to S^{1}$ given by $\widetilde{\nu}=\nu\circ \mathrm{pr}_1$. We modify $\alpha_{\mathrm{II}}$ by a homotopy so that the support of the diffeomorphism of $\partial V_i$ for the surgery is disjoint from all the critical points of $\nu$, and the projection of the support on $S^1$ is included in the interior of the small disk $U^{(0)}\subset S^{1}$. Such a modification is possible by the definition of $\alpha_\mathrm{II}$. Here, for sufficiently large $T>0$, the bordism $\Phi_{-\widetilde{\nu}}(T,\cdot)\circ \delta_\ell^{(i)}$ satisfies the conditions (1), (2), (3). Moreover, since the twists of $\partial V_i$ over the 1-handle of $\{U^{(q)}\}$ is done by a trivial family, we may take a trivial bordism there, and (4) is satisfied. Over $U^{(0)}$, where the family $A_\ell^{(i)}\cap (\partial V_i\times K_i)$ of cycles may be nontrivial, the family of cycles is the connected sum of $B_\ell^{(i)}\cap (\partial V_i\times K_i)$ and a loop that represents a Whitehead product, by Proposition~\ref{prop:wh-prod}. Since the connecting part for the connected sum may be assumed to be disjoint from the horizontal ascending manifolds of $\widetilde{\nu}$, we may assume that the connecting part would be included in the $(0,0)$-handle after taking it by the flow $\Phi_{-\widetilde{\nu}}(T,\cdot)$ for large $T$. Hence, the difference of the two families of cycles over $U^{(0)}$ is represented by a nullbordant $(c_\ell^{(i)}+1)$-loop in $\partial V_i\times U^{(0)}$ whose base point is in the $(0,0)$-handle. It does not intersect the locus of the maximal point of $\nu$, and thus intersects only $(0,0)$, $(1,0)$ or $(2,0)$-handle. After taking the bordism by a long time gradient flow of $-\widetilde{\nu}$, it becomes parallel on $(1,0)$ and $(2,0)$-handle, by a similar reason as in the proof of Lemma~\ref{lem:ribbon-graph-I}. Now (5) has been proved. 
\end{proof}

We remark that the bordisms $C_\ell^{(i)}$ for different $\ell$ may intersect each other.
\begin{Def}\label{def:coherent}
We say that the $v$-gradient $(f^G,\xi^G)$ is \emph{coherent} if its restriction on $V_i$ is the direct product of the restriction of the coherent one $(f^{G_i},\xi^{G_i})$ over $K_i$ with $K_1\times\cdots\times \widehat{K_i}\times\cdots\times K_{2k}$ (the $i$-th factor excluded). We say that the $v$-gradient $(f^G,\xi^G)$ is \emph{generic with respect to $\vec{V}_G=(V_1,\ldots,V_{2k})$} if a property in question is satisfied if for each $i$ the $v$-gradient $\xi^{(i)G}$ is perturbed slightly, preserving the following conditions.
\begin{enumerate}
\item The restriction of $\xi^{(i)G}$ to $(\R^4-\mathrm{Int}\,(V_1\cup\cdots\cup V_{2k}))\times B_\Gamma$ is the constant family with respect to the parameter in $B_\Gamma$.
\item The restriction of $\xi^{(i)G}$ to $\bigcup_{(t_1,\ldots,t_{2k})\in B_\Gamma}(V_1(t_1)\cup\cdots\cup V_{2k}(t_{2k}))$ is of the form $\bigcup_{(t_1,\ldots,t_{2k})}(\xi^{(i)G_1}(t_1)\cup\cdots\cup \xi^{(i)G_{2k}}(t_{2k}))$.
\end{enumerate}
\end{Def}

Lemma~\ref{lem:0-mfd} still holds for a $v$-gradient $(f^G,\xi^G)$ that is generic with respect to $\vec{V}_G$. See \S\ref{ss:transversality-surgery} for a proof.

Let $F_\ell^{(i)}$ be the cycle in the closed manifold $S_i=\widetilde{V}_i\cup_{\partial} (-V_i\times K_i)$
defined by
\[ F_\ell^{(i)}=A_\ell^{(i)}-B_\ell^{(i)}+C_\ell^{(i)}. \]
We denote the class of $F_\ell^{(i)}$ in $H_*(S_i)$ by $\alpha_\ell^{(i)}$.

\begin{Lem}\label{lem:aaa=1}
\begin{enumerate}
\item $\langle \alpha_\ell^{(i)},\alpha_{\ell'}^{(i)}\rangle_{S_i} =0$ for $\ell,\ell'\in\{1,2,3\}$,
\item $\langle \alpha_1^{(i)},\alpha_2^{(i)},\alpha_3^{(i)}\rangle_{S_i} =\pm 1$.
\end{enumerate}
\end{Lem}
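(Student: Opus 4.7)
The plan is to identify each class $\alpha_\ell^{(i)}\in H_*(S_i)$ with the cycle dual to the $\ell$-th component of the framed Borromean string link that defines the surgery $\alpha_i$, and then to reduce the computation to the classical Borromean (triple) linking.

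First I would observe that since $\widetilde{V}_i$ and $-V_i\times K_i$ are both realized as trivial $V_i$-bundles over $K_i$ glued along $\partial V_i\times K_i$ by the identity, $S_i$ is diffeomorphic to $DV_i\times K_i$, where $DV_i$ is the double of the handlebody $V_i$. Moreover, by the construction of $\alpha_{\mathrm{I}}$ and $\alpha_{\mathrm{II}}$, the surgery is trivial off a contractible region of $K_i$ (the point $-1\in S^0$ for type I; the complement of the small disk $U^{(0)}\subset S^1$ on which $\alpha_{\mathrm{II}}$ is supported, for type II), so $A_\ell^{(i)}=B_\ell^{(i)}$ and $C_\ell^{(i)}=0$ there. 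Thus $F_\ell^{(i)}$ is, up to homology, supported over the ``Borromean locus'' and for intersection purposes reduces to a cycle in (essentially) a single copy of $DV_i$.

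Next I would describe this cycle geometrically. In $DV_i=V_i\cup_{\partial V_i}(-V_i)$, the piece $A_\ell^{(i)}$ lies in the $+V_i$ half and is (via the coherent gradient of Lemmas~\ref{lem:ribbon-graph-I} and \ref{lem:ribbon-graph-II}) the co-core disk of the $\ell$-th handle of $V_i$, while $B_\ell^{(i)}$ is the corresponding co-core in $-V_i$. By Proposition~\ref{prop:wh-prod}, the boundary traces of $A_\ell^{(i)}$ and $B_\ell^{(i)}$ on $\partial V_i$ differ as cycles by $\pm[b_m,b_n]$ with $\{\ell,m,n\}=\{1,2,3\}$, and $C_\ell^{(i)}$ realizes an explicit null-bordism of this difference in $\partial V_i\times K_i$. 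Hence $F_\ell^{(i)}$ is (up to nullhomology) the doubled co-core sphere of the $\ell$-th handle, modified by a surface trace of the Whitehead product $[b_m,b_n]$. In other words, $\alpha_\ell^{(i)}$ is Alexander-dual in $DV_i$ (or $DV_i\times K_i$ for type II) to the $\ell$-th component of the framed Borromean link $B(\underline{2},\underline{2},\underline{1})_4$ defining the surgery, respectively $B(\underline{3},\underline{2},\underline{2})_5$ for type II by Lemma~\ref{lem:B(3,2,2)}.

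Given this identification, part (1) is the homological reflection of the fact that any two components of the Borromean link have pairwise linking number zero: concretely, Lemma~\ref{lem:V[l]-trivial} allows the handle corresponding to $\ell'$ to be filled in so that the surgery becomes trivial after this filling, producing an explicit null-bordism that pushes $F_\ell^{(i)}$ off $F_{\ell'}^{(i)}$ in $S_i$. Part (2) follows from the defining property of the Borromean link: after putting the three cycles in transverse position, there is a unique transverse intersection point coming from the ``triple linking'' inherent to $B(\underline{2},\underline{2},\underline{1})_4$ (resp.\ $B(\underline{3},\underline{2},\underline{2})_5$), so $\langle \alpha_1^{(i)},\alpha_2^{(i)},\alpha_3^{(i)}\rangle_{S_i}=\pm 1$.

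The main obstacle will be making the identification of $\alpha_\ell^{(i)}$ with its Alexander dual sufficiently precise and tracking signs through the product/family structure over $K_i$; in particular, the type II case requires carrying the $S^1$-parameter through the graphing map of Lemma~\ref{lem:B(3,2,2)} so that the resulting triple intersection yields the nonzero value $\pm 1$ rather than collapsing to zero.
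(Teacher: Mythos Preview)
Your approach is essentially the paper's: identify $\alpha_\ell^{(i)}$ with the (Seifert/Alexander-dual) cycle of the $\ell$-th Borromean component and read off (1) and (2) from the pairwise and triple linking of $B(\underline{2},\underline{2},\underline{1})_4$ (resp.\ $B(\underline{3},\underline{2},\underline{2})_5$ via Lemma~\ref{lem:B(3,2,2)}). The paper phrases this as ``gluing immersed Seifert surfaces'' and invokes \cite[Lemma~5.3]{Wa2} together with the coordinate description of \S\ref{ss:borromean} for (2), which is exactly your ``unique transverse intersection from triple linking''.

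One small correction on (1): invoking Lemma~\ref{lem:V[l]-trivial} is not the cleanest route. That lemma asserts triviality of the \emph{bundle} $\widetilde{V}_{[\ell']}$, i.e.\ a nullhomotopy of a classifying map; translating this into a null-bordism of $F_\ell^{(i)}$ \emph{inside $S_i$} that avoids $F_{\ell'}^{(i)}$ is not immediate. The paper instead uses Proposition~\ref{prop:wh-prod} directly: it tells you $\bar{\alpha}_* a_\ell \simeq a_\ell \pm [b_m,b_n]$ and $\bar{\alpha}_* b_\ell \simeq b_\ell$, so the homology class $\alpha_\ell^{(i)}$ in $S_i$ is the obvious ``doubled co-core'' class (the Whitehead correction being null-homologous), and the pairwise intersection form on those classes vanishes. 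That is the argument you should write for (1); the rest of your proposal is fine.
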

\begin{proof}
(1) follows from the homological property of the Borromean surgery in Proposition~\ref{prop:wh-prod}. Since the left hand side of (2) is the triple intersection among cycles in closed manifold $S_i$, the result follows from \cite[Lemma~5.3]{Wa2}. Roughly speaking, $\alpha_\ell^{(i)}$ can be represented by the cycle obtained by gluing ``immersed Seifert surface'' of a component of the Borromean string link in its complement $\widetilde{V}_i$ with that of a trivial string link in $-V_i\times K_i$. The triple intersection can be seen in the coordinate description of the Borromean rings in \S\ref{ss:borromean}.
\end{proof}

Since critical points of the Morse function $m$ on fiber do not change by the surgery along $G$, the correspondence between critical points gives a canonical $\Z$-linear isomorphism $\phi_n:C_*(f_{b_0^n})\to C_*(f_{b_0^n}^G)$. 

\begin{Lem}\label{lem:morse-complex-invariant}
If $(f^{G},\xi^{G})$ is coherent and generic with respect to $\vec{V}_G$, then the following hold.
\begin{enumerate}
\item[(a)] For each $n$, $\phi_n$ is a chain isomorphism.
\item[(b)] $(f^G,\xi^G,\eta)$ satisfies Assumption~\ref{hyp:cv-const}.
\item[(c)] $(f^G,\xi^G,\eta)$ satisfies Assumption~\ref{hyp:eta}.
\end{enumerate}
\end{Lem}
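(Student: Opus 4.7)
The three parts are best tackled in a carefully chosen order, since (c)(3) turns out to be a consequence of (b)(1), and (a)/(b)(2) use the same homological input. I would first dispose of the easy items: (c)(4) is immediate because the surgery is supported in $V_1\cup\cdots\cup V_{2k}\subset\mathrm{Int}\,D^4$ and the Morse functions $m^{(j)}$ agree outside $D^4$; (c)(1) and (c)(2) follow by perturbing $\xi^{(j)G}$ generically within coherent $v$-gradients (Definition~\ref{def:coherent}), noting that $\calD_p(\xi^{(j)G})$ is a disk bundle over a section $p\colon B_\Gamma\to E^\Gamma$ of an orientable bundle, hence orientable; (b)(1) is arranged by choosing $\widetilde{\mu}_i$ on $\widetilde{V}_i$ with the same critical values as $\mu_i$ on $V_i$, so that the critical values of $f^G$ are constant along $B_\Gamma$. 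Given (b)(1), (c)(3) is then automatic: a flow line of $-\xi^{(j)G}$ strictly decreases $f^{(j)G}$, so no trajectory can run from a lower-index critical locus $p$ to a higher-index one $q$ at the same $f$-value, ruling out $i/i{+}\ell$-intersections for $\ell\geq 1$.

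For (a), identify the critical points of $f_{b_0^n}$ and $f^G_{b_0^n}$ via the canonical bijection (critical points outside $V_i$ are unchanged, while those of $\widetilde\mu_i$ correspond to those of $\mu_i$); this defines $\phi_n$. To see $\phi_n$ is a chain map I would argue that the Morse boundary coefficient $\langle \partial p,q\rangle$ equals the algebraic intersection of $\calD_p$ and $\calA_q$ in a level surface of $F_0$, and that this number is invariant under the surgery. Indeed, by Proposition~\ref{prop:wh-prod} the ascending cycle $A_\ell^{(i)}\cap(\partial V_i\times\{b_0^n\})$ differs from its pre-surgery counterpart $B_\ell^{(i)}\cap(\partial V_i\times\{b_0^n\})$ only by a Whitehead product, which is null-bordant in $\partial V_i$, with null-bordism provided by the restriction of $C_\ell^{(i)}$. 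Extending $\calA_q^G$ across $\partial V_i$ by this bordism produces a relative cycle in $F_0$ homologous to the pre-surgery $\calA_q$, so their algebraic intersections with $\calD_p$ (which coincides with $\calD_p^G$ outside $V_i$) agree.

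For (b)(2), let $\alpha\colon I\to B_\Gamma$ be a flow line of $-\eta$ between critical points of $\lambda'$ with index difference one. Since $\lambda'$ is a sum of factor functions, $\alpha$ moves in exactly one $K_i$-direction. If $K_i=S^0$ (type I vertex), there is no flow line within $K_i$, so this case is vacuous. If $K_i=S^1$ (type II), $\alpha$ is a half-circle from the maximum $1\in S^1$ to the minimum. By coherence and the definition of the surgery datum $\alpha_{\mathrm{II}}$, the $v$-gradient $\xi^{(j)G}$ restricted over $\alpha$ is the product of the $K_i$-parametrized gradient on $V_i$ with a constant gradient on the remaining factors, and by Lemma~\ref{lem:ribbon-graph-II} the family of ascending manifolds is constant outside a small disk $U^{(0)}\subset S^1$ disjoint from the critical points of $\lambda$. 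Inside $U^{(0)}$ the variation is described by a bordism $C_\ell^{(i)}$ lying in the thin handlebody $\widetilde T_i$ and parallel to its handles. A pair $(p,q)$ of critical points of the same index contributes an $i/i$-intersection along $\alpha$ only when $\calD_p$ and $\calA_q$ slide past one another inside $U^{(0)}$; the parallel (null-bordant) nature of $C_\ell^{(i)}$ ensures that such slides occur in canceling pairs when the handlebody is taken thin enough (Lemma~\ref{lem:ribbon-graph-II}(6)). Hence $\Phi_\alpha=1$ on every generator, which is (b)(2).

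The main obstacle is the cancellation argument in (b)(2): one must verify that, transported through the nontrivial part of the family over $U^{(0)}$, the $i/i$-intersections arising from the Whitehead-product correction genuinely cancel rather than merely sum to a bordism-trivial class. This is where the ad hoc 4-dimensional input mentioned in the introduction enters, via the specific index pattern $(0,1,1,2)$ or $(0,1,2,2)$ of the handles of $V_i$: only two of the $|p_\ell^{(i)}|$'s can coincide, and the parallelism in the $(c_\ell^{(i)},1)$- and $(1,0)$-, $(2,0)$-handles of Lemma~\ref{lem:ribbon-graph-II}(4)(5) forces the two contributions with opposite orientations to occur on the same fiber segment, producing the cancellation. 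Once this symmetry is established, (a) and (b)(2) both follow from the same bordism-invariance principle, and the proof is complete.
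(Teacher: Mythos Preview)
Your treatment of (a), (b)(1), (b)(2), (c)(1), (c)(2), (c)(4) is essentially in line with the paper: the same homological cancellation via the coherence data $C_\ell^{(i)}$ drives both (a) and (b)(2), and you correctly isolate the type~II case for the parallel-transport computation.

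The genuine gap is your argument for (c)(3). You claim that once (b)(1) holds, the absence of $i/i{+}\ell$-intersections for $\ell\geq 1$ is ``automatic'' because a flow line of $-\xi^{(j)G}$ strictly decreases $f^{(j)G}$. But an $i/i{+}\ell$-intersection is a trajectory from a critical locus $p$ of index $i$ to one $q$ of index $i{+}\ell$; this only requires $f(p)>f(q)$, and nothing in (b)(1) (constancy of critical values along $B_\Gamma$) forces critical values to be ordered by index. In a family, such intersections can and do appear generically in positive-dimensional strata even when the underlying Morse function is fixed fiberwise; the descending and ascending manifolds move with the $v$-gradient, and $\calD_p\cap\calA_q$ can become nonempty over a codimension-$\ell$ locus in $B_\Gamma$. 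So (c)(3) is \emph{not} a corollary of (b)(1).

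The paper's proof of (c)(3) is substantially more structural and you will need to supply something equivalent. One splits $E^\Gamma$ along the level-$0$ surface into $E^\Gamma_+$ and $E^\Gamma_-$. On $E^\Gamma_+$ the $v$-gradient is a constant family, so no $i/i{+}\ell$-intersections appear there; inside each $\widetilde V_i$ the fiberwise Morse function is standard, so none appear there either. The only possible $i/i{+}\ell$-intersections connect a critical locus in $E^\Gamma_+$ to one in $E^\Gamma_-$. Since the indices occurring in $E^\Gamma_-$ are $0,1,2$, the only candidate is a $1/2$-intersection, with $p$ of index~$1$ outside $V_i$ and $q$ of index~$2$ inside. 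For such $p$, $\calD_p(\xi^{G_i})\cap\partial\widetilde V_i$ is a constant family of $0$-manifolds, and coherence (Lemmas~\ref{lem:ribbon-graph-I}, \ref{lem:ribbon-graph-II}) lets one arrange it disjoint from the thin handlebody $T_i$ (or $\widetilde T_i$) containing the variation of the ascending manifolds of index~$2$ critical loci; hence no $1/2$-intersections. This case analysis, not a monotonicity argument, is what establishes (c)(3).
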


\begin{proof}
(a) It suffices to check that the boundary operator is invariant under the surgery on $G_i$ of type I. The $i+1/i$-flow-lines that result in the difference between the counts of the flow-lines are those from critical points outside $V_i$ to those inside $V_i$. Hence the change of the count under surgery is given by the intersection of the chains $\calD_p(\xi_{b_0^n})\cap \partial V_i$ and $(A_k^{(i)}\cup (-B_k^{(i)}))\cap \partial V_i$ for some $k$. Generically, $\calD_p(\xi_{b_0^n})\cap \partial V_i$ intersects transversally with cores of the handles in the handlebody $T_i$ of Lemma~\ref{lem:ribbon-graph-I} in a single fiber over $b_0^n$. Since it intersects boundaries of each parallel bordism in $C_k^{(i)}$ for a coherent $v$-gradient, the intersection number cancels in pair, which shows that the intersection of $\calD_p(\xi_{b_0^n})\cap \partial V_i$ and $(A_k^{(i)}\cup (-B_k^{(i)}))\cap \partial V_i$ is zero.

(b) That the critical values of $f^G$ are constant over $B_\Gamma$ is clear by the definition of $f^G$. For each pair $a,b\in\Sigma(\eta)$ with $|a|=|b|+1$ and for a flow-line $\alpha:I\to B_\Gamma$ of $-\eta$ between them we shall prove that $n_\alpha(x,y)=0$ for $x\neq y$, $x,y\in\Sigma(\xi^G)$, where $n_\alpha(x,y)$ was defined in (\ref{eq:n(x,y)}). For this we only need to prove that the count of the transversal intersections between $\calD_x(\xi^G)$ and $\calA_y(\xi^G)$ over $\alpha$, i.e., the $j/j$-intersections between $x$ and $y$, is 0. By the definition of $\xi^G$, the flow-lines in question that may be counted are those between critical points outside and inside $V_i$. It suffices to consider the case where $\alpha$ intersects the support of the diffeomorphisms for a surgery of type II since otherwise $n_\alpha(x,y)=0$ is obvious. In that case, the count of the flow-lines in question are given by the intersection of the chains $\calD_x(\xi^G)\cap \partial \widetilde{V}_i$ and $(A_k^{(i)}\cup (-B_k^{(i)}))\cap \partial \widetilde{V}_i$ for some $k$. By coherence and Lemma~\ref{lem:ribbon-graph-II}, the latter fits into a thin handlebody in $\partial \widetilde{V}_i$ and generically $\calD_x(\xi^G)\cap \partial \widetilde{V}_i$ intersects transversally with the cores of the thin handles with positive indices. Also, since the intersection with the parallel pieces of $(A_k^{(i)}\cup (-B_k^{(i)}))\cap \partial \widetilde{V}_i$ is that with the boundary of the parallel bordisms in thin handles, it is generically 0. Thus $n_\alpha(x,y)=0$ for $x\neq y$ follows.

(c) Assumption~\ref{hyp:eta} (1) is satisfied by choosing $\xi^G$ generically with respect to $\vec{V}_G$. Assumption~\ref{hyp:eta} (2), (4) are clear from the definition of $(f^G,\xi^G)$. For Assumption~\ref{hyp:eta} (3), we prove that there are no $i/i+\ell$-intersections ($\ell\geq 1$) for $\xi^G$. Put $E^\Gamma_+=(f^G)^{-1}[0,\infty)$, $E^\Gamma_-=(f^G)^{-1}(-\infty,0]$. Inside $E^\Gamma_+$, the $v$-gradient $\xi^G$ is a trivial family, and thus there are no $i/i+\ell$-intersections ($\ell\geq 1$) there. Also, inside $E^\Gamma_-$, there are no $i/i+\ell$-intersection ($\ell\geq 1$) by the definition of the surgery of $v$-gradient. Hence there may be only $i/i+\ell$-intersections between critical points of $E^\Gamma_+$ and $E^\Gamma_-$ for $\ell\geq 1$. Since the indices of critical points in $E^\Gamma_-$ are 0,1,2 and $0/1$-intersection is impossible, there may be only $1/2$-intersections. For a critical locus $p$ of index 1 in $E^\Gamma_+$, $\calD_p(\xi^{G_i})\cap \partial \widetilde{V}_i$ is a trivial $\dim{K_i}$-parameter family of 0-manifolds. By coherence of $\xi^{G_i}$, this may be assumed to be disjoint from $T_i$ or $\widetilde{T}_i$ of Lemmas~\ref{lem:ribbon-graph-I} and \ref{lem:ribbon-graph-II}, and thus may also be assumed to be disjoint from the ascending manifolds of critical points of index 2 in $E^\Gamma_-$. Namely, there are no $1/2$-intersections. This completes the proof. 
\end{proof}

\subsection{Key Lemma}

The following is a key lemma to simplify the computation. Put 
\[ \widetilde{V}_i'=K_1\times\cdots\times K_{i-1}\times \widetilde{V}_i\times K_{i+1}\times\cdots\times K_{2k}.\]
This is a bundle over $B_\Gamma=K_1\times\cdots\times K_{2k}$, which is the pullback of the bundle $\widetilde{V}_i\to K_i$ by the projection $B_\Gamma\to K_i$. 
\begin{Lem}[Key Lemma]\label{lem:occupied}
Let $k\geq 1$. If $(f^{(j)G},\xi^{(j)G})$ is coherent for each $j\in\{1,2,\ldots,3k\}$, generic with respect to $\vec{V}_G$, and if the width $\ve$ in Lemmas~\ref{lem:ribbon-graph-I}, \ref{lem:ribbon-graph-II} is sufficiently small, then we have the following.
\begin{enumerate}
\item The value of $\hat{Z}_k^\Morse$ for $\pi^\Gamma$ (Theorem~\ref{thm:GCF}) agrees with the sum 
\[ \frac{(-1)^{3k}}{2^{3k}}\sum_{\ve_i=\pm 1}Z_k^\Morse((\ve_1\xi^{(1)G},\cdots, \ve_{3k}\xi^{(3k)G}),\eta) \]
for the coherent $v$-gradients $(\xi^{(1)G},\cdots, \xi^{(3k)G})$. We denote the value of $Z_k^\Morse((\xi^{(1)G},\ldots,\xi^{(3k)G}),\eta)$ by $Z_k^\Morse(\pi^\Gamma)$.
\item The Z-graphs that  may contribute to $Z_k^\Morse(\pi^\Gamma)$ are such that for each $i$ there are exactly one black vertex in $\widetilde{V}'_i$. (We say that such a Z-graph \emph{occupies} $G$.)
\end{enumerate}
\end{Lem}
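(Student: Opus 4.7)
The two parts admit separate treatments: Part~(1) is essentially formal once the assumptions of \S\ref{s:GCF} are verified, while Part~(2) is the geometric heart. For Part~(1), Lemma~\ref{lem:morse-complex-invariant} shows that the coherent tuple $(f^G,\vec{\xi}^G,\eta)$ satisfies Assumptions~\ref{hyp:eta} and \ref{hyp:cv-const}, so Theorem~\ref{thm:GCF} produces admissible propagators $\theta^{(j)}=-\hat{\theta}_{\mathrm{Z}}(\xi^{(j)G},\eta)$ with
\[
Z_k^\adm(\vec{\theta})=\frac{(-1)^{3k}}{2^{3k}}\sum_{\ve_i=\pm 1}Z_k^\Morse((\ve_1\xi^{(1)G},\ldots,\ve_{3k}\xi^{(3k)G}),\eta).
\]
Since $\pi^\Gamma\in\mathrm{Im}\,H$ by Proposition~\ref{prop:primitive}, Proposition~\ref{prop:bordism-inv} together with \S\ref{ss:ch-cobordism} identifies this (together with the standard $\alpha$-correction) with $\hat{Z}_k^\adm(\pi^\Gamma)=\hat{Z}_k^\Morse(\pi^\Gamma)$, proving Part~(1).

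For Part~(2), the structural input is that by Definition~\ref{def:coherent}, $\xi^{(j)G}$ is a constant $B_\Gamma$-family outside $\bigcup_i V_i$ and on $V_i$ depends only on $K_i$. Consequently the propagator chains $M(\xi^{(j)G})$ and $H_{pq}(\xi^{(j)G},\eta)$ are invariant under translation of $b_i\in K_i$ on the portion of $\EC_2(\pi^\Gamma)$ whose two points lie outside $V_i$ in their fiber. Consider a Z-graph contributing to $Z_k^\Morse(\pi^\Gamma)$ whose trivalent vertices lie in a single fiber $F_b$, $b=(b_1,\ldots,b_{2k})$, and suppose no trivalent vertex lies in $V_i\subset F_b$. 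If the $i$-th Y-graph is of type~II so $K_i=S^1$, then translating $b_i$ produces a continuous 1-parameter family of Z-graphs: vertex positions stay in the $K_i$-invariant region, and edges entering $V_i$ deform continuously since their endpoints are fixed outside $V_i$. This forces $\dim\acalM_\Gamma\geq 1$, contradicting Lemma~\ref{lem:0-mfd}. If the $i$-th Y-graph is of type~I so $K_i=S^0$, then by Definition~\ref{def:alpha-I} the component $\{b_i=-1\}$ carries the un-surgered bundle, and by the construction $\widetilde{V}=V'\cup(-V)$ the two sheets enter the intersection count with opposite signs; the $K_i$-invariance of the propagators outside $V_i$ matches each Z-graph at $b_i=+1$ with its mirror at $b_i=-1$, and these cancel. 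Hence each $\widetilde{V}'_i$ contains at least one trivalent vertex, and since there are exactly $2k$ trivalent vertices and $2k$ handlebodies, the assignment is a bijection.

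The main obstacle is making these family and mirror arguments rigorous in the presence of Z-graph edges that enter $V_i$, where the gradient does vary nontrivially with $b_i$. The resolution rests on the smallness parameter $\ve$ of Lemmas~\ref{lem:ribbon-graph-I} and \ref{lem:ribbon-graph-II}: by shrinking the handles of $T_i$ and $\widetilde{T}_i$ and choosing $\xi^{(j)G}$ generic relative to $\vec{V}_G$ (Definition~\ref{def:coherent}), each edge of a generic Z-graph meets $V_i$ transversally in a finite union of short controlled arcs that deform continuously with $b_i$ by compactness, giving the extension of the 1-parameter family in the type~II argument. For type~I, the same transversality shows that the combinatorial type of edge-$V_i$ intersections matches across $b_i=\pm 1$, so the mirror pairs cancel cleanly in the signed count. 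The underlying genericity and compactness statements parallel those used in \S\ref{ss:transversality} to prove Lemma~\ref{lem:0-mfd}, and should be imported with straightforward modifications adapted to the surgery setting.
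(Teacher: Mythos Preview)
Your treatment of Part~(1) has a real gap. Theorem~\ref{thm:GCF} gives
$Z_k^\adm(\vec{\theta})=\frac{(-1)^{3k}}{2^{3k}}\sum_{\ve_i}Z_k^\Morse(\ve_1\xi^{(1)G},\ldots)$,
but $\hat{Z}_k^\Morse$ is by definition $Z_k^\adm(\vec{\theta})$ \emph{minus} the correction $\alpha_k^\adm(\vec{\eta}_W)$ (plus possibly $P_k$). The assertion of the lemma is that this correction \emph{vanishes}, and this is not formal. The paper proves it by constructing a specific cobordism $W^\Gamma$: one glues to $D^4\times B_\Gamma\times I$ and $D^4\times\widetilde{B}_\Gamma$ the pieces $T_i'=\prod_{j\neq i}K_j\times T_i$, where $T_i$ is a compact manifold bounding $S_i=\widetilde{V}_i\cup_\partial(-V_i\times K_i)$ (existence of $T_i$ uses the explicit spherical-modification description of Borromean surgery). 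The admissible sections $\vec{\eta}_{W^\Gamma}$ are then built piece by piece using Lemma~\ref{lem:framing}, and $\alpha_k^\adm$ vanishes on each piece either by a dimensional argument (trivial families) or by cancellation over a factor $K_j=S^0$ with $j\neq i$. Without this cobordism construction, you have no control over the correction term, and invoking Proposition~\ref{prop:bordism-inv} does not help: that proposition includes the correction, it does not kill it.

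Your Part~(2) argument is on the right track for the easy case where the Z-graph is entirely disjoint from $\widetilde{V}_i'$---your translation (type~II) and mirror (type~I) arguments are essentially what the paper calls the ``dimensional reason''. But the hard case, which you acknowledge and then mishandle, is when the Z-graph meets $\widetilde{V}_i'$ only through a \emph{white} vertex, i.e.\ a separated edge terminates at a critical point $p$ inside $V_i$. The smallness parameter $\ve$ does not make flow-lines inside $V_i$ ``short controlled arcs'': it controls the thickness of the handlebody $T_i$ (or $\widetilde{T}_i$) on $\partial V_i$, not the geometry inside $V_i$, where the gradient is twisted by the full Borromean surgery. The paper's argument is different and uses coherence essentially: the count of such Z-graphs is the intersection of a chain in $\overline{E^\Gamma\setminus\widetilde{V}_i'}$ (coming from the rest of the graph) with the ascending-manifold locus of $p$. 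By coherence, this locus meets $\partial\widetilde{V}_i'$ in the thin parallel bordism $C_\ell^{(i)}$ of Lemmas~\ref{lem:ribbon-graph-I}--\ref{lem:ribbon-graph-II}; the external chain intersects each thin parallel piece along its two boundary sheets with opposite signs, so the net intersection is zero. Your proposed mechanism---continuous deformation of arcs---does not see this cancellation and would not survive a white vertex pinned at a critical point of the varying gradient.
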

\begin{proof}
(1) Since the manifold $\widetilde{V}_i$ can be obtained from $V_i\times K_i$ by spherical modifications, namely, the Borromean rings ($B(2,2,1)_4$ or $B(3,2,2)_5=B(2+1,1+1,1+1)_{4+1}$, see Lemma~\ref{lem:B(3,2,2)}) can be obtained by surgery on three Hopf links, similar to \cite[Figure~1]{GGP}, one can find a compact oriented manifold $T_i$ with $\partial T_i=S_i=\widetilde{V}_i\cup_\partial (-V_i\times K_i)$. Put $W^0=D^4\times B_\Gamma\times I$, which gives a $(k+5)$-cobordism between the trivial $D^4$-bundles $D^4\times B_\Gamma\times\{1\}$ and $-D^4\times B_\Gamma\times\{0\}$. Let $\widetilde{B}_\Gamma$ be an oriented cobordism between $B_\Gamma$ and $-S^k$ and put $W^1=D^4\times \widetilde{B}_\Gamma$. We construct a $(k+5)$-cobordism $W^\Gamma$ with corners between $E^\Gamma$ and $-D^4\times S^k$ by modifying $W^0\cup_{D^4\times B_\Gamma\times\{0\}} W^1$. Let $\alpha_i':B_\Gamma\to \Diff(\partial V_i)$ be the composition of the projection $B_\Gamma\to K_i$ with $\alpha_i:K_i\to \Diff(\partial V_i)$. By attaching the $(k+5)$-manifold $T_i'=K_1\times\cdots\times K_{i-1}\times T_i\times K_{i+1}\times\cdots\times K_{2k}$ to $W^0\cup W^1$ along $V_i\times\prod_{j=1}^{2k} K_j=K_1\times\cdots\times K_{i-1}\times(V_i\times K_i)\times K_{i+1}\times\cdots\times K_{2k}$ in $D^4\times B_\Gamma\times\{1\}$, a $(k+5)$-cobordism $T_i'\cup_{V_i\times\prod_j K_j}(W^0\cup W^1)$ between $(D^4\times B_\Gamma)^{V_i,\alpha_i'}$ and the trivial $D^4$-bundle over $S^k$ is obtained. Moreover, by attaching $T_1',\ldots,T_{2k}'$ to $W^0\cup W^1$ along $(V_1\cup\cdots \cup V_{2k})\times\prod_j K_j$ simultaneously in a similar manner, a $(k+5)$-dimensional cobordism $W^\Gamma=(T_1'\cup\cdots\cup T_{2k}')\cup(W^0\cup W^1)$ between $E^\Gamma$ and the trivial bundle over $S^k$ is obtained. By Proposition~\ref{prop:primitive} and Lemma~\ref{lem:framing}, this gives a pair of a bundle and cobordism that satisfies Assumption~\ref{hyp:W}.

Let us prove that the correction term of $\hat{Z}_k^\Morse$ for $W^\Gamma$ vanishes. There is an $SO_4$-framing in $TW^0$ induced by the structure of trivial $D^4$-bundle, which spans a trivial linear $\R^4$-bundle $T^vW^0$. On the $D^4\times B_\Gamma\times\{1\}$ side, there is an admissible section of $ST^vW^0|_{D^4\times B_\Gamma\times\{1\}}$ given by the trivial family of the $v$-gradient $\xi^{(j)}$, and on the $-D^4\times B_\Gamma\times\{0\}$ side, there is a tuple $\vec{v}$ of admissible sections of $ST^vW^0|_{-D^4\times B_\Gamma\times\{0\}}$ given by constant sections. We take a tuple $\vec{\eta}_{W^0}$ of admissible sections of $ST^vW^0$ that extends the two, which may be assumed to be independent of the parameter of $B_\Gamma$. We take a tuple $\vec{\eta}_{W^1}$ of admissible sections of $ST^vW^1$ by the constant admissible sections $\vec{v}$. Also, we can find a tuple $\vec{\eta}_{T_i'}$ of admissible sections on $T_i'$ as follows. By Lemma~\ref{lem:framing}, there is a vertical framing on $\widetilde{V}_i$ that extends the standard framing on the boundary. Thus one can find a trivialization $\tau_{S_i}$ of the rank 4 vector bundle $T^v S_i$, which is obtained by gluing $T^v\widetilde{V}_i$ and $T^v(V_i\times K_i)$ along the boundary, where the fibers of $T^vS_i$ over $\partial \widetilde{V}_i$ are not tangent to $S_i$. By gluing the trivial vector bundle $\R^4\times T_i$ to $T^vS_i$ by using the bundle isomorphism $\tau_{S_i}$, we obtain a trivial vector bundle $T^v T_i\to T_i$ that extends $T^vS_i$. There is a tuple of admissible sections of $T^vT_i$ that extends those of $T^vS_i$ given by the $v$-gradients on $\widetilde{V}_i$ and $V_i\times K_i$. Extending this trivially to $T_i'$ by the product structure, we obtain a tuple $\vec{\eta}_{T_i'}$ of admissible sections of $T^vT_i'$. We define the tuple $\vec{\eta}_{W^\Gamma}$ of admissible sections of $T^vW^\Gamma$ by setting $\vec{\eta}_{W^0}$ on $W^0$, $\vec{\eta}_{W^1}$ on $W^1$, and $\vec{\eta}_{T_i'}$ on $T_i'$. Then clearly we have
\[ \alpha_k^\adm(\vec{\eta}_{W^\Gamma})=\alpha_k^\adm(\vec{\eta}_{W^0})+\alpha_k^\adm(\vec{\eta}_{W^1})+\sum_{i=1}^{2k}\alpha_k^\adm(\vec{\eta}_{T_i'}). \]

Since $W^0$ is a trivial $D^4$-bundle and $\vec{\eta}_{W^0}$ is a trivial family with respect to $B_\Gamma$, we have $\alpha_k^\adm(\vec{\eta}_{W^0})=0$ by a dimensional reason. Similarly, $\alpha_k^\adm(\vec{\eta}_{W^1})=0$ by a dimensional reason. Also, for $k\geq 2$ and for each $i$, there is $j\neq i$ such that $V_j$ is of type I. Then $T_i'$ is decomposed into two parts by the coordinate of $K_j=\{-1,1\}$, and accordingly $\alpha_k^\adm(\vec{\eta}_{T_i'})$ is decomposed into two parts. Since the result is of the form $A-A$, its sum $\alpha_k^\adm(\vec{\eta}_{T_i'})$ is 0 and thus $\alpha_k^\adm(\vec{\eta}_{W^\Gamma})=0$ for $k\geq 2$. For $k=1$, if $V_i$ is of type II, then the proof is the same as for $k\geq 2$. If $k=1$ and $V_i$ is of type I, then $T_1'=T_1\times S^1$. Since $\vec{\eta}_{T_1'}$ is defined as a trivial 1-parameter family of admissible sections on $T_1$, we have $\alpha_k^\adm(\vec{\eta}_{T_1'})=0$ by a dimensional reason. When $k+5\equiv 0$ (mod 4), we need to prove $P_k(W^\Gamma;\tau_{E^\Gamma})=0$. The proof of this fact is the same as the vanishing of $\alpha_k^\adm(\vec{\eta}_{W^\Gamma})$ above by using the additivity of the integral and by dimensional reasons. Thus (1) has been proved. 

Next we prove (2). If a Z-graph $I$ with $2k$ black vertices does not meet $\widetilde{V}'_i$, then the count of such $I$ vanishes by a dimensional reason since we assume that the $v$-gradients outside $\widetilde{V}_i'$ are constant families over $K_i$. If a Z-graph $I$ with $2k$ black vertices meets $\widetilde{V}'_i$ but no black vertices of $I$ lie there, then there must be white vertices of $I$ in $\widetilde{V}'_i$. Let $p$ be the critical point attached to one of such white vertices. If $|p|=0$, then the Z-graph $I$ can be interpreted as a 0-chain in the bundle of configuration space of points in fiber of $\overline{E^\Gamma\setminus \widetilde{V}'_i}$.  This shows that the count of $I$ vanishes by a dimensional reason. If $|p|=1$ or $2$, then $p=p_k^{(i)}$ for some $k$ and the Z-graph $I$, viewed as a 0-chain in the configuration space of points in the fibers of $E^\Gamma$, can be interpreted as the transversal intersection of the ascending manifold locus of $p$ and a piecewise smooth chain of $\overline{E^\Gamma\setminus \widetilde{V}'_i}$. \emph{Now we use the coherence of $(f^{(j)G},\xi^{(j)G})$.} The count of $I$ is the transversal intersection of the locus of the (upward) gradient flow from the boundary of a thin parallel piece in $C_k^{(i)}$ of Lemma~\ref{lem:ribbon-graph-I} or \ref{lem:ribbon-graph-II} with the piecewise smooth chain of $\overline{E^\Gamma\setminus \widetilde{V}'_i}$. Since the locus from the parallel piece can be made arbitrarily thin, the number of the transversal intersections is 0. Namely, the Z-graphs that does not have any black vertices in $\widetilde{V}'_i$ for each $i$ does not contribute to $Z_k^\Morse$.
\end{proof}

\begin{Cor}\label{cor:Y-disjoint}
Let $k\geq 1$. Under the assumption of Lemma~\ref{lem:occupied}, only the Z-graphs that are the disjoint union of $2k$ $Y$-shaped components may contribute nontrivially to $Z_k^\Morse(\pi^\Gamma)$.
\end{Cor}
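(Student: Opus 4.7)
The plan is to show, building on Lemma~\ref{lem:occupied}(2), that no contributing Z-graph can have any compact edge; once this is established, every edge of the underlying $\vec{C}$-graph is separated, and the $\vec{C}$-graph then decomposes automatically as a disjoint union of $2k$ Y-shapes (one per trivalent vertex of $\Gamma$, consisting of that vertex together with its three incident half-edges and the three terminal white vertices).

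First I would invoke Lemma~\ref{lem:occupied}(2) to place exactly one black vertex $x_i$ in each $\widetilde{V}'_i$. Since all black vertices of a Z-graph lie in a single fiber $F_t$ of $\pi^\Gamma$, this means $x_i\in V_i(t_i)$ where $t=(t_1,\ldots,t_{2k})\in B_\Gamma$. A compact edge of the $\vec{C}$-graph would then correspond to a vertical flow-line of $-\xi^{(m)G}$ in $F_t$ between two of these $x_i,x_j$; the case $i=j$ is excluded by the uniqueness of the black vertex in $\widetilde{V}'_i$ (and by the absence of self-loops in $\Gamma\in\calG_k'$), so necessarily $i\neq j$ and the flow-line would run between two distinct handlebodies in a single fiber.

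The crux is then to show that such cross-handlebody compact flow-lines produce no contribution. I would exploit Definition~\ref{def:coherent}(1): outside $\bigcup_\ell V_\ell(t_\ell)$ the $v$-gradient $\xi^{(m)G}$ is independent of $t$, so the exterior portion of any such flow-line is frozen, while its behavior inside $V_i$ and $V_j$ is controlled by the coherent ribbon-handlebody structure of Lemmas~\ref{lem:ribbon-graph-I} and~\ref{lem:ribbon-graph-II}. Coupled with the remaining two codimension-$3$ edge conditions at each of $x_i$ and $x_j$, the arbitrarily-thin-neighborhood trick of the concluding paragraph of Lemma~\ref{lem:occupied}(2) should extend: the locus of admissible compact-edge configurations can be shrunk into arbitrarily thin neighborhoods of the cores of $T_i$ and $T_j$, whose generic transverse intersection with the other edge conditions is then empty.

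Once all edges must be separated, each black vertex $x_i$ is incident to three separated edges whose terminal white vertices are critical points of $f^{(j)G}_0$ lying in $F_0$; a parallel thinness argument forces these critical points to lie in $V_i$, i.e.\ to be among $\{p_1^{(i)},p_2^{(i)},p_3^{(i)}\}$. The resulting $\vec{C}$-graph is therefore a disjoint union of $2k$ Y-components, one attached to each handlebody. The main obstacle will be the cross-handlebody compact-edge exclusion in the third step above, since the flow-line is not purely exterior to the $V_\ell$'s and the thinness argument must be carefully refined to handle uniform control of entry and exit through $\partial V_i\cup\partial V_j$ over the full parameter space $B_\Gamma$.
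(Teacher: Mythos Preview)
Your overall strategy matches the paper's: invoke Lemma~\ref{lem:occupied}(2), rule out compact edges, and observe that a $\vec{C}$-graph with only separated edges is automatically a disjoint union of $2k$ $Y$-components. But you have turned a one-line step into the ``main obstacle'', and your proposed thinness argument for it is both vague and unnecessary.

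The exclusion of cross-handlebody compact edges is immediate from the Morse-function setup of \S\ref{ss:fiberwise-MF}: by construction $(f^{(j)G})^{-1}(-\infty,0]$ in each fiber is the disjoint union $V_1(t_1)\cup\cdots\cup V_{2k}(t_{2k})\cup g_j\cdot\R^4_-$, and the surgery preserves this since it is performed along the $0$-level locus. A compact edge is a vertical flow-line of $-\xi^{(j)G}$, along which $f^{(j)G}$ strictly decreases; hence a flow-line starting in one connected component $V_i(t_i)$ of this sublevel set can never leave it. Since by Lemma~\ref{lem:occupied}(2) the two black endpoints of a compact edge would sit in distinct components $V_i(t_i)$ and $V_j(t_j)$, no such flow-line exists. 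That is the entire content of the paper's sentence ``Thus there is no compact edge in $I$ connecting two black vertices.'' No coherence, no parallel pieces, no $\ve$-thinness is needed here.

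Your step~4 (forcing the white-vertex critical points into the same $V_i$ as the black vertex) is not part of the corollary and is not claimed at this point; the $Y$-decomposition follows already once all edges are separated, regardless of where the white vertices land. The further constraints on the indices and locations of white vertices are handled separately in \S\ref{ss:surviving}--\S\ref{ss:eval-Z}.
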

\begin{proof}
By Lemma~\ref{lem:occupied}, a Z-graph $I$ that may contribute to $Z_k^\Morse$ occupies $G$. Thus there is no compact edge in $I$ connecting two black vertices. Namely, every edge of $I$ must be separated one.
\end{proof}

As we will see in \S\ref{ss:eval-Z}, the counts of Y-shaped Z-graphs can be computed explicitly in terms of the homology of the closed manifold $S_i$. According to Lemma~\ref{lem:occupied} (1), the computation of the correction terms is no longer necessary and the value of the invariant can be computed exactly.

\subsection{Graphs of surviving type}\label{ss:surviving}

Let $I$ be a Z-graph in $E^\Gamma$ consisting only of $Y$-shaped components and suppose that $I$ occupies $G$. This implies that every output white vertices of a $Y$-shaped component are mapped to critical points in some $\widetilde{V}_\ell'$. Thus we may assume the following.
\begin{enumerate}
\item The index of the critical point attached on any output white vertex of $I$ is either of 0,1 or 2.
\item The index of the critical point attached on any input white vertex of $I$ is either of 1,2 or 3.
\end{enumerate}

We define the index of a $Y$-shaped component $J$ in $I$ as $(a_1,\ldots,a_r\,|\,a_{r+1},\ldots,a_3)$ if the indices of the critical points attached on the input white vertices of $J$ are $a_1,\ldots,a_r$ and if the indices of the critical points attached on the output white vertices of $J$ are $a_{r+1},\ldots,a_3$. Let $\ell\in \{1,\ldots,2k\}$ be such that the black vertex of $J$ lies in $\widetilde{V}_\ell'$. In fact, the moduli space of $J$ in $E^\Gamma$ is the pullback of the moduli space of $J$ in the restriction $E^\Gamma|_{K_\ell}$ of $E^\Gamma$ on $K_\ell$ under the projection $B_\Gamma\to K_\ell$. If there is a component $J$ whose moduli space in $E^\Gamma|_{K_\ell}$ has negative dimension, then the moduli space of the whole Z-graph including $J$ is empty. Hence 
{\it we need only to consider $J$ whose moduli space in $E^\Gamma|_{K_\ell}$ is exactly 0 dimensional. } Let $\acalM_J(\vec{\xi}^{G_\ell})=\acalM_J(\vec{\xi}^{G_\ell},\eta_\ell)$ denote the moduli space of Z-graphs from $J$ into $E^\Gamma|_{K_\ell}$, where $\eta_\ell$ is a Morse--Smale $h$-gradient for the Morse function $\lambda'|_{K_\ell}$ (\S\ref{ss:fiberwise-MF}). The condition that the dimension of the moduli space of $J$ in $E^\Gamma|_{K_\ell}$ is 0 is given by the equation
\[ \sum_{j=1}^r (4-a_j) + \sum_{j=r+1}^3 a_j 
= \left\{\begin{array}{ll}
4 & \mbox{if $V_\ell$ is of type I}\\
5 & \mbox{if $V_\ell$ is of type II}
\end{array}\right.
\]
All the indices of $J$ that satisfy this equation are as follows.
\begin{equation}\label{eq:list}
  \begin{split}
    \mbox{(Type I)}\quad & (2,3,3\,|\,),(\,|\,0,2,2),(\,|\,1,1,2), (1,3\,|\,0),(2,2\,|\,0),(2,3\,|\,1),(3,3\,|\,2),\\
    & (1\,|\,0,1),(2\,|\,0,2),(2\,|\,1,1),(3\,|\,1,2)\\
    \mbox{(Type II)}\quad & (1,3,3\,|\,),(2,2,3\,|\,),(\,|\,1,2,2), (1,2\,|\,0),(1,3\,|\,1),(2,2\,|\,1),(2,3\,|\,2),\\
    &(1\,|\,0,2),(1\,|\,1,1),(2\,|\,1,2),(3\,|\,2,2) 
  \end{split}
\end{equation}
Note that the edge-orientation of a $\vec{C}$-graph $J$ is independent of the type (I or II) of $J$. We say that $Y$-shaped component in a $\vec{C}$-graph is of {\it surviving type} if its index is on the above list (\ref{eq:list}).

\subsection{Algebraic Seifert surface of a leaf and linking number}\label{ss:dg}

Here, we shall give a formula for the chain $\bD_{g(p)}\cap \widetilde{V}_\ell$ for a combinatorial propagator in terms of the linking number. 

Let $\sigma$ be the fundamental cycle of $B_\Gamma^n$. In the following, we denote $\bD_*(\xi^G,\eta)_\sigma, \bA_*(\xi^G,\eta)_\sigma$ simply by $\bD_*, \bA_*$, respectively. Suppose that an $\R^4$-bundle $E_1\to S^1$ has sub $S^a$-bundle $\widetilde{c}$ and sub $S^{2-a}$-bundle $\widetilde{c}'$ with $\widetilde{c}\cap \widetilde{c}'=\emptyset$, and that $\tilde{c}$ bounds an $(a+1)$-chain $d$ in $E_1$. Then the linking number of $\widetilde{c}$ and $\widetilde{c}'$ is defined by $\widetilde{\ell k}(\tilde{c},\tilde{c}')=\langle d, \tilde{c}'\rangle$. 

\begin{Lem}\label{lem:dg(p)=p} Let $p,p'$ be critical points of Morse functions $\mu_i:V_i\to (-\infty,0]$ and $\mu_\ell:V_\ell\to (-\infty,0]$ ($i\neq \ell$) respectively with $|p|, |p'|>0$. We consider $p$ and $p'$ are points in the base fiber $F_0$. The following holds.
\begin{enumerate}
\item $\partial\bD_{g(p)}=\bD_p$, $\partial\bcalD_{g(p)}(\xi_{b_0^n})=\bcalD_p(\xi_{b_0^n})$.
\item $\langle \bD_{g(p)}, \bcalD_{p'}(\xi_{b_0^n})\rangle_{(\R^4\times K_\ell)^{V_\ell,\alpha_\ell}}=\ell k(c(p),c(p'))$ for $p'\neq p$, $|p|+|p'|=3$, where $c(p)$ etc. is the cycle in $\R^4$ given by $\bcalD_p(\xi_{b_0^n})$ etc.
\item If $V_\ell$ is of type II, then $\langle \bD_{g(p)}, \bD_{p'}\rangle_{(\R^4\times K_\ell)^{V_\ell,\alpha_\ell}}=\widetilde{\ell k}(\tilde{c}(p),\tilde{c}(p'))$ for $p'\neq p$, $|p|+|p'|=2$, where $\widetilde{c}(p)$ etc. is the cycle in $(\R^4\times K_\ell)^{V_\ell,\alpha_\ell}$ given by $\bcalD_p(\xi^G)$ etc. 
\end{enumerate}
\end{Lem}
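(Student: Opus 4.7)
My plan is to realize $\bD_{g(p)}$ (and its single-fiber restriction $\bcalD_{g(p)}(\xi_{b_0^n})$) as an algebraic ``Seifert-type'' bounding chain for the cycle $\bD_p$ (respectively $\bcalD_p(\xi_{b_0^n})$), and then to read off (2) and (3) as the classical identity \emph{linking number} $=$ \emph{intersection of one cycle with a bounding chain of the other}.

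For (1), I would apply Proposition~\ref{prop:dD}, extended $\Z$-linearly, to $g(p)=\sum n_q\,q$, obtaining $\partial\bD_{g(p)} = \bD_{\overline\partial\,\overline g(p)}$ modulo degenerate chains. The chain-contraction relation on $\overline C_*$ recorded just before Lemma~\ref{lem:dtheta-0}, together with $\overline g(\ell_\infty^-)=0$, gives $\overline\partial\,\overline g(p) = p - g\partial p$ for $1\le |p|\le 3$. The residual term $\bD_{g\partial p}$ has to be shown to vanish (or at least to be irrelevant for the subsequent intersections in (2) and (3)); I would do this by exploiting the freedom in the choice of chain contraction $g$ of the acyclic complex for our adapted Morse function $m$, choosing $g$ so that it kills the image of $\partial$ on the index-$\ge 1$ critical points of $\mu_i$ lying in $V_i$. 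The single-fiber identity $\partial\bcalD_{g(p)}(\xi_{b_0^n}) = \bcalD_p(\xi_{b_0^n})$ is the corresponding statement in one fiber and is the standard Morse-homological relation between a chain contraction and the descending manifolds.

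Granted (1), parts (2) and (3) become essentially definitional. In (2), $\bcalD_{g(p)}(\xi_{b_0^n})$ is a $(|p|+1)$-chain in the base fiber $\R^4$ whose boundary is $c(p)$; the cycle $c(p')$ has complementary dimension $3-|p|$ in $\R^4$; and the classical formula identifies $\langle\bcalD_{g(p)}(\xi_{b_0^n}),c(p')\rangle_{\R^4}$ with $\ell k(c(p),c(p'))$. Passing from the fiber to the total space $(\R^4\times K_\ell)^{V_\ell,\alpha_\ell}$ changes nothing, because $\bcalD_{p'}(\xi_{b_0^n})$ is concentrated on the base fiber and the parallel-transport construction makes $\bD_{g(p)}$ restrict to $\bcalD_{g(p)}(\xi_{b_0^n})$ there. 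For (3), the ambient $5$-manifold admits $\bD_{g(p)}$ as a genuine bounding chain of the cycle $\tilde c(p) = \bD_p$ (of dimension $|p|+1=2$) by (1), and the intersection pairing with the complementary-dimensional cycle $\bD_{p'}=\tilde c(p')$ is, by the very definition of $\widetilde{\ell k}$, the parametrized linking number; the dimensions $(|p|+2)+(|p'|+1)=5$ match.

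The main obstacle I anticipate is the combined orientation/sign bookkeeping: matching the conventions for $\bD_p$, $\bcalD_p$, and $\bcalM_2$ fixed in \S\ref{s:ori}--\S\ref{ss:boudary} with the orientation of a Seifert-type chain and the standard sign of a linking number. A secondary, but essential, point is the precise justification in (1) that the combinatorial propagator $g$ can be arranged so that the residual term $\bD_{g\partial p}$ genuinely drops from the boundary formula, rather than only being comparable modulo degenerate pieces that could in principle intersect nontrivially later---this is where the flexibility of chain contractions on the acyclic Morse complex of $m$ does the real work.
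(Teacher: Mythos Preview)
Your plan for (2) and (3) is correct and is exactly what the paper does: once $\bD_{g(p)}$ (resp.\ $\bcalD_{g(p)}(\xi_{b_0^n})$) is known to bound $\bD_p$ (resp.\ $c(p)$), the intersections are linking numbers by definition.

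The gap is in (1), and it is precisely the point you flag as ``secondary but essential''. Your proposed mechanism---exploit the freedom in the chain contraction $g$ to kill $g\partial p$---cannot work. For any chain contraction $g$ on an acyclic complex and any cycle $z$, applying $\partial g+g\partial=\mathrm{id}$ to $z$ gives $\partial g(z)=z$; hence $g(z)=0$ forces $z=0$. Since $\partial p$ is a cycle, $g(\partial p)=0$ is achievable for some (equivalently, every) $g$ only if $\partial p=0$ already. So there is no genuine freedom here.

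The paper's argument is the direct one: $\partial p=0$ holds for every positive-index critical point $p$ of $\mu_i$. Indeed, $V_i$ is a connected component of the sublevel set $m^{-1}(-\infty,0]$, so the descending manifold of $p$ stays in $V_i$ and $\partial p$ (in the Morse complex of $m$) is computed entirely inside $V_i$. But $\mu_i$ is a \emph{perfect} Morse function on $V_i$: the handlebody $V_i$ has the homotopy type of $S^1\vee S^1\vee S^2$ (type~I) or $S^1\vee S^2\vee S^2$ (type~II), and the critical-point indices $0,1,1,2$ or $0,1,2,2$ match the Betti numbers exactly, forcing every Morse differential to vanish. With $\partial p=0$ in hand, the paper's one-line chain
\[
\overline{\partial}\,g(p)=\partial g(p)=(\partial g+g\partial)(p)=p
\]
is literally correct (the first equality because $|g(p)|\ge 2$, the second because $g\partial p=0$), and Proposition~\ref{prop:dD} finishes (1).
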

\begin{proof} By $\overline{\partial} g(p)=\partial g(p)=(\partial g+g\partial)(p)=p$ and Proposition~\ref{prop:dD}, we have
\[ \partial\bD_{g(p)}=\sum_r \langle \overline{\partial} g(p),r\rangle\bD_r=\sum_r\langle p,r\rangle \bD_r=\bD_p. \]
Also, $\langle \bD_{g(p)},\bcalD_{p'}(\xi_{b_0^n})\rangle_{(\R^4\times K_\ell)^{V_\ell,\alpha_\ell}} = \langle \bcalD_{g(p)}(\xi_{b_0^n}),\bcalD_{p'}(\xi_{b_0^n})\rangle_{F_0}=\ell k(c(p),c(p'))$ and the identity (2) is proved. The identity of (3) follows from (1) and the definition of $\widetilde{\ell k}$.
\end{proof}

\begin{Lem}\label{lem:rel_cycle}
Let $p$ be one of critical points of $\mu_i:V_i\to (-\infty,0]$ with $0<|p|\leq 2$. Let $p_1,p_2,p_3$ be the critical loci of $\widetilde{\mu}_\ell:\widetilde{V}_\ell\to (-\infty,0]$ ($\ell\neq i$) of positive indices. Then $\bD_{g(p)}$ induces a (piecewise smooth singular) relative cycle of $(\widetilde{V}_\ell,\partial \widetilde{V}_\ell)$ on $K_\ell$, and its $\Z$-homology class is 
\[ \sum_j \ell k(c(p),c(p_j))\,[\bA_{p_j}]. \]
Here the sum is taken for $j$ such that $|p|+|p_j|=3$.
\end{Lem}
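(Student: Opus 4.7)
The plan is two-fold: first, to show that $\bD_{g(p)}\cap \widetilde{V}_\ell'$ defines a relative cycle of $(\widetilde{V}_\ell,\partial\widetilde{V}_\ell)$ fibered over $K_\ell$; second, to identify its homology class by a Poincaré--Lefschetz duality argument, pairing against the fiberwise descending manifolds $\bcalD_{p_j}(\xi_{b_0^n})$.

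For the relative-cycle step, Lemma~\ref{lem:dg(p)=p}(1) yields $\partial \bD_{g(p)}=\bD_p$. Since $p$ is a critical point of $\mu_i$ with $i\neq \ell$, the coherence of $\xi^G$ (Lemmas~\ref{lem:ribbon-graph-I}, \ref{lem:ribbon-graph-II}) together with genericity with respect to $\vec V_G$ forces $\bD_p$ to meet $\widetilde{V}_\ell$ only in thin tubes that enter and exit through $\partial\widetilde{V}_\ell$; any internal contribution from an $|p|/|p_j|$-intersection inside $\widetilde{V}_\ell$ cancels in pairs by exactly the argument used in the proof of Lemma~\ref{lem:morse-complex-invariant}(a), where the intersection of $\calD_p\cap\partial V_i$ with the boundary of each parallel piece of $C_k^{(i)}$ vanishes. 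Hence $\bD_p\cap\widetilde{V}_\ell$ is null modulo $\partial\widetilde V_\ell$ and $\bD_{g(p)}\cap\widetilde{V}_\ell'$ is a relative cycle. Coherence further ensures that the restriction of $\xi^G$ to $\widetilde{V}_\ell'$ is pulled back along $B_\Gamma\to K_\ell$ (Definition~\ref{def:coherent}), so the resulting relative cycle descends to one on $K_\ell$, which we still denote $\bD_{g(p)}\cap \widetilde V_\ell$.

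For the duality step, I would observe that the family $\widetilde V_\ell$ admits a fiberwise handle decomposition whose positive-index cores are the critical loci $p_1,p_2,p_3$, so the classes $\{[\bA_{p_j}]\}_{|p_j|=3-|p|}$ span $H_{|p|+1+k_\ell}(\widetilde V_\ell,\partial\widetilde V_\ell;\Z)$ and are dual to the fiberwise classes $\{[\bcalD_{p_{j'}}(\xi_{b_0^n})]\}$ via intersection in $\widetilde V_\ell$: the ascending manifold $\bA_{p_j}$ meets the base fiber along $\calA_{p_j}(\xi_{b_0^n})$, which intersects $\calD_{p_{j'}}(\xi_{b_0^n})$ transversely at the single point $p_j$ when $j=j'$ and is empty otherwise. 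Thus the coefficient of $[\bA_{p_j}]$ in $[\bD_{g(p)}\cap \widetilde V_\ell]$ is
\[ \langle \bD_{g(p)}\cap \widetilde V_\ell,\, \bcalD_{p_j}(\xi_{b_0^n})\rangle_{\widetilde V_\ell}. \]
The chain $\bcalD_{p_j}(\xi_{b_0^n})$ emanates from $p_j\in V_\ell$, so after a small generic perturbation its intersection with $\bD_{g(p)}$ is contained in $\widetilde V_\ell$ and the local pairing agrees with the global intersection $\langle\bD_{g(p)},\bcalD_{p_j}(\xi_{b_0^n})\rangle$, which by Lemma~\ref{lem:dg(p)=p}(2) equals $\ell k(c(p),c(p_j))$. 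Summing over $j$ with $|p|+|p_j|=3$ gives the claimed formula.

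The main obstacle is the first step: rigorously ruling out interior contributions of $\bD_p\cap\widetilde V_\ell$ at the critical loci $p_j$ of $\widetilde\mu_\ell$. This rests on the same handlebody-parallelness mechanism (taking the width $\ve$ in Lemmas~\ref{lem:ribbon-graph-I}--\ref{lem:ribbon-graph-II} arbitrarily small) that drives Key Lemma~\ref{lem:occupied} and the chain-isomorphism assertion of Lemma~\ref{lem:morse-complex-invariant}(a); once this cancellation is in place, the duality and intersection computations follow from the structural identities already available.
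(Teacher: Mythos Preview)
Your duality step has a genuine gap when $V_\ell$ is of type II (so $K_\ell=S^1$). You assert that the classes $\{[\bA_{p_j}]\}_{|p_j|=3-|p|}$ span $H_{|p|+1+\dim K_\ell}(\widetilde V_\ell,\partial\widetilde V_\ell;\Z)$, but this is false: by Proposition~\ref{prop:wh-prod} one has $H_*(\widetilde V_\ell,\partial\widetilde V_\ell)\cong H_*(V_\ell,\partial V_\ell)\otimes H_*(S^1)$, and the K\"unneth summand $H_{|p|+2}(V_\ell,\partial V_\ell)\otimes H_0(S^1)$ contributes additional generators $[\bcalA_{p_{j'}}(\xi_{b_0^n})]$ with $|p|+|p_{j'}|=2$. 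The paper handles exactly this: it writes
\[
[\bD_{g(p)}\cap\widetilde V_\ell]=\sum_j c_j[\bA_{p_j}]+\sum_{j'} c_{j'}[\bcalA_{p_{j'}}(\xi_{b_0^n})],
\]
pairs against $\bD_{p_{j'}}$ to obtain $c_{j'}=\widetilde{\ell k}(\tilde c(p),\tilde c(p_{j'}))$ via Lemma~\ref{lem:dg(p)=p}(3), and then observes from the explicit linking pattern of the Y-link $G$ that all these family linking numbers vanish. Without this last step your expansion is incomplete and the formula does not follow.

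A minor remark on your first step: it is more work than needed. Since $p\in V_i$ has negative critical value and Z-paths are descending in $f^G$, the chain $\bD_p$ is confined to the connected component $\widetilde V_i'$ of $(f^G)^{-1}(-\infty,0]$, which is disjoint from $\widetilde V_\ell'$ for $\ell\neq i$. Thus $\bD_p\cap\widetilde V_\ell=\emptyset$ outright, and the relative-cycle property of $\bD_{g(p)}\cap\widetilde V_\ell$ is immediate from $\partial\bD_{g(p)}=\bD_p$; no cancellation argument in thin tubes is required.
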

\begin{proof}
By Proposition~\ref{prop:wh-prod}, the families of twists of $\partial \widetilde{V}_\ell$ are homologically trivial and we have $H_*(\widetilde{V}_\ell,\partial \widetilde{V}_\ell)\cong H_*(V_\ell,\partial V_\ell)\otimes H_*(K_\ell)$, whose basis is given explicitly by the ascending manifolds of $p_j$. Thus we may put
\[ [\bD_{g(p)}\cap \widetilde{V}_\ell]=\left\{
\begin{array}{ll}
  \displaystyle\sum_j c_j[\bA_{p_j}] & (\mbox{$V_\ell$ is of type I})\\
  \displaystyle\sum_j c_j[\bA_{p_j}]+\sum_{j'} c_{j'}[\bcalA_{p_{j'}}(\xi_{b_0^n})] & (\mbox{$V_\ell$ is of type II})
\end{array}\right. \]
($c_j,c_{j'}\in\Z$). Here, the first sum in both lines is taken for $j$ such that $|p|+|p_j|=3$, and the second sum in the second row is taken for $j'$ such that $|p|+|p_{j'}|=2$. By Lemma~\ref{lem:dg(p)=p}, we have
$\langle \bD_{g(p)}\cap \widetilde{V}_\ell, \bcalD_{p_j}(\xi^G_{b_0^n})\rangle=\langle \bD_{g(p)}, \bcalD_{p_j}(\xi^G_{b_0^n})\rangle=\ell k(c(p),c(p_j)) $, $\langle \bA_{p_j},\bcalD_{p_{j'}}(\xi^G_{b_0^n})\rangle=\delta_{jj'}$, and thus we have $c_j=\ell k(c(p),c(p_j))$. If $V_\ell$ is of type II and $|p|+|p_j|=2$, then by Lemma~\ref{lem:dg(p)=p} again, we have $\langle \bD_{g(p)}\cap \widetilde{V}_\ell, \bD_{p_{j'}}\rangle=\widetilde{\ell k}(\tilde{c}(p),\tilde{c}(p_j)) $, $\langle \bcalA_{p_j},\bD_{p_{j'}}\rangle=\delta_{jj'}$, and thus we have $c_{j'}=\widetilde{\ell k}(\tilde{c}(p),\tilde{c}(p_{j'}))$. Moreover, by looking at the way of linking of the Y-link $G_1\cup\cdots\cup G_{2k}$, we see that all $\widetilde{\ell k}$ are 0 and hence the sum for $j'$ is 0.
\end{proof}

\subsection{Orientations of completely separated graphs}\label{ss:o-separated}

Let $H$ be an edge-oriented, vertex-oriented labelled trivalent graph with $2k$ vertices. Let $H^\circ$ be a $\vec{C}$-graph obtained from $H$ by replacing every edge with separated one. Then $H^\circ$ is a disjoint union of $Y$-shaped components. Let $H^\circ_i$ ($i=1,2,\ldots,2k$) be the $Y$-component of $H^\circ$ that includes the $i$-th black vertex. 

Recall that the orientation of a trivalent graph was defined by $o=(e_{1+}\wedge e_{1-})\wedge\cdots\wedge(e_{3k+}\wedge e_{2k-})$, where $\{e_{j+},e_{j-}\}$ is a half-edge decomposition of the $j$-th edge $e_j$, and this can be rewritten as $o=\tau_1\wedge\cdots\wedge \tau_{2k}$, $\tau_i=e_{p\pm}\wedge e_{q\pm}\wedge e_{r\pm}$, where $e_{p\pm}, e_{q\pm}, e_{r\pm}$ are the half-edges adjacent to the $i$-th vertex. In \S\ref{ss:arrow-graph}, we used an arrow on each edge $e$ to determine a decomposition of $e$ into half-edges $\{e_+,e_-\}$ such that $\deg\,{e_+}=1$ and $\deg\,{e_-}=2$. Here, we do not use arrows to avoid confusion with the edge-orienation of a $\vec{C}$-graph. Instead, we use the indices of the pair of critical points attached to the white vertices of a separated edge. Namely, we identify each segment in a separated edge with a half-edge and we define its degree (mod 2) by the index of the critical point attached to the segment. For example, if a separated edge $e$ consists of two segments $e',e''$ with critical points of indices 1 and 2 attached to the white vertices of $e'$ and $e''$ respectively, then we define $\deg\,e'=1$ and $\deg\,e''=2$, independent of the edge-orientation of $e$ for a $\vec{C}$-graph.

\subsection{Homological description of graph counting}\label{ss:eval-Z}

If the half-edges $e,e',e''$ around the $i$-th black vertex of $H^\circ$ gives $\tau_i$ in this order, and if the critical points that are attached to the white vertices of $e,e',e''$ are $p,p',p''$ respectively, then we write $H^\circ_i=H^\circ_i(\bvec{p})$ ($\bvec{p}=(p,p',p'')$). Also, if $H^\circ=H^\circ_1(\bvec{p}_1)\cup\cdots\cup H^\circ_{2k}(\bvec{p}_{2k})$, then we write $H^\circ=H^\circ(\bvec{p}_1,\ldots,\bvec{p}_{2k})$ and allow substitution $H^\circ(\bvec{p}_1',\ldots,\bvec{p}_{2k}')$ by different tuple $(\bvec{p}_1',\ldots,\bvec{p}_{2k}')$ with possibly different indices from $(\bvec{p}_1,\ldots,\bvec{p}_{2k})$. When $\bvec{p}_1,\ldots,\bvec{p}_{2k}$ are such that every separated edge is of degree 1 (in the sense of \S\ref{ss:gcf}) and that $H_i^\circ(\bvec{p}_i)$ is of surviving type for each $i$, we denote by $\mathfrak{S}_{2k}^{H^\circ}(\bvec{p}_1,\ldots,\bvec{p}_{2k})$ the subset of $\mathfrak{S}_{2k}$ consisting of bijections $\beta:\{1,2,\ldots,2k\}\to \{1,2,\ldots,2k\}$ such that $H^\circ_j(\bvec{p}_j)$ is of type I (resp. type II) in the list (\ref{eq:list}) of surviving type if and only if $V_{\beta(j)}$ is of type I (resp. type II). Let $\mathrm{sgn}'(\sigma)$ be defined by the identity: $\tau_{\sigma(1)}\wedge\cdots\wedge \tau_{\sigma(2k)}=\mathrm{sgn}'(\sigma)\,\tau_1\wedge\cdots\wedge\tau_{2k}$. Note that $\mathrm{sgn}'(\sigma)$ depends on the choice of $\bvec{p}_1,\ldots,\bvec{p}_{2k}$.
The following lemma is a consequence of Corollary~\ref{cor:Y-disjoint} and the observation in \S\ref{ss:surviving}.

\begin{Lem}\label{lem:I(H)}
Under the assumption of Lemma~\ref{lem:occupied}, the contribution in $2^{3k}(2k)!(3k)!\,Z_k^\Morse(\pi^\Gamma)$ of the Z-graphs from graphs of the form $H^\circ$ is 
\begin{equation}\label{eq:I(H)}
 \footnotesize\sum_{\tbvec{p}_1,\ldots,\tbvec{p}_{2k}} 
\sum_{{\sigma\in}\atop{\mathfrak{S}_{2k}^{H^\circ}(\tbvec{p}_1,\ldots,\tbvec{p}_{2k})}}
\mathrm{sgn}'(\sigma)\,\mathrm{Tr}_{\vec{g}}\Bigl(
\#\acalM_{H^\circ_1(\tbvec{p}_1)}(\vec{\xi}^{G_{\sigma(1)}})\cdots\#\acalM_{H^\circ_{2k}(\tbvec{p}_{2k})}(\vec{\xi}^{G_{\sigma(2k)}})H^\circ(\bvec{p}_1,\ldots,\bvec{p}_{2k})\Bigr),\end{equation}
where $\bvec{p}_1,\ldots,\bvec{p}_{2k}$ are such that every separated edge is of degree 1 and that $H_i^\circ(\bvec{p}_i)$ is of surviving type for each $i$. The orientation of $\acalM_{H^\circ_i(\tbvec{p}_i)}(\vec{\xi}^{G_{\sigma(i)}})$ is given by the coorientation in $\widetilde{V}_{\sigma(i)}$ that is the wedge of coorientations of descending or ascending manifolds associated with the vertex-orientation $\tau_i$. We denote (\ref{eq:I(H)}) by $I(\vec{\xi}^G,H)$. 
\end{Lem}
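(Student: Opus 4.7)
\medskip

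\noindent\textbf{Proof proposal for Lemma~\ref{lem:I(H)}.}
The plan is to start from the defining sum (\ref{eq:Z_k}) for $Z_k^\Morse(\pi^\Gamma)$, which ranges over all $\vec{C}$-graphs $\Gamma'$ with $2k$ black vertices and $\deg(\Gamma')=(1,\ldots,1)$, and reduce it to the claimed expression by successively applying the three constraints already available in the excerpt: Corollary~\ref{cor:Y-disjoint} (only completely separated graphs contribute), Key Lemma~\ref{lem:occupied} (each black vertex must sit in a distinct $\widetilde{V}'_\ell$), and the dimension bookkeeping of \S\ref{ss:surviving} (only surviving-type $Y$-pieces give nonzero moduli). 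First I would fix an edge-oriented, vertex-oriented, labelled trivalent graph $H$ and consider a $\vec{C}$-graph $\Gamma'$ of the form $H^\circ(\bvec{p}_1,\ldots,\bvec{p}_{2k})$; by Corollary~\ref{cor:Y-disjoint} this is the only shape contributing, so the sum over $\Gamma'$ in (\ref{eq:Z_k}) splits, for each class represented by $H$, into a sum over tuples $(\bvec{p}_1,\ldots,\bvec{p}_{2k})$ of critical points attached to the six white vertices of each $Y$-component.

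Next I would assign to each Z-graph $I$ from such a $\Gamma'$ a permutation $\sigma\in\mathfrak{S}_{2k}$ by letting $\sigma(i)$ be the index of the handlebody $\widetilde{V}'_{\sigma(i)}$ that contains the $i$-th black vertex; by the Key Lemma this map is a bijection from the contributing Z-graphs to the disjoint union, over $\sigma\in\mathfrak{S}_{2k}$, of the products $\prod_{i=1}^{2k}\acalM_{H^\circ_i(\tbvec{p}_i)}(\vec{\xi}^{G_{\sigma(i)}})$. The factorization is legitimate because the coherence (Definition~\ref{def:coherent}) makes the $v$-gradient on $\widetilde{V}'_{\sigma(i)}$ a pullback from $K_{\sigma(i)}$ and the handlebodies are mutually disjoint, so the transversality and orientation arguments of \S\ref{s:GCF} imply that the fibered intersection defining $\acalM_{\Gamma'}$ splits as the product of the factor moduli. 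The surviving-type condition appears because, for each factor, $\dim\acalM_{H^\circ_i(\tbvec{p}_i)}(\vec{\xi}^{G_{\sigma(i)}})=0$ is precisely the equation listed in \S\ref{ss:surviving}, matching $Y$-components of type I with $K_{\sigma(i)}=S^0$ and type II with $K_{\sigma(i)}=S^1$; this is exactly the condition $\sigma\in\mathfrak{S}_{2k}^{H^\circ}(\tbvec{p}_1,\ldots,\tbvec{p}_{2k})$. Otherwise the factor has the wrong dimension and is either empty or of positive dimension, contributing $0$ to the count.

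Then I would collect signs. Unwinding the definition of $\Tr_{\vec g}$ in \S\ref{ss:gcf}, each separated edge of $H^\circ$ carrying $(q,p)$ contributes the scalar $-g^{(i)}_{qp}$, which produces the $\Tr_{\vec g}$ factor on the outside of (\ref{eq:I(H)}). The orientation of $\acalM_{H^\circ_i(\tbvec{p}_i)}(\vec{\xi}^{G_{\sigma(i)}})$ is, by the recipe in \S\ref{ss:gcf}, the wedge of coorientations of the three relevant ascending/descending manifolds in the cyclic order prescribed by the vertex-orientation $\tau_i$, which is the convention stated in the lemma. When the bijection between black vertices and handlebodies is $\sigma$ rather than the identity, we are forced to rewrite $\tau_{\sigma(1)}\wedge\cdots\wedge\tau_{\sigma(2k)}$ in the order $\tau_1\wedge\cdots\wedge\tau_{2k}$ before reading off $\#\acalM_\Gamma$, which produces the sign $\mathrm{sgn}'(\sigma)$ by the very definition of that sign. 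Putting everything together and multiplying by the normalizing factor $2^{3k}(2k)!(3k)!$ removed by $Z_k^\Morse$ yields (\ref{eq:I(H)}).

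The main obstacle I expect is the orientation bookkeeping, specifically justifying that the product coorientation on $\prod_i\acalM_{H^\circ_i(\tbvec{p}_i)}$ derived from the transversal intersection structure of $\EC_{2k}(\pi^\Gamma)$ agrees with the wedge-of-coorientations convention inside each $\widetilde{V}_{\sigma(i)}$, and that the sign $\mathrm{sgn}'(\sigma)$—which depends on $(\bvec{p}_1,\ldots,\bvec{p}_{2k})$ via the degrees of half-edges described in \S\ref{ss:o-separated}—is precisely the discrepancy between these two orientation conventions under the relabeling induced by $\sigma$. This will require carefully checking the half-edge degree convention of \S\ref{ss:o-separated} against the sign rules used when permuting the factors of the fibered intersection. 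All other parts of the argument are direct consequences of already-established facts.
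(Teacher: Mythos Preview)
Your proposal is correct and follows essentially the same route as the paper, which states the lemma as ``a consequence of Corollary~\ref{cor:Y-disjoint} and the observation in \S\ref{ss:surviving}'' without giving a separate proof. One small refinement: the paper justifies the sign $\mathrm{sgn}'(\sigma)$ not via reordering the graph-orientation factors $\tau_i$ directly, but via the base coorientation---the moduli of the $i$-th $Y$-piece projects to $K_{\sigma(i)}$, so the full moduli has base coorientation $o(K_{\sigma(1)})\wedge\cdots\wedge o(K_{\sigma(2k)})=\mathrm{sgn}'(\sigma)\,o(K_1)\wedge\cdots\wedge o(K_{2k})$; since the type-matching condition in $\mathfrak{S}_{2k}^{H^\circ}$ forces $\dim K_{\sigma(i)}\equiv\deg\tau_i\pmod 2$, this agrees with the definition of $\mathrm{sgn}'(\sigma)$ via the $\tau_i$'s, which addresses exactly the obstacle you flagged in your last paragraph.
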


Each term $\#\acalM_{H^\circ_i(\tbvec{p}_i)}(\vec{\xi}^{G_\ell})$ is the triple intersection of some three chains in $\widetilde{V}_\ell$ formed along the vertex-orientation. For example, if among the three white vertices in $H^\circ_i(p_i,p_i',p_i'')$ the one with $p_i$ attached is incoming and the other two with $p_i',p_i''$ attached are outgoing, then $\#\acalM_{H^\circ_i(\tbvec{p}_i)}(\vec{\xi}^{G_\ell})=\langle \bD_{p_i},\bA_{p_i'},\bA_{p_i''}\rangle_{\widetilde{V}_\ell}$. The reason of $\mathrm{sgn}'(\sigma)$ is that the pullback of the projection of the moduli space $\acalM_{H_i^\circ(\tbvec{p}_i)}(\vec{\xi}^{G_{\sigma(i)}})$ on $K_{\sigma(i)}$ to $B_\Gamma$ has coorientation $\pm o(K_{\sigma(i)})$, where $\pm$ is determined by the orientation of the triple intersection. Then the moduli space of the whole graph $H^\circ$ projected on $B_\Gamma$ has coorientation $\pm o(K_{\sigma(1)})\wedge\cdots\wedge o(K_{\sigma(2k)})=\pm \mathrm{sgn}'(\sigma)\,o(K_1)\wedge\cdots\wedge o(K_{2k})$.

The term $\#\acalM_{H^\circ_1(\tbvec{p}_1)}(\vec{\xi}^{G_{\sigma(1)}})\cdots\#\acalM_{H^\circ_{2k}(\tbvec{p}_{2k})}(\vec{\xi}^{G_{\sigma(2k)}})H^\circ(\bvec{p}_1,\ldots,\bvec{p}_{2k})$ is an integer multiple of a graph with $3k$ inputs and $3k$ outputs. Let $y_1,y_2,\ldots,y_{3k}$ be the critical points that are attached on the input white vertices of $H^\circ(\bvec{p}_1,\ldots,\bvec{p}_{2k})$, and let $x_1,x_2,\ldots,x_{3k}$ be those on the corresponding output white vertices, so that $y_i$ and $x_i$ are attached on the $i$-th separated edge of $H^\circ$. The combinatorial structure of the graph $H$ gives the bijective correspondence $(\bvec{p}_1,\ldots,\bvec{p}_{2k})\leftrightarrow (x_1,\ldots,x_{3k};y_1,\ldots,y_{3k})$. The following lemma is evident.

\begin{Lem}\label{lem:R-X}
For a permutation $\sigma\in\mathfrak{S}_{2k}^{H^\circ}(\bvec{p}_1,\ldots,\bvec{p}_{2k})$, put
\[ \begin{split}
R_\sigma(x_1,\ldots,x_{3k};y_1,\ldots,y_{3k})&=\mathrm{sgn}'(\sigma)\,\#\acalM_{H^\circ_1(\tbvec{p}_1)}(\vec{\xi}^{G_{\sigma(1)}})\cdots\#\acalM_{H^\circ_{2k}(\tbvec{p}_{2k})}(\vec{\xi}^{G_{\sigma(2k)}}),\\
X_\sigma(x_1,\ldots,x_{3k};y_1,\ldots,y_{3k})&=R_\sigma(x_1,\ldots,x_{3k};y_1,\ldots,y_{3k})H^\circ(\bvec{p}_1,\ldots,\bvec{p}_{2k}).
\end{split}\]
Then $I(\vec{\xi}^G,H)$ can be rewritten as
\begin{equation}\label{eq:XR}
 \begin{split}
   &\sum_{{{x_1,\ldots,x_{3k}}\atop{y_1,\ldots,y_{3k}}}}\sum_{{\sigma\in}\atop{\mathfrak{S}_{2k}^{H^\circ}(\tbvec{p}_1,\ldots,\tbvec{p}_{2k})}}
\mathrm{Tr}_{\vec{g}}\Bigl(
X_\sigma(x_1,\ldots,x_{3k};y_1,\ldots,y_{3k})
\Bigr)\\
&=(-1)^{3k}\sum_{x_1,\ldots,x_{3k}}\sum_{{\sigma\in}\atop{\mathfrak{S}_{2k}^{H^\circ}(\tbvec{p}_1,\ldots,\tbvec{p}_{2k})}}\Bigl[R_\sigma(x_1,\ldots,x_{3k};g^{(1)}(x_1),\ldots,g^{(3k)}(x_{3k}))\,\mathrm{Close}(H^\circ)\Bigr].
\end{split}
 \end{equation}
Here, $x_1,\ldots,x_{3k},y_1,\ldots,y_{3k}$ are such that every separated edge is of degree 1 and that $H_i^\circ(\bvec{p}_i)$ is of surviving type for each $i$, $R_\sigma(x_1,\ldots,x_{3k};y_1,\ldots,y_{3k})$ is extended to sequences of chains by $\Z$-linearity and $\mathrm{Close}(H^\circ)$ is the trivalent graph that is obtained from the $\vec{C}$-graph $H^\circ$ by identifying the pair of white vertices in each separated edge. 
\end{Lem}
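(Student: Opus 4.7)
The plan is to verify both equalities in (\ref{eq:XR}) by direct substitution, since this lemma is essentially a bookkeeping identity unpacking the definition of $\mathrm{Tr}_{\vec{g}}$. First I would make the bijective change of variables $(\bvec{p}_1,\ldots,\bvec{p}_{2k}) \leftrightarrow (x_1,\ldots,x_{3k};\,y_1,\ldots,y_{3k})$ described just above the statement: each entry of some $\bvec{p}_i$ is either an input critical point $y_j$ (attached to the white vertex of the $j$-th separated edge coming from the $i$-th black vertex) or an output critical point $x_j$, and the ordering within $\bvec{p}_i$ is dictated by the vertex-orientation $\tau_i$. Under this identification, the tuple $(H^\circ(\bvec{p}_1,\ldots,\bvec{p}_{2k}), \mathrm{sgn}'(\sigma)\prod_i \#\acalM_{H_i^\circ(\tbvec{p}_i)}(\vec\xi^{G_{\sigma(i)}}))$ is precisely $X_\sigma(x_1,\ldots,x_{3k};y_1,\ldots,y_{3k})$, so $I(\vec{\xi}^G,H)$ in (\ref{eq:I(H)}) coincides termwise with the first line of (\ref{eq:XR}).

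The second equality is where the trace definition does the work. Applying
\[
\mathrm{Tr}_{\vec{g}}(\Gamma) = \prod_{i=1}^{3k} (-g^{(i)}_{q_i p_i})\cdot \mathrm{Close}(\Gamma)
\]
to $X_\sigma$, with $p_i = y_i$ (input) and $q_i = x_i$ (output), produces the factor $\prod_i (-g^{(i)}_{x_i y_i})$ times $R_\sigma(x_1,\ldots;y_1,\ldots)\cdot \mathrm{Close}(H^\circ)$, since the closed trivalent graph $\mathrm{Close}(H^\circ)$ depends only on the combinatorial type of $H$, not on the particular critical points decorating its white vertices. I would then carry out the sum over $y_1,\ldots,y_{3k}$ edge by edge: for each fixed $i$, the partial sum $\sum_{y_i}(-g^{(i)}_{x_i y_i})\,R_\sigma(\ldots;\ldots,y_i,\ldots)$ equals $-R_\sigma(\ldots;\ldots,g^{(i)}(x_i),\ldots)$ by the very definition of $g^{(i)}_{x_i y_i}$ as the coefficient of $y_i$ in $g^{(i)}(x_i)=\sum_{y_i} g^{(i)}_{x_i y_i}\,y_i$, together with the $\Z$-linearity extension of $R_\sigma$ in its $y$-arguments that is asserted in the statement. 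Iterating across all $3k$ separated edges collects a global sign $(-1)^{3k}$.

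Two small consistency checks remain. First, the substitution $y_i \mapsto g^{(i)}(x_i)$ raises the degree by one, so $|y_i| = |x_i|+1$ for every surviving-type summand; this matches the degree-one hypothesis on separated edges, so the degree compatibility required for $\vec{C}$-graphs and for non-vanishing moduli spaces is preserved. Second, the permutation $\sigma$ is compatible with the $V_{\sigma(i)}$-type (I or II) of the component at the $i$-th black vertex of $H^\circ$, and this type is determined by the indices of $\bvec{p}_i$, which are preserved under the substitution of $g^{(i)}(x_i)$ since $g^{(i)}$ preserves the filtration by critical point index shifted by one. Hence $\mathfrak{S}_{2k}^{H^\circ}(\bvec{p}_1,\ldots,\bvec{p}_{2k})$ is the correct range of summation on both sides, and the identity follows. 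No step requires real work beyond the chain of definitions; the only place demanding care is tracking the sign $\mathrm{sgn}'(\sigma)$ and the signs produced by the combinatorial propagator, which is the reason for isolating $R_\sigma$ as an intermediate object.
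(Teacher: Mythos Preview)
Your proposal is correct and matches the paper's approach: the paper simply declares this lemma ``evident'' and gives no proof, so you have spelled out exactly the definition-chasing (the bijection $(\bvec{p}_1,\ldots,\bvec{p}_{2k})\leftrightarrow(x_1,\ldots,x_{3k};y_1,\ldots,y_{3k})$, the formula for $\mathrm{Tr}_{\vec{g}}$, and the $\Z$-linear extension absorbing the $y_i$-sum into $g^{(i)}(x_i)$) that the author left implicit.
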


We will denote $\mathrm{sgn}'(\sigma)$ for $\bvec{x}=(x_1,\ldots,x_{3k};g^{(1)}(x_1),\ldots,g^{(3k)}(x_{3k}))$ by $\mathrm{sgn}'_{\tbvec{x}}(\sigma)$, which is determined by $x_1,\ldots,x_{3k}$ since we only count $\vec{C}$-graphs that are occupied. Under the assumption of Lemma~\ref{lem:occupied}, we have, for $k\geq 1$ and the coherent $v$-gradients $\xi^{(j)G}$ ($j=1,2,\ldots,3k$),  
\begin{equation}\label{eq:Z_I(x,H)}
 Z_k^\Morse(\pi^\Gamma)=\frac{1}{2^{3k}(2k)!(3k)!}\sum_HI(\vec{\xi}^G,H)
\end{equation}
by Corollary~\ref{cor:Y-disjoint} and Lemma~\ref{lem:I(H)}. Here, the sum is taken for edge-oriented labelled trivalent graphs $H$ with $2k$ trivalent vertices. Let $S_H(\sigma(1),\ldots,\sigma(2k))$, $\sigma\in\mathfrak{S}_{2k}$, denote the part of $I(\vec{\xi}^G,H)$ in (\ref{eq:Z_I(x,H)}) consisting of terms of Z-graphs such that for each $i$ the $i$-th black vertex lies in $\widetilde{V}_{\sigma(i)}$. Then we have
\[ Z_k^\Morse(\pi^\Gamma)=\frac{1}{2^{3k}(2k)!(3k)!}\sum_{\sigma\in\mathfrak{S}_{2k}}\sum_HS_H(\sigma(1),\ldots,\sigma(2k)). \]
The following lemma is evident from Lemma~\ref{lem:R-X}.
\begin{Lem}\label{lem:S_H}
The following identity holds.
\[
S_H(\sigma(1),\ldots,\sigma(2k))
=(-1)^{3k}\Bigl[\sum_{x_1,\dots,x_{3k}}\mathrm{sgn}'_{\tbvec{x}}(\sigma)\,Q_{\tbvec{x}}^{(1)}(\sigma)Q_{\tbvec{x}}^{(2)}(\sigma)\cdots Q_{\tbvec{x}}^{(2k)}(\sigma)\,\mathrm{Close}(H^\circ)\Bigr]
\]
Here, the sum is the same as (\ref{eq:XR}), $\bvec{x}=(x_1,\ldots,x_{3k};g^{(1)}(x_1),\ldots,g^{(3k)}(x_{3k}))$, 
\[ Q_{\tbvec{x}}^{(i)}(\sigma)=\left\{\begin{array}{ll}
\#\acalM_{H^\circ_i(\tbvec{p}_i)}(\vec{\xi}^{G_{\sigma(i)}}) & \mbox{if the types of $H^\circ_i(\bvec{p}_i)$ and $V_{\sigma(i)}$ agree,}\\
0 & \mbox{otherwise}
\end{array}\right.\]
and $\bvec{p}_1,\ldots,\bvec{p}_{2k}$ are sequences of linear combinations of critical points that correspond to $\bvec{x}$.
\end{Lem}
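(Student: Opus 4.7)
The plan is to extract the contribution of a single permutation $\sigma\in\mathfrak{S}_{2k}$ from the formula for $I(\vec{\xi}^G,H)$ established in Lemma~\ref{lem:R-X}. By the definition preceding the statement, $S_H(\sigma(1),\ldots,\sigma(2k))$ is precisely the part of $I(\vec{\xi}^G,H)$ coming from Z-graphs in which, for every $i$, the $i$-th black vertex of $H^\circ$ lands in the handlebody $\widetilde{V}_{\sigma(i)}$. Under the identification made in Lemma~\ref{lem:I(H)}, the permutation indexing in the inner sum of (\ref{eq:I(H)}) is exactly the record of which $\widetilde{V}_{\sigma(i)}$ each $H^\circ_i(\bvec{p}_i)$ sits in, so $S_H(\sigma(1),\ldots,\sigma(2k))$ is obtained from the right-hand side of Lemma~\ref{lem:R-X} by deleting the sum over $\sigma$ and keeping only the chosen one.

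Carrying this out: first I fix $\sigma\in\mathfrak{S}_{2k}$ and restrict the sum in (\ref{eq:XR}) to the single term indexed by $\sigma$. The result is
\[ S_H(\sigma(1),\ldots,\sigma(2k)) = (-1)^{3k}\sum_{x_1,\ldots,x_{3k}} R_\sigma(x_1,\ldots,x_{3k};g^{(1)}(x_1),\ldots,g^{(3k)}(x_{3k}))\,\mathrm{Close}(H^\circ), \]
provided $\sigma\in\mathfrak{S}_{2k}^{H^\circ}(\bvec{p}_1,\ldots,\bvec{p}_{2k})$, and $0$ otherwise. The constraint $\sigma\in\mathfrak{S}_{2k}^{H^\circ}$ was imposed in Lemma~\ref{lem:R-X} because $\acalM_{H_i^\circ(\tbvec{p}_i)}(\vec{\xi}^{G_{\sigma(i)}})$ is automatically empty whenever the type (I or II) of the surviving-type Y-component $H_i^\circ(\bvec{p}_i)$ does not match the type of the handlebody $V_{\sigma(i)}$: in that case there simply are no Z-graphs of the prescribed combinatorics landing in $\widetilde{V}_{\sigma(i)}$. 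Consequently, replacing $\#\acalM_{H^\circ_i(\tbvec{p}_i)}(\vec{\xi}^{G_{\sigma(i)}})$ by $Q_{\tbvec{x}}^{(i)}(\sigma)$ does not change the value, and allows us to drop the constraint $\sigma\in\mathfrak{S}_{2k}^{H^\circ}$ altogether.

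Finally, recalling from Lemma~\ref{lem:R-X} that $R_\sigma(\bvec{x}) = \mathrm{sgn}'(\sigma)\,\#\acalM_{H^\circ_1(\tbvec{p}_1)}(\vec{\xi}^{G_{\sigma(1)}})\cdots\#\acalM_{H^\circ_{2k}(\tbvec{p}_{2k})}(\vec{\xi}^{G_{\sigma(2k)}})$, and using the notation $\mathrm{sgn}'_{\tbvec{x}}(\sigma)$ introduced just before the lemma (since, for the graphs actually occupying $G$, the sign $\mathrm{sgn}'(\sigma)$ is determined by $x_1,\ldots,x_{3k}$), the product of $\#\acalM$'s becomes $Q_{\tbvec{x}}^{(1)}(\sigma)\cdots Q_{\tbvec{x}}^{(2k)}(\sigma)$. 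Substituting yields the stated identity. No step is a real obstacle: the only bookkeeping to check is that the sign convention $\mathrm{sgn}'_{\tbvec{x}}(\sigma)$ agrees with the $\mathrm{sgn}'(\sigma)$ appearing inside $R_\sigma$, which follows from the discussion in \S\ref{ss:o-separated} since the vertex-orientations $\tau_i$ are read off directly from the indices of the attached critical points $x_1,\ldots,x_{3k}$.
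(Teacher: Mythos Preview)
Your proof is correct and matches the paper's approach exactly; the paper simply asserts that the lemma ``is evident from Lemma~\ref{lem:R-X}'' without further detail, and you have written out precisely why. One minor imprecision: you justify dropping the constraint $\sigma\in\mathfrak{S}_{2k}^{H^\circ}$ by saying the moduli space $\acalM_{H_i^\circ(\tbvec{p}_i)}(\vec{\xi}^{G_{\sigma(i)}})$ is ``automatically empty'' under a type mismatch, but this is only half true --- a Type~II component in a Type~I handlebody has negative expected dimension (hence is empty generically), while a Type~I component in a Type~II handlebody has positive expected dimension (hence is not zero-dimensional and so is not counted). The cleaner justification is simply that $Q_{\tbvec{x}}^{(i)}(\sigma)$ is \emph{defined} to be $0$ in the mismatch case, so the product $Q_{\tbvec{x}}^{(1)}(\sigma)\cdots Q_{\tbvec{x}}^{(2k)}(\sigma)$ vanishes unless $\sigma\in\mathfrak{S}_{2k}^{H^\circ}(\bvec{p}_1,\ldots,\bvec{p}_{2k})$, which recovers the restricted sum by fiat; this is purely notational and needs no moduli-theoretic input.
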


The Morse indices of the critical points $x_1,\ldots,x_{3k}$ in the formula of Lemma~\ref{lem:S_H} may be restricted further as follows.
\begin{Lem}\label{lem:Y-restrict}
After perturbing the coherent $v$-gradients without affecting the previous assumptions, we may restrict the $Y$-shaped Z-graphs in Corollary~\ref{cor:Y-disjoint} to those such that all the critical points attached on input white vertices are of index 2 or 3 and that all the critical points attached on output white vertices are of index 1 or 2.
\end{Lem}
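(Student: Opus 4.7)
Since every separated edge has degree one ($|p|-|q|=1$), the restriction ``inputs in $\{2,3\}$ and outputs in $\{1,2\}$'' is equivalent to the absence of any edge with $(|p|,|q|)=(1,0)$. By Lemma~\ref{lem:S_H} it therefore suffices to show that, after an allowed perturbation, every summand of $\sum_{x_1,\ldots,x_{3k}}\mathrm{sgn}'_{\tbvec{x}}(\sigma)\prod_i Q^{(i)}_{\tbvec{x}}(\sigma)$ in which some $|x_e|=0$ vanishes. The strategy combines a geometric confinement of the chains $\bcalA_q$ and $\bcalD_p$ with an adapted choice of the combinatorial propagators.

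I will first sharpen the coherent $v$-gradient $\xi^{(j)G}$ by a small generic perturbation supported outside $V_1\cup\cdots\cup V_{2k}$, so that every $V_\ell$ is forward-invariant under the flow of $-\xi_0^{(j)}$. This is possible because $\mu_\ell^{-1}(-\infty,0]=V_\ell$ and, after adjusting the extension of $m^{(j)}$ to $\R^4$ so that $m^{(j)}>0$ in a neighbourhood of $\partial V_\ell$ exterior to $V_\ell$, the vector field $-\xi_0^{(j)}$ points strictly inward along $\partial V_\ell$. Consequently, for every index-$0$ critical point $q\in V_\ell$ one has $\bcalA_q(\xi_0^{(j)})\subset V_\ell$, and for every index-$1$ critical point $p\in V_\ell$ one has $\bcalD_p(\xi_0^{(j)})\subset V_\ell$. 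Since $\xi^{(j)G}$ equals the constant family of $\xi_0^{(j)}$ outside the $\widetilde{V}_\ell$'s, the analogous inclusions hold in the surgered bundle, so $\bcalA_q\cap\widetilde{V}_{\ell'}'=\emptyset$ and $\bcalD_p\cap\widetilde{V}_{\ell'}'=\emptyset$ whenever $\ell'\neq\ell$.

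Next I will choose each propagator $g^{(e)}:C_*(\xi_0^{(e)})\to C_{*+1}(\xi_0^{(e)})$ so that for every index-$0$ critical point $q$ the $1$-chain $g^{(e)}(q)$ is supported entirely on index-$1$ critical points of $m^{(e)}$ lying in the extension $\R^4\setminus(V_1\cup\cdots\cup V_{2k})$. This is possible: one arranges $m^{(e)}$ to possess auxiliary index-$1$ critical points in the extension whose descending $-\xi_0^{(e)}$-trajectories enter each forward-invariant region $V_\ell$ and terminate at its index-$0$ critical point, so that the extension provides index-$1$ generators whose boundary contains any prescribed $q$. Starting from any chain contraction of the acyclic complex $C_*(\xi_0^{(e)})$, one modifies $g^{(e)}$ on $C_0$ by elements of $\mathrm{Im}\,\partial_2\subset\ker\partial_1$ to cancel the $V_\ell$-components of every $g^{(e)}(q)$. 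The resulting propagator has $g^{(e)}_{p,q}=0$ whenever $|q|=0$, $|p|=1$, and $p\in V_\ell$ for any $\ell$. Now fix a summand with $|x_e|=0$ and let the edge $e$ of $H$ join the two distinct black vertices labelled by $Y$-components $i$ (output end, carrying $x_e$) and $i'$ (input end, carrying an index-$1$ critical point $p$ from the expansion of $g^{(e)}(x_e)$); distinctness of $i,i'$ follows from the absence of self-loops in $\Gamma\in\calG_k'$, whence $\sigma(i)\neq\sigma(i')$. Non-vanishing of $Q^{(i)}_{\tbvec{x}}(\sigma)$ forces $\bcalA_{x_e}\cap\widetilde{V}_{\sigma(i)}'\neq\emptyset$, hence $x_e\in V_{\sigma(i)}$, while non-vanishing of $Q^{(i')}_{\tbvec{x}}(\sigma)$ forces $\bcalD_p\cap\widetilde{V}_{\sigma(i')}'\neq\emptyset$, hence $p\in V_{\sigma(i')}$. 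But by the propagator choice $g^{(e)}_{p,x_e}=0$, since $p$ lies in a $V_\ell$. Thus the summand vanishes and the restriction follows.

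The main obstacle is to verify that the perturbation of the extension of $m^{(j)}$ and the choice of $g^{(e)}$ are compatible with every previously used hypothesis: coherence (Lemmas~\ref{lem:ribbon-graph-I}--\ref{lem:ribbon-graph-II}), the parametrised Morse--Smale condition and Assumption~\ref{hyp:eta}~(3), Assumption~\ref{hyp:cv-const}, and the Morse chain isomorphism of Lemma~\ref{lem:morse-complex-invariant}. Because the perturbation is strictly localised outside every $V_\ell$, coherence inside each $V_\ell$ is untouched; transversality and Morse--Smale conditions are open-dense and survive generic perturbation; the boundary operator between critical points of the $V_\ell$'s is unaffected, so the chain isomorphism persists; and the insertion of auxiliary extension critical points only enlarges the acyclic complex $C_*(\xi_0^{(e)})$ by cancellable pairs, preserving acyclicity and the existence of a chain contraction with the required support property.
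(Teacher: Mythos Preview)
Your argument has a genuine gap at the input end of the bad edge. Forward-invariance of $V_\ell$ under $-\xi_0^{(j)}$ does give $\bcalD_p\subset V_\ell$ for an index-$1$ critical point $p\in V_\ell$, but your propagator choice places every $p$ appearing in $g^{(e)}(x_e)$ \emph{outside} all the $V_\ell$'s, and for such $p$ there is no containment whatsoever: the one-dimensional descending manifold $\bcalD_p$ is swept out by the forward flow of $-\xi$, which freely enters the forward-invariant regions. Indeed your own requirement $\partial g^{(e)}(x_e)=x_e$ with $x_e\in V_{\sigma(i)}$ forces one branch of $\bcalD_p$ to penetrate $V_{\sigma(i)}$, and nothing prevents the other branch from landing in $V_{\sigma(i')}$. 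Hence the implication ``$\bcalD_p\cap\widetilde V_{\sigma(i')}'\neq\emptyset\Rightarrow p\in V_{\sigma(i')}$'' is false and you get no contradiction with $g^{(e)}_{p,x_e}=0$. (A secondary slip: the claim $\bcalA_q\subset V_\ell$ for index-$0$ $q\in V_\ell$ is also false---the basin of a local minimum is full-dimensional and extends far outside $V_\ell$. Your conclusion $x_e\in V_{\sigma(i)}$ is still correct, but for the dual reason that the \emph{black vertex} lies in the forward-invariant set $V_{\sigma(i)}$, so its forward limit $x_e$ does too; no such argument is available for backward limits.)

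The paper's proof is of a different nature: it runs through the seven bad index-types containing an output of index $0$ one by one, showing each count vanishes by (i) cancellation between the two sheets $V_i'$ and $-V_i$ of $\widetilde V_i$ for type~I, (ii) the vanishing of pairwise intersections $\langle\alpha_\ell^{(i)},\alpha_{\ell'}^{(i)}\rangle_{S_i}=0$ from Lemma~\ref{lem:aaa=1}(1), and (iii) for the types $(1,3\,|\,0)$, $(1,2\,|\,0)$, $(1\,|\,0,1)$, $(1\,|\,0,2)$, a further perturbation confining the surgery support to a thin collar $R\times[\kappa,0]\subset\widetilde V_i$ that can be made disjoint from the relevant $\bD_q$.
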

Proof of Lemma~\ref{lem:Y-restrict} is a bit technical and will be given in \S\ref{ss:indices_Y}. We assume Lemma~\ref{lem:Y-restrict} in the rest of this subsection, in which case there are no output white vertex of index 0 and Lemma~\ref{lem:rel_cycle} can be applied. The following lemma describes $S_H(\sigma(1),\ldots,\sigma(2k))$ only by homological data, as done in \cite{KT}.

\begin{Lem}\label{lem:homological-formula}
Recall that $S_i=\widetilde{V}_i\cup_\partial(-V_i\times K_i)$ (defined after Definition~\ref{def:coherent}). Let $\beta_1^{(i)},\beta_2^{(i)},\beta_3^{(i)}$ be the classes in $H_*(S_i)$ given by the cores of the three handles of $V_i$ with positive indices. If a coherent $v$-gradient is chosen as in Lemma~\ref{lem:Y-restrict} and generic with respect to $\vec{V}_G$, then the following identity holds.
\begin{equation}\label{eq:Q-Q}
\begin{split}
&\sum_{x_1,\ldots,x_{3k}}\mathrm{sgn}'_{\tbvec{x}}(\sigma)\,Q_{\tbvec{x}}^{(1)}(\sigma)\cdots Q_{\tbvec{x}}^{(2k)}(\sigma)\\
&=\pm\Bigl\{\prod_{{e=(j,\ell)}\atop{\in\mathrm{Edges}(H^\circ)}}\sum_{1\leq a,b\leq 3}\ell k(\beta_a^{(j)},\beta_b^{(\ell)})\Bigr\}\prod_{i=1}^{2k}\langle \alpha_1^{(i)},\alpha_2^{(i)},\alpha_3^{(i)}\rangle_{S_i},
\end{split}
\end{equation}
where the sum in the left hand side is the same as (\ref{eq:XR}).
\end{Lem}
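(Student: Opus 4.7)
The overall idea is to reduce the computation at each vertex $i$ of $H^\circ$ to a triple intersection of homology classes inside $S_{\sigma(i)}$, using Lemma~\ref{lem:rel_cycle} to convert descending-manifold chains coming from combinatorial propagators into linking-number-weighted sums of ascending-manifold chains. Once this is done, the sum over $x_1,\ldots,x_{3k}$ will factorize over the edges of $H^\circ$: each edge $e=(j,\ell)$ will produce a double sum of linking numbers $\ell k(\beta_{a_e}^{(\sigma(j))},\beta_{b_e}^{(\sigma(\ell))})$, and each vertex $\sigma(i)$ will contribute a local triple intersection equal to $\pm\langle\alpha_1^{(\sigma(i))},\alpha_2^{(\sigma(i))},\alpha_3^{(\sigma(i))}\rangle_{S_{\sigma(i)}}$ via Lemma~\ref{lem:aaa=1}.

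The first step is to unfold $Q_{\vec{x}}^{(i)}(\sigma)$ as a transversal triple intersection $\langle\Xi_1,\Xi_2,\Xi_3\rangle_{\widetilde{V}_{\sigma(i)}}$ inside the thickened handlebody. By Corollary~\ref{cor:Y-disjoint}, Lemma~\ref{lem:Y-restrict}, and Lemma~\ref{lem:occupied}~(2), the $i$-th black vertex of $H^\circ$ is carried into $\widetilde{V}_{\sigma(i)}$, and the three critical points attached to the adjacent white vertices are positive-index critical loci of $\widetilde{\mu}_{\sigma(i)}$. For each half-edge meeting vertex $i$, the corresponding factor $\Xi_s$ is either $\bA_{x_e}$ (output side of a separated edge $e$) or $\bD_{g^{(e)}(x_e)}\cap\widetilde{V}_{\sigma(i)}$ (input side). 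Next, for each input half-edge I would use Lemma~\ref{lem:rel_cycle} to replace the latter chain by the relative class
\[ \sum_{b=1}^{3} \ell k\bigl(c(x_e),\beta_b^{(\sigma(i))}\bigr)\,[\bA_{p_b^{(\sigma(i))}}] \ \in\ H_*\bigl(\widetilde{V}_{\sigma(i)},\partial\widetilde{V}_{\sigma(i)}\bigr). \]
Since transversal triple intersections of chains in $\widetilde{V}_{\sigma(i)}$ depend only on their relative homology classes (once pulled off the boundary by a small perturbation compatible with coherence), this substitution preserves $Q_{\vec{x}}^{(i)}(\sigma)$.

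After the substitution, writing each $x_e=p_{a_e}^{(\sigma(j))}$ with $a_e\in\{1,2,3\}$, the sum $\sum_{x_1,\ldots,x_{3k}}$ becomes $\sum_{a_1,\ldots,a_{3k}}\sum_{b_1,\ldots,b_{3k}}$, and the linking-number coefficients introduced by Lemma~\ref{lem:rel_cycle} reassemble into the edge product $\prod_e\sum_{a,b}\ell k(\beta_a^{(\sigma(j))},\beta_b^{(\sigma(\ell))})$. The remaining factor at each vertex $\sigma(i)$ is a pure ascending-manifold triple intersection $\langle\bA_{p_{c_1}^{(\sigma(i))}},\bA_{p_{c_2}^{(\sigma(i))}},\bA_{p_{c_3}^{(\sigma(i))}}\rangle_{\widetilde{V}_{\sigma(i)}}$ for some $(c_1,c_2,c_3)\in\{1,2,3\}^3$ determined by the two index choices at each adjacent half-edge. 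The Morse-index count of \S\ref{ss:surviving} then forces this intersection to vanish unless $\{c_1,c_2,c_3\}=\{1,2,3\}$ as multisets; when the triple is a permutation of $(1,2,3)$, it equals, up to a permutation sign, the canonical triple $\langle A_1^{(\sigma(i))},A_2^{(\sigma(i))},A_3^{(\sigma(i))}\rangle_{\widetilde{V}_{\sigma(i)}}$. Passing from $\widetilde{V}_{\sigma(i)}$ to the closed manifold $S_{\sigma(i)}$, the $B$-representatives of $\alpha_\ell$ lie in $V_{\sigma(i)}\times K_{\sigma(i)}$ and can be taken as pullbacks of cocores of distinct handles of $V_{\sigma(i)}$, which can be made pairwise disjoint so that their triple intersection vanishes, while the $C$-contribution is killed by codimension; Lemma~\ref{lem:aaa=1} therefore identifies the result with $\pm 1$.

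The main obstacle I anticipate is the orientation bookkeeping. The coorientation of each $\Xi_s$ is fixed by the vertex orientation $\tau_i$, so permuting the $\bA_{p_c}$'s to the canonical order $(\alpha_1,\alpha_2,\alpha_3)$ introduces a sign that a priori depends on $\vec{x}$; this must combine with $\mathrm{sgn}'_{\vec{x}}(\sigma)$ and with the sign factor implicit in Lemma~\ref{lem:rel_cycle} (reflecting how the coorientation of $\bD_{g^{(e)}(x_e)}$ transports under the replacement) to yield a sign independent of $\vec{x}$. Since (\ref{eq:Q-Q}) only claims the identity up to an overall $\pm$, it suffices to check that the $\vec{x}$-dependent signs cancel in each of the surviving index patterns in the list~(\ref{eq:list}); this is a finite case analysis separated by the two types (I and II) of the Y-graphs, and requires no input beyond Lemmas~\ref{lem:rel_cycle} and \ref{lem:aaa=1} together with the orientation conventions of \S\ref{ss:ori} and \S\ref{ss:o-separated}.
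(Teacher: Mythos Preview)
Your proposal is correct and follows essentially the same route as the paper: express each $Q_{\bvec{x}}^{(i)}(\sigma)$ as a triple intersection in $\widetilde{V}_{\sigma(i)}$, use Lemma~\ref{lem:rel_cycle} to replace the $\bD_{g(x_e)}$-chains by linking-number-weighted sums of ascending manifolds, close everything up to cycles in $S_{\sigma(i)}$ (using coherence to kill the $C$-contributions by codimension), and then invoke Lemma~\ref{lem:aaa=1} to factor the result as an edge product of linking sums times a vertex product of triple intersections. The only stylistic difference is that the paper first replaces each $\bA_*$ by the closed cycle $F_*^{(j)}$ in $S_i$ and only then substitutes for $\bD_{g(x_i)}$, so that all intersections are computed in the closed manifold from the outset; this makes the ``depends only on relative homology classes'' step you invoke unnecessary to justify, but the content is the same.
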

\begin{proof}
Recall that tach term $Q_{\tbvec{x}}^{(i)}(\sigma)=\#\acalM_{H^\circ_i(\tbvec{p}_i)}(\vec{\xi}^{G_{\sigma(i)}})$ is given by a linear combination of triple intersections like $\langle \bD_{g(p)},\bA_{p'},\bA_{p''}\rangle_{\widetilde{V}_j}$ ($j=\sigma(i)$). We claim that the triple intersections do not change if each $\bA_*$ is replaced with the cycle $F_*^{(j)}$ (defined after Definition~\ref{def:coherent}) in $S_i$ by closing it using the trivial family $B_*^{(j)}$ in $-V_j\times K_j$ and the bordism $C_*^{(j)}$ in $\partial V_j\times K_j$. 

Indeed, by the coherence of $v$-gradients, $C_\ell^{(j)}$ is collapsed into a small neighborhood of a skeleton of $\partial\widetilde{V}_j$ of less dimension. So we may consider $C_\ell^{(j)}$ as an object of less dimension. In particular, for $V_j$ of type I, if $|p_\ell^{(j)}|=2$, then $C_\ell^{(j)}$ can be considered as a map from a ribbon graph, and if $|p_\ell^{(j)}|=1$, then $C_\ell^{(j)}$ is included in a small neighborhood of a 2-dimensional subcomplex of $\partial \widetilde{V}_j$. For $V_j$ of type II, if $|p_\ell^{(j)}|=1$, then $C_\ell^{(j)}$ has parts parallel to $(2,1), (1,0), (2,0)$-handles (with 3,1,2-dimensional cores), and it is included in a small neighborhood of codimension $\geq 1$ subcomplex in $\dim\,\partial\widetilde{V}_j=4$ dimension. If $|p_\ell^{(j)}|=2$, $C_\ell^{(j)}$ has parts parallel to $(1,1)$, $(1,0)$, $(2,0)$-handles (with 2,1,2-dimensional cores), and it is included in a small neighborhood of codimension $\geq 2$ subcomplex in $\dim\,\partial\widetilde{V}_j=4$ dimension. Then the triple intersections of chains including $C_\ell^{(j)}$ may be assumed to be empty by a dimensional reason. Hence the additions of $C_\ell^{(j)}$ do not affect the triple intersections above, and $\bA_*$ can be replaced with the cycle $F_*^{(j)}$. 

Now $Q_{\tbvec{x}}^{(1)}(\sigma)\cdots Q_{\tbvec{x}}^{(2k)}(\sigma)$ can be described as follows. We consider the following formal tensor product:
\begin{equation}\label{eq:L-tensor}
 \bigotimes_{i=1}^{3k}(\bA_{x_i}\otimes \bD_{g(x_i)}). 
\end{equation}
By Lemma~\ref{lem:rel_cycle}, $\bD_{g(x_i)}$ induces a relative cycle at each $(\widetilde{V}_j',\partial \widetilde{V}_j')$ described as follows.
\[ [\bD_{g(x_i)}\cap \widetilde{V}_j]=\sum_{j,\ell} \ell k(c(x_i), c(p_\ell^{(j)})) [\bA_{p_\ell^{(j)}}]. \]
If the terms $\bD_{g(x_i)}$ in the tensor product (\ref{eq:L-tensor}) are replaced with $\sum_{j,\ell} \ell k(c(x_i), c(p_\ell^{(j)})) \bA_{p_\ell^{(j)}}$, then we obtain a linear combination of tensor products of $6k$ $\bA_*$-terms. Each term in the linear combination may be permuted to a tensor product of $2k$ triple products by using the combinatorial structure of the graph $H$, and the value of $Q_{\tbvec{x}}^{(1)}(\sigma)\cdots Q_{\tbvec{x}}^{(2k)}(\sigma)$ will be obtained after replacing the triple products with their triple intersection numbers. Here, one may see that the replacements of $\bD_{g(x_i)}$ with $\sum_{j,\ell} \ell k(c(x_i), c(p_\ell^{(j)})) \bA_{p_\ell^{(j)}}$ do not affect the triple intersection numbers, as follows. Since $\bD_{g(x_i)}\cap (\partial V_\ell\times K_i)=\bD_{g(x_i)}\cap \partial\widetilde{V}_\ell$ does not change by surgery, we may obtain a cycle of $S_\ell$ by gluing together the two $\bD_{g(x_i)}$'s before and after the surgery along the boundary, which is homologous to the one obtained from $\sum_{j,\ell} \ell k(c(x_i), c(p_\ell^{(j)})) \bA_{p_\ell^{(j)}}$ by closing with $C_\ell^{(j)}$. Since $Q_{\tbvec{x}}^{(j)}(\sigma)$ is given by products among homology classes in a closed manifold $S_j$, a change of a cycle within its homology class does not affect the resulting value. 

Based on the above observations, it follows that the left hand side of (\ref{eq:Q-Q}) can be described in the homology level as follows. 
Let $e=(j,\ell)$ be a separated edge of $H^\circ$. Consider the following element of $H_{k+5}(S_j\times S_\ell)\cong H^3(S_j\times S_\ell)$ 
\[ L_e=\sum_{ a,b}\ell k(\beta_a^{(j)},\beta_b^{(\ell)})\,\alpha_a^{(j)}\otimes \alpha_b^{(\ell)}, \]
where the sum is taken for $(a,b)$ such that $|\alpha_a^{(j)}|+|\alpha_b^{(\ell)}|=k+5$. Let $\widehat{L}_e$ be the element of $H_{9k-3}(S_1\times S_2\times \cdots\times S_{2k})$ obtained from the fundamental class $[S_1]\otimes [S_2]\otimes\cdots\otimes [S_{2k}]$ by replacing the $j$-th and the $\ell$-th factors with $L_e$. Now the following identity holds.
\[ \sum_{x_1,\ldots,x_{3k}}\mathrm{sgn}'_{\tbvec{x}}(\sigma)\,Q_{\tbvec{x}}^{(1)}(\sigma)\cdots Q_{\tbvec{x}}^{(2k)}(\sigma)
=\pm\prod_{e\in \mathrm{Edges}(H^\circ)}\widehat{L}_e \]
where the product in the right hand side is the usual intersection form among cycles $H_{9k-3}(\textstyle\prod_{i=1}^{2k} S_i)^{\otimes 3k}\to H_0(\textstyle\prod_{i=1}^{2k} S_i)=\Q$.
The product in the right hand side can be rewritten as the right hand side of the formula of the lemma.
\end{proof}

Although the sign in the right hand side of (\ref{eq:Q-Q}) depends on the graph orientation of $H^\circ$, it is cancelled out by the graph orientation after multiplying (\ref{eq:Q-Q}) to an oriented graph. 

The following lemma is immediate.
\begin{Lem}\label{lem:lk-nontrivial}
$\prod_{e=(j,\ell)\in\mathrm{Edges}(H^\circ)}\sum_{a,b}{\ell k}(\beta_a^{(j)},\beta_b^{(\ell)})\neq 0$ iff there exists an isomorphism $H\cong \pm \Gamma$ that preserves the labels of vertices. 
\end{Lem}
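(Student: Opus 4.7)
The plan is to trace back through the construction of the Y-link $G = G_1\cup\cdots\cup G_{2k}$ in Section~\ref{ss:Y-link} and identify $\ell k(\beta_a^{(j)},\beta_b^{(\ell)})$ explicitly in terms of the combinatorics of $\Gamma$. Recall that $G_j$ has three leaves, one for each half-edge of $\Gamma$ at the $j$-th vertex, and these leaves are precisely the handle cores of $V_j$ of positive index: by the type-I/II dichotomy, the leaf corresponding to an outgoing (resp.\ incoming) half-edge is an $S^1$ (resp.\ $S^2$) yielding a 1-handle (resp.\ 2-handle) of $V_j$. Thus I will identify the three classes $\beta_1^{(j)},\beta_2^{(j)},\beta_3^{(j)}\in H_*(S_j)$ with the three half-edges of $\Gamma$ at vertex $j$, graded by degree.

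Next, I will show that for each pair $(j,a)$, $(\ell,b)$ of half-edges with $j\ne\ell$,
\[
\ell k(\beta_a^{(j)},\beta_b^{(\ell)}) = \left\{\begin{array}{ll} \pm 1 & \text{if $(j,a)$ and $(\ell,b)$ form an edge of $\Gamma$},\\ 0 & \text{otherwise}.\end{array}\right.
\]
This is immediate from the construction in \S\ref{ss:Y-link}: each edge of $\Gamma$ was decorated with a standard Hopf link $S^1\cup S^2$ in a 4-ball $P(e)$ disjoint from everything else, and the Hopf link of the two components comprising an $S^1$ and $S^2$ has linking number $\pm 1$. Dimensions force both $|\beta_a^{(j)}|+|\beta_b^{(\ell)}|=3$ (matching the linking-number constraint in $S^3$-sized fibers of the handle neighborhoods) and the degree incidence at the two ends of an arrow-oriented edge of $\Gamma$. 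No other handle-cores link, because the handles associated to different edges of $\Gamma$ sit in disjoint 4-balls $P(e)$.

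With this dictionary in hand, the product
\[
\prod_{e=(j,\ell)\in\mathrm{Edges}(H^\circ)}\sum_{a,b}\ell k(\beta_a^{(j)},\beta_b^{(\ell)})
\]
expands into a sum indexed by assignments $\varphi$ that send each edge $e$ of $H$ (equivalently $H^\circ$), with its pair of endpoint-half-edges at the two vertices of $e$, to a pair of matching half-edges of $\Gamma$ forming an actual edge of $\Gamma$. Each such $\varphi$ contributes $\pm 1$, and $\varphi$ is precisely the data of a graph morphism $H\to\Gamma$ that preserves vertex labels, is a bijection on vertex sets (since both have $2k$ vertices with labels $\{1,\ldots,2k\}$), and is a bijection on edges (since each half-edge of $\Gamma$ is used by exactly one edge of $\Gamma$, forcing injectivity on edges out of each vertex). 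Such a $\varphi$ exists iff $H\cong\Gamma$ as vertex-labeled graphs.

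The remaining step is to check that the signed count does not cancel: the signs come from the choice of decomposition of each edge of $H$ and $\Gamma$ into half-edges, and from the arrow-orientations chosen in \S\ref{ss:arrow-graph}. The one genuine subtlety is to verify that when $H\cong\Gamma$ as vertex-labeled unoriented graphs, the signed contribution equals $\pm 1$ (not zero), so the conclusion becomes $H\cong\pm\Gamma$ with the sign absorbed into the orientation of the graph class (which is consistent with the statement of the lemma). Since there are no self-loops or multiple edges under the running hypothesis on $\calG_k'$, the matching $\varphi$ (when it exists) is unique up to the graph automorphisms, so cancellations between distinct $\varphi$ do not occur. This is the main bookkeeping obstacle but is straightforward once the dictionary above is in place.
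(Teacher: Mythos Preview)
Your approach is correct in spirit and far more detailed than the paper, which simply calls the lemma ``immediate''. You correctly identify $\ell k(\beta_a^{(j)},\beta_b^{(\ell)})$ with the Hopf link in the construction of \S\ref{ss:Y-link}, and since $\Gamma\in\calG_k'$ has no multiple edges you correctly deduce that for each edge $e=(j,\ell)$ of $H$ the inner sum is $\pm 1$ if $j,\ell$ are $\Gamma$-adjacent and $0$ otherwise.

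There is, however, a genuine gap in your bijectivity step. You assert that the resulting assignment $\varphi:E(H)\to E(\Gamma)$ is injective ``since each half-edge of $\Gamma$ is used by exactly one edge of $\Gamma$, forcing injectivity on edges out of each vertex''. This does not follow: if $H$ has a double edge between two vertices $j,\ell$ that happen to be $\Gamma$-adjacent, both copies are sent to the \emph{same} edge of $\Gamma$, $\varphi$ is not injective, and yet every factor in the product is $\pm 1$, so the product is $\pm 1\neq 0$. For a concrete instance take $\Gamma=K_4$ and let $H$ be the trivalent multigraph on $\{1,2,3,4\}$ with edge multiset $\{(1,2),(1,2),(1,3),(2,4),(3,4),(3,4)\}$. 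So the lemma as literally stated fails for such $H$; the paper's wording is a slight imprecision.

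This does not affect the downstream use in Lemma~\ref{lem:eval-hatZ}, for two reasons. First, any $H$ with a multiple edge satisfies $[H]=0$ in $\calA_k$ (cf.\ Proposition~\ref{prop:A_2}), so $S_H=0$ regardless. Second, the quantity that actually comes out of the proof of Lemma~\ref{lem:homological-formula} is the intersection $\prod_e\widehat L_e$, in which the triple intersection $\langle\alpha_a^{(i)},\alpha_b^{(i)},\alpha_c^{(i)}\rangle_{S_i}$ vanishes unless $\{a,b,c\}=\{1,2,3\}$ (by the degree count following Lemma~\ref{lem:aaa=1}); this constraint \emph{does} force $\varphi$ to be a bijection on half-edges at each vertex and hence an isomorphism $H\cong\Gamma$. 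Your argument is repaired by either restricting to $H$ without multiple edges (harmless by the first reason) or by replacing your parenthetical with this triple-intersection constraint.
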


For an abstract trivalent graph $\Gamma$, let $\mathrm{Aut}_e\Gamma$ be the group of automorphisms of $\Gamma$ that fix all the vertices of $\Gamma$. Let $\mathrm{Aut}_v\Gamma$ be the group of permutations of vertices of $\Gamma$ that give automorphism of $\Gamma$. We have $|\mathrm{Aut}\,\Gamma|=|\mathrm{Aut}_e\Gamma||\mathrm{Aut}_v\Gamma|$.

\begin{Lem}\label{lem:eval-hatZ}
If the coherent $v$-gradients are chosen as Lemma~\ref{lem:Y-restrict} and generic with respect to $\vec{V}_G$ and if we choose orientations of the ascending manifolds $A_\ell^{(i)}$ so that the value of Lemma~\ref{lem:aaa=1} is $+1$, then the following identity folds.
\begin{equation}\label{eq:sum_S_H}
 \frac{1}{2^{3k}(2k)!(3k)!}\sum_{\sigma\in\mathfrak{S}_{2k}}\sum_{H}S_H(\sigma(1),\ldots\sigma(2k))
=(-1)^{3k}[\Gamma]. 
\end{equation}
\end{Lem}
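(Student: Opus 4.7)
The plan is to substitute the formulas from Lemmas~\ref{lem:S_H} and~\ref{lem:homological-formula} into the left-hand side of~(\ref{eq:sum_S_H}), to restrict the resulting double sum to isomorphism classes using Lemma~\ref{lem:lk-nontrivial}, and then to count contributions weighted by orientation signs. First, I would combine Lemma~\ref{lem:S_H} with Lemma~\ref{lem:homological-formula} and apply the orientation choice of the statement so that every triple intersection $\langle\alpha_1^{(i)},\alpha_2^{(i)},\alpha_3^{(i)}\rangle_{S_i}$ equals $+1$ by Lemma~\ref{lem:aaa=1}. This reduces $S_H(\sigma(1),\ldots,\sigma(2k))$ to the product of $(-1)^{3k}$, a sign $\varepsilon(H,\sigma)=\pm 1$ coming from the graph-orientation convention of \S\ref{ss:o-separated}, a linking-number product over the edges of $H^\circ$, and the class $[H]=[\mathrm{Close}(H^\circ)]\in\calA_k$.

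Next, by the construction of the Y-link in \S\ref{ss:Y-link}, the linking number $\ell k(\beta_a^{(m)},\beta_b^{(n)})$ equals $\pm 1$ exactly when the $a$-th leaf of $G_m$ is joined by a Hopf link to the $b$-th leaf of $G_n$ along an edge of $\Gamma$, and vanishes otherwise. Applying Lemma~\ref{lem:lk-nontrivial}, the only $(H,\sigma)$ contributing nontrivially are those for which $\sigma$ realises an isomorphism from $H$ onto $\Gamma$ as abstract trivalent graphs. For each such $\sigma\in\mathfrak{S}_{2k}$ the underlying graph structure of $H$ is thereby pinned down, while the edge labelling in $\mathfrak{S}_{3k}$ and the arrow orientations in $\{\pm 1\}^{3k}$ on the edges of $H$ remain free. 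Consequently the number of contributing pairs equals $(2k)!\cdot(3k)!\cdot 2^{3k}$, which exactly cancels the normalisation factor $\frac{1}{2^{3k}(2k)!(3k)!}$.

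The main obstacle, which is the step that deserves genuine care, is to verify that for every contributing pair $(H,\sigma)$ the total sign
\[
\varepsilon(H,\sigma)\,\prod_{e=(j,\ell)\in E(H^\circ)}\ell k(\beta_{a(e)}^{(\sigma(j))},\beta_{b(e)}^{(\sigma(\ell))})\cdot [H]\ =\ [\Gamma]
\]
in $\calA_k$, independently of the labelling and the arrow data. This is precisely the sign-cancellation remarked upon immediately after Lemma~\ref{lem:homological-formula}. I would prove it by a local analysis: at each black vertex $i$, the sign produced by ordering the coorientations of the ascending manifolds according to the vertex-orientation of the arrow graph in the triple intersection of Lemma~\ref{lem:aaa=1} matches the sign produced in $\calA_k$ by reordering the half-edges via the label-change relation of \S\ref{ss:graphs}; at each edge, the $\pm 1$ linking number equals the sign by which the arrow on that edge of $H$ differs from the arrow on the corresponding edge of $\Gamma$, as dictated by the Hopf-linking convention of \S\ref{ss:Y-link}; finally, the edge-label permutation by which $H$ and $\Gamma$ differ is absorbed into the $\Gamma'\sim-\Gamma$ relation defining $\calA_k$. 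Once this sign-matching is verified, summing $(-1)^{3k}[\Gamma]$ over the $(2k)!(3k)!\,2^{3k}$ contributing pairs and dividing by the same factor yields exactly $(-1)^{3k}[\Gamma]$, completing the proof.
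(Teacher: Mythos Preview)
Your approach follows the paper's proof closely: both substitute Lemmas~\ref{lem:S_H} and~\ref{lem:homological-formula}, invoke Lemma~\ref{lem:lk-nontrivial} to restrict the double sum, and then count. The sign-matching you identify as ``the main obstacle'' is precisely what the paper absorbs into the remark after Lemma~\ref{lem:homological-formula}.

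The one place your argument diverges, and where it has a small gap, is the counting. You assert that there are $(2k)!\,(3k)!\,2^{3k}$ contributing pairs $(H,\sigma)$, each worth $(-1)^{3k}[\Gamma]$. The paper instead shows that each contributing pair gives $(-1)^{3k}\,|\mathrm{Aut}_e\Gamma|\,[\Gamma]$ --- the factor $|\mathrm{Aut}_e\Gamma|$ arises because, when $\Gamma$ has multiple edges between two vertices, the product $\prod_e\sum_{a,b}\ell k(\beta_a^{(j)},\beta_b^{(\ell)})$ is not $\pm1$ but records the several ways to match parallel $H$-edges to parallel $\Gamma$-edges --- and that there are $\frac{2^{3k}(2k)!(3k)!}{|\mathrm{Aut}\,\Gamma|}\cdot|\mathrm{Aut}_v\Gamma|$ such pairs. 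The two bookkeepings produce the same total, but your per-pair claim and your pair count are only literally correct when $|\mathrm{Aut}_e\Gamma|=1$: for multi-edged $\Gamma$ your ``$(3k)!$ free edge labellings'' overcounts distinct labelled $H$'s, since relabelling a set of parallel edges does not change the labelled graph. This is harmless for Theorem~\ref{thm:commute} (where $\Gamma\in\calG_k'$ has no multiple edges), and $[\Gamma]=0$ in $\calA_k$ for multi-edged $\Gamma$ anyway, but the paper's automorphism bookkeeping is the cleaner route for the general statement.
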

\begin{proof}
By Lemma~\ref{lem:aaa=1} (2), the value of the right hand side of (\ref{eq:Q-Q}) agrees with $\pm\prod_{e=(j,\ell)\in\mathrm{Edges}(H^\circ)}\sum_{a,b}\ell k(\beta_a^{(j)},\beta_b^{(\ell)})$. Each nonvanishing coefficient in the sum\\ $\sum_H S_H(\sigma(1),\ldots,\sigma(2k))$ gives the same element in $\calA_k$ after multiplying it to $\mathrm{Close}(H^\circ)$. Thus 
\[ S_H(\sigma(1),\ldots,\sigma(2k))=\left\{\begin{array}{ll}
(-1)^{3k}|\mathrm{Aut}_e\Gamma|[\Gamma] & \mbox{if there exists an isomorphism $H\cong \pm \Gamma$}\\
 & \mbox{\phantom{if }that preserves the labels of vertices}\\
0 & \mbox{otherwise}
\end{array}\right. \]
Among the permutations $\sigma\in \mathfrak{S}_{2k}$ of the set of vertices, there are $|\mathrm{Aut}_v\Gamma|$ elements satisfying the condition of Lemma~\ref{lem:lk-nontrivial}. For a trivalent graph, there are $\frac{2^{3k}(2k)!(3k)!}{|\mathrm{Aut}\,\Gamma|}$ different ways of giving edge-orientations and labellings. Therefore, the LHS of (\ref{eq:sum_S_H}) is
\[ \frac{1}{2^{3k}(2k)!(3k)!}\frac{2^{3k}(2k)!(3k)!}{|\mathrm{Aut}\,\Gamma|}(-1)^{3k}|\mathrm{Aut}_v\Gamma||\mathrm{Aut}_e\Gamma|[\Gamma]=(-1)^{3k}[\Gamma].
\]
This completes the proof.
\end{proof}

\begin{proof}[Proof of Theorem~\ref{thm:Z(G)} (2)]
To complete the proof of Theorem~\ref{thm:Z(G)} (2), we shall prove that $Z_k^\Morse((\ve_1\xi^{(1)},\ldots,\ve_{3k}\xi^{(3k)}),\eta)$ agrees with $Z_k^\Morse((\xi^{(1)},\ldots,\xi^{(3k)}),\eta)$ for all $\ve_i=\pm 1$ for the coherent $v$-gradient $\xi^{(j)G}$ as in Lemma~\ref{lem:occupied}. Z-graphs for non-vanishing terms in $Z_k^\Morse((\ve_1\xi^{(1)},\ldots,\ve_{3k}\xi^{(3k)}),\eta)$ are exactly the same as those for $Z_k^\Morse((\xi^{(1)},\ldots,\xi^{(3k)}),\eta)$. Here, although the orientations of the Z-graphs may change under the replacements $\xi^{(i)}\to \ve_i\xi^{(i)}$, the orientations of graphs change accordingly and the changes do not affect the product $\#\acalM_\Gamma(\vec{\xi}^{G})\cdot \Gamma$. Thus we need only to show that in such a Z-graph a separated edge of $\ve_j\xi^{(j)}$, $\ve_j=-1$, gives the same coefficient as that of $\ve_j=1$. The analogue of the identity $\overline{\partial} g(p)=p$ in the proof of Lemma~\ref{lem:dg(p)=p} for $\ve_j=-1$ is $p=\overline{\partial}^*g^*(p)$, where $\overline{\partial}^*$, $g^*$ are given by matrix transpose for $\overline{\partial}$, $g$, respectively. Then we have
\[ \ell k(c(p),c(p'))
=\langle \bcalD_{g^*(p)}(\xi_{b_0^n}),\bcalD_{p'}(\xi_{b_0^n})\rangle_{F_0}
=\langle \bcalD_{p}(\xi_{b_0^n}),\bcalD_{g(p')}(\xi_{b_0^n})\rangle_{F_0}.\]
There are no changes in Lemma~\ref{lem:rel_cycle} for $\ve_j=-1$ except that $g(p)$ becomes $g^*(p)$, and there are no other changes in the argument of \S\ref{ss:eval-Z} except that $\bD$ and $\bA$ are exchanged. Hence by Lemma~\ref{lem:eval-hatZ} we have $Z_k^\Morse((\ve_1\xi^{(1)},\ldots,\ve_{3k}\xi^{(3k)}),\eta)=Z_k^\Morse((\xi^{(1)},\ldots,\xi^{(3k)}),\eta)=(-1)^{3k}[\Gamma]$. This completes the proof.
\end{proof}

\subsection{Proof of Lemma~\ref{lem:Y-restrict}}\label{ss:indices_Y}

Among the $Y$-shaped components $J$ in a Z-graph in $E^\Gamma$ that are of surviving type, listed in (\ref{eq:list}), 
the cases $(2,3,3\,|\,)$, $(\,|\,1,1,2)$, $(2,3\,|\,1)$, $(3,3\,|\,2)$, $(2\,|\,1,1)$, $(3\,|\,1,2)$, $(2,2,3\,|\,)$, $(\,|\,1,2,2)$, $(2,2\,|\,1)$, $(2,3\,|\,2)$, $(2\,|\,1,2)$, $(3\,|\,2,2)$
satisfy the condition of the lemma. We will show below that if $J$ has a white vertex with index 0 critical point, i.e., the index of $J$ is one of $(\,|\,0,2,2)$, $(1,3\,|\,0)$, $(2,2\,|\,0)$, $(1\,|\,0,1)$, $(2\,|\,0,2)$, $(1,2\,|\,0)$, $(1\,|\,0,2)$, then such a graph does not contribute to $Z_k^\Morse(\pi^\Gamma)$. Since an input white vertex with index 1 critical point is paired with an output white vertex with index 0 critical point, the cases $(1,3,3\,|\,)$, $(1,3\,|\,1)$, $(1\,|\,1,1)$ can then be excluded, too. This will complete the proof Lemma~\ref{lem:Y-restrict}.

\begin{Lem}
Let $J$ be  a $Y$-shaped component in $I$ as above. If $J$ has a white vertex with index 0 critical point, i.e., the index of $J$ is one of the following:
\begin{enumerate}
\item (Type I)\quad $(\,|\,0,2,2)$, $(1,3\,|\,0)$, $(2,2\,|\,0)$, $(1\,|\,0,1)$, $(2\,|\,0,2)$, 
\item (Type II)\quad $(1,2\,|\,0)$, $(1\,|\,0,2)$, 
\end{enumerate}
then such a graph $I$ does not contribute to $Z_k^\Morse(\pi^\Gamma)$.
\end{Lem}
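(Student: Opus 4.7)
The plan is to analyze each of the seven listed index tuples and show that in each case the moduli space $\acalM_J$ vanishes after a suitable further perturbation of the coherent $v$-gradient that preserves all the previous hypotheses.

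The central geometric input is the structure of the Morse function $m:\R^4\to\R$ of \S\ref{ss:fiberwise-MF}. Since $m^{-1}(0)\supset \partial V_\ell$ and the restriction $\mu_\ell=m|_{V_\ell}$ possesses a unique critical point of index $0$, call it $q_\ell^0$, the vector field $-\xi$ is nowhere outward-pointing along $\partial V_\ell$ and every forward $-\xi$-orbit starting in $V_\ell$ converges to $q_\ell^0$. Therefore $\bcalA_{q_\ell^0}(\xi)\cap V_\ell=V_\ell$, whereas $\bcalA_q(\xi)\cap V_\ell=\emptyset$ for every other critical point $q$ of $m$ in $\R^4$. Because Lemma~\ref{lem:occupied} forces the black vertex $v_b$ of $J$ to lie in $V_\ell(t_\ell)\subset\widetilde{V}_\ell'$, any output white vertex attached to an index $0$ critical point $q$ must satisfy $q=q_\ell^0$; otherwise the constraint $v_b\in\bA_q$ is violated and $\acalM_J=\emptyset$ immediately.

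Assuming $q=q_\ell^0$, the condition $v_b\in\bA_q$ becomes the tautology $v_b\in V_\ell$, so only the remaining two arcs of $J$ impose effective constraints on $v_b$. I would then verify case by case that these two constraints cannot cut out a $0$-dimensional transversal intersection in $V_\ell$. For the cases with two output white vertices of positive index --- namely $(\,|\,0,2,2)$, $(1\,|\,0,1)$, $(2\,|\,0,2)$, and $(1\,|\,0,2)$ --- the uniqueness of forward $-\xi$-limits implies that a generic point of $V_\ell$, which flows to $q_\ell^0$, cannot simultaneously belong to the stable manifold of a distinct critical point $q_j$, so $\bcalA_{q_j}(\xi)\cap V_\ell$ is either empty (when $q_j$ lies outside $V_\ell$) or concentrated on broken trajectories through higher-index critical points of $\mu_\ell$. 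By coherence and the thin-handlebody structure of Lemmas~\ref{lem:ribbon-graph-I}, \ref{lem:ribbon-graph-II}, these broken trajectories can be pushed away from the other constraint set by a perturbation localized near $q_\ell^0$. For the cases with two input white vertices --- $(1,3\,|\,0)$, $(2,2\,|\,0)$, and $(1,2\,|\,0)$ --- the input descending manifolds $\bcalD_{p_j}(\xi)$ meet $V_\ell$ only along prescribed strata of $\partial V_\ell$; a similar coherence-preserving perturbation renders the intersection of their restrictions to $V_\ell$ transversal of negative expected dimension, hence empty.

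The main obstacle is ensuring that a single perturbation of $\xi$ establishes all of these vanishing assertions simultaneously while respecting Assumption~\ref{hyp:eta} (especially the absence of $i/i+\ell$-intersections for $\ell\geq 1$), Assumption~\ref{hyp:cv-const}, the genericness with respect to $\vec{V}_G$ of Definition~\ref{def:coherent}, and the thin-handlebody coherence of Lemmas~\ref{lem:ribbon-graph-I}, \ref{lem:ribbon-graph-II}. The natural approach is to localize the perturbation in an arbitrarily small neighborhood of each $q_\ell^0$ that is disjoint from the thin handlebodies $T_\ell$ (respectively $\widetilde{T}_\ell$) appearing in the coherence description, so that the previous genericness and coherence conditions are trivially preserved and only the local basin structure of $q_\ell^0$ is modified. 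A verification that such a local perturbation suffices to empty every listed moduli space then completes the proof.
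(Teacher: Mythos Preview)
Your opening observation is correct and matches the paper: once the black vertex $v_b$ lies in $\widetilde V_\ell$, the output arc to an index-$0$ critical point imposes a codimension-$0$ constraint (equivalently $\bA_{q_\ell^0}\cap V_\ell$ is dense in $V_\ell$), so only the two remaining half-edges are effective. But from this point on your argument breaks down.

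The central error is the dimension count. You assert that the two remaining constraints cut out a set of \emph{negative} expected dimension, hence empty after perturbation. This is false: the index tuples in the list are exactly those for which the formula $\sum_{j\le r}(4-a_j)+\sum_{j>r}a_j$ equals $4$ (type I) or $5$ (type II), i.e.\ the expected dimension of $\acalM_J$ is \emph{zero}. Dropping the tautological index-$0$ constraint does not change this, since its codimension was already $0$. For example, in the case $(\,|\,0,2,2)$ with $V_\ell$ of type I, the two ascending manifolds $\bA_q,\bA_r$ of index-$2$ critical points are honest $2$-dimensional submanifolds of the $4$-manifold $\widetilde V_\ell$; their transversal intersection is a $0$-manifold, generically nonempty. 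A local perturbation near $q_\ell^0$ does not touch these intersections at all (they occur away from $q_\ell^0$), and no perturbation can alter a transversal signed $0$-dimensional count. The same applies to $(2,2\,|\,0)$, where the two relative $2$-cycles $\bD_{g(p)},\bD_{g(q')}$ meet in a $0$-dimensional set in $\widetilde V_\ell$.

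What is missing is the homological mechanism that the paper uses. The signed count of each such $0$-manifold is an intersection number in the closed manifold $S_\ell=\widetilde V_\ell\cup_\partial(-V_\ell\times K_\ell)$ (or in $\overline S_\ell$), after closing the relevant $\bA$- and $\bD$-chains to cycles via the bordisms $C^{(\ell)}_j$ and the trivial families $B^{(\ell)}_j$. The vanishing then comes from three distinct homological facts: (i) for type I with $q$ inside $\widetilde V_\ell$, cancellation between the two sheets $V_\ell'$ and $-V_\ell$ forces the linking-number contribution to be $\ell k - \ell k = 0$; (ii) for two $2$-dimensional classes in $S_\ell$ or $\overline S_\ell$, the cup product $H^2\otimes H^2\to H^4$ restricted to the relevant classes vanishes (Lemma~\ref{lem:aaa=1}(1)); (iii) in cases like $(1,3\,|\,0)$ and $(1\,|\,0,1)$ one must additionally arrange, via a perturbation of $\xi^{(\ell)}$ in $\R^4\setminus V_i$, that certain descending-manifold loci avoid the thin region $R\times[\kappa,0]$ where the surgery is supported, so that the count again cancels between the two components. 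None of these ingredients is a dimension argument; they are genuine homological identities, and your proposal does not invoke them.
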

\begin{proof}
\underline{$(*,*\,|\,0)$}: Let $K$ be a $Y$-shaped component in $I$ of index $(2,2\,|\,0)$ (type I). Suppose that $K$ has a black vertex in $\widetilde{V}_i=V_i'\tcoprod (-V_i)$ and that the half-edges of $K$ with critical points of indices $2,2,0$ are labelled by $k,\ell,m$ respectively. By (\ref{eq:XR}), we may consider that on the input white vertex of $K$ of index 2 labeled by $k$, the Morse chain $g(p)$ for some critical point $p$ of $f^{(k)}_{b_0^n}$ of index 1 with negative critical value is  attached, extending $I$ to chains. Let $q$ be the critical point of $f^{(\ell)}_{b_0^n}$ of index 2 that is attached on $\ell$ in $K$. According to Lemma~\ref{lem:rel_cycle}, $g(p)$ induces a relative 2-cycle $\sigma$ in $(\widetilde{V}_i,\partial \widetilde{V}_i)$. Let $r$ be the critical point of $f^{(m)}_{b_0^n}$ of index 0 attached on $m$ in $K$. 

If $q$ lies in $\widetilde{V}_i$, then $q$ is a 2-cycle in the Morse complex of the fiber over $b_0^n$. By Lemma~\ref{lem:dg(p)=p} (2), $K$ gives $\langle \sigma, \bA_r,\bD_q\rangle_{\widetilde{V}_i}=\langle \sigma, \bcalD_q\rangle_{\widetilde{V}_i}=\pm({\ell k}(c(p),c(q))-{\ell k}(c(p),c(q)))=0$,
where the two terms of $\ell k$ correspond to the components $V_i'$ and $-V_i$ respectively. Note that the linking number is invariant under the surgery on $G_i$ by the homological triviality result of Proposition~\ref{prop:wh-prod}. Hence the Z-graph $I$ having such $K$ does not contribute to $Z_k^\Morse(\pi^\Gamma)$. The same argument can be applied to $K$ of index $(1,3\,|\,0)$ with $|g(p)|=3$, $|q|=1$.

If $q$ lies outside $\widetilde{V}_i$, it suffices to consider $K$ with $q$ replaced by $g(q')$ for some index 1 critical point $q'$. As for $g(p)$ on the half-edge $k$, $g(q')$ induces a relative 2-cycle $\sigma'$ in $(\widetilde{V}_i,\partial \widetilde{V}_i)$. By gluing $\partial V_i'$ and $-\partial V_i$ together, we may think $\sigma$ and $\sigma'$ are 2-cycles in $\overline{S}_i=V_i'\cup_\partial (-V_i)$. The intersection number $\langle \sigma,\bA_r, \sigma'\rangle_{\widetilde{V}_i}=\sigma\cdot \sigma'$ can be interpreted as that for two 2-cycles in $\overline{S}_i$, which vanishes since the image of the cup product $H^2(\overline{S}_i)\otimes H^2(\overline{S}_i)\to H^4(\overline{S}_i)$ of such 2-cycles is zero (Lemma~\ref{lem:aaa=1} (1)). 

If $K$ is of index $(1,3\,|\,0)$ (type I) and $q$ lies outside $\widetilde{V}_i$, then let $g(p), q, r$ with $|p|=2, |q|=1, |r|=0$ be the Morse chains attached to the white vertex of $K$. Let $\sigma$ be the relative 3-cycle in $(\widetilde{V}_i,\partial\widetilde{V}_i)$ induced from $g(p)$. A leg of $\bD_q$ converges to a critical locus $s$ of index 0 in $\widetilde{V}_i$ by assumption. We remark that since $\bD_q$ induces a relative 1-cycle of $(\widetilde{V}_i,\partial \widetilde{V}_i\cup s)$, the value of $\langle \sigma, \bA_r,\bD_q\rangle_{\widetilde{V}_i}$ will not change by adding to $\sigma$ the boundary of a 4-chain in $\widetilde{V}_i$ that does not meet $\partial \widetilde{V}_i\cup s$. To obtain a good modification of $\sigma$, we now assume that the $v$-gradient for the fiberwise Morse function $\widetilde{\mu}_i$ and its perturbations $\widetilde{\mu}_i^{(1)},\cdots,\widetilde{\mu}_i^{(3k)}$ are as follows. We assume that the support of the relative diffeomorphisms of $\partial V_i$ for the surgery on $G_i$ (Definition~\ref{def:alpha-I}) is included in a thin handlebody $R$ with at most 2-handles in $\partial \widetilde{V}_i$. Then we may assume that for a real number $\kappa<0$ with small absolute value, both the manifolds $\widetilde{V}_i$ and the gradient-like vector field for $\widetilde{\mu}_i$ agree on the complement of $R\times [\kappa,0]$, where the direct product structure $R\times[\kappa,0]$ is the one generated by the gradient-like vector field for $\widetilde{\mu}_i$, and we consider that $\widetilde{V}_i$ is obtained from $V_i\times K_i$ by surgery within $R\times [\kappa,0]$. Furthermore, we may assume that this property is satisfied for every $\xi^{(j)G_i}$. By perturbing the gradient-like vector field for $f^{(\ell)}$ in $\R^4-V_i$, we may assume that $\bD_q\cup s$ does not meet $R\times [\kappa,0]$ in $\widetilde{V}_i$. Note that this perturbation is independent of the gradient-like vector fields in $\widetilde{V}_i$'s and does not affect all the previous vanishing results. 

We may assume that the relative cycle $\sigma$ is disjoint from $s$. By adding the boundary of a 4-chain in $\widetilde{V}_i$ to $\sigma$ that does not meet $\partial \widetilde{V}_i\cup s$, we may assume that the restriction of $\sigma$ on the complement of $R\times [\kappa,0]$ is independent of parameter in $K_i=S^0$. See Figure~\ref{fig:modify-sigma-G}, which summarizes the change of $\sigma$. Under all the above assumptions, $\langle \sigma, \bA_r,\bD_q\rangle_{\widetilde{V}_i}=0$ by cancellation since the intersections avoid $R\times [\kappa,0]$, and hence the Z-graph $I$ does not contribute to $Z_k^\Morse(\pi^\Gamma)$ in this case, too. 
\begin{figure}
\includegraphics[height=30mm]{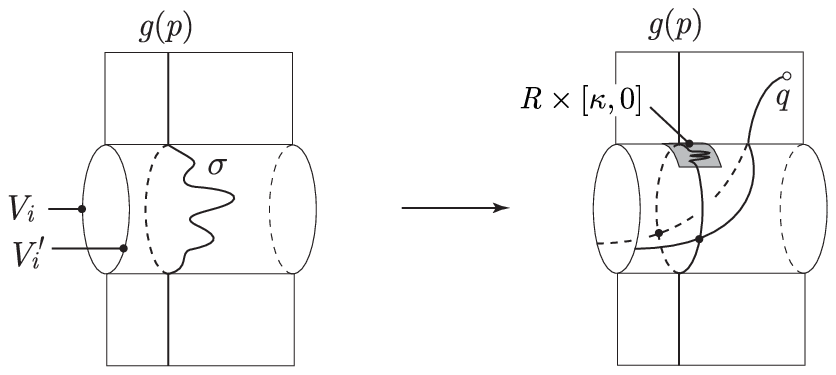}
\caption{}\label{fig:modify-sigma-G}
\end{figure}

If $K$ is of index $(1,2\,|\,0)$ (type II), then the proof of vanishing is almost the same as above for $(1,3\,|\,0)$, except that $|p|=1$, $|q|=1$, $\ell k$ is replaced with $\widetilde{\ell k}$.

\underline{$(\,|\,0,2,2)$}: Let $K$ be a $Y$-shaped component in $I$ of index $(\,|\,0,2,2)$ (type I). Suppose that the black vertex of $K$ lies in $\widetilde{V}_i$ and that the output half-edges of $K$ with critical points $q,r,s$ of index 2,2,0 are labelled by $k,\ell,m$ respectively. Then $K$ gives $\langle \bA_q,\bA_r,\bA_s\rangle_{\widetilde{V}_i}=\langle \bA_q,\bA_r\rangle_{\widetilde{V}_i}$. By coherence of the $v$-gradients, the bordism $C_j^{(i)}$ of Lemma~\ref{lem:ribbon-graph-I} for $p_j^{(i)}=q$ or $r$ is a map from a ribbon graph, and two such do not have intersection in the 3-manifold $\partial \widetilde{V}_i$ generically. Thus we have $\langle \bA_q,\bA_r\rangle_{\widetilde{V}_i}=\langle F_j^{(i)}(\xi^{(k)G_i}),F_{j'}^{(i)}(\xi^{(\ell)G_i})\rangle_{S_i}$, which is 0 by Lemma~\ref{lem:aaa=1} (1). Therefore, it follows that $I$ does not contribute to $Z_k^\Morse(\pi^\Gamma)$, either. 

\underline{$(*\,|\,0,*)$}: Let $K$ be a $Y$-shaped component in $I$ of index $(1\,|\,0,1)$ (type I). Suppose that the black vertex of $K$ lies in $\widetilde{V}_i$ and that the input and output half-edges of $K$ with critical points of index 1 are labelled by $k,\ell$ respectively. Suppose that the output half-edge of $K$ with critical point of index 0 is labelled by $m$. Let $q,r,s$ be the critical points of index $1,1,0$ attached on the half-edges $k,\ell,m$, respectively. Then we may assume that $q$ lies outside $\widetilde{V}_i$, and $K$ gives $\langle \bD_q, \bA_s, \bA_r\rangle_{\widetilde{V}_i}=\langle \bD_q, \bA_r\rangle_{\widetilde{V}_i}$. By the same argument as $(1,3\,|\,0)$ with both input white vertices outside $\widetilde{V}_i$, the difference of $\bA_r$ before and after the surgery on $G_i$ can be squeezed into $R\times [\kappa,0]$ for a thin handlebody $R$ in $\partial \widetilde{V}_i$. Since $\bD_q$ can be made disjoint from $R\times [\kappa,0]$ by perturbing the gradient-like vector field for $f^{(k)}$ in $\R^4-V_i$, the intersection $\langle \bD_q, \bA_r\rangle_{\widetilde{V}_i}$ is zero by cancellation, and the flow-graph $I$ does not contribute to $Z_k^\Morse(\pi^\Gamma)$ in this case. The case $(1\,|\,0,2)$ (type II) is almost the same as this, where the difference of the ascending manifolds of index 2 can be squeezed into the union of $(1,1), (1,0), (2,0)$-handles on $\partial \widetilde{V}_i$ and disjoint from the locus of the descending manifold from an index 1 critical locus. The case $(2\,|\,0,2)$ (type I) is similar to the case where $K$ is of index $(2,2\,|\,0)$ with $q$ not in $\widetilde{V}_i$, in which case the proof is the same as before except that $\bA_q$ for a critical point $q$ of index 2 of an output white vertex is closed by using $C_j^{(i)}$, and the count of $K$ is $\langle\sigma,\bA_q,\bA_r\rangle_{\widetilde{V}_i}=\sigma\cdot \bA_q = \sigma\cdot F_j^{(i)}$. This vanishes by the triviality of the cup product in $\overline{S}_i$ for the corresponding classes (Lemma~\ref{lem:aaa=1} (1)).

Finally, note that the gradient-like vector fields may be perturbed so that every $Y$-shaped component in any $I$ with indices in the given list (1), (2) of the statement simultaneously vanish.
\end{proof}

\appendix

\mysection{Orientations on manifolds and their intersections}{s:ori}


For a $d$-dimensional orientable manifold $M$, we will represent an orientation on $M$ by a nowhere vanishing $d$-form of $\Omega_{\mathrm{dR}}^d(M)$ and denote by $o(M)$. If $M$ is a submanifold of an oriented Riemannian $e$-dimensional manifold $E$, then we may alternatively define $o(M)$ from an orientation $o^*_E(M)$ of the normal bundle of $M$ by the rule
\begin{equation}\label{eq:coori}
 o(M)\wedge o^*_E(M)\sim o(E). 
\end{equation}
Note that $o^*_E(M)$ is defined canonically by the Hodge star operator: $o^*_E(M)=*o(M)$. $o^*_E(M)$ is called a {\it coorientation} of $M$ in $E$. We assume that (\ref{eq:coori}) is always satisfied so that coorientation is just an alternative way to represent orientation. 

Let $N$ be an oriented smooth manifold and let $\pi:N\to E$ be a smooth map that is transversal to $M$. Then the preimage $\pi^{-1}M$ is naturally an oriented submanifold of $N$. We may define the coorientation of $\pi^{-1}M$ by $\pi^* o^*_E(M)$. We denote simply by $o^*_E(M)$ the coorientation $\pi^* o^*_E(M)$. 

If $M$ has boundary $\partial M$, we provide an induced orientation on $\partial M$ from $o(M)$ as follows: let $n$ be an outward normal vector field on $\partial M$, then we define 
\begin{equation}\label{eq:inward_first}
 o(\partial M)_x=\iota(n_x)o(M)_x. 
\end{equation}

Suppose $M$ and $M'$ are two cooriented submanifolds of $E$ of dimension $i$ and $j$ that intersect transversally. The transversality implies that at an intersection point $x$, the form $o^*_E(M)_x\wedge o^*_E(M')_x$ is a non-trivial $(2e-i-j)$-form. We define 
\begin{equation}\label{eq:coori_int}
 o^*_E(M\pitchfork M')_x=o^*_E(M)_x\wedge o^*_E(M')_x. 
\end{equation}
This depends on the order of the product. 

For an $F$-bundle $\pi:E\to B$, with both base and fiber oriented, we orient the total space by 
\begin{equation}\label{eq:o(E)}
  o(E)_x=o(B)_x\wedge o(F_{\pi(x)})_x.
\end{equation}

\mysection{Transversality of flow-graphs}{s:transversality}

\subsection{Proof of Lemma~\ref{lem:0-mfd}}\label{ss:transversality-family}
It suffices to prove that the spaces of vertical flow-graphs in fibers satisfy the property of the assertion. 
Let $r$ be a sufficiently large integer and let $\calX(F_0)$ be the space of $C^r$ gradient-like vector fields on the base fiber $F_0$ that satisfy Assumption~\ref{hyp:eta}. Let $\calU^{(\ell)}\subset \calX(F_0)$ be a small neighborhood of a Morse--Smale element $\xi_0^{(\ell)}$. Let $O$ be an open $k$-disk and let $\widetilde{\calU}^{(\ell)}$ be the space of $C^r$ maps $O\to \calU^{(\ell)}$. An element of $\widetilde{\calU}^{(\ell)}$ gives an $O$-family of gradient-like vector fields on $F_0$, which can be considered as a $v$-gradient on $F_0\times O$.

Now we take a $\vec{C}$-graph $\Gamma$. We define $\calE_\ell(\xi^{(\ell)})$ ($\ell=1,2,\ldots,3k$) as follows. If the $\ell$-th edge of $\Gamma$ is separated and if the critical points $p$ and $q$ of $\xi_0^{(\ell)}$ are attached respectively to the input and output white vertex of the $\ell$-th edge, then we define $\calE_\ell(\xi^{(\ell)})=\calA_q(\xi^{(\ell)})\times_O \calD_p(\xi^{(\ell)})$, where we also denote by $p,q$ the critical loci of the $v$-gradient $\xi^{(\ell)}$ on $F_0\times O$ which correspond to the critical points $p,q$ of $\xi_0^{(\ell)}$, by abuse of notation. If the $\ell$-th edge of $\Gamma$ is compact, we define $\calE_\ell(\xi^{(\ell)})=\calM_2(\xi^{(\ell)})$. We define the map 
\[ \Phi:\bigcup_{\xi^{(\ell)}\in\widetilde{\calU}^{(\ell)}} \calE_1(\xi^{(1)})\times_O\calE_2(\xi^{(2)})\times_O\cdots\times_O\calE_{3k}(\xi^{(3k)}) \to F_0^{6k} \]
between Banach manifolds as follows\footnote{For fiberwise spaces $X_i$ over $O$, we write $X_1\times_O\cdots\times_O X_r=\int_{s\in O}X_1(s)\times\cdots\times X_r(s)$.}. A point of $\calE_\ell(\xi^{(\ell)})$ can be represented by $s\in O$ and a pair $(x_\ell,y_\ell)$ of noncritical endpoints of a possibly separated flow-line. Thus, a point of $\calE_1(\xi^{(1)})\times_O\calE_2(\xi^{(2)})\times_O\cdots\times_O\calE_{3k}(\xi^{(3k)})$ can be represented by $(x_1,y_1)\times(x_2,y_2)\times\cdots\times (x_{3k},y_{3k})\times \{s\}$, and its fiber components $(x_1,y_1)\times(x_2,y_2)\times\cdots\times (x_{3k},y_{3k})$ gives a point of $F_0^{6k}$. We define $\Phi(\{\xi^{(\ell)}\};(x_1,y_1)\times(x_2,y_2)\times\cdots\times (x_{3k},y_{3k})\times \{s\})$ to be this point. For each trivalent vertex $v$ of $\Gamma$, we collect the three components from $(x_1,y_1)\times(x_2,y_2)\times\cdots\times (x_{3k},y_{3k})$ corresponding to the endpoints of the three edges incident to $v$ on the $v$-side, and we identify $F_0^{6k}$ (ordered with respect to the edge numbering) with $F_0^3\times\cdots\times F_0^3$ (ordered with respect to the vertex numbering). Then we denote the main diagonal $\{(x,x,x)\mid x\in F_0\}$ of $F_0^3$ by $\Delta^{(3)}$, and put $\Delta=\Delta^{(3)}\times\cdots\times \Delta^{(3)}$, which is naturally diffeomorphic to $F_0^{2k}$. 
\begin{Lem}\label{lem:transversal}
$\Phi$ is transversal to $\Delta$. Hence $\Phi^{-1}(\Delta)$ is a $C^r$ submanifold of codimension $16k$.
\end{Lem}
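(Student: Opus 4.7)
The plan is to show that the differential $d\Phi$ is surjective at every point of $\Phi^{-1}(\Delta)$, which is a stronger condition than transversality to $\Delta$ and directly yields the codimension claim. Fix a point
\[
  \zeta_0 = \bigl(\{\xi_0^{(\ell)}\};\,(x_\ell,y_\ell)_{\ell=1}^{3k};\,s_0\bigr)\in \Phi^{-1}(\Delta).
\]
The tangent space to the domain at $\zeta_0$ receives contributions from perturbations $\delta\xi^{(\ell)}\in T_{\xi_0^{(\ell)}}\widetilde\calU^{(\ell)}$, from perturbations of the base parameter $\delta s\in T_{s_0}O$, and from internal tangent vectors of the moduli $\calE_\ell(\xi_0^{(\ell)})$ satisfying the fibered compatibility at $s_0$. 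The target $T_{\Phi(\zeta_0)}F_0^{6k}$ decomposes as $\bigoplus_{\ell}\bigl(T_{x_\ell}F_0\oplus T_{y_\ell}F_0\bigr)$. Since $\Delta\cong F_0^{2k}$ has codimension $16k$ in $F_0^{6k}$, a successful verification that $d\Phi$ is onto the full target will immediately give that $\Phi^{-1}(\Delta)$ is a $C^r$ Banach submanifold of codimension $16k$, by the implicit function theorem.

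The key step is to show that already the restriction of $d\Phi_{\zeta_0}$ to $\bigoplus_\ell T_{\xi_0^{(\ell)}}\widetilde\calU^{(\ell)}$ surjects onto the target. I will argue edge by edge, using that the $\delta\xi^{(\ell)}$ lie in independent factors. For a separated edge $\ell$, the endpoints $x_\ell$ and $y_\ell$ are \emph{noncritical} (by construction of $\Phi$), and they lie respectively on an ascending manifold $\calA_q(\xi_0^{(\ell)})$ and a descending manifold $\calD_p(\xi_0^{(\ell)})$ in the fiber over $s_0$. Given any tangent vector $v\in T_{x_\ell}F_0$, the classical dependence of flow-lines on vector-field perturbations produces a $C^r$-small $\delta\xi^{(\ell)}$ compactly supported in a neighborhood of $x_\ell$ disjoint from $y_\ell$ and from all critical loci, whose induced first-order variation of the point of $\calA_q$ through $x_\ell$ equals $v$: one writes $\calA_q$ locally as the trajectory of the flow from $q$, and integrates the Duhamel-type variation $\int \mathrm{Ad}(\Phi_{-\xi}^{-t})\,\delta\xi^{(\ell)}\,dt$ along this trajectory, noting that by transversality to the flow at the noncritical point $x_\ell$ every normal direction is reached and the flow-direction is reached by slightly shifting the time parameter. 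Splitting the support of $\delta\xi^{(\ell)}$ into a part near $x_\ell$ and a part near $y_\ell$, we independently prescribe arbitrary first-order variations in $T_{x_\ell}F_0\oplus T_{y_\ell}F_0$. For compact edges, essentially the same argument applies to $\calM_2(\xi^{(\ell)})$. Summing over $\ell$, one obtains surjectivity of $d\Phi_{\zeta_0}$.

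The main obstacle is the honest ODE-perturbation calculation underlying the claim that a locally supported $\delta\xi^{(\ell)}$ at a noncritical point can produce an arbitrarily prescribed first-order displacement of the ascending/descending manifold passing through that point. This is standard but needs care with the parameter-dependent setting (the base $O$), to ensure that the perturbation may be chosen still inside $\widetilde\calU^{(\ell)}$ (an open condition), still fiberwise Morse and Morse--Smale with the same critical loci $p,q$. Once the local model is in place, the rest is bookkeeping: the independence of the factors $\widetilde\calU^{(\ell)}$, the fact that $\calE_\ell$ already supplies the tangential directions along the flow, and the compatibility of the fiber product with small variations of $s$ together conclude the proof. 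The resulting submersion statement is stronger than needed for Lemma~\ref{lem:transversal}, and will be useful later when combined with Sard--Smale to secure genericity of the finite-dimensional intersection.
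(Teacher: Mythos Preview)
Your proposal is correct and follows essentially the same approach as the paper's proof: perturb the gradient-like vector fields with support near the trivalent vertices (equivalently, near the noncritical endpoints $x_\ell,y_\ell$) to move each coordinate of $z\in F_0^{6k}$ in an arbitrary tangent direction, exploiting that the factors $\widetilde{\calU}^{(\ell)}$ are independent. The paper's proof is much terser---it simply points to \cite{Fu} and asserts that ``any component of $z$ can be shifted in arbitrary tangent direction in $TF_0$ by a small perturbation of a family of gradient-like vector fields near the trivalent vertex''---whereas you spell out the Duhamel-type variation and the separated/compact edge cases; your stronger conclusion that $d\Phi$ itself is a submersion is in fact what the paper's argument yields as well, even though the paper only records the weaker transversality statement $\mathrm{Im}\,d\Phi + T_z\Delta = T_zF_0^{6k}$.
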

\begin{proof}
The idea is almost the same as in \cite{Fu}. Namely, for each point $z\in \mathrm{Im}\,\Phi\cap \Delta$, one may see that any component of $z$ can be shifted in arbitrary tangent direction in $TF_0$ by a small perturbation of a family of gradient-like vector fields near the trivalent vertex. It follows that the tangent space $T_zF_0^{6k}$ is spanned by $\mathrm{Im}\,d\Phi$ and $T_z\Delta$. That $\Phi^{-1}(\Delta)$ is a $C^r$ submanifold follows from the local presentation of transversality (e.g., \cite[Corollary 17.2]{AR}). 
\end{proof}

\begin{Lem}\label{lem:Fredholm}
The projection $\pi:\Phi^{-1}(\Delta)\to \widetilde{\calU}^{(1)}\times \widetilde{\calU}^{(2)}\times\cdots\times \widetilde{\calU}^{(3k)}$ is Fredholm, and its index is 0.
\end{Lem}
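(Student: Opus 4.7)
The plan is to show the Fredholm property by a local slicing argument and then compute the index by a short exact sequence computation.

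First I would fix a point $z = (\{\xi^{(\ell)}\}, c) \in \Phi^{-1}(\Delta)$ and choose locally a smooth transversal slice $N$ to $\Delta$ in $F_0^{6k}$ at $\Phi(z)$, so that a small neighborhood of $\Phi(z)$ is diffeomorphic to a neighborhood of $0$ in $\Delta \times N$; note $\dim N = 16k$. Writing $\Phi = (\Phi_\Delta,\Phi_N)$ in these coordinates, one has $\Phi^{-1}(\Delta) = \Phi_N^{-1}(0)$ locally, and by Lemma~\ref{lem:transversal} the full differential $d\Phi_N$ is surjective at $z$ as a map from $T_z(\prod\widetilde\calU^{(\ell)}) \oplus T_z(\calE_1\times_O\cdots\times_O\calE_{3k})$ to $N$. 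Its kernel is precisely $T_z\Phi^{-1}(\Delta)$.

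Next I would analyze $d\pi:T_z\Phi^{-1}(\Delta)\to T_{\pi(z)}\prod\widetilde\calU^{(\ell)}$. A vector lies in $\ker d\pi$ iff it has the form $(0,v)$ with $v\in T_z(\calE_1\times_O\cdots\times_O\calE_{3k})$ and $d\Phi_N(0,v)=0$. Thus $\ker d\pi$ is identified with the kernel of the restriction of $d\Phi_N$ to the finite-dimensional factor $T_z(\calE_1\times_O\cdots\times_O\calE_{3k})$, which is therefore finite-dimensional. For the cokernel, consider the induced map
\[
\widetilde{d\Phi_N}:T_z\textstyle\prod\widetilde\calU^{(\ell)} \to N\big/d\Phi_N(T_z(\calE_1\times_O\cdots\times_O\calE_{3k})).
\]
The image of $d\pi$ is exactly $\ker\widetilde{d\Phi_N}$, and surjectivity of the full $d\Phi_N$ (Lemma~\ref{lem:transversal}) implies that $\widetilde{d\Phi_N}$ is surjective. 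Hence $\mathrm{coker}\,d\pi$ is finite-dimensional with
\[
\dim\mathrm{coker}\,d\pi = \dim\bigl(N\big/d\Phi_N(T_z(\calE_1\times_O\cdots\times_O\calE_{3k}))\bigr).
\]
This finishes the verification that $\pi$ is Fredholm.

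To compute the index, I would use the rank-nullity relation for the finite-dimensional linear map $d\Phi_N|_{T_z(\calE_1\times_O\cdots\times_O\calE_{3k})}$, whose source has dimension $16k$ (this is the fiber dimension of $\calE_1\times_O\cdots\times_O\calE_{3k}\to \prod\widetilde\calU^{(\ell)}$, computed by noting that each $\calE_\ell$ has fiber dimension $d+1=5$ over $O$, whether the $\ell$-th edge is separated with $\deg=1$ or compact, giving $3k\cdot 5 + k = 16k$) and whose target $N$ also has dimension $16k$. Therefore
\[
\dim\ker\bigl(d\Phi_N|_{\text{config}}\bigr) = \dim\bigl(N\big/d\Phi_N(T_z(\calE_1\times_O\cdots\times_O\calE_{3k}))\bigr),
\]
which by the previous paragraph gives $\dim\ker d\pi = \dim\mathrm{coker}\,d\pi$, so $\mathrm{ind}\,d\pi = 0$. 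The only step requiring care is checking that the source and target of the finite-dimensional linear map have equal dimension $16k$, and this follows from the fact that the codimension of $\Delta$ in $F_0^{6k}$ equals the fiber dimension of $\calE_1\times_O\cdots\times_O\calE_{3k}$, a coincidence that is essentially the degree hypothesis $\deg(\Gamma)=(1,\ldots,1)$ and the equality $\dim B=k$ built into the setup.
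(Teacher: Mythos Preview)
Your proof is correct and arrives at the same conclusion by essentially unpacking the linear algebra that the paper's argument abstracts away. The paper's proof is shorter: it observes that $d\pi_z$ factors as the composition
\[
T_z\Phi^{-1}(\Delta)\;\hookrightarrow\;T_z\,\mathrm{Dom}\,\Phi\;\xrightarrow{d\hat\pi_z}\;T_{\hat\pi(z)}\textstyle\prod_\ell\widetilde{\calU}^{(\ell)},
\]
where the first map is the inclusion of a codimension-$16k$ subspace (Fredholm of index $-16k$) and the second is the differential of a projection with $16k$-dimensional fiber (Fredholm of index $+16k$), and then invokes additivity of the Fredholm index under composition to get $-16k+16k=0$. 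Your approach instead chooses a transversal slice $N$ to $\Delta$, identifies $\ker d\pi$ and $\mathrm{coker}\,d\pi$ explicitly as the kernel and cokernel of the finite-dimensional linear map $d\Phi_N|_{T_z(\calE_1\times_O\cdots\times_O\calE_{3k})}$, and appeals to rank--nullity. Both arguments rest on the same numerical coincidence, namely that the fiber dimension of $\hat\pi$ and the codimension of $\Delta$ are both $16k$; your version makes the mechanism transparent without assuming the reader knows the composition law for Fredholm indices, while the paper's version is terser and more structural.
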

\begin{proof}
The dimension of the fiber of the projection $\hat{\pi}:\mathrm{Dom}\,\Phi\to \widetilde{\calU}^{(1)}\times \widetilde{\calU}^{(2)}\times\cdots\times \widetilde{\calU}^{(3k)}$ is $k+5\times 3k=16k$. For $z\in \Phi^{-1}(\Delta)$, the linear map $d\pi_z:T_z\Phi^{-1}(\Delta)\to T_{\pi(z)}(\widetilde{\calU}^{(1)}\times \widetilde{\calU}^{(2)}\times\cdots\times \widetilde{\calU}^{(3k)})$ agrees with the composition of Fredholm operators $T_z\Phi^{-1}(\Delta)\stackrel{\subset}{\to} T_z\mathrm{Dom}\,\Phi\stackrel{d\hat{\pi}_z}{\to} T_{\hat{\pi}(z)}(\widetilde{\calU}^{(1)}\times \widetilde{\calU}^{(2)}\times\cdots\times \widetilde{\calU}^{(3k)})$, and it follows that $\pi$ is Fredholm. Moreover, by the additivity of the index of Fredholm operators under composition, we see that the index of $\pi$ is $(-16k)+16k=0$.
\end{proof}

\begin{proof}[Proof of Lemma~\ref{lem:0-mfd}]
It follows from the Sard--Smale theorem (\cite{Sm3}, \cite[Theorem~16.2]{AR}) that the set of regular values of $\pi$ is a residual subset of $\prod_\ell\widetilde{\calU}^{(\ell)}$. By Lemmas~\ref{lem:transversal}, \ref{lem:Fredholm}, we see that the fiber of $\pi$ over each regular value $(\xi^{(1)},\ldots,\xi^{(3k)})$ is a 0-manifold. Here, one may check that $(\xi^{(1)},\ldots,\xi^{(3k)})$ is a regular value of $\pi$ if and only if the restriction $\Phi_{(\xi^{(1)},\ldots,\xi^{(3k)})}:\calE_1(\xi^{(1)})\times_O\calE_2(\xi^{(2)})\times_O\cdots\times_O\calE_{3k}(\xi^{(3k)})\to F_0^{6k}$ of $\Phi$ on $\pi^{-1}(\{(\xi^{(1)},\ldots,\xi^{(3k)})\})$ is transversal to $\Delta$. Since $C^\infty$ sections are $C^r$ dense in the space of $C^r$ sections, one can find a $C^\infty$ tuple in the image of $\pi$ that is arbitrarily $C^r$ close to a regular value $(\xi^{(1)},\ldots,\xi^{(3k)})$, by which one can approximate $\calE_1(\xi^{(1)}),\calE_2(\xi^{(2)}),\ldots,\calE_{3k}(\xi^{(3k)})$ by $C^\infty$ submanifolds arbitrarily $C^r$ closely. Such approximations inherit the transversality property of $\Phi_{(\xi^{(1)},\ldots,\xi^{(3k)})}$, and it follows that we may assume after a small perturbation that the regular value $(\xi^{(1)},\ldots,\xi^{(3k)})$ consists of $C^\infty$ $v$-gradients. 

One may see that the argument above also works even if $\calE_\ell(\xi^{(\ell)})$ are replaced with their compactifications. This completes the proof that $\acalM_\Gamma(\vec{\xi},\eta)$ is a compact 0-manifold, for a single $\vec{C}$-graph $\Gamma$. Since there may be only finitely many $\vec{C}$-graphs with $2k$ black vertices and $\deg(\Gamma)=(1,\ldots,1)$, and finite intersection of residual subsets is residual in $\prod_\ell\widetilde{\calU}^{(\ell)}$, the assertion follows for all the relevant $\vec{C}$-graphs.
\end{proof}

\subsection{Proof of the analogue of Lemma~\ref{lem:0-mfd} for $v$-gradients that are generic with respect to $\vec{V}_G$}\label{ss:transversality-surgery}

Here, we assume that the open $k$-disk $O$ in the proof of Lemma~\ref{lem:0-mfd} is $O_1\times O_2\times\cdots\times O_{2k}$, where $O_i$ is a point or an open interval. We take $\xi_0^{(\ell)}$ to be one that is adapted to $\vec{V}_G$, as in \S\ref{ss:fiberwise-MF}, and we replace $\widetilde{\calU}^{(\ell)}$ with the following one.
\[ \widetilde{\calU}^{(\ell)}=\calU_+^{(\ell)}\times \widetilde{\calU}^{(\ell)}_{1-}\times\cdots\times \widetilde{\calU}^{(\ell)}_{2k-} \]
Here, $\calU_+^{(\ell)}$ is a small neighborhood of $\xi^{(\ell)}_0|_{\R^4-\mathrm{Int}(V_1\cup\cdots\cup V_{2k})}$ in the space of $v$-gradients on $\R^4-\mathrm{Int}(V_1\cup\cdots\cup V_{2k})$ that agree with $\xi^{(\ell)}_0$ near the boundary. We denote by $\calU_{j-}^{(\ell)}$ a small neighborhood of $\xi^{(\ell)}_0|_{V_j}$ in the space of $v$-gradients on $V_j$ that agree with $\xi^{(\ell)}_0$ near the boundary, and define $\widetilde{\calU}_{j-}^{(\ell)}$ to be the space of $C^r$ maps $O_j\to \calU_{j-}^{(\ell)}$. In this situation, $\widetilde{\calU}^{(\ell)}$ still has enough freedom so that the proofs of Lemmas~\ref{lem:transversal}, \ref{lem:Fredholm} are almost the same as above.

\section*{\bf Acknowledgments.} 
I would like to thank Tatsuro Shimizu for explaining to me his work and like to thank Takuya Sakasai for giving me information about Kontsevich's graph complex. I would also like to thank Fran\c{c}ois Laudenbach, Andrew Lobb, Syunji Moriya, Kaoru Ono, Osamu Saeki, Keiichi Sakai, Makoto Sakuma and Masatoshi Sato for helpful discussions, comments and encouragements. This work was partially supported by JSPS Grant-in-Aid for Scientific Research 17K05252 and 26400089 and by the Research Institute for Mathematical Sciences, a Joint Usage/Research Center located in Kyoto University.



\begin{thebibliography}{WW}

\bibitem[AR]{AR} R.~Abraham, J.~Robbin, \emph{Transversal Mappings and Flows}, Benjamin, New York, 1967.

\bibitem[ABK]{ABK} P.L.~Antonelli, D.~Burghelea, P.J.~Kahn, \emph{The non-finite homotopy type of some diffeomorphism groups}, Topology {\bf 11} (1972), 1--49. 

\bibitem[AD]{AD} M.~Audin, M.~Damian, \emph{Morse Theory and Floer Homology}, Translated from the 2010 French original by Reinie Ern\'{e}, Universitext, Springer, EDP Sciences, 2014. xiv+596 pp.

\bibitem[AS]{AS}
S.~Axelrod, I.~M.~Singer, \emph{Chern--Simons perturbation theory}, in Proceedings of the XXth DGM Conference, Catto S., Rocha A. (eds.), pp. 3--45, World Scientific, Singapore, 1992{ , II, J. Diff. Geom. {\bf 39} (1994), 173--213.}

\bibitem[BNM]{BNM} D.~Bar-Natan, B.~McKay, \emph{Graph Cohomology - An Overview and Some Computations}, draft, 2001.

\bibitem[BL]{BL} D.~Burghelea, R.~Lashof, \emph{The homotopy type of the space of diffeomorphisms. I}, Trans. Amer. Math. Soc. {\bf 196} (1974), 1--36; \emph{II}, Trans. Amer. Math. Soc. {\bf 196} (1974), 37--50.

\bibitem[Bo]{Bo}
R.~Bott, \emph{Morse theory indomitable}, Publ. Math. I.H.\'{E}.S. {\bf 68} (1988), no. 1, 99--114.

\bibitem[BT]{BT}
R.~Bott, C.~Taubes, \emph{On the self-linking of knots}, J. Math. Phys. {\bf 35}, (1994), 5247--5287. 

\bibitem[BH]{BH}
D.~Burghelea, S.~Haller, \emph{On the topology and analysis of a closed one form I}, Monogr. Enseign. Math. {\bf 38} (2001), 133--175.

\bibitem[CCL]{CCL} A.~S.~Cattaneo, P.~Cotta-Ramusino, R.~Longoni, \emph{Configuration spaces and Vassiliev classes in any dimension}, Algebr. Geom. Topol. {\bf 2} (2002), 949--1000.

\bibitem[Ce]{Ce} J.~Cerf, \emph{La stratification naturelle des espaces de fonctions diff\'{e}rentiables r\'eelles et le th\'eor\`eme de la pseudo-isotopie}, Publ. Math. I.H.\'E.S. \textbf{39} (1970), 5--173.

\bibitem[CGP]{CGP} M.~Chan, S.~Galatius, S.~Payne, \emph{Tropical curves, graph complexes, and top weight cohomology of $\mathcal{M}_g$}, arXiv:1805.10186.

\bibitem[Ch]{Ch} K.~T.~Chen, \emph{Iterated path integrals}, Bull. Amer. Math. Soc., {\bf 83}, no. 5 (1977), 831--879.

\bibitem[CF]{CF} P.~E.~Conner, E.~E.~Floyd, Differentiable periodic maps, Springer, Berlin, 1964.

\bibitem[CJ]{CJ}
M.~Crabb, I.~James, Fibrewise homotopy theory, Springer Monographs in Mathematics, Springer-Verlag London, Ltd., London, 1998.

\bibitem[El]{El} Y.~Eliashberg, \emph{Symplectic topology in the nineties}, Differential Geom. Appl. {\bf 9} (1998), no. 1--2, 59--88.

\bibitem[Fu]{Fu}
K.~Fukaya, \emph{Morse Homotopy and Chern--Simons Perturbation Theory}, Comm. Math. Phys. {\bf 181} (1996), 37--90.

\bibitem[GGP]{GGP} S.~Garoufalidis, M.~Goussarov, M.~Polyak, \emph{Calculus of clovers and finite type invariants of 3-manifolds}, Geom. Topol. {\bf 5}, no. 1 (2001), 75--108.

\bibitem[GL]{GL} S.~Garoufalidis, J.~Levine, \emph{Homology surgery and invariants of 3--manifolds}, Geom. Topol. {\bf 5}, no. 2 (2001), 551--578.
  
\bibitem[GG]{GG}
M.~Golubitsky, V.~Guillemin, Stable Mappings and Their Singularities, Springer, New York (1973).

\bibitem[Gou]{Gou} M.~Goussarov, \emph{Finite type invariants and n-equivalence of 3-manifolds}, C. R. Acad. Sci. Paris S\'{e}r. I Math. {\bf 329} (1999), no. 6, 517--522.

\bibitem[GHV]{GHV} W.~Greub, S.~Halperin, R.~Vanstone, Connections, Curvature, and Cohomology, Volume I, Academic Press, New York and Lindon, 1972.

\bibitem[Hab]{Hab} 
K.~Habiro, \emph{Claspers and finite type invariants of links}, Geom. Topol. {\bf 4} (2000), 1--83.

\bibitem[Hat]{Hat} A.~Hatcher, \emph{A proof of the Smale conjecture, $\Diff(S^3)\simeq O(4)$}, Ann. of Math. (2) {\bf 117} (1983), no. 3, 553--607. 

\bibitem[Hat2]{Hat2} A.~Hatcher, \emph{A 50-Year View of Diffeomorphism Groups}, talk slides, 50th Cornell Topology Festival 2012.

\bibitem[HW]{HW} A.~Hatcher, J.~Wagoner, Pseudo-isotopies of compact manifolds, Ast\'{e}risque, No. 6. Soci\'{e}t\'{e} Math\'{e}matique de France, Paris, 1973. 

\if0
\bibitem[Hu]{Hu} J.R.~Hubbuck, \emph{On homotopy commutative H-spaces}, Topology {\bf 8} (1969), 119--126. 
\fi

\bibitem[Hu]{Hu} M.~Hutchings, \emph{Floer homology of families. I.}, Algebr. Geom. Topol. {\bf 8} (2008), no. 1, 435--492. 

\bibitem[Ig]{Ig} K.~Igusa, Higher Franz--Reidemeister torsion, AMS/IP Studies in Advanced Mathematics 31, Amer. Math. Soc., Providence, RI, 2002.

\bibitem[KM]{KM} M.~A.~Kervaire, J.~W.~Milnor, \emph{Groups of homotopy spheres: I}, Ann. of Math. {\bf 77} no. 3 (1963), 504--537.

\bibitem[Kir]{Kir} R.~Kirby (Ed.), \emph{Problems in Low-Dimensional Topology},\\ \verb+https://math.berkeley.edu/~kirby/+

\bibitem[Kon]{Kon} M.~Kontsevich, \emph{Feynman diagrams and low-dimensional topology}, First European Congress of Mathematics, Vol. II (Paris, 1992), Progr. Math. {\bf 120} (Birkhauser, Basel, 1994), 97--121.

\bibitem[KT]{KT} G.~Kuperberg, D.~P.~Thurston, \emph{Perturbative 3-manifold invariants by cut-and-paste topology}, arXiv:math/9912167.

\bibitem[Kup]{Kup} A.~Kupers, \emph{Some finiteness results for groups of automorphisms of manifolds}, To appear in Geom. Topol., arXiv:1612.09475, 

\bibitem[Les1]{Les1} C.~Lescop, \emph{On the Kontsevich-Kuperberg-Thurston construction of a configuration-space invariant for rational homology 3-spheres}, arXiv:math/0411088.

\bibitem[Les2]{Les2} C.~Lescop, \emph{Splitting formulae for the Kontsevich-Kuperberg-Thurston invariant of rational homology 3-spheres}, arXiv:math/0411431.

\bibitem[Ma]{Ma} W.~S.~Massey, \emph{Higher order linking numbers}, 1969 Conf. on Algebraic Topology (Univ. of Illinois at Chicago Circle, Chicago, Ill., 1968) pp. 174--205.

\bibitem[MM]{MM} J.~W.~Milnor, J.~C.~Moore, \emph{On the structure of Hopf algebras}, Ann. of Math. (2) {\bf 81} (1965) 211--264. 

\bibitem[Pa]{Pa} A.~Pajitnov, Circle-valued Morse Theory, De Gruyter Studies in Mathematics {\bf 32}, Berlin (2006).

\bibitem[RS]{RS} D.~Randall, P.~A.~Schweitzer, \emph{On Foliations, Concordance Spaces, and the Smale Conjectures}, In ``Differential Topology, Foliations, and Group Actions'',  Contemp. Math. {\bf 161} (1994), 235--258.

\bibitem[Sh]{Sh} T.~Shimizu, \emph{An invariant of rational homology 3-spheres via vector fields}, Algebr. Geom. Topol. {\bf 16} (2016), 3073--3101.

\bibitem[Sm]{Sm} S.~Smale, \emph{Diffeomorphisms of the 2-sphere}, Proc. Amer. Math. Soc. {\bf 10} (1959), 621--626. 

\bibitem[Sm2]{Sm2} S,~Smale, \emph{On the structure of manifolds},  Amer. J. Math. {\bf 84} (1962), 387--399. 
  
\bibitem[Sm3]{Sm3} S.~Smale, \emph{An Infinite Dimensional Version of Sard's Theorem}, Amer. J. Math. {\bf 87} (1965), no. 4, 861--866.

\bibitem[Sw]{Sw} R.~M.~Switzer, Algebraic Topology--Homotopy and Homology, Springer, Berlin (1975).

\bibitem[T]{T} R.~Thom, \emph{Quelques propri\'{e}t\'{e}s globales des vari\'{e}t\'{e}s diff\'{e}rentiables}, Comment. Math. Helv. {\bf 28} (1954), 17--86. 

\bibitem[Wa1]{Wa1} T.~Watanabe, \emph{On Kontsevich's characteristic classes for higher dimensional sphere bundles. I. The simplest class}, Math. Z. {\bf 262} (2009), no. 3, 683--712.

\bibitem[Wa2]{Wa2} T.~Watanabe, \emph{On Kontsevich's characteristic classes for higher-dimensional sphere bundles II: Higher classes}, J. Topol. {\bf 2} (2009), 624--660.

\bibitem[Wa3]{Wa3} T.~Watanabe, \emph{Higher order generalization of Fukaya's Morse homotopy invariant of 3-manifolds I. Invariants of homology 3-spheres}, Asian J. Math. {\bf 22}, no. 1 (2018), 111--180.

\bibitem[Wa4]{Wa4} T.~Watanabe, \emph{Morse homotopy for fiber bundles}, in preparation.


\end{thebibliography}
\end{document}